\date{} 
\title{The Fourier Spectrum of Critical Percolation}
\author{Christophe Garban \and G\'abor Pete \and Oded Schramm}
\newif\ifhyper\IfFileExists{hyperref.sty}{\hypertrue}{\hyperfalse}
\ifhyper\usepackage{hyperref}  
\newif\iffigures\figurestrue
\def\hitem#1#2{\item[\hypertarget{#1}{#2.}]\expandafter\gdef\csname LBL#1ITM\endcsname{#2}}
\def\iref#1{\hyperlink{#1}{\csname LBL#1ITM\endcsname}}
\def\hitem#1#2{\item[{#2.}]\expandafter\gdef\csname LBL#1ITM\endcsname{#2}}
\def\iref#1{{\csname LBL#1ITM\endcsname}}
\newif\ifdraft
\long\def\note#1/{\ifdraft{\marginpar{{$\Longleftarrow$}} \bf [#1] }\fi}
\long\def\comment#1{}
\long\def\old#1{}
\numberwithin{equation}{section}
\numberwithin{figure}{section}
\newtheorem{theorem}{Theorem}
\numberwithin{theorem}{section}
\newtheorem{corollary}[theorem]{Corollary}
\newtheorem{lemma}[theorem]{Lemma}
\newtheorem{proposition}[theorem]{Proposition}
\newtheorem{conjecture}[theorem]{Conjecture}
\theoremstyle{remark}\newtheorem{definition}[theorem]{Definition}
\theoremstyle{remark}\newtheorem{remark}[theorem]{Remark}
\def\eref#1{(\ref{#1})}
\let\qqed=\qed
\def\QED{\qqed\bigskip}
\let\qed=\QED
\newcommand{\Prob} {{\mathbb P}}
\newcommand{\R}{\mathbb{R}}
\newcommand{\Q}{\mathbb{Q}}
\newcommand{\C}{\mathbb{C}}
\newcommand{\Z}{\mathbb{Z}}
\newcommand{\N}{\mathbb{N}}
\def\Quad{\mathcal Q}
\def\Piv{\mathscr{P}}
\def\diam{\mathop{\mathrm{diam}}}
\def\SLEkk#1/{$\mathrm{SLE}(#1)$}
\def\SLEr#1/{$\mathrm{SLE(\kappa;#1)}$}
\def\SLEkr#1;#2/{$\mathrm{SLE(#1;#2)}$}
\def\SLEk/{\SLEkk{\kappa}/}
\def\SLEtwo/{\SLEkk2/}
\def\SLE/{$\mathrm{SLE}$}
\def\SLEab/{\SLEkr 4; {a/\hco-1}, {b/\hco-1}/}
\def\Ito/{It\^o}
\def \eps {\epsilon}
\def \P {\Prob}
\def\md{\mid}
\def\Bb#1#2{{\def\md{\bigm| }#1\bigl[#2\bigr]}}
\def\BB#1#2{{\def\md{\Bigm| }#1\Bigl[#2\Bigr]}}
\def\Bs#1#2{{\def\md{\mid}#1[#2]}}
\def\Pb{\Bb\P}
\def\Eb{\Bb\E}
\def\PB{\BB\P}
\def\EB{\BB\E}
\def\Ps{\Bs\P}
\def\Es{\Bs\E}
\def \p {{\partial}}
\def \E {{\mathbb E}}
\def\closure{\overline}
\def\ev#1{{\mathcal{#1}}}
\def \proof {{ \medbreak \noindent {\bf Proof.} }}
\def\proofof#1{{ \medbreak \noindent {\bf Proof of #1.} }}
\def\bl{\bigl}
\def\nn{R}
\def\Ent{\mathrm{Ent}}
\def\SmirnovICM{MR2275653} 
\def\SmirnovIsing1{arXiv:0708.0039}
\def\DPSL{DPSL}
\def\PivotalMeasure{arXiv:1008.1378}
\def\LMN{MR1370363} 
\def\KKL{21923}
\def\SchrammSteif{math.PR/0504586}
\def\SchrammSmirnovNoise{SSblacknoise}
\def\BKSnoise{MR2001m:60016}
\def\KestenScaling{MR88k:60174}
\def\SmirnovPerc{MR1851632}
\def\AizenmanDuplantierAharony{ADA} 
\def\LSWii{MR2002m:60159b}
\def\SmirnovWerner{MR1879816}
\def\LSWoneArm{MR2002k:60204}
\def\KestenSZh{MR1637089}
\def\Reimer{MR2001g:60017}
\def\BMbook{BMbook}
\def\WWperc{arXiv:0710.0856}
\def\HaggstromPeresSteif{MR1465800}
\def\BHPS{MR1959784}
\def\SchrammICM{MR2334202}
\def\TsirelsonStFlour{MR2079671}
\def\FriedgutKalai{MR1371123}
\def\BromanSteif{MR2223951}
\def\JonaSteifCircle{MR2393996}
\def\PeresSchSteif{MR2521411}
\def\DynBoolean{MR1472953}
\def\BSexcPlanes{MR1641830}
\def\PeresSteif{MR1626782}
\def\DavarTree{MR2357674}
\def\DaverEtAl{MR2226886}
\def\HaggstromPemantle{MR1716767}
\def\HoffmanRecu{MR2235173}
\def\NolinKesten{MR2438816}
\def\Mattila{MR1333890}
\def\Grimm{Grimmett:newbook}
\def\HPSIIC{HPS:IIC}
\def\KestenIIC{MR88c:60196}
\def\BernsteinVazirani{MR1471988}
\def\LSSdomination{MR1428500}
\def\noopsort#1{}
\def\bl{\begin{lemma}}
\def\el{\end{lemma}}
\def\bth{\begin{theorem}}
\def\eth{\end{theorem}}
\def\bc{\begin{corollary}}
\def\ec{\end{corollary}}
\def\bcj{\begin{conjecture}}
\def\ecj{\end{conjecture}}
\def\bpr{\begin{proposition}}
\def\epr{\end{proposition}}
\def\bde{\begin{definition}}
\def\ede{\end{definition}}
\newcommand{\be}{\begin{eqnarray}}
\newcommand{\ee}{\end{eqnarray}}
\newcommand{\bes}{\begin{eqnarray*}}
\newcommand{\ees}{\end{eqnarray*}}
\def\Spec{\mathscr{S}}
\def\Ann{\mathcal{A}}
\def\ANN{\mathfrak{A}}
\def\ANNp{\ANN'}
\def\ANNpp{\ANN''}
\def\bgamma{\bar\gamma}
\def\gammas{\gamma^*}
\def\bgammas{\bgamma^*}
\def\F{\mathcal F}
\def\Qb{\Bb{\Q}}
\def\1{1}
\def\bits{\I}
\def\I{\mathcal{I}}
\def\b{\beta}
\def\llwb{\lambda_{B,W}}
\def\lala(#1,#2){\lambda_{#1,#2}}
\def\NN{\mathcal{N}}
\def\coa{\vartheta}
\def\cod{\vartheta^*}
\def\Rs{\mathcal{Z}}
\def\EX{\mathscr{E}}
\def\Ess_#1{\E |\Spec_{f_{#1}}|}
\def\determ{\mathcal{U}}
\def\determc{\determ^c}
\def\AA{\mathscr A}
\def\window{\theta}
\def\concentration{tightness }
\def\concentrated{tight}
\begin{document}
\maketitle

\renewcommand{\thefootnote}{}
\footnotetext{\hspace{-6mm}Microsoft Research (CG, GP and OS), Univ.\ Paris-Sud and ENS (CG), and Univ.\ of Toronto (GP).
CG was partially supported by the ANR under the grant ANR-06-BLAN-0058.
GP was partially supported by the Hungarian National Foundation for Scientific Research, grant T049398, and by an NSERC Discovery Grant.} 
\renewcommand{\thefootnote}{\arabic{footnote}}

\begin{abstract}
Consider the indicator function $f$ of a two-dimensional percolation crossing event.
In this paper, the Fourier transform of $f$ is studied and sharp bounds are
obtained for its lower tail in several situations.
Various applications of these bounds are derived. In particular, we show
that the set of exceptional times
of dynamical critical site percolation
on the triangular grid in which the origin percolates has dimension $31/36$ a.s.,
and the corresponding dimension in the half-plane is $5/9$.
It is also proved that critical bond percolation on the square grid has
exceptional times a.s.
Also, the asymptotics of the number of sites that need to be resampled in order
to significantly perturb the global percolation configuration in a large square
is determined. 
\end{abstract}
\newpage
\tableofcontents
\newpage

\section{Introduction} \label{s.intro}

\subsection{Some general background}\label{ss.genback}
The Fourier expansion of functions on $\R^d$ is an indispensable tool with
numerous applications.
Likewise, the harmonic analysis of functions
defined on the discrete cube $\{-1,1\}^d$ has found a host
of applications; see the survey~\cite{MR2208732}.
Yet the Fourier expansion of some functions of interest
is rather poorly understood.
Here, we study the harmonic analysis of functions arising from 
planar percolation and answer most if not all of the previously
posed problems regarding their Fourier expansion.
We also derive some applications to the behavior of percolation
under noise and in the study of dynamical percolation.
It is hoped that some of the techniques introduced here will be helpful
in the harmonic analysis of other functions.

Let $\I$ be some finite set, and let $\Omega=\{-1,1\}^\I$ be endowed with the uniform
measure.
The Fourier basis on $\Omega$ consists of all the functions of the form
$\chi_S(\omega):= \prod_{i\in S}\omega_i$, where $S\subseteq\I$.
(These functions are also sometimes called the Walsh functions.)
It is easily seen to be an orthonormal basis with respect to the inner product $\Eb{f\,g}$.
Therefore, for every $f:\Omega\to \R$, we have
\begin{equation}\label{e.fourier}
f=\sum_{S\subseteq\I}\widehat f(S)\,\chi_S\,,
\end{equation}
where $\widehat f(S):=\Es{f\,\chi_S}$.
If $\Es{f^2}=1$, then the random variable $\Spec=\Spec_f\subseteq\I$ with distribution given by
$$
\Pb{\Spec =S}=\widehat f(S)^2 
$$
will be called the {\bf Fourier spectral sample} of $f$.
Due to Parseval's formula, this is indeed a probability distribution.
The idea to look at this as a probability distribution was proposed in~\cite{\BKSnoise},
though the study of the weights $\widehat f(S)^2$ is \lq\lq ancient\rq\rq, and boils down to the
same questions in a different lingo.
As noted there, important properties of the function $f$ are encoded in the law of the
spectral sample. For example, suppose that
$f:\{-1,1\}^\I\to\{-1,1\}$. Let $x\in\{-1,1\}^\I$ be random and uniform,
and let $y$ be obtained from
$x$ by resampling\footnote{In the definition of~\cite{\BKSnoise},
each bit is flipped with probability $\eps$, rather than resampled.
This accounts for some discrepancies involving factors of $2$.
The present formulation generally produces simpler formulas.}
 each coordinate with probability
$\epsilon$ independently, where $\eps\in(0,1)$.
Then $y$ is referred to as an $\eps$-noise of $x$.
Since for $i\in\I$ we have $\Eb{x_i\,y_i}=1-\eps$ it follows that
$\Eb{\chi_S(x)\,\chi_S(y)}=(1-\eps)^{|S|}$ for $S\subseteq\I$
and hence it easily follows by using the Fourier expansion~\eqref{e.fourier} that
\begin{equation}
\label{e.fcor}
\Eb{f(x)\,f(y)}= \Eb{(1-\eps)^{|\Spec_f|}}.
\end{equation}
Thus, the stability or sensitivity of $f$ to noise is encoded in the law of $|\Spec|$.

One mathematical model in which noise comes up is that of the Poisson dynamics on $\Omega$,
in which each coordinate is resampled according to a Poisson process of rate $1$,
independently. 
This is, of course, just the continuous time random walk on $\Omega$.
If $x_t$ denotes this continuous time Markov process started at the stationary (uniform) measure
on $\Omega$, then $x_t$ is just $\eps$-noise of $x_0$, where $\eps=1-e^{-t}$.
Indeed, the Markov operator defined by
$$
T_tf(x)=\Eb{f(x_t)\md x_0=x}
$$
is diagonalized by the Fourier basis:
$$
T_t \chi_S= e^{-t|S|}\chi_S\,.
$$
It is therefore hardly surprising that the behavior of $|\Spec_f|$ will play an important
role in the study of the generally non-Markov process $f(x_t)$.
These types of questions have been under investigation in the context of Bernoulli percolation~\cite{\HaggstromPeresSteif}, \cite{\PeresSteif}, \cite{\HaggstromPemantle},  \cite{\PeresSchSteif}, \cite{\DavarTree}, other percolation type processes \cite{\DynBoolean}, \cite{\BSexcPlanes}, \cite{\BromanSteif},
and also more generally~\cite{\BHPS}, \cite{\JonaSteifCircle}, \cite{\HoffmanRecu}, \cite{\DaverEtAl}.
Estimates of the Fourier coefficients played an important role in the proof that the
dynamical version of critical site percolation on the triangular grid a.s.\ has percolation times~\cite{\SchrammSteif}.
These estimates can naturally be phrased in terms of properties of the random variable $|\Spec|$.

Recall that the (random) set of {\bf pivotals} of $f:\{-1,1\}^\I\to\{-1,1\}$ is the set of $i\in\I$
such that flipping the value of $\omega_i$ also changes the value of $f(\omega)$.
It is easy to see~\cite{\KKL} that the first moment of the number of pivotals of $f$
is the same as the first moment of $|\Spec_f|$.
Gil Kalai (personal communication) observed that the same is true for the second moment,
but not for the higher moments.
(We will recall the easy proof of this fact in Section~\ref{ss.ssg}.)
This often facilitates an easy estimation of $\Eb{|\Spec_f|}$ and $\Eb{|\Spec_f|^2}$.

It is often the case that $\Eb{|\Spec|^2}$ is of the same order of magnitude as $\Eb{|\Spec|}^2$,
and this implies that with probability bounded away from zero, the random variable $|\Spec|$
is of the same order of magnitude as its mean.
However, what turns out to be much harder to estimate is the probability that
$|\Spec|$ is positive and much smaller than its mean. (In particular, this is 
much harder than the analogous \concentration result for pivotals, see Remark~\ref{r.pivotals} below.)
This is very relevant to applications; as can be seen from~\eqref{e.fcor}, the
probability that $|\Spec|$ is small is what matters most in understanding the correlation
of $f(x)$ with $f$ evaluated on a noisy version of $x$. Likewise, in the dynamical
setting, the lower tail of $|\Spec|$ controls the switching rate of $f$.
Indeed, the primary purpose of this paper is getting good estimates on
$\Pb{0<|\Spec_f|<s}$ for indicators of crossing events in percolation
and deriving the consequences of such bounds.

\subsection{The main result}\label{ss.main}

We consider two percolation models: critical bond percolation on the square grid $\Z^2$ and
critical site percolation on the triangular grid. See \cite{\Grimm,\WWperc} for background. 
These two models are believed to behave
essentially the same, but the mathematical understanding of the latter is significantly superior to
the former due to Smirnov's theorem~\cite{\SmirnovPerc} and its consequences.
Fix some large $R>0$, and consider the event that (in either of these percolation models)
there is an open (i.e., occupied) left-right crossing of the square $[0,R]^2$.
Let $f_R$ denote the $\pm1$ indicator function of this event; that is, $f_R=1$ if there is
a crossing and $f_R=-1$ when there is no crossing.
In this case, $\I$ is the relevant set of bonds or sites, depending on whether we are in the
bond or site model. The probability space is $\Omega=\{-1,1\}^\I$ with the uniform measure.
Here, it is convenient that $p_c=1/2$ for both models, and so the relevant measure
on $\Omega$ is uniform.

The paper~\cite{\BKSnoise}
posed the problem of studying the law of $\Spec_{f_R}$.
There, it was proved that $\Pb{0<|\Spec_{f_R}|<c\,\log R}\to 0$ as $R\to\infty$
for some $c>0$. This had the implication that
$f_R$ is asymptotically noise sensitive;
that is, $\lim_{R\to\infty} \Eb{f_R(x)\,f_R(y)}-\Eb{f_R(x)}\Eb{f_R(y)}=0$,
when $y$ is an $\eps$-noisy version of $x$ and $\eps>0$ is fixed.
It was also asked in~\cite{\BKSnoise} whether $\Pb{0<|\Spec|<R^\delta}\to 0$ for some $\delta>0$.
This was later proved in~\cite{\SchrammSteif} with $\delta=1/8+o(1)$ in the setting
of the triangular lattice and with an unspecified $\delta>0$ for the square lattice.
While certainly a useful step forward, these results were far from being sharp.
But the issue is more than just a quantitative question. The most natural
hypothesis is that $|\Spec|$ is always proportional to its mean when it is nonzero, or more
precisely that
\begin{equation}
\label{e.tight}
\sup_{R>1} \PB{0<|\Spec_{f_R}|<t\,\E|\Spec_{f_R}|} \to 0\qquad\text{as }t\searrow 0\,.
\end{equation}
To illustrate the fact that~\eqref{e.tight} is not a universal principle,
we note that it does not hold, e.g., for the 
$\pm 1$-indicator function of the event that
there is a left-right percolation in the square $[0,R]^2$ and this square has more
open sites [or edges] than closed sites [or edges].
We prove~\eqref{e.tight} in the present paper, and give useful bounds that are sharp up to constants
on the left hand side of~\eqref{e.tight}.
This is the content of our first theorem.

\begin{theorem}\label{t.1}
As above, let $R>1$ and let $f_R$ be the $\pm1$ indicator function of the left-right
crossing event of the square $[0,R]^2$ in critical bond percolation
on $\Z^2$ or site percolation on the triangular grid.
The spectral sample of $f_R$ satisfies
\begin{equation}
\label{e.ssbounds}
\PB{0<|\Spec_{f_R}|<\Ess_r} \asymp \Bigl(\frac{\Ess_R/R}{\Ess_r/r}\Bigr)^2
\end{equation}
holds for every $r\in[1,R]$,
and $\asymp$ denotes equivalence up to positive multiplicative constants.
\end{theorem}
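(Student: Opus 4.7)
The plan is to prove both inequalities in~\eqref{e.ssbounds} by identifying the right-hand side as the probability that $\Spec_{f_R}$ is nonempty and concentrated in a single $r$-box of $[0,R]^2$. Using the standard relation $\Ess_\rho \asymp \rho^2 \alpha_4(\rho)$ together with quasi-multiplicativity of the four-arm probability, one can rewrite
$$
\Bigl(\frac{\Ess_R/R}{\Ess_r/r}\Bigr)^2 \asymp \Bigl(\frac{R}{r}\Bigr)^2 \alpha_4(r,R)^2,
$$
in which $(R/r)^2$ is the number of $r$-boxes available to house $\Spec_{f_R}$ and $\alpha_4(r,R)^2$ is the probabilistic cost of confining the spectrum to any one of them.

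The first ingredient is the identity
$$
\Pb{\Spec_{f_R}\subseteq B} = \Eb{f_R(\omega)\,f_R(\omega')},
$$
with $\omega,\omega'$ coupled to agree on $B$ and independent outside $B$. For an $r$-box $B$ at macroscopic distance from $\partial[0,R]^2$, a geometric argument should yield $\Pb{\emptyset \neq \Spec_{f_R} \subseteq B} \asymp \alpha_4(r,R)^2$: the residual correlation after decoupling outside $B$ is driven by crossings whose fate depends sensitively on $B$, which costs a four-arm event from $B$ to $\partial[0,R]^2$ in each of $\omega$ and $\omega'$. Summing over the $\asymp (R/r)^2$ interior $r$-boxes gives the right order of magnitude for the probability of being nonempty and localized in some $r$-box. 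For the lower bound in~\eqref{e.ssbounds}, I would then argue that conditional on such confinement the law of $\Spec_{f_R}$ looks locally like the law of $\Spec_{f_r}$, so that $|\Spec_{f_R}|<\Ess_r$ has uniformly positive conditional probability by a Markov-type argument.

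The upper bound is the substantive direction. Partitioning $[0,R]^2$ into $r$-boxes $\{B_i\}$, let $N_r$ be the number of boxes meeting $\Spec_{f_R}$. The plan is to show that $\{0 < |\Spec_{f_R}| < \Ess_r\}$ forces the spectrum to be essentially clustered in a single $r$-box, and then to union-bound over the location of this cluster using a generalization of the box estimate from Step~1 to unions of $r$-boxes, where well-separated clusters are penalized by additional four-arm costs connecting them to the boundary. Given such clustering, Step~1 applied to the enclosing ``effective box'' delivers the matching upper bound $\lesssim (R/r)^2 \alpha_4(r,R)^2$.

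The principal obstacle is exactly the clustering claim: showing that $|\Spec_{f_R}\cap B| \gtrsim \Ess_r$ with positive probability whenever $\Spec_{f_R}\cap B \neq \emptyset$, uniformly in interior $r$-boxes $B$ and scales $r$. This ``spectrum is locally proportional to its local mean'' phenomenon is the quantitative heart of~\eqref{e.ssbounds}, and I expect it to be obtained by a multiscale induction in which the bound at scale $r$ is bootstrapped from the bound at smaller scales, using the approximate spatial factorization of the spectrum across disjoint regions that is encoded in the coupling formulation of Step~1.
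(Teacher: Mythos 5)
Your reduction of the right-hand side to $(R/r)^2\,\alpha_4(r,R)^2$, your reading of it as "number of $r$-boxes times the cost $\alpha_4(r,R)^2$ of confinement to one box", and your Step~1 estimate $\Pb{\emptyset\ne\Spec\subseteq B}\asymp\alpha_4(r,R)^2$ are all correct and do appear in the paper (see Lemma~\ref{l.basic} and Remark~\ref{r.asymp}). The lower bound via confinement events and a first-moment/Markov bound on $|\Spec\cap B|$ given $\emptyset\ne\Spec\subseteq B$ is also essentially what the paper does (end of Subsection~\ref{ss.lsq}). The problem is the upper bound, where you have correctly located the crux but your proposed route through it would not work.

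The claim you flag as the "principal obstacle" --- that conditioned on $\Spec\cap B\ne\emptyset$ and on the behaviour of $\Spec$ outside $B$, one has $|\Spec\cap B|\gtrsim\Ess_r$ with uniformly positive probability --- is precisely the statement the paper singles out (Subsection~\ref{ss.rough}) as the naive plan that fails. The difficulty is that there is no handle on the conditional law of $\Spec\cap B$ given \emph{positive} information about $\Spec$ elsewhere (e.g., that $\Spec$ already meets some other boxes); only conditioning on the negative event $\Spec\cap W=\emptyset$ is tractable, via the identity $\Pb{\Spec\subseteq W^c}=\Eb{\Es{f\md\F_{W^c}}^2}$ and the coupled-configuration interpretation of $\Eb{\lala(B,W)^2}$. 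A "multiscale induction" of the kind you gesture at would inevitably have to condition on $\Spec$ having already intersected previously explored boxes, which is exactly the positive conditioning that cannot be controlled. Likewise, "forces the spectrum to be essentially clustered in a single $r$-box" is not a consequence of $|\Spec|<\Ess_r$: a small $\Spec$ can a priori spread thinly over many boxes, and one must rule this out quantitatively.

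The paper's workaround has three distinct ingredients that are absent from your sketch. First, a combinatorial "annulus structure" decomposition (Section~\ref{s.verysmall}) gives $\Pb{|\Spec_r|=k}\le g(k)\,(R/r)^2\alpha_4(r,R)^2$ with $g(k)$ sub-exponential in $k$; this is the rigorous version of the "well-separated clusters pay extra four-arm costs" heuristic, but by itself controls only the number of hit boxes, not $|\Spec|$. Second, the conditional clustering statement is only proved under \emph{negative} conditioning, in the form of Proposition~\ref{pr.SQBW}: with an auxiliary independent random set $\Rs$ of density $1/(\alpha_4(r)r^2)\asymp 1/\Ess_r$, one has $\Pb{\Spec\cap B'\cap\Rs\ne\emptyset\md\Spec\cap B\ne\emptyset,\,\Spec\cap W=\emptyset}>a$, proved by a second-moment argument requiring a quasi-multiplicativity theorem for the coupled pair $(\omega',\omega'')$ (Proposition~\ref{pr.qm}). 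Third, and crucially, a general large-deviation lemma (Proposition~\ref{pr.YX}) for pairs of $\{0,1\}^n$-valued random vectors converts the collection of negatively-conditioned marginal bounds into $\Pb{\Rs\cap\Spec=\emptyset\md X>0}\le a^{-1}\Eb{e^{-aX/e}\md X>0}$, where $X=|\Spec_r|$; this is what substitutes for the impossible sequential/positive conditioning and, combined with the annulus-structure bound and the sub-exponential $g(k)$, gives the upper bound. Without all three pieces --- and especially without the move to negative conditioning plus $\Rs$ plus Proposition~\ref{pr.YX} --- the argument does not close.
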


To make the sharp bound~\eqref{e.ssbounds} more explicit, one needs to discuss
estimates for $\Ess_r$. 
The value of $\Ess_r$ is estimated by the probability of the so called \lq\lq alternating
$4$-arm event\rq\rq, which we will treat in detail in Subsection~\ref{ss.multi}.
For now, let us just mention that it is known that 
\begin{equation}
\label{e.oneminusdelta}
\Ess_R/\Ess_r \le C\,(R/r)^{1-\delta}
\end{equation}
for any $R\geq r \geq 1$ with some $\delta>0$ and $C<\infty$, and that, for the triangular lattice,
\begin{equation}
\label{e.3d4}
\Ess_R \,/\, \Ess_r \asymp (R/r)^{3/4+o(1)}
\end{equation}
as $R/r\to\infty$ while $r\ge 1$, which
follows from Smirnov's theorem~\cite{\SmirnovPerc} and the SLE-based
analysis of the percolation exponents in~\cite{\SmirnovWerner}.
(This will be proved in Section~\ref{ss.lsq}.)
From~\eqref{e.3d4} and~\eqref{e.ssbounds},
we get for the triangular grid 
\begin{equation}
\label{e.lam} 
\Pb{0<|\Spec_{f_R}|\le\lambda\,\Ess_R} \asymp \lambda^{2/3+o(1)},
\end{equation}
 where $\lambda$ may depend on $R$, but is restricted
to the range $\bigl[(\Ess_R)^{-1},1\bigr]$.
Here, the $o(1)$ represents a function of $\lambda$ and $R$ that
tends to $0$ as $\lambda\to 0$, uniformly in $R$.
This answers Problem 5.1 from~\cite{\SchrammICM}.
Even for the square lattice,~\eqref{e.tight} follows from Theorem~\ref{t.1} combined with (\ref{e.oneminusdelta}).
Below, we prove~\eqref{e.lam2}, which is a variant of~\eqref{e.lam} with slightly different
asymptotics.

There is nothing particularly special about the square with regard to Theorem~\ref{t.1}.
The proof applies to every rectangle of a fixed shape (with the implied constants
depending on the shape).
For percolation crossings in more general shapes, Theorem~\ref{t.loc} gives bounds
on the behavior of $\Spec$ away from the boundary, and we also prove
Theorem~\ref{t.quadtight}, which
is some analog of~\eqref{e.tight}.
However, we chose not to go into the complications that would arise when trying to
prove~\eqref{e.ssbounds} in this general context. The issue is that $\Spec$ could possibly be of larger size near ``peaks'' of the boundary of $\Quad$ (especially if it is fractal-like), so the boundary contributions might dominate $\E|{\Spec}|$ and might also have a complicated influence on the tail behavior. Still, as it turns out, $\Spec$ is unlikely to be close to the boundary, so we can ignore these effects to a certain extent and prove Theorem~\ref{t.quadtight}.

We also remark that $\Pb{\Spec_f=\emptyset}=\Es{f}^2=\widehat f(\emptyset)^2$
is generally easy to compute.
The level $0$ Fourier coefficient $\widehat f(\emptyset)$ has more to do with the
way the function is normalized than with its fundamental properties.
For this reason, $|\Spec|=0$ is separated out in bounds such as~\eqref{e.ssbounds}.

At this point we mention that Theorem~\ref{t.radial} gives the bound
analogous to Theorem~\ref{t.1}, but dealing with
the spectrum of the indicator function for a percolation crossing
from the origin to distance $R$ away.

\subsection{Applications to noise sensitivity}\label{ss.intronoise}

\begin{figure}[htbp]
\SetLabels
(.495*.83)$\rightarrow$\\
(.76*.665)$\downarrow$\\
(.26*.665)$\uparrow$\\
(.76*.32)$\downarrow$\\
(.26*.32)$\uparrow$\\
(.495*.15)$\leftarrow$\\
\endSetLabels
\centerline{
\AffixLabels{%
\vbox{
\hbox{
\epsfysize=2in \epsffile{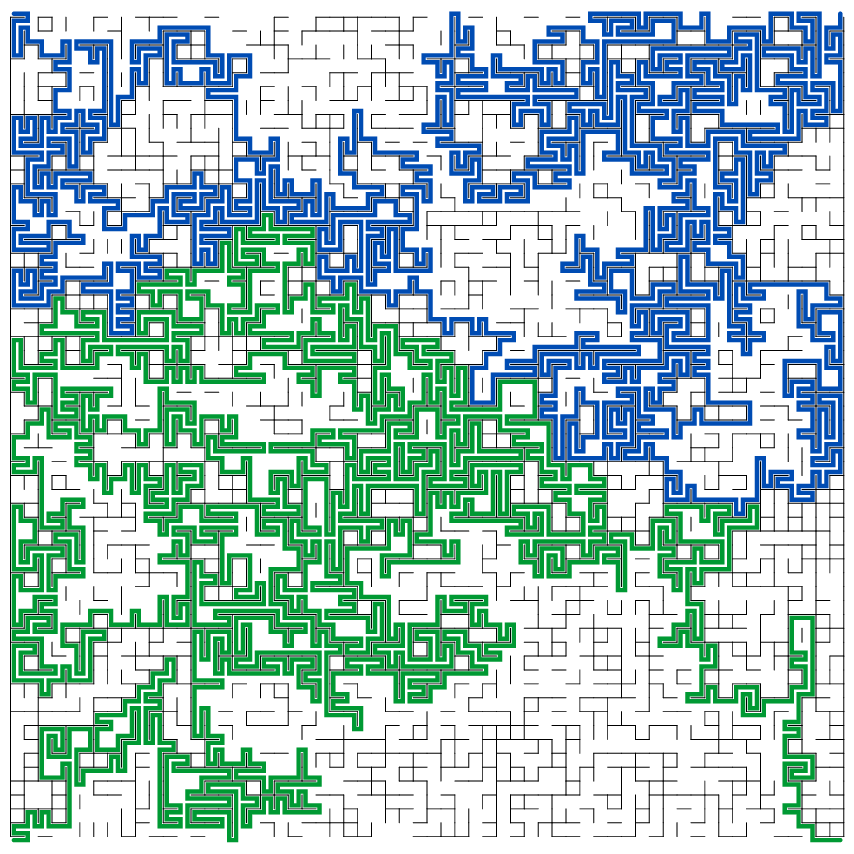}\qquad
\epsfysize=2in \epsffile{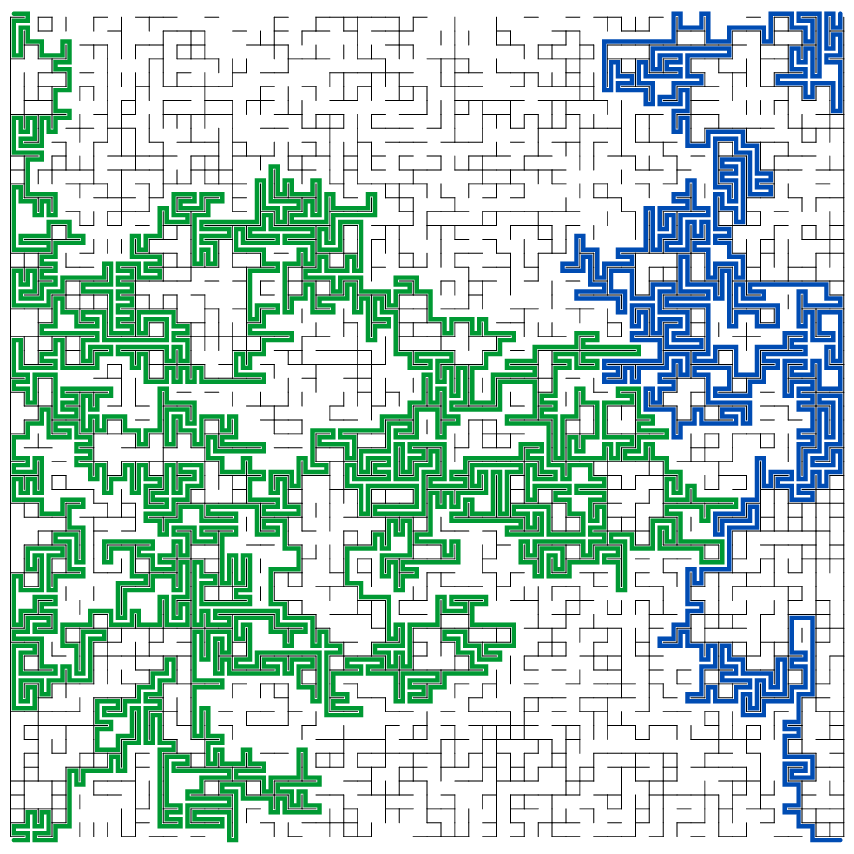}
}
\vskip0.2in
\hbox{
\epsfysize=2in \epsffile{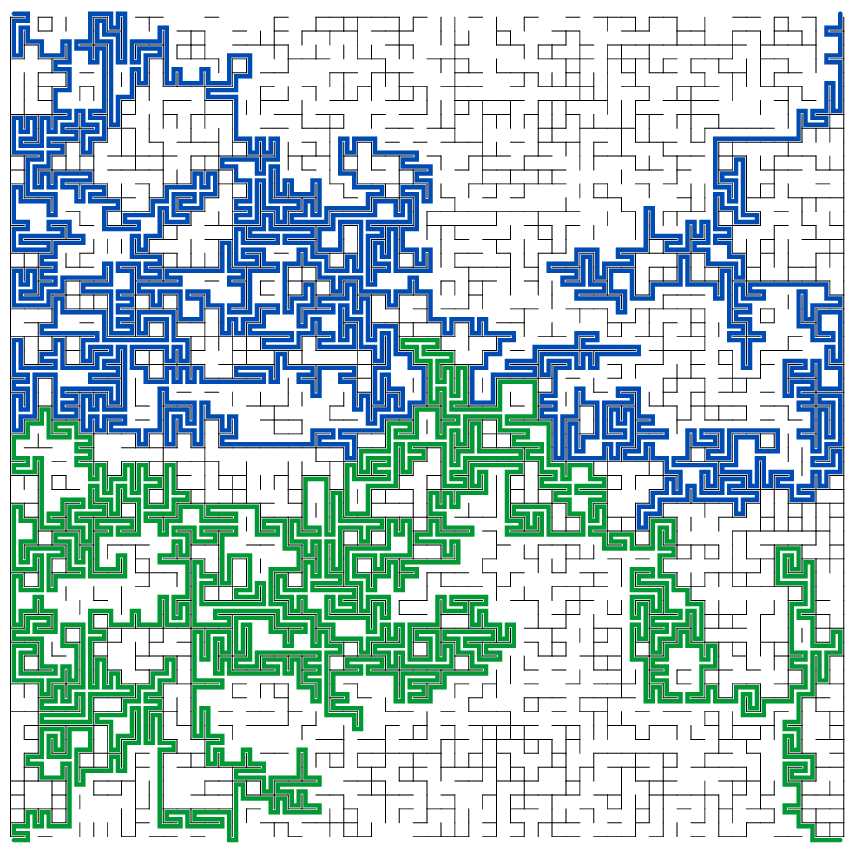}\qquad
\epsfysize=2in \epsffile{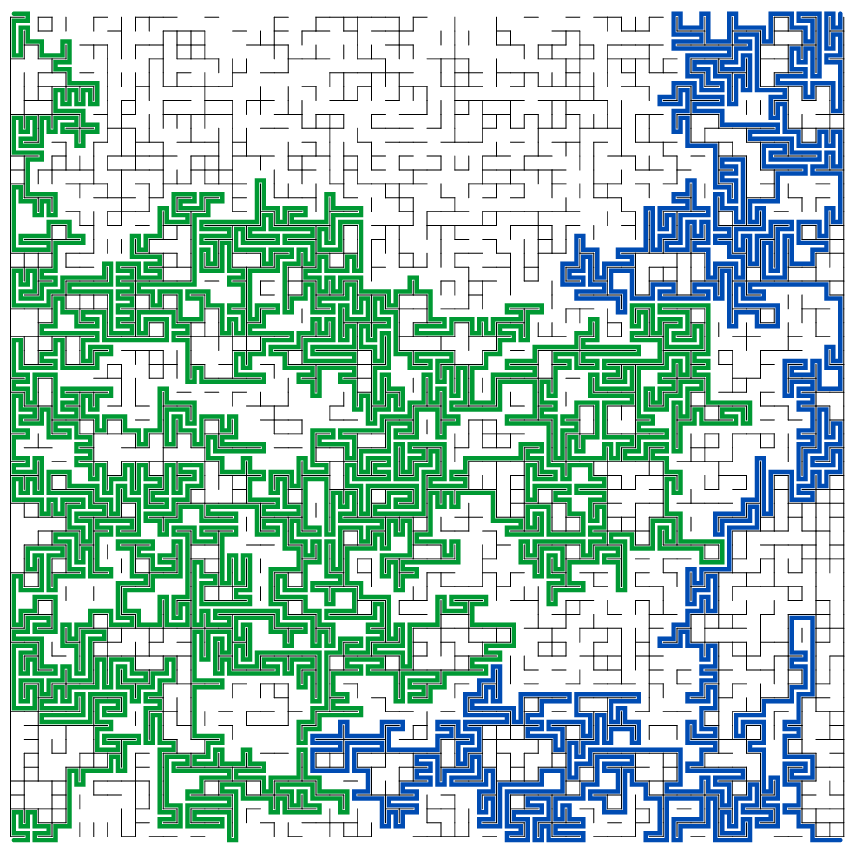}
}
\vskip0.2in
\hbox{
\epsfysize=2in \epsffile{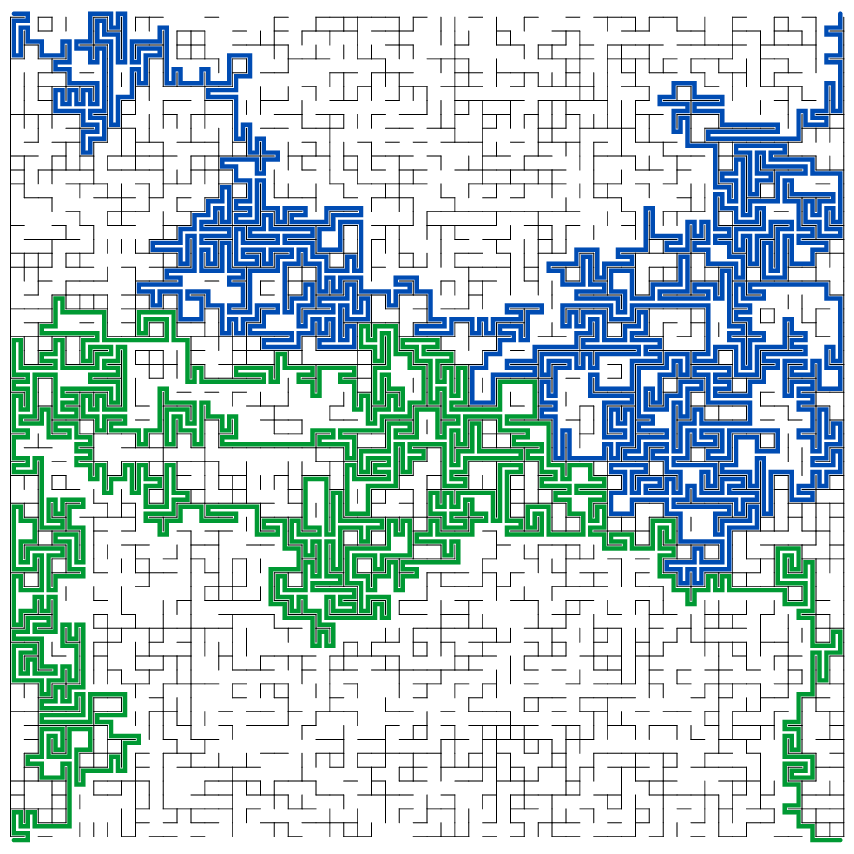}\qquad
\epsfysize=2in \epsffile{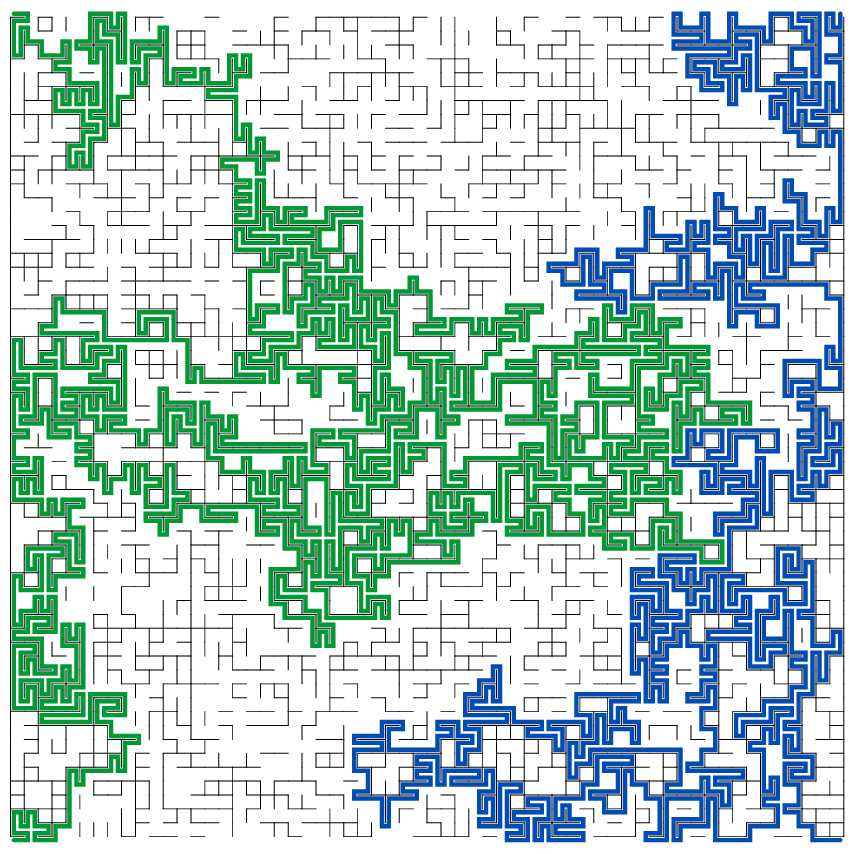}
}}
}
}
\begin{caption}{
\label{f.persample}
Interfaces in percolation on $\Z^2$.
Each successive pair of configurations are related by a noise of about $0.04$,
which results in about one in every $50$ bits being different.
The squares are of size about $60\times 60$. 
The perturbations follow each other cyclically, flipping the edges in three independently chosen subsets, $S_1,S_2, S_3, S_1, S_2, S_3$, arriving back where we started.
}
\end{caption}
\end{figure}

Figure~\ref{f.persample} illustrates a sequence of percolation configurations,
where each configuration is obtained from the previous one by applying some
noise. The effect on the interfaces can be observed.
Theorem~\ref{t.1} (or, in fact, its corollary (\ref{e.tight})) implies the following sharp noise sensitivity estimate
regarding such perturbations:

\begin{corollary}\label{c.noise}
Suppose that $y$ is an $\eps_R$-noisy version of $x$, where 
$\eps_R\in(0,1)$ may depend on $R$.
If $\lim_{R\to\infty} \Ess_R\,\eps_R=\infty$, then
\begin{equation}
\label{e.uncor}
\lim_{R\to\infty}
\Eb{f_R(x)\,f_R(y)}-\Eb{f_R(x)}\Eb{f_R(y)}=0\,,
\end{equation}
while if $\lim_{R\to\infty} \Ess_R\,\eps_R=0$, then
\begin{equation}
\label{e.corel}
\lim_{R\to\infty} \Eb{f_R(x)\,f_R(y)}-\Eb{f_R(x)^2}=0\,.
\end{equation}
\end{corollary}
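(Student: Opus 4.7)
The plan is to reduce everything to the basic Fourier identity $\Eb{f_R(x)\,f_R(y)} = \Eb{(1-\eps_R)^{|\Spec_{f_R}|}}$ from (\ref{e.fcor}), and then exploit the two key facts we now have at our disposal: the concentration statement (\ref{e.tight}) (which is a corollary of Theorem~\ref{t.1}) and the elementary lower bound $(1-\eps)^k \ge 1 - k\eps$. Since both $\Eb{f_R(x)}$ and $\Eb{f_R(y)}$ are determined by $\widehat{f_R}(\emptyset)$, and $\widehat{f_R}(\emptyset)^2 = \Pb{\Spec_{f_R} = \emptyset}$, the quantity in (\ref{e.uncor}) can be rewritten as
\[
\Eb{f_R(x)\,f_R(y)} - \Eb{f_R(x)}\Eb{f_R(y)} = \EB{(1-\eps_R)^{|\Spec_{f_R}|}\,\mathbf{1}_{|\Spec_{f_R}|>0}}.
\]

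To handle (\ref{e.uncor}), given any $\delta>0$ I would use (\ref{e.tight}) to choose $t>0$ small enough that $\Pb{0<|\Spec_{f_R}|<t\,\Ess_R} < \delta$ uniformly in $R$. Splitting the expectation according to whether $|\Spec_{f_R}|$ is below or above $t\,\Ess_R$, the small-spectrum piece is bounded by $\delta$ and the large-spectrum piece is bounded by $(1-\eps_R)^{t\,\Ess_R} \le \exp(-t\,\eps_R\,\Ess_R)$, which tends to $0$ under the hypothesis $\eps_R\,\Ess_R \to \infty$. Letting $R\to\infty$ and then $\delta\to 0$ gives (\ref{e.uncor}).

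For (\ref{e.corel}), since $f_R \in \{-1,+1\}$ we have $\Eb{f_R(x)^2}=1$, so I only need to show $\Eb{(1-\eps_R)^{|\Spec_{f_R}|}} \to 1$. Bernoulli's inequality gives $(1-\eps_R)^{|\Spec_{f_R}|} \ge 1 - \eps_R\,|\Spec_{f_R}|$, and taking expectations yields
\[
1 \ge \EB{(1-\eps_R)^{|\Spec_{f_R}|}} \ge 1 - \eps_R\,\Ess_R,
\]
which tends to $1$ by the hypothesis $\eps_R\,\Ess_R \to 0$. This half of the corollary uses only the trivial first moment bound and in fact does not require Theorem~\ref{t.1} at all; the substantive direction is (\ref{e.uncor}), where the main (and already resolved) obstacle is the uniform small-ball estimate (\ref{e.tight}) for the spectrum. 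No further technical difficulty arises, so the argument is essentially a one-page deduction from Theorem~\ref{t.1}.
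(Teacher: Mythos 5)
Your argument is correct, and for~\eqref{e.uncor} it genuinely differs from the paper's. You deduce~\eqref{e.uncor} directly from the tightness statement~\eqref{e.tight} by fixing $\delta$, choosing a threshold $t$ so that $\Pb{0<|\Spec_{f_R}|<t\,\Ess_R}<\delta$ uniformly in $R$, and splitting $\EB{(1-\eps_R)^{|\Spec_{f_R}|}\,\mathbf 1_{|\Spec_{f_R}|>0}}$ at that threshold; the above-threshold term is bounded by $\exp(-t\,\eps_R\,\Ess_R)\to 0$. The paper instead bins $|\Spec_{f_R}|$ into intervals $((j-1)/\eps,\, j/\eps]$, applies the sharp quantitative bound~\eqref{e.up} from Theorem~\ref{t.1} to each bin (rather than only the qualitative statement~\eqref{e.tight}), and sums a geometric series, arriving at the explicit estimate $\Psi_R\le O(1)\,\gamma(R)/\gamma(\rho(1/\eps_R))$. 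The paper itself points out, right before the proof, that your route via~\eqref{e.tight} is available (it is the very argument it uses for the general-quad version, Corollary~\ref{c.noisequad}); it chooses the longer, more quantitative proof because the explicit rate is needed for~\eqref{e.triangnoise} and is adapted in Section~\ref{s.dyns}. Your argument buys brevity; the paper's buys the rate of decorrelation. For~\eqref{e.corel} the two proofs agree except that you use Bernoulli's inequality $(1-\eps)^k\ge 1-k\eps$ where the paper invokes Jensen's inequality on the convex function $k\mapsto(1-\eps)^k$; both yield the same first-moment bound and the difference is cosmetic.
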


The second of these statements is actually obvious from~(\ref{e.fcor}) and Jensen's inequality, 
and is brought here only to complement the first claim.
Of course,~\eqref{e.uncor} just means that having a crossing in $x$ is
asymptotically uncorrelated with having a crossing in $y$, while~\eqref{e.corel}
means that with probability going to $1$, a crossing in $x$ occurs if and only
if there is a crossing in $y$.
Although $f_R(x)^2=1$, we find the form of~\eqref{e.corel} more suggestive,
since this is the statement that generalizes to other situations.
Likewise, $\lim_{R\to\infty} \Eb{f_R(x)}=0$, but~\eqref{e.uncor}
is more suggestive.

In Corollary~\ref{c.noisequad}, we prove a generalization of Corollary~\ref{c.noise} 
for the crossing function $f_{R\Quad}$, where $\Quad$ is an arbitrary fixed ``quad'', 
i.e., a domain homeomorphic to a disk with four marked points on its boundary.

In a forthcoming paper on the scaling limit of dynamical percolation \cite{\DPSL} (see also \cite{\PivotalMeasure}),
we plan to show that for critical percolation on the triangular grid,
whenever $\lim_{R\to\infty} \Ess_R\,\eps_R$ exists and is in $(0,\infty)$,
then $\lim_{R} \Eb{f_R(x)\,f_R(y)}$ also exists and is
strictly between the limits of $\Eb{f_R(x)}^2$ and $\Eb{f_R(x)^2}$.

The following theorem proves Conjecture~5.1 from~\cite{\BKSnoise}.
With minor adaptations, it follows from the proof of Theorem~\ref{t.1}.

\begin{theorem}\label{t.horizontal}
Consider bond percolation on $\Z^2$. Let $x$ be a critical
percolation configuration, and let $z$ be another critical percolation
configuration, which equals $x$ on the horizontal edges, but is
independent from $x$ on the vertical edges.
Then having a left-right crossing in $x$ is asymptotically
independent from having a left-right crossing in $z$.
Moreover, the same holds true if ``horizontal'' and ``vertical''
are interchanged (but still considering left
to right crossing).
\end{theorem}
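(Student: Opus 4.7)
The first step is a Fourier reformulation of the conclusion. Partition the index set $\I$ of bonds of $\Z^2$ into its horizontal and vertical parts $\I = \I_h \sqcup \I_v$. Since $z$ agrees with $x$ on $\I_h$ and is an independent uniform sample on $\I_v$, a direct computation gives $\Eb{\chi_S(x)\,\chi_T(z)} = \mathbf{1}[S = T \subseteq \I_h]$, so by Parseval
$$\Eb{f_R(x)\,f_R(z)} \,=\, \sum_{S \subseteq \I_h} \widehat f_R(S)^2 \,=\, \Pb{\Spec_{f_R} \subseteq \I_h}.$$
Since also $\Eb{f_R(x)}\,\Eb{f_R(z)} = \widehat f_R(\emptyset)^2 = \Pb{\Spec_{f_R} = \emptyset}$, the asymptotic independence of the two crossing events is equivalent to
$$\Pb{\emptyset \ne \Spec_{f_R} \subseteq \I_h} \longrightarrow 0 \text{ as } R \to \infty,$$
and the variant with $\I_h$ and $\I_v$ interchanged is symmetric.

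The plan is then to replay the multi-scale argument underlying the proof of Theorem~\ref{t.1}. That proof partitions $[0,R]^2$ into mesoscopic $r$-boxes and controls $\Spec_{f_R}$ through its coarse-grained image $\{B : \Spec_{f_R} \cap B \ne \emptyset\}$. Its key local input is that, conditionally on $\Spec_{f_R} \cap B$ being nonempty, this restriction is ``rich'', with cardinality and geometry governed by the alternating $4$-arm event inside $B$.

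The minor adaptation I would add is a uniform directional lower bound: there exists $c > 0$, independent of $r$ and $R$, such that for every mesoscopic box $B$ well inside the square
$$\PB{\Spec_{f_R} \cap B \cap \I_v \ne \emptyset \md \Spec_{f_R} \cap B \ne \emptyset} \ge c,$$
and symmetrically for $\I_h$. Heuristically, the $4$-arm pivotal picture governing $\Spec_{f_R} \cap B$ pairs each macroscopic pivotal point with two incident horizontal and two incident vertical edges, all pivotal simultaneously, so a vertical pivotal accompanies any horizontal one with positive probability. Combined with the approximate independence of $\Spec_{f_R}$ restricted to disjoint, well-separated mesoscopic boxes---a property established in the proof of Theorem~\ref{t.1}---this yields
$$\PB{\emptyset \ne \Spec_{f_R} \subseteq \I_h \md N_r = k} \le (1-c)^k,$$
where $N_r$ denotes the number of $r$-boxes met by $\Spec_{f_R}$. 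The proof of Theorem~\ref{t.1} implies that, for a suitable scale $r = r(R)$ with $r/R \to 0$, $N_r \to \infty$ in probability conditional on $\Spec_{f_R} \ne \emptyset$, which forces $\Pb{\emptyset \ne \Spec_{f_R} \subseteq \I_h} \to 0$.

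The main obstacle is the uniform directional bound above. The unconditional statement that $\Spec_{f_R} \cap B$ contains a vertical edge (whenever nonempty) with positive probability is a routine consequence of the RSW analysis of $4$-arm pivotals, but one must obtain it with conditional probability bounded below uniformly in the (highly non-trivial) conditioning on $\Spec_{f_R}$ outside $B$ that arises in the inductive multi-scale scheme of the proof of Theorem~\ref{t.1}. That scheme handles analogous conditionings via an iterative revealing procedure combined with a conditional second-moment estimate for spectral densities; the required adaptation is to run this procedure through while additionally tracking the Boolean variable signaling the presence of a vertical edge in $\Spec_{f_R} \cap B$. The whole machinery transfers because the underlying $4$-arm pivotal structure is manifestly symmetric between the two edge directions, reflecting the symmetry of $\Z^2$ under the diagonal reflection that swaps $\I_h$ and $\I_v$.
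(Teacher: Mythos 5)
Your Fourier reformulation is exactly right and is the same starting point as the paper: from the identity $\Eb{\chi_S(x)\chi_T(z)}=\mathbf{1}[S=T\subseteq\I_h]$ one gets $\Eb{f_R(x)f_R(z)}-\Eb{f_R(x)}\Eb{f_R(z)}=\Pb{\emptyset\ne\Spec_{f_R}\subseteq\I_h}$, so the goal is indeed to show this probability tends to $0$. You also correctly intuit that the $4$-arm pivotal structure is symmetric between $\I_h$ and $\I_v$ and that this symmetry is what makes a directional lower bound plausible.

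The gap is in the aggregation step. You invoke ``the approximate independence of $\Spec_{f_R}$ restricted to disjoint, well-separated mesoscopic boxes---a property established in the proof of Theorem~\ref{t.1}'' and then condition on $N_r=k$ to conclude a bound of the form $(1-c)^k$. No such independence is established in the paper; in fact Section~\ref{ss.rough} explicitly flags both ingredients of your aggregation as the exact pitfalls that forced the authors to do something else. There are presently no tools for controlling the conditional law of $\Spec\cap B$ given $\Spec\cap W$ for a general $W$, and conditioning on ``having a large total number of nonempty intersections'' (your $N_r=k$) is a \emph{positive} conditioning which the paper explicitly says cannot be handled. The same issue afflicts your local lemma: conditioning on $\Spec\cap B\ne\emptyset$ is also positive. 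The only conditioning the machinery can digest is the negative one $\Spec\cap W=\emptyset$.

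What the paper actually does is prove Theorem~\ref{t.horizontal} as a corollary of Proposition~\ref{p.determ}: take $\determ_R=\I_h$, note that the largest disk avoiding $\determc_R=\I_v$ has $O(1)$ radius so that hypothesis~\eqref{e.rR} is satisfied, and conclude. The proof of Proposition~\ref{p.determ} in turn replaces your two problematic steps by the correct tools: it picks a maximal $r_R$-spaced subset $H_R\subseteq\I_v$, sets $y_j=\mathbf{1}[\Spec\cap B_j'\cap H_R\ne\emptyset]$ and $x_j=\mathbf{1}[\Spec\cap B_j\ne\emptyset]$, establishes the inequality $\Pb{y_j=1\mid y_i=0\ \forall i\in I}\ge a\,\Pb{x_j=1\mid y_i=0\ \forall i\in I}$ using the first and second moment estimates (Propositions~\ref{pr.first},~\ref{pr.second}) which are precisely designed for conditioning on $\Spec\cap W=\emptyset$, and then aggregates via Proposition~\ref{pr.YX} (the large-deviation/averaging lemma that sidesteps positive conditioning) together with Proposition~\ref{pr.verysmall} to control the regime where few boxes are hit. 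So your overall shape of argument is recognizable, but the ``approximate independence'' and positive conditionings you rely on are precisely what Propositions~\ref{pr.YX} and~\ref{pr.verysmall} were invented to replace, and without them the argument does not close.
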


This is just a special case of Proposition~\ref{p.determ}, a quite general result on ``sensitivity to selective noise''.

\subsection{Applications to dynamical percolation}
\label{ss.introdyn}

First, recall that in dynamical percolation the random bits
determining the percolation configuration are refreshed according
to independent Poisson clocks of rate $1$.
Dynamical percolation was proposed by Itai Benjamini  in 1992, and later independently 
by H\"aggstr\"om, Peres and Steif, who wrote the first paper on the subject~\cite{\HaggstromPeresSteif}.  
Since then, dynamical percolation
and other dynamical random processes have been the focus of
several research papers; see the references in Section~\ref{ss.genback}.
In response to a question in~\cite{\HaggstromPeresSteif}
it was proved in~\cite{\SchrammSteif} that critical dynamical site percolation on
the triangular grid a.s.\ has times at which the origin is in an infinite
connected percolation component. Such times are called \lq\lq exceptional\rq\rq,
since they necessarily have measure zero. The paper~\cite{\SchrammSteif}
also showed that the dimension of the set of exceptional times
is a.s.\ in $\bigl[\frac16,\frac{31}{36}\bigr]$, and conjectured that it is a.s.\ $31/36$.
Likewise, it was conjectured there that the set of times at which
an infinite occupied as well as an infinite vacant cluster
coexist is a.s.\ $2/3$, but~\cite{\SchrammSteif} only proved that
$2/3$ is an upper bound, without establishing the existence of
such times. Similarly,~\cite{\SchrammSteif} proved that
the set of times at which there is an infinite percolation component
in the upper half plane has Hausdorff dimension at most $5/9$,
but did not prove that the set is nonempty.
There were numerous other lower and upper bounds of this type
in~\cite{\SchrammSteif}, some of them having to do with dynamical percolation
in wedges and cones, which will not be discussed in this paper.
We can now prove most of these conjectures regarding the Hausdorff
dimensions of exceptional times
in the setting of the triangular grid, and for each case corresponding to a monotone event,
the Hausdorff dimension a.s.\ equals the previously known upper bound. In particular, we have

\begin{theorem}\label{t.dyn}
In the setting of dynamical critical site percolation on the triangular grid, we have the
following a.s.\ values for the Hausdorff dimensions.
\begin{enumerate}
\hitem{i.plane}{1} The set of times at which there is an infinite cluster a.s.\ has Hausdorff dimension $31/36$.
\hitem{i.halfplane}{2} The set of times at which there is an infinite cluster in the upper half plane a.s.\
  has Hausdorff dimension $5/9$.
\hitem{i.multi}{3} The set of times at which an infinite occupied cluster and an  infinite vacant cluster coexist a.s.\
  has Hausdorff dimension at least $1/9$.
\end{enumerate}
\end{theorem}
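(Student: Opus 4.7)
The plan is to prove Theorem~\ref{t.dyn} by a second-moment and energy argument, using as input the sharp spectral estimates of Theorem~\ref{t.1} and, crucially, its radial analogue Theorem~\ref{t.radial} for the indicator of a crossing from $0$ to $\partial B_R$. Since the matching upper bounds $31/36$, $5/9$, $2/3$ are established in \cite{\SchrammSteif}, I focus on the lower bounds.

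For part \iref{i.plane}, let $(x_t)_{t \in [0,1]}$ denote dynamical critical site percolation on the triangular grid, let $A_R$ be the event that $0$ is connected to $\partial B_R$, set $g_R := \mathbf{1}_{A_R}$, and write $\alpha_1(R) := \P[A_R]$. Introduce the random measure
\[
\mu_R(dt) := \alpha_1(R)^{-1}\, g_R(x_t)\, dt
\]
on $[0,1]$, supported on the random set $\mathcal E_R := \{t \in [0,1] : A_R(t)\}$. By construction $\E[\mu_R([0,1])] = 1$, and the spectral expansion, together with the $\{0,1\}$-valued analogue of~\eqref{e.fcor}, gives
\[
C_R(t) := \P[A_R(0) \cap A_R(t)] = \alpha_1(R)^2 + \E\!\left[e^{-t|\Spec_{g_R}|}\, \mathbf{1}_{\Spec_{g_R} \ne \emptyset}\right].
\]
The sharp lower-tail bound from Theorem~\ref{t.radial} translates, after a change of variables and evaluation at $|\Spec_{g_R}| \sim 1/t$, into an estimate of the form $C_R(t) \le K\, \alpha_1(R)^2\, t^{-\xi_1}$ on the range $1/\E|\Spec_{g_R}| \lesssim t \le 1$, with exponent $\xi_1 = 5/36$; this is exactly the exponent for which \cite{\SchrammSteif} obtained the one-sided upper bound $\dim \le 1 - \xi_1$, and the point is that Theorem~\ref{t.radial} upgrades it to a two-sided estimate.

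This at once yields $\E[\mu_R([0,1])^2] \le K'$ uniformly in $R$, so the Paley--Zygmund inequality gives $\P[\mathcal E_R \ne \emptyset] \ge (2K')^{-1}$. More importantly, for every $\gamma < 1 - \xi_1 = 31/36$ the Riesz $\gamma$-energy satisfies
\[
\E \iint |t-s|^{-\gamma}\, \mu_R(ds)\, \mu_R(dt) \le K \iint |t-s|^{-\gamma - \xi_1}\, ds\, dt < \infty
\]
uniformly in $R$. Extracting a subsequential weak limit $\mu$ of $\{\mu_R\}_R$, Fatou's lemma shows that on the positive-probability event $\{\mu \ne 0\}$ the measure $\mu$ has finite $\gamma$-energy and is supported on the set of exceptional percolation times. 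Frostman's lemma then gives Hausdorff dimension at least $\gamma$ on that event, and a $0/1$-law coming from the ergodicity of the dynamical process with respect to time shifts promotes this to an a.s.\ lower bound.

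Parts \iref{i.halfplane} and \iref{i.multi} proceed by the same recipe. For \iref{i.halfplane}, $A_R$ is replaced by the half-plane one-arm event and Theorem~\ref{t.radial} by its half-plane version, and the relevant exponents combine to give $\dim = 5/9$. For \iref{i.multi}, the seed event is the polychromatic $2$-arm event from $0$ to $\partial B_R$; a cruder second-moment and energy argument, based on the known two-arm exponent and a rough spectral bound, gives only the non-sharp lower bound $1/9$, which is why this part is far from the conjectured $2/3$. The main obstacle throughout is Theorem~\ref{t.radial} itself: the proof of Theorem~\ref{t.1} exploits the uniform four-arm pivotal structure of a rectangle crossing at every interior scale, whereas radial pivotals at distance $r$ from $0$ are sandwiched as \emph{one-arm inside $\times$ four-arm outside}, which forces a nontrivial adaptation of the multi-scale clustering argument used to control $|\Spec_{f_R}|$; a secondary technical point is carrying the correlation estimate uniformly in $R$ up to the macroscopic scale $t=1$ so that the subsequential limit $\mu$ really exists and is nonzero.
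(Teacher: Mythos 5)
Your proposal for parts 1 and 2 is essentially the paper's own argument: the key input is Theorem~\ref{t.radial}, from which one gets the correlation estimate $\Eb{f(\omega_0)\,f(\omega_t)} \le O(1)\,\alpha_1(R)^2/\alpha_1(\rho(1/t))$ (the paper's~\eqref{e.correll}), and this feeds into a second-moment/energy/Frostman argument that the paper simply cites as \cite[Theorem~6.1]{\SchrammSteif} rather than spelling out the weak-limit step as you do. The numerology $(4/3)\cdot(5/48)=5/36$ giving $1-5/36=31/36$, and $\xi_1^+=1/3$ giving $5/9$, also matches the paper's computation.

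Your description of part~3, however, is too vague and would not go through as stated. There is no ``rough spectral bound'' available for the full-plane polychromatic two-arm event: that event is not monotone, and step~\eqref{e.increasing} in the proof of Proposition~\ref{pr.SQBW} fails for non-monotone functions --- the paper makes this explicit in the unlabeled remark at the end of Subsection~\ref{subs.int}. (Note also that a sharp bound based on the full-plane two-arm exponent $\xi_2=1/4$ would give $2/3$, not $1/9$, so the ``two-arm exponent'' is not even the right quantity here.) What the paper actually does is take the product event that there is a white crossing of the annulus inside the closed upper half-plane and, independently, a black crossing inside the open lower half-plane. The two factors are independent, each is monotone in a single color, the half-plane analogue of Theorem~\ref{t.radial} applies to each, and the codimensions add to give $1-2\cdot(4/3)\cdot\xi_1^+=1/9$. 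You arrive at the right number and correctly identify monotonicity as the obstruction to the sharp $2/3$, but the route to $1/9$ has to pass through this half-plane product construction rather than a spectral estimate on the two-arm event itself.
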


The reason that in~\iref{i.plane} and~\iref{i.halfplane} the results agree with the conjectured upper bound from~\cite{\SchrammSteif}
is that the upper bound is dictated by $\Eb{|\Spec_f|}$ (which is generally not hard to compute),
while the tail estimate given in~\eqref{e.ssbounds} and its analogs give sufficient estimates to bound
the probability that $|\Spec_f|$ is much smaller than its expectation.
Here, $f$ is the indicator function of some crossing event, which may vary from one application to another. We cannot calculate the exact dimension in item~\iref{i.multi} because we use the monotonicity of $f$ in an essential way (though at only one point), and the event that both
vacant and occupied percolation crossings occur is not monotone. The lower bound $1/9$ comes from using the $5/9$ result in the upper and lower half planes separately,  in which case ``codimensions add'' by independence.

See Section~\ref{s.dyns} for further results and an explanation as to how these numbers are calculated.

\smallskip

The paper~\cite{\SchrammSteif} came quite close to proving that exceptional times exist for dynamical
critical bond percolation on $\Z^2$, but was not able to do it. Now, we close this gap.

\begin{theorem}\label{t.Z2dyn}
A.s.\ there are exceptional times at which dynamical critical bond percolation on $\Z^2$ has infinite clusters, and the Hausdorff dimension of the set of such times is a.s.\ positive.
\end{theorem}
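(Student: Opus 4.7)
The strategy is the standard second-moment/energy method for exceptional times, with the $\Z^2$ spectral bounds of the present paper---namely the radial analog Theorem~\ref{t.radial} of Theorem~\ref{t.1}, combined with the polynomial improvement~\eqref{e.oneminusdelta}---playing the role that the sharp triangular-lattice exponents played in \cite{\SchrammSteif}.

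Let $\omega_t$ denote the dynamical critical bond percolation on $\Z^2$ at time $t$, set $A_R:=\{0\leftrightarrow\partial B_R\}$, write $\alpha_1(R):=\P[A_R]$, and let $f_R:=2\mathbf{1}_{A_R}-1$ be the $\pm 1$ indicator, so that $\Spec_{f_R}$ is a bona fide probability distribution. Consider
$$Y_R:=\int_0^1 \mathbf{1}_{A_R}(\omega_t)\,dt\,,\qquad\E[Y_R]=\alpha_1(R)\,.$$
Applying \eqref{e.fcor} to $f_R$ and passing to $\mathbf{1}_{A_R}=(f_R+1)/2$ yields
$$\E\bigl[\mathbf{1}_{A_R}(\omega_0)\,\mathbf{1}_{A_R}(\omega_u)\bigr]=\alpha_1(R)^2+\tfrac14\,\E\bigl[e^{-u|\Spec_{f_R}|}\,;\,\Spec_{f_R}\neq\emptyset\bigr]\,,$$
and integrating against $2(1-u)$ over $u\in[0,1]$ reduces the target $\E[Y_R^2]\le C\alpha_1(R)^2$ to the purely spectral estimate
$$\E\bigl[\min(1,\,1/|\Spec_{f_R}|)\,;\,\Spec_{f_R}\neq\emptyset\bigr]\le C\,\alpha_1(R)^2.\qquad(\ast)$$

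The core of the proof is $(\ast)$, which we establish by dyadic summation against the spectral tail. Theorem~\ref{t.radial}, the radial analog of Theorem~\ref{t.1}, supplies an explicit upper bound on $\P[0<|\Spec_{f_R}|<m_r]$ (with $m_r:=\E|\Spec_{f_r}|$ the expected radial spectrum size at scale $r\le R$) in terms of ratios of radial one-arm and spectral first-moment quantities. Inserting~\eqref{e.oneminusdelta} turns this into a polynomial bound of the form
$$\P\bigl[0<|\Spec_{f_R}|<m_r\bigr]\le C\,\alpha_1(R)\,(r/R)^{\delta'}$$
for some $\delta'>0$, the factor $\alpha_1(R)$ recording that the total mass on $\{\Spec_{f_R}\neq\emptyset\}$ is itself of that order (since $\E[f_R]\to-1$). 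Summing $1/m_r$ weighted by this tail over dyadic $r=2^k\in[1,R]$, and adjoining the trivial contribution from $|\Spec|\ge m_R$, proves $(\ast)$. Paley--Zygmund then gives $\P[Y_R>0]\ge c>0$ uniformly in $R$; monotonicity of the events $\{Y_R>0\}$ in $R$, compactness of $[0,1]$, and the standard $0$-$1$ law for the Poisson dynamics upgrade this to the a.s.\ existence of exceptional times at which $0$ lies in an infinite cluster.

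For positive Hausdorff dimension, one runs the same scheme weighted by $|s-t|^{-\alpha}$. Let $\mu_R$ be the Lebesgue measure on $\{t\in[0,1]:\omega_t\in A_R\}$; its expected $\alpha$-energy reduces, through the same Fourier computation, to a spectral quantity controlled up to constants by $\E\bigl[|\Spec_{f_R}|^{\alpha-1}\,;\,\Spec_{f_R}\neq\emptyset\bigr]$. For $\alpha>0$ sufficiently small relative to the exponent $\delta'$ of $(\ast)$, the same dyadic tail bound keeps this below $C_\alpha\,\alpha_1(R)^2$. Any subsequential weak limit of $\mu_R/\mu_R([0,1])$ is then supported on exceptional times and has finite $\alpha$-energy with positive probability, so Frostman's lemma furnishes Hausdorff dimension at least $\alpha$ a.s. The principal obstacle is precisely~$(\ast)$: \eqref{e.oneminusdelta} yields only a non-explicit polynomial exponent on $\Z^2$, as opposed to the sharp~\eqref{e.3d4} on the triangular grid, so we recover only a strictly positive but unquantified $\delta'$ and correspondingly only an unquantified positive $\alpha$, falling short of the sharp analog of Theorem~\ref{t.dyn} that holds on the triangular lattice.
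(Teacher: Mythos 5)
Your overall architecture matches the paper's: radial spectral lower-tail bound (Theorem~\ref{t.radial}), an energy/second-moment computation reducing to a dyadic sum over scales, and Frostman's lemma to get positive Hausdorff dimension. Your identity $\E[\mathbf{1}_{A_R}(\omega_0)\mathbf{1}_{A_R}(\omega_u)]=\alpha_1(R)^2+\tfrac14\E[e^{-u|\Spec_{f_R}|};\Spec_{f_R}\ne\emptyset]$ is correct, and your reduction to $(\ast)$ is in substance the same as the paper's $M_\gamma(R)$ energy integral with $\gamma=0$; your Frostman step with $|s-t|^{-\alpha}$ weights is the paper's~\eqref{e.energy}.

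However, there is a genuine gap in the step you call the ``core of the proof.'' You claim that inserting~\eqref{e.oneminusdelta} into Theorem~\ref{t.radial} produces a usable polynomial bound. It does not. Run the dyadic sum: writing $r_j:=\rho(2^j)$, Theorem~\ref{t.radial} gives
$$
\Q\bigl[0<|\Spec|\le 2^j\bigr]\le O(1)\,\frac{\alpha_1(R)^2}{\alpha_1(r_j)}\,,
$$
so the contribution to $(\ast)$ is $O(1)\,\alpha_1(R)^2\sum_j 2^{-j}/\alpha_1(r_j)$, and using $2^j\asymp \alpha_4(r_j)r_j^2$ this equals $O(1)\,\alpha_1(R)^2\sum_j\bigl(\alpha_1(r_j)\,\alpha_4(r_j)\,r_j^2\bigr)^{-1}$. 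For this to be $O(\alpha_1(R)^2)$ you need
$$
\alpha_1(r)\,\alpha_4(r)\,r^2\ge c\,r^{\eps}\qquad\text{for some }\eps>0\,,
$$
which is precisely~\eqref{e.14}. Estimate~\eqref{e.oneminusdelta} (equivalently the right side of~\eqref{e.a4less2}) only controls $\alpha_4$; it says nothing about $\alpha_1\alpha_4$, and since $\alpha_1(r)\to 0$ the product $\alpha_1(r)\alpha_4(r)r^2$ is \emph{not} trivially bounded below polynomially---on $\Z^2$ the exponents are not known, so one cannot argue as on the triangular lattice. The inequality~\eqref{e.14} is nontrivial and is the genuinely new arm-exponent input of the paper: it is the $k=2$ case of Proposition~\ref{p.strong} (Beffara's result in the Appendix), which strengthens the Reimer bound $\alpha_5\le\alpha_1\alpha_4$ to $\alpha_5(r,R)\le C\alpha_1(r,R)\alpha_4(r,R)(r/R)^\eps$, and combines with $\alpha_5(r,R)\asymp(r/R)^2$ to give~\eqref{e.14}. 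This is exactly what allowed the authors to close the gap left by \cite{\SchrammSteif} on $\Z^2$, and it cannot be replaced by~\eqref{e.oneminusdelta}. You should replace your appeal to~\eqref{e.oneminusdelta} by an appeal to Proposition~\ref{p.strong}/\eqref{e.14}; with that substitution, your dyadic sum converges and the rest of your argument goes through.
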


A paper in preparation, \cite{\HPSIIC}, defines a natural local time measure on the set of exceptional times for percolation, and proves that the configuration at a ``typical exceptional time'', chosen with respect to this local time, has the distribution of Kesten's Incipient Infinite Cluster \cite{\KestenIIC}.

There is one more application to dynamical percolation that we will presently mention.
This has to do with the scaling limit of dynamical percolation, as introduced in~\cite{\SchrammICM}, and whose existence we plan to show in \cite{\DPSL} (see also \cite{\PivotalMeasure}). In this scaling limit, time and space are both scaled, and the relationship between their 
scaling is chosen in such a way that the event of the existence of a percolation crossing 
of the unit square at time $0$ and at one unit of time later have some fixed correlation
strictly between $0$ and $1$. Consequently, as space is shrinking, time is expanding.
We leave it as an exercise to the reader to verify that the ratio between the scaling of time
and of space can be worked out directly from the law of $|\Spec_{f_R}|$.
An easy consequence of~\eqref{e.tight} is that in the dynamical percolation scaling limit, the
correlation between having a left-right crossing of the square at time $0$ and at time $t$
goes to zero as $t\to\infty$; see (\ref{e.triangnoise}).
In fact, based on~\cite{\SchrammSmirnovNoise} and estimates such as~\eqref{e.tight}
and its generalizations to other domains,
it can be shown that the dynamical percolation scaling limit is ergodic w.r.t.~time.
These results answer Problem~5.3 from~\cite{\SchrammICM}.

\subsection{The scaling limit of the spectral sample}
\label{ss.introscaling}

The study of the scaling limit of $\Spec$ was suggested by Gil Kalai~\cite[Problem 5.2]{\SchrammICM}
(see also~\cite[Problem 5.4]{\BKSnoise}).
The idea is that we can think of $\Spec_{f_R}$ as a random subset of the plane,
and consider the existence of the weak limit as $R\to\infty$ of
the law of $R^{-1}\,\Spec_{f_R}$.
Boris Tsirelson~\cite{\TsirelsonStFlour} addressed this problem more generally within
his theory of noises, dealing with various functions
$f$ that are not necessarily related to percolation. 
It follows from Tsirelson's theory and from~\cite{\SchrammSmirnovNoise}
that the scaling limit of $\Spec_{f_R}$ exists.
In Section~\ref{s.spectralscaling}, we explain this, and prove 

\begin{theorem}\label{t.speclim}
In the setting of the triangular grid, the limit in law of $R^{-1}\,\Spec_{f_R}$ exists.
It is a.s.\ a Cantor set of dimension $3/4$.
\end{theorem}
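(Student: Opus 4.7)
The plan is to establish three claims: existence of the weak limit of $R^{-1}\Spec_{f_R}$; that its Hausdorff dimension equals $3/4$; and that it is almost surely a Cantor set.

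Existence follows from Tsirelson's theory of noises~\cite{\TsirelsonStFlour} applied to the percolation continuum noise constructed in~\cite{\SchrammSmirnovNoise}. Under this identification, $\Spec_{f_R}$ is a spectral object attached to the Boolean noise at mesh $1/R$, and convergence of the noises implies convergence of the spectra, in a suitable topology on random closed subsets of $[0,1]^2$. The machinery is developed in Section~\ref{s.spectralscaling}.

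For the Hausdorff dimension, the plan is a first-/second-moment multiscale analysis. Let $N_r$ be the number of $r$-boxes in a fixed dyadic grid on $[0,R]^2$ that $\Spec_{f_R}$ intersects. A localized version of Theorem~\ref{t.1} (cf.\ Theorem~\ref{t.loc}) yields $\P(\Spec_{f_R}\cap B_r\neq\emptyset)\asymp\alpha_4(r,R)$ for an $r$-box $B_r$ well inside $[0,R]^2$, where $\alpha_4(r,R)$ is the alternating four-arm probability and satisfies $\alpha_4(r,R)\asymp(r/R)^{5/4+o(1)}$ on the triangular grid. Hence $\E N_r\asymp(R/r)^{3/4+o(1)}$, and a Borel--Cantelli argument across dyadic scales gives $\dim_H\leq 3/4$ almost surely. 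For the matching lower bound, the plan is a Frostman-type construction: use the normalized counting measure on $R^{-1}\Spec_{f_R}$ and show that its $s$-energy has finite expectation for every $s<3/4$. The key input is a two-point refinement of~\eqref{e.ssbounds} estimating the probability that $\Spec_{f_R}$ meets two $r$-boxes at distance $\rho\gg r$; written in terms of the relevant four-arm exponents, this bound is integrable against $|x-y|^{-s}$ over $[0,1]^2\times[0,1]^2$ for every $s<3/4$, yielding $\dim_H\geq 3/4$.

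For the Cantor set property, the limit is closed and contained in the compact set $[0,1]^2$, and is nonempty almost surely since $\Spec_{f_R}$ is nonempty for every $R$ (as $\widehat{f_R}(\emptyset)=\E f_R=0$ by duality) and $\E|\Spec_{f_R}|\to\infty$. It is totally disconnected because its Hausdorff dimension $3/4$ is strictly less than $1$. Perfectness (no isolated points) reduces to a clustering statement: conditional on $\Spec_{f_R}$ meeting a box $B_r$, with probability bounded below uniformly in $r$ and $R$, it also meets a distinct sub-box of $B_r$ at scale $r/2$. This self-similar clustering is another consequence of the four-arm estimates together with the one-box bound~\eqref{e.ssbounds}. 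The principal obstacle is precisely the two-point estimate underlying the lower bound on $\dim_H$: the one-box lower tail~\eqref{e.ssbounds} of Theorem~\ref{t.1} is already delicate, and upgrading it to a sharp two-box factorization that tracks how clusters of $\Spec$ at disjoint windows decorrelate is exactly where the sharpness of~\eqref{e.ssbounds}, rather than a weaker polynomial bound, becomes essential.
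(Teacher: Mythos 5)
Your overall plan coincides with the paper's: existence of the limit comes from Tsirelson's theory and \cite{\SchrammSmirnovNoise} (Theorem~\ref{t.scalinglimit}), and the Cantor set and dimension claims come from a first-moment box-counting upper bound together with a Frostman energy lower bound (Theorem~\ref{t.limprop}). However, you misplace where the sharp lower-tail estimate of Theorem~\ref{t.1} actually enters. The two-point bound used in the energy integral is simply~\eqref{e.percmom2}, the elementary second-moment estimate from Lemma~\ref{l.moments}, namely $\Pb{x,y\in\Spec}\le O(1)\,\alpha_4(R)^2/\alpha_4(|x-y|,R)$; its proof uses only quasi-multiplicativity of the four-arm event in disjoint annuli and has nothing to do with the sharpness of~\eqref{e.ssbounds} or of Sections~\ref{s.verysmall}--\ref{s.concent}. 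The place the sharp lower tail is genuinely indispensable is elsewhere: one must show that the rescaled counting measure $\lambda$ (the limit of $\lambda_\eta:=$ counting measure divided by $\eta^{-2}\alpha_4(1,1/\eta)$) is nondegenerate wherever $\Spec$ is nonempty. Without Theorem~\ref{t.loc}, $\lambda$ could a priori vanish on a piece of $\Spec$, and the Frostman argument would then lower-bound only $\dim_H(\mathrm{supp}\,\lambda)$, not $\dim_H(\Spec)$. That nondegeneracy step, i.e.\ showing $\Pb{\Spec\cap B\ne\emptyset,\,\lambda(B)=0}=0$, is precisely where the paper uses Theorem~\ref{t.loc}, and your description of the ``principal obstacle'' as a two-box decorrelation refinement of~\eqref{e.ssbounds} is incorrect.

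Your perfectness argument also has a gap: a uniform-in-scale lower bound on the probability that $\Spec$ splits between sub-boxes does not by itself forbid isolated points, since the complementary events could conspire across infinitely many scales. The paper instead deduces perfectness from the simultaneous dimension statement: once $\dim_H(\Spec\cap U)=3/4>0$ holds a.s.\ for all $U$ in a countable basis of opens meeting $\Spec$, each such intersection is uncountable, so $\Spec$ has no isolated points. Together with total disconnectedness from $\dim_H<1$, compactness in the $d_H$ topology, and nonemptiness in the limit (from $\Eb{f_R}\to 0$, hence $\Pb{\Spec_{f_R}=\emptyset}\to 0$; note $\E f_R=0$ is not exact for $[0,R]^2$ at finite $R$, only asymptotic), this gives the Cantor set structure.
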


The conformal invariance of the scaling limit of $\Spec_{f_R}$ in the setting
of the triangular grid is also proved in Section~\ref{s.spectralscaling}.
These results answer a problem posed by Gil Kalai \cite[Problem~5.2]{\SchrammICM}.

\subsection{A rough outline of the proof}
\label{ss.rough}
The proof of Theorem~\ref{t.1} does not follow the same general strategy as
the proof of the non-sharp bounds given in~\cite{\SchrammSteif}.
The lower bound for the left hand side in~\eqref{e.ssbounds} 
is rather easy, and so we only discuss here the proof of the upper bound.
Fix some $r\in[1,R]$ and subdivide the square $[0,R]^2$ into subsquares
of sidelength $r$ (suppose that $r$ divides $R$, say).
Let $\Spec(r)$ denote the set of these subsquares that intersect $\Spec_{f_R}$.
In Section~\ref{s.verysmall} we estimate the probability that $|\Spec(r)|=k$ when $k$ is 
small (for example, $k=O(\log (R/r))$).
The argument is based on building a rough geometric classification
of all the possible configurations of $\Spec(r)$, applying a bound for each
class, and summing over the different classes.
The bound obtained this way is
\begin{equation}
\label{e.Xbd}
\Pb{|\Spec(r)|=k}\le \exp\bigl(O(1)\,\log^2 (k+2)\bigr)\,
\Bigl(\frac{\Ess_R/R}{\Ess_r/r}\Bigr)^2,
\end{equation}
and has the optimal dependence on $R$ and $r$,
but a rather bad dependence on $k$.

Here is a naive strategy for getting from~\eqref{e.Xbd} to~\eqref{e.ssbounds}, which
does not seem to work.
Fix some $r\times r$ square $B$. Suppose that we are able to show that conditioned
on the intersection of $\Spec$ with some set $W$ in the complement of the $r$-neighborhood of $B$,
and conditioned on $\Spec$ intersecting $B$, we have a probability bounded away from
$0$ that $|\Spec\cap B|>\Ess_r$.
We can then restrict to a sublattice of $r\times r$ squares that are at mutual distance
at least $r$ and easily show by induction that the probability that
$\Spec$ intersects at least $k'$ of the squares in the sublattice but has size
less than $\Ess_r$ is exponentially small in $k'$. 
We may then take a bounded set of such sublattices, 
which covers every one of the $r\times r$ squares in our initial
tiling of $[0,R]^2$. Thus, using the exponentially small bound in $k'$ when $|\Spec(r)|$ is large enough, 
while~\eqref{e.Xbd} when $|\Spec(r)|$ is small, we obtain the required bound on $\Pb{0<|\Spec|<\Ess_r}$.
The reason that this strategy fails is that there are presently no good tools to understand the conditional
law of $\Spec \cap B$  given $\Spec \cap W$.
Refusing to give up, we observe that,
as explained in Section~\ref{ss.ssg}, the law of $B\cap\Spec$ conditioned on the ``negative information'' $\Spec\cap W=\emptyset$
can be described.  
Based on this, we amend the above strategy, as follows. We pick a random set $\Rs\subseteq\I$
independent from $\Spec$,
where each $i\in\I$ is put in $\Rs$ with probability about $1/\Ess_r$ independently.
The reason for using this sparse random set is that for any $r\times r$ square $B$, either $\Spec\cap B \cap \Rs$ is empty, and thus we gained only ``negative information'' about $\Spec$, or $|\Spec\cap B|$ is likely to be as large as $\Ess_r$.
So, as we will see in a second, we can hope to get a good upper bound on $\Pb{\Spec\ne\emptyset=\Rs\cap\Spec}$, which would almost immediately give a constant times the same bound on $\Pb{0<|\Spec|<\Ess_r}$.

%
In Section~\ref{s.mainlemma} we show that for an $r\times r$ square $B$ and the random $\Rs$ as above, 
if we condition on $\Spec\cap B\ne\emptyset$ and on $\Spec\cap W=\emptyset$, where $W\subseteq B^c$,
then with probability bounded away from $0$ we
have $\Spec\cap B'\cap \Rs\ne\emptyset$, where $B'$ is a square of $1/3$ the
sidelength that is concentric with $B$; namely,
\begin{equation}\label{e.pe}
\Pb{\Spec\cap B'\cap \Rs\ne\emptyset\md \Spec\cap W=\emptyset, \Spec\cap B\ne\emptyset}>a>0\,,
\end{equation}
for some constant $a$.
This is based on a second moment argument, but to carry it through we have to
resort to rather involved percolation arguments.
A key observation here is to interpret these conditional events for the spectral
sample in terms of percolation events for a coupling of two configurations (which are independent on the set $W$ 
but coincide elsewhere). An important step is to prove a quasi-multiplicativity property for arm-events
in the case of this system of coupled configurations.

Again, there is a simple naive strategy based on~\eqref{e.pe} and~\eqref{e.Xbd}
to get an upper bound for $\Pb{\Spec\ne\emptyset=\Rs\cap\Spec}$.
One may try to check sequentially if $B'\cap \Rs\cap\Spec\ne\emptyset$ for each of the
$r\times r$ squares $B$, and as long as a nonempty intersection has not been found, the probability
to detect a nonempty intersection is proportional to the conditional probability that
$\Spec\cap B\ne\emptyset$. However, the trouble with this strategy is that the conditional
probability of $\Spec\cap B\ne\emptyset$ varies with time, and the bound~\eqref{e.Xbd}
does not imply a similar bound for the sum of these conditional probabilities,
since each time the conditioning is different. Hence we cannot conclude that $\Spec\cap B\ne\emptyset$ happens many times during the sequential checking. And this issue cannot be solved by first conditioning on having a large total number of nonempty intersections, because we cannot handle such a ``positive conditioning''.

The substitute for this naive strategy is a large deviations estimate that we state and
prove in Section~\ref{s.YX}, namely, Proposition~\ref{pr.YX}.
This result is somewhat in the flavor of the Lov\'asz local lemma and the domination of product measures result of \cite{\LSSdomination} (which proves and uses an extension of the Lov\'asz lemma), since it gives
estimates for probabilities of events with a possibly complicated dependence structure, and more specifically, it says that certain positive conditional marginals ensure exponential decay for the probability of complete failure, similarly to what would happen in a product measure. However, our situation is more complicated than \cite{\LSSdomination}: we have two random variables $x,y\in\{0,1\}^n$ instead of one random field,  and we have only a much smaller set of positive conditional marginals to start with. In the application, $x_i$ is the indicator of the event that $\Spec$ intersects the
$i$'th $r\times r$ square, and $y_i$ is the indicator of the event that $\Spec\cap \Rs$ intersects that
square.
The assumption~\eqref{e.pe} then translates to
$$
\Pb{y_j=1\md y_i=0\, \forall i\in I}
\ge
a\,
\Pb{x_j=1\md y_i=0\,\forall i\in I}\,,\qquad j\notin I\subset[n]\,,
$$
and the proposition tells us that under these assumptions we have
\begin{equation}
\label{e.introXY}
\Pb{y=0 \md X> 0}\le a^{-1}\,\Eb{e^{-aX/e} \md X>0}\,,
\end{equation}
where $X=\sum_ix_i$.
In our application $X=|\Spec(r)|$,
and thus~\eqref{e.introXY} combines with~\eqref{e.pe}
to yield the desired bound.
The proof of Theorem~\ref{t.1} is completed in this way in Section~\ref{s.concent}.

\begin{remark}\label{r.pivotals}
As we mentioned earlier, the above results about the \concentration of the spectrum when normalized by its mean
(as well as the up to constants optimal results on the lower tail) are much harder to achieve than their analogs for the set of pivotal points, even though the two are intimately related. 
Indeed, the techniques of the paper easily adapt to the 
set $\Piv_{f_R}$ of pivotal points of the left-right crossing of the square $[0,R]^2$ and give \concentration results 
on the number of pivotal points $|\Piv_{f_R}|$. For instance,
they imply the following analog of Theorem ~\ref{t.1}: 
$$
\PB{0<|\Piv_{f_R}|<\E |\Piv_{f_r}|} \asymp  (R/r)^2 \alpha_6(r,R)\,.
$$
The appearance of $\alpha_6(r,R)$ in place of $\alpha_4(r,R)^2$ will be explained in Remark~\ref{r.annpivo}. 
In the case of the triangular grid, this gives the following estimate on the lower tail:
$$
\limsup_{R\to \infty} \PB{0<| \Piv_{f_R}| \le \lambda \,\E |\Piv_{f_R}| } = \lambda^{11/9+o(1)}\,,
$$
as $\lambda\to 0$. 
It turns out that in the case of the pivotal points $\Piv_{f_R}$ (where the i.i.d. structure of the percolation configuration
helps), there is a more direct route towards the \concentration of $|\Piv_{f_R}|/\E |\Piv_{f_R}|$ on $(0,\infty)$
than the route we needed to follow
for the spectrum $\Spec$. We do not give more details, since we will not use this pivotal \concentration in this paper.

One might think that this good control on the lower tail of the number of pivotals
should imply that if one waits long enough, the system must switch many times between having and not having
a left-right crossing, and then this should imply an almost complete decorrelation. However, assuming the first implication for now,
the second one still remains completely unclear: even if there are a lot of switches, it might well be that the system started, say,
from having the left-right crossing, will remember this for a long time in the sense that it will move back more quickly from not
having the crossing to having the crossing, than vice versa. In this case, at a given time it would be significantly more likely (by a constant factor) that an even number of switches have happened so far, i.e., the system would not have lost most of the correlation.
We do not see how to rule out this scenario by studying pivotals only, that is why the Fourier spectrum is so useful.
\end{remark}

\bigskip
\noindent {\bf Acknowledgments}.
We are grateful to Gil Kalai for inspiring conversations and for his observation~\eqref{e.gil}, 
and to Jeff Steif for insights regarding the dimensions of exceptional times
in dynamical percolation and for numerous comments on the manuscript. We thank Vincent Beffara for permitting us to include
Proposition~\ref{p.strong}, Alan Hammond, Pierre Nolin and Wendelin Werner for motivating and enlightening discussions, and Erik Broman and Alan Hammond for suggestions on how to improve readability at several places.
Finally, we are extremely grateful to the journal referee for his or her careful reading and many detailed comments, and to MathSciNet reviewer Antal J\'arai for drawing our attention to some mistakes in Subsection~\ref{ss.smallradial} that are corrected in the present version.

\section{Some basics} \label{s.basics}
\subsection{A few general definitions}\label{ss.gen}

In this paper we consider site percolation on the triangular grid as well as
bond percolation on $\Z^2$, both at the critical parameter $p=1/2$.

In the case of site percolation on the triangular grid $T$, a percolation configuration $\omega$
is just the set of sites which are open. However, we often think of $\omega$ as a coloring of the plane by two colors: in the hexagonal grid dual to $T$, a hexagon is colored white if the corresponding site is in $\omega$, while the other hexagons are colored black. If $A$ and $B$ are subsets of the plane, we say that there is a {\bf crossing} in $\omega$ from $A$ to $B$ if there is a continuous path with one endpoint in $A$ and the other endpoint in $B$ that is contained in the closure of the union of the white hexagons. Likewise, a {\bf dual crossing} corresponds to a path contained in the closure of the black hexagons.

In the case of bond percolation on $\Z^2$, there is a similar coloring of the plane
by two colors which has the \lq\lq correct\rq\rq\ connectivity properties.
In this case, we color by white all the points that are within
$L^\infty$ distance of $1/4$ from all the vertices of $\Z^2$
and all the points that are within $L^\infty$ distance of $1/4$ from the edges
in $\omega$, and color by black the closure of the complement of the white colored
points.
%

Regardless of the grid, the set of points whose color is determined by
$\omega_x$ will be called the {\bf tile} of $x$. In the case of the square
grid, we also have tiles with deterministic color, namely, each square
of sidelength $1/2$ centered at a vertex of $\Z^2$ and each square
of sidelength $1/2$ concentric with a face of $\Z^2$. Thus,
in either case we have a tiling of the plane by hexagons
or squares where each tile consists of a connected
set of points whose colors always agree.

A {\bf quad} $\Quad$ is a subset of the plane homemorphic to
the closed unit disk together with a distinguished pair
of disjoint closed arcs on $\p\Quad$.
We say that $\omega$ has a {\bf crossing} of $\Quad$ if the two
distinguished arcs can be connected by a white path inside
$\Quad$.

If $\ev A$ is an event, then the {\bf $\pm 1$ indicator} function of 
$\ev A$ is the function $2\cdot 1_{\ev A}-1$, which is
$1$ on $\ev A$ and $-1$ on $\neg\ev A$.
The $\pm1$ indicator function for the event
that a quad $\Quad $ is crossed will be
denoted by $f_{\Quad}$.

We use $\I$ to denote the set of bits in $\omega$;
that is, in the context of the triangular grid
$\I$ is the set of vertices of the grid, and in the context 
of $\Z^2$ it denotes the set of edges.
Although $\I$ is not finite in these cases,
the functions we consider will only depend on finitely many bits in $\I$,
and so the Fourier-Walsh expansion~\eqref{e.fourier} still holds.
Moreover, for $L^2$ functions depending on infinitely many bits
we still have~\eqref{e.fourier}, except that the summation is restricted
to finite $S\subseteq\I$.

Since we will be considering $\Spec$ as a geometric object,
we find it convenient to think of $\I$ as a discrete set in the plane.
In the context of the triangular grid, this is anyway the case,
but for the square grid we will implicitly associate each
edge of $\Z^2$ with its center; so $\I$ can be considered as
the set of centers of the relevant edges. This way, any subset of the 
plane also represents a subset of the bits. Note however that e.g.~the 
crossing function $f_\Quad$ usually depends on more bits than the ones 
contained in $\Quad$.

For $z\in\R^2$ and $r\ge 0$, the set $z+[-r,r)^2$ will be called the 
square of radius $r$ centered at $z$.
Furthermore, we let $B(z,r)$ 
denote the union of the tiles whose center is contained in $z+[-r,r)^2$, 
and will refer to $B(z,r)$ as a {\bf box} of radius $r$.
One reason for using these boxes (instead of round balls, say) is that
the plane can be tiled with them perfectly.

\subsection{Multi-arm events for percolation}\label{ss.multi}

In many different studies of percolation, the multi-arm events play a central role.
We now define these events (a word of caution --- there are a few different natural variants
to these definitions), and discuss the asymptotics of their probabilities.

\bigskip

Let $A\subset\R^2$ be some topological annulus in the plane, and let $j\in\N_+$.
If $j$ is even, then the {\bf $j$-arm event} in $A$ is the event that there are
$j$ disjoint monochromatic paths joining the two boundary components of $A$,
and these paths in circular order are alternating between white and black.
If $j$ is odd, the definition is similar, except that the order of the colors
is required to be (in circular order) alternating between white and black
with one additional white crossing.

In most papers, the restriction that the colors are alternating is relaxed
to the requirement that not all crossings are of the same color. 
Indeed, it is known that if $A$ is an annulus,
$A=B(0,R)\setminus B(0,r)$, then in the setting of critical site percolation
on the triangular grid the circular order of the colors effects the probability of
the event by at most a constant factor (which may depend on $j$), provided that
in the case $j>1$ there is at least one required crossing from each
color~\cite{\AizenmanDuplantierAharony}.
However, since it appears that the corresponding result for the square grid has
not been worked out, we have opted to impose the alternating colors restriction.

We let $\alpha_j(A)$ denote the probability of the $j$-arm event in $A$.
For the case $A= B(0,R)\setminus B(0,r)$, write $\alpha_j(r,R)$ for $\alpha_j(A)$.
Note that $\alpha_j(r,R)=0$ if $r <<j<R$.
We use $\alpha_j(R)$  as a shorthand for $\alpha_j(2\,j,R)$.
We will also adopt the convention that $\alpha_j(r,R)=1$ if $r\ge R$. 

We now review some of the results concerning these arm events.
The Russo-Seymour-Welsh (RSW) estimates imply 
that
\begin{equation}\label{e.RSW}
s^{-a_j}/C_j \leq \alpha_j(r,s\,r) \leq C_j s^{-1/a_j}
\end{equation}
for all $r>r(j)$ and $s>1$, where $r(j),C_j,a_j>1$ depend only on $j$.  
Another important property of these arm events is {\bf quasi-multiplicativity},
namely,
\begin{equation}\label{e.qm}
\alpha_j(R)/C_j\le \alpha_j(r)\,\alpha_j(r,R) \le C_j\,\alpha_j(R)
\end{equation}
for $r(j)<r<R$, where, again, $r(j),C_j>1$ depend only on $j$.
This was proved in \cite{\KestenScaling}; see \cite[Proposition A.1]{\SchrammSteif} and \cite[Section 4]{\NolinKesten} for concise proofs.
The above properties in particular give for $r<r'<R'<R$ that
\begin{equation}
\label{e.qRSW}
C_j^{-1}
\Bigl(\frac{R\,r'}{R'\,r}\Bigr)^{a_j^{-1}}
\alpha_j(r,R)\le
\alpha_j(r',R')
\le
C_j\,
\Bigl(\frac{R\,r'}{R'\,r}\Bigr)^{a_j}
\alpha_j(r,R)\,,
\end{equation}
with possibly different constants $C_j$.

\bigskip

Of the multi-arm events, the most relevant to this paper is the
$4$-arm event, due to its relation to pivotality for the crossing event in a quad $\Quad$. In particular, for closed $B\subset\R^2$,
we will use $\alpha_\square(B,\Quad)$ to denote the probability of having four arms in $\Quad\setminus B$, the white arms connecting $\p B$ to the two distinguished arcs on $\p\Quad$ and the black arms to the complementary arcs.
If $B\cap\p\Quad\ne\emptyset$, then the arms connecting $B$ to the arcs of $\p\Quad$ which $B$ intersects are considered as present.
Quasi-multiplicativity often generalizes easily to this quantity; for example, if $\Quad$ is an $R\times R$ square with two opposite sides being the distinguished arcs, $B$ is a
radius $r$ box anywhere in $\Quad$, and $x\in B$ is at a distance at least $c r$ from $\p B$, then
\begin{equation}\label{e.qmsq}
{\alpha_\square(x,\Quad)}/{\alpha_\square(B,\Quad)} \asymp \alpha_4(x,B) \asymp \alpha_4(r)\,,
\end{equation}
with the implied constants depending only on $c$.
(A more general version of this will also be proved in Section~\ref{ss.first}.)
Here and in the following, when we write $\alpha_\square(x,\Quad)$ or $\alpha_4(x,B)$, we are referring to the corresponding $4$-arm event from the tile of $x$ to $\p\Quad$ or $\p B$ (with or without paying attention to any distinguished arms on the boundary, respectively).

Let us also recall what is known about $\alpha_4$ quantitatively. For site percolation on the triangular lattice, by~\cite{\SmirnovWerner}, we have
\begin{equation}
\label{e.a4}
\alpha_4(r,R)= (r/R)^{5/4+o(1)}
\end{equation}
as $R/r\to\infty$ while $1\le r\le R$. 
Similar relations are known for $j\ne 4$~\cite{\LSWoneArm,\SmirnovWerner}.
For bond percolation on the square grid, we presently have weaker estimates; in particular,
\begin{equation}
\label{e.a4less2}
C^{-1}\, (r/R)^{2-\eps}\le \alpha_4(r,R) \le C\, (r/R)^{1+\eps}
\end{equation}
for some fixed constants $C,\eps>0$ and every $1\le r\le R$.
The left inequality
can be obtained by combining $\alpha_5(r,R)\asymp (r/R)^2$
(see \cite[Lemma 5]{\KestenSZh} or \cite[Corollary A.8]{\SchrammSteif}),
the RSW estimate $\alpha_1(r,R) < O(1)\,(r/R)^{\eps}$, and, finally, the relation
$\alpha_1(r,R)\,\alpha_4(r,R) \geq \alpha_5(r,R)$
(which follows from Reimer's inequality \cite{\Reimer},
or from Proposition~\ref{p.strong} in our Appendix).
The right hand inequality in~\eqref{e.a4less2}
follows from~\cite{\KestenScaling}; see also~\cite[Remark 4.2]{\BKSnoise}.
  
\subsection{The spectral sample in general}\label{ss.ssg}

This subsection derives some formulas and estimates for $\Pb{\Spec\subseteq A}$,
for the distribution of $\Spec\cap A$,
and for $\Pb{\Spec\cap A=\emptyset\ne\Spec\cap B}$.
We also briefly present an estimate of the variation distance between the
laws of $\Spec_f$ and of $\Spec_g$ in terms of $\|f-g\|$. 
(We generally use $\|\cdot\|$ to denote the $L^2$ norm.)
Moreover, the definition of the set of pivotals $\Piv$ is recalled and some relations
between $\Piv$ and $\Spec$ are discussed.
\bigskip

As before, let $\Omega:=\{-1,1\}^\I$, where $\I$ is finite.
Recall that for $f:\Omega\to\R$ with $\|f\|=1$, we consider the random variable
$\Spec_f$ whose law is given by $\Pb{\Spec=S}=\widehat f(S)^2$.
More generally, if $\|f\|>0$, we use the law given by
$$
\Pb{\Spec=S}=\widehat f(S)^2/\|f\|^2,
$$
but will also consider the un-normalized measure given by
$$
\Qb{\Spec=S}=\widehat f(S)^2.
$$
(If we wish to indicate the function $f$, we may write $\Q_f$ in place of $\Q$.)

Now suppose that $f,g:\Omega\to\R$.
We argue that if $\|f-g\|$ is small, then the law of $\Spec_f$ is close to the law of $\Spec_g$, as follows. 
If we want to compare the ``laws'' under $\Q$, then
\begin{equation}
\label{e.specapprox}
\begin{aligned}
\sum_{S\subseteq\I} \bigl|\widehat f(S)^2-\widehat g(S)^2\bigr|
&
=
\sum_S \bigl|\widehat f-\widehat g\bigr|\,\bigr|\widehat f+\widehat g\bigr|
\\&
\le 
\Bigl(
\sum_S(\widehat f-\widehat g)^2\Bigr)^{1/2}\Bigl(\sum_S(\widehat f+\widehat g)^2
\Bigr)^{1/2}
\\&
= \|f-g\|\,\|f+g\|\,,
\end{aligned}
\end{equation}
where the inequality is due to Cauchy-Schwarz and the final
equality is an application of Parseval's identity. 
For the laws under $\P$, when $\|f\|=\|g\|=1$ does not hold, a slightly more complicated version of this argument gives  that 
if $\|f-g\|\leq\eps \max\{\|f\|,\|g\|\}$ for some $\eps\in (0,1)$, then 
$$
\sum_{S\subseteq\I} \left|\frac{\widehat f(S)^2}{\|f\|^2}-\frac{\widehat g(S)^2}{\|g\|^2} \right|
\leq \frac{4\eps}{(1-\eps)^2}\,.
$$
We will not use this version, hence omit the details of its derivation.
%
%
\bigskip

For $A\subseteq\I$, let $\omega_A$ denote the restriction of $\omega$ to $A$, and
let $\F_A$ denote the $\sigma$-field of subsets of $\Omega$ generated by $\omega_A$.
We use the notation $A^c:=\I\setminus A$ for the complement of $A$.
Observe that for $A\subseteq\I$,
\begin{equation}
\label{e.FA}
\Eb{\chi_S\md\ev F_A}=\begin{cases} \chi_S & S\subseteq A\,,\\
0&\text{otherwise}\,.
\end{cases}
\end{equation}
It follows from this and~\eqref{e.fourier} that
$$
g:=\Eb{f \md \F_A}=\sum_{S\subseteq A}\widehat f(S)\, \chi_S\,.
$$ 
Thus $\widehat g(S)=\widehat f(S)$ for $S\subseteq A$, and $\widehat g(S)=0$ otherwise.
Therefore, Parseval's formula implies 
\begin{equation}\label{e.SinB}
\Qb{\Spec\subseteq A}=\Eb{\Es{f \md \F_A}^2}.
\end{equation}
In principle, this describes the distribution of $\Spec$ in terms of $f$,
and indeed we will extract information about $\Spec$ from this formula and its
consequences.

Using~\eqref{e.fourier} and~\eqref{e.FA}, one obtains for $S\subseteq A\subseteq \I$
$$
\Eb{f\,\chi_S\md\ev F_{A^c}}=\sum_{S'\subseteq A^c}\widehat f(S\cup S')\,\chi_{S'}\,.
$$
This gives 
$$
\EB{\Eb{f\,\chi_S\md\ev F_{A^c}}^2}=\sum_{S'\subseteq A^c}\widehat f(S\cup S')^2
=\Qb{\Spec\cap A=S}\,,
$$
which implies the following Lemma from~\cite{\LMN}.
Roughly, the lemma says that in order to sample the random variable 
$\Spec\cap A$, one can first pick a random sample of $\omega_{A^c}$,
and then take a sample from the spectral sample of the function
we get by plugging in these values for the bits in $A^c$.

\begin{lemma}\label{l.LMN}
Suppose that $f:\Omega\to\R$, and $A\subseteq \I$.
For $x\in \{-1,1\}^A$ and $y\in\{-1,1\}^{A^c}$, write
$g_y(x):=f\bigl(\omega(x,y)\bigr)$, where $\omega(x,y)$ is the
element of $\Omega$ whose restriction to $A$ is $x$ and whose
restriction to $A^c$ is $y$.
Then for every $S\subseteq A$, we have
$\Qb{\Spec_f\cap A=S} = \Eb{\widehat{g_y}(S)^2}=\Eb{\Bs{\Q_{g_y}}{\Spec_{g_y}=S}}$.
\QED
\end{lemma}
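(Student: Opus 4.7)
The plan is to read the lemma as a rephrasing of the computation that the excerpt has just performed. Fix $S \subseteq A$. Because $S \subseteq A$, the Walsh function $\chi_S$ depends only on the bits in $A$, so conditioning on $\F_{A^c}$ amounts to freezing the coordinates in $A^c$ to some value $y \in \{-1,1\}^{A^c}$, leaving $f$ as the function $g_y$ on $\{-1,1\}^A$. I would then identify, pointwise in $y$,
$$
\Eb{f\,\chi_S \md \F_{A^c}}(y) \;=\; \Eb{g_y(x)\,\chi_S(x)}_x \;=\; \widehat{g_y}(S),
$$
where the inner expectation is over a uniform $x \in \{-1,1\}^A$. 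This is nothing more than unpacking the definition of conditional expectation under the product measure on $\Omega = \{-1,1\}^A \times \{-1,1\}^{A^c}$.

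From this identification the first asserted equality is immediate: the excerpt has already established
$$
\EB{\Eb{f\,\chi_S \md \F_{A^c}}^2} \;=\; \sum_{S'\subseteq A^c}\widehat f(S\cup S')^2 \;=\; \Qb{\Spec_f \cap A = S},
$$
and taking the outer expectation of $\widehat{g_y}(S)^2$ over a uniformly chosen $y$ recovers the left-hand side via the identification above. The second asserted equality $\Eb{\widehat{g_y}(S)^2} = \Eb{\Bs{\Q_{g_y}}{\Spec_{g_y}=S}}$ is pure definition: by construction of the unnormalized measure, $\Bs{\Q_{g_y}}{\Spec_{g_y}=S} = \widehat{g_y}(S)^2$ for each outcome of $y$, and we take expectation over $y$.

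There is no real obstacle — the whole statement is essentially a tautology once one makes the substitution $y \leftrightarrow$ (fixed outcome on $A^c$). The one spot deserving a moment of care is the Parseval step used to produce the displayed identity preceding the lemma: one must use that, restricted to $\F_{A^c}$-measurable $L^2$ functions, the system $\{\chi_{S'} : S' \subseteq A^c\}$ is again an orthonormal basis, so that the expansion $\Eb{f\,\chi_S \md \F_{A^c}} = \sum_{S'\subseteq A^c} \widehat f(S\cup S')\,\chi_{S'}$ (obtained by expanding $f$ in the Fourier basis and applying \eqref{e.FA} termwise) yields the required sum of squares upon taking $\Eb{\,\cdot\,^2}$. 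Once this routine check is made, the lemma follows with no additional work.
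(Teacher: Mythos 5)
Your proof is correct and follows the same route as the paper: the lemma is stated immediately after the displayed identity $\EB{\Eb{f\,\chi_S\md\ev F_{A^c}}^2}=\sum_{S'\subseteq A^c}\widehat f(S\cup S')^2=\Qb{\Spec\cap A=S}$, and the paper treats the identification $\Eb{f\,\chi_S\md\ev F_{A^c}}(y)=\widehat{g_y}(S)$ as the implicit final step ("which implies the following Lemma"), exactly the substitution you spell out.
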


For any $\omega\in\Omega$ and any $A\subseteq \I$, let $\omega_A^{+}$
denote the element of $\Omega$ that is equal to $1$ in $A$ and
equal to $\omega$ outside of $A$. Similarly, let $\omega_A^{-}$
denote the element of $\Omega$ that is equal to $-1$ in $A$ and
equal to $\omega$ outside of $A$.
An $i\in\I$ is said to be {\bf pivotal} for $f:\Omega\to\R$ and $\omega$ if
$f(\omega^+_{\{i\}})\ne f(\omega^-_{\{i\}})$.
Let $\Piv=\Piv_f$ denote the (random) set of pivotals.

It is known from~\cite{\KKL} that for functions $f:\Omega\to\{-1,1\}$,
\begin{equation}\label{e.kkl}
\Eb{|\Spec|}  =\Eb{|\Piv|},
\end{equation}
Gil Kalai (private communication) further observed that it also holds for the second moment
\begin{equation}\label{e.gil}
\Eb{|\Spec|^2}  =\Eb{|\Piv|^2},
\end{equation}
but this does not extend to higher moments. (Note that in these formulas, 
the expectation $\E$ is with respect to different measures on the left and right hand sides.)
To prove~\eqref{e.kkl} and~\eqref{e.gil}, consider some $i,j\in\I$.
In Lemma~\ref{l.LMN}, if we take $A=\{i\}$, then $g_y$ is a
constant function (of $x$) unless $i\in\Piv$, while
$g_y=\pm\chi_{\{i\}}$ if $i\in\Piv$. Therefore,
the lemma gives 
\begin{equation}
\label{e.SP}
\Pb{i\in\Spec}= \Pb{\Spec\cap\{i\}=\{i\}}= \Pb{i\in\Piv},
\end{equation}
which sums to give~\eqref{e.kkl}.
Similarly, one can show that 
\begin{equation}\label{e.2ndmom}
  \Pb{i,j\in\Spec}=\Pb{i,j\in\Piv}
\end{equation}
by using Lemma~\ref{l.LMN} with $A=\{i,j\}$
to reduce~\eqref{e.2ndmom} to the case where $\Omega=\{-1,1\}^2$,
which easily yields to direct inspection.
Now~\eqref{e.gil} follows by summing~\eqref{e.2ndmom} over $i$ and $j$.

\bigskip
As we have discussed in the Introduction,~(\ref{e.fcor}) immediately shows that the distribution of the size $|\Spec_f|$ governs the sensitivity of $f:\Omega\to\{-1,1\}$ to $\eps$-noise: if $\eps\, \Eb{|\Spec|}$ is small, then the correlation is always close to 1, while the $\eps$ needed for decorrelation is given by the lower tail of $|\Spec|$. One can also ask finer questions, concerning ``sensitivity to selective noise'': which deterministic subsets $\determ \subseteq \I$ have the property that knowing the bits in $\determ$ (and having  unknown independent random bits in $\determc$) we can predict $f$ with probability close to 1, and which subsets $\determ$ give almost no information on $f$? In Subsection~\ref{ss.deterministic} we will see, using~(\ref{e.SinB}), that the first case ($\determ$ is ``almost decisive'') happens if{f} $\Pb{\Spec\subseteq\determ}$ is close to 1, while the second case ($\determ$ is ``almost clueless'') happens if{f} $\Pb{\emptyset\not=\Spec\subseteq\determ}$ is close to 0. For percolation, we will be able to understand both cases quite well, and will also see that although the pivotal and spectral sets, $\Piv$ and $\Spec$, are different in many ways, it is reasonable to conjecture that they have the same decisive and clueless sets (see Remark~\ref{r.polarset}). There is probably a similar phenomenon for many Boolean functions. 
\bigskip

We now derive estimates for $\Qb{\Spec\cap B\ne\emptyset=\Spec\cap W}$, when $B$ and $W$ are
disjoint subsets of the bits.
Define $\Lambda_B=\Lambda_{f,B}$ as the event that $B$ is pivotal for $f$. More precisely,
$\Lambda_B$ is the set of $\omega\in\Omega$ such that there is some $\omega'\in\Omega$
that agrees with $\omega$ on $B^c$ while $f(\omega)\ne f(\omega')$.
Also define $\lala(B,W):=\Pb{\Lambda_B\md \mathcal{F}_{W^c}}$.

\begin{lemma}\label{l.lambda}
Let $\Spec=\Spec_f$ be the spectral sample of some
$f:\Omega\to\R$, and let $W$ and $B$ be disjoint subsets of $\bits$.
Then
$$
\Qb{\Spec\cap B\ne\emptyset=\Spec\cap W}
\le 4\,\|f\|^2_\infty\,\Eb{\lala(B,W)^2}.
$$
\end{lemma}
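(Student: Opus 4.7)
The plan is to expand the probability via the basic identity~\eqref{e.SinB} applied to two nested sets, and then reduce the resulting difference to a conditional variance controlled by pivotality. Writing $A:=W^c$ and $A':=(W\cup B)^c=A\setminus B$, I would note that $\{\Spec\cap B\ne\emptyset=\Spec\cap W\}=\{\Spec\subseteq A\}\setminus\{\Spec\subseteq A'\}$, so~\eqref{e.SinB} yields
$$\Qb{\Spec\cap B\ne\emptyset=\Spec\cap W}=\Eb{g^2}-\Eb{g'^2}\,,$$
with $g:=\Es{f\md \F_A}$ and $g':=\Es{f\md \F_{A'}}$. Since $A'\subseteq A$, the tower property gives $g'=\Es{g\md \F_{A'}}$, and Pythagoras (applied conditionally on $\F_{A'}$) turns the right-hand side into $\Eb{(g-g')^2}=\Eb{\Var(g\md \F_{A'})}$.

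The heart of the argument is to bound this conditional variance by the oscillation of $g$ in the $B$-coordinate. First, I would observe that $\Lambda_B$ is $\F_{B^c}$-measurable, since whether the slice $\omega_B\mapsto f(\omega_B,\omega_{B^c})$ is non-constant depends only on $\omega_{B^c}$. Hence $\lambda_{B,W}=\Pb{\Lambda_B\md \F_{W^c}}$ is in fact a function of $\omega_{A'}$ alone, because further conditioning on $\omega_B$ is irrelevant for an event in $\F_{B^c}$. Then, fixing $\omega_{A'}$ and any two values $\omega_B,\tilde\omega_B$ on $B$,
$$g(\omega_{A'},\omega_B)-g(\omega_{A'},\tilde\omega_B)=\Es{f(\omega_{A'},\omega_B,\omega_W)-f(\omega_{A'},\tilde\omega_B,\omega_W)\md \omega_{A'}}\,,$$
and the integrand vanishes on those $\omega_W$ for which $B$ is not pivotal, while being bounded by $2\|f\|_\infty$ in absolute value otherwise. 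This gives $|g(\omega_{A'},\omega_B)-g(\omega_{A'},\tilde\omega_B)|\le 2\,\|f\|_\infty\,\lambda_{B,W}(\omega_{A'})$.

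To conclude, I would apply the elementary bound that a random variable supported on an interval of length $D$ has variance at most $D^2/4$, obtaining $\Var(g\md \F_{A'})\le \|f\|_\infty^2\,\lambda_{B,W}^2$. Taking expectation bounds the desired probability by $\|f\|_\infty^2\,\Eb{\lambda_{B,W}^2}$, which is in fact a factor of $4$ stronger than the stated claim. The only real step is the algebraic reduction to $\Eb{\Var(g\md \F_{A'})}$; everything afterwards follows from recognizing $\Lambda_B$ as $\F_{B^c}$-measurable and writing $g$ as a double average, so I do not foresee any serious obstacle.
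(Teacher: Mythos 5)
Your proof is correct, and in fact sharper: it yields $\Qb{\Spec\cap B\ne\emptyset=\Spec\cap W}\le \|f\|_\infty^2\,\Eb{\lala(B,W)^2}$, improving the paper's constant $4$ to $1$. The starting point is the same as the paper's: both use~\eqref{e.SinB} (via~\eqref{e.SBA}) and the orthogonality of martingale increments to write the left-hand side as $\Eb{(g-g')^2}$ with $g=\Es{f\md\F_{W^c}}$ and $g'=\Es{f\md\F_{(W\cup B)^c}}$. Where the two arguments part ways is in the final bound. The paper works with the pointwise inequality $|f-\Es{f\md\F_{B^c}}|\le 2\|f\|_\infty 1_{\Lambda_B}$, projects onto $\F_{W^c}$ to obtain $|g-g'|\le 2\|f\|_\infty\,\lala(B,W)$, and then squares, incurring a factor of $4$. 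You instead observe that $\Lambda_B$ is $\F_{B^c}$-measurable (hence $\lala(B,W)$ is in fact $\F_{A'}$-measurable, $A'=(W\cup B)^c$, because conditioning further on $\omega_B$ cannot affect a $\F_{B^c}$-measurable event under the product measure), and that the oscillation of $g$ over the $B$-fiber is bounded by $2\|f\|_\infty\,\lala(B,W)$; Popoviciu's inequality for the conditional variance then gives $\Var(g\md\F_{A'})\le\|f\|_\infty^2\,\lala(B,W)^2$. This is a genuinely different closing step: the paper's argument only uses that $g-g'$ is pointwise small, while yours uses that $g'$ is the \emph{conditional mean} of $g$ on the fiber, which is what lets the variance bound quarter the constant. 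Everything in your write-up checks out, including the measurability observation that justifies treating $\lala(B,W)$ as constant on each $B$-fiber.
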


\proof
{}From~\eqref{e.SinB}, 
\begin{equation} 
\label{e.SBA}
\begin{aligned}
\Qb{\Spec\cap B\ne \emptyset,\ \Spec\cap W =\emptyset} 
&
= \Qb{\Spec \subseteq W^c}-\Qb{\Spec\subseteq (W\cup B)^c}
\\ &
= \EB{\Es{f \md \F_{W^c}}^2-\Es{f \md \F_{(W\cup B)^c}}^2}
\\ &
= \EB{\Bigl(\Es{f \md \F_{W^c}}-\Es{f \md \F_{(W\cup B)^c}}\Bigr)^2} .
\end{aligned}
\end{equation}
On the complement of $\Lambda_B$, we have $f=\Eb{f\md \ev F_{B^c}}$. Therefore,
$$
-2\,\|f\|_\infty\,1_{\Lambda_B} \le
f-\Es{f\md  \mathcal{F}_{B^c}}\le
2\, \|f\|_\infty 1_{\Lambda_B}\,.
$$
Taking conditional expectations throughout, we get
$$
-2\,\|f\|_\infty\,\lala(B,W) \le
\Eb{f\md \ev F_{W^c}}-\Eb{\Es{f\md\ev F_{ B^c}}\md \ev F_{W^c}}\le
2\, \|f\|_\infty \lala(B,W)\,.
$$
Note that
$\Eb{\Es{f\md\ev F_{ B^c}}\md \ev F_{W^c}} =\Eb{f\md \ev F_{(B\cup W)^c}}$,
since our measure on $\Omega$ is i.i.d.
Thus, the above gives
$$
\Bigl|\Eb{f\md \ev F_{W^c}}- \Eb{f\md \ev F_{(B\cup W)^c}}\Bigr| \le 2\,\|f\|_\infty\,
\lala(B,W)\,.
$$
An appeal to~\eqref{e.SBA} now completes the proof.
\QED

Taking $W=\emptyset$ yields $\Qb{\Spec\cap B \neq \emptyset} \leq 4\|f\|_\infty \, \Pb{\Lambda_B}$, 
since $\Lambda_{B,\emptyset}=1_{\Lambda_B}$. Taking $W=B^c$ yields 
$\Qb{\emptyset\neq \Spec \subseteq B} \leq  4\|f\|_\infty \, \Pb{\Lambda_B}^2$, since 
$\Lambda_{B,B^c}=\Pb{\Lambda_B}$. See also Lemma~\ref{l.basic} and the discussion afterwards.

\begin{remark}\label{r.xlambda} In the special case when $f$ is monotone and $\pm1$-valued, and $B=\{x\}$ is a single bit, the lemma takes a more precise form, as we will prove in Subsection~\ref{subs.int}:
$\Pb{x\in\Spec,\ \Spec \cap W=\emptyset} = \Eb{\lala(x,W)^2}$.
\end{remark}

What turns out to be important in Section~\ref{s.mainlemma} is that,
in the context of percolation, the quantity
$\Eb{\lala(B,W)^2}$ can be studied and controlled when $B$ is
a box and $W\subseteq \bits\setminus B$ is arbitrary. Likewise,
in Section~\ref{s.verysmall}, we use a variant of Lemma~\ref{l.lambda}
in which we look at the event that $\Spec$ intersects a collection of
boxes and is disjoint from some collection of annuli.

\section{First percolation spectrum estimates} \label{s.firstest}

We will now consider the special case of $\Spec_f$ when $f=f_\Quad$ for a quad $\Quad\subset \R^2$.
As noted in Section~\ref{ss.gen}, we will be considering $\I$ and $\Spec\subseteq\I$ as
subsets of the plane. 
When $R>0$, we will use the notation $R\,\Quad$ to denote the quad obtained from $\Quad$
by scaling by a factor of $R$ about $0$.

\begin{lemma}[First and second moments]\label{l.moments}
Let $\Quad\subset\R^2$ be some quad and let $U$ be an
open set whose closure is contained in the interior of $\Quad$.
Let $\Spec:=\Spec_{f_{R\Quad}}$ be the spectral sample for
the $\pm1$-indicator function of crossing $R\Quad$.
There are constants $C,R_0>0$, depending only on $\Quad$ and $U$,
such that for all $R>R_0$
$$
C^{-1}\,R^2\,\alpha_4(R)
\le \Eb{|\Spec_{f_{R\Quad}} \cap R\,U|}
\le
C \,R^2\,\alpha_4(R)
$$
and
$$
\Eb{|\Spec_{f_{R\Quad}} \cap R\,U|^2}
\le
C\,
\Eb{|\Spec_{f_{R\Quad}} \cap R\,U|}^2.
$$
\end{lemma}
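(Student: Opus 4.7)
\proof[Proof proposal for Lemma~\ref{l.moments}]
The plan is to convert everything to pivotal calculations via the Kalai identities~\eqref{e.SP} and~\eqref{e.2ndmom} and then reduce to standard four-arm estimates. Write $\Piv=\Piv_{f_{R\Quad}}$. Summing~\eqref{e.SP} and~\eqref{e.2ndmom} over $i\in R\,U\cap\I$ (respectively over pairs) gives
\begin{equation*}
\Eb{|\Spec\cap R\,U|}=\sum_{i\in R\,U\cap\I}\Pb{i\in\Piv},\qquad
\Eb{|\Spec\cap R\,U|^2}=\sum_{i,j\in R\,U\cap\I}\Pb{i,j\in\Piv}\,.
\end{equation*}
Since $|R\,U\cap\I|\asymp R^2$, both claims reduce to bounds on pivotal probabilities.

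For the first moment I would argue that for $i\in R\,U$, one has $\Pb{i\in\Piv}=\alpha_\square(i,R\Quad)\asymp\alpha_4(R)$, with constants depending only on $\dist(\overline U,\p\Quad)$. The lower bound comes from the fact that one can force the four arms from the tile of $i$ out to $\p(R\Quad)$ to land on the correct arcs: use the four-arm event to distance comparable to $R$, then apply RSW/separation-of-arms in the annular region between this and $\p(R\Quad)$ to route the arms to the four prescribed arcs; this is exactly the generalization of~\eqref{e.qmsq} announced in Section~\ref{ss.first}. The upper bound is immediate since pivotality entails the unrestricted four-arm event from $i$ to distance $\dist(i,\p(R\Quad))\asymp R$. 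Summing over $R^2$-many $i$ gives the claimed first moment.

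For the second moment, fix $i\ne j$ in $R\,U$ and let $d:=|i-j|$. Joint pivotality forces four alternating arms from $i$ inside the ball $B(i,d/2)$, four alternating arms from $j$ inside the disjoint ball $B(j,d/2)$, and a joint four-arm configuration from scale $d$ out to $\p(R\Quad)$ (the two pairs of arms merge at scale $d$ into a single four-arm structure reaching the four boundary arcs). Independence in disjoint regions and the generalized quasi-multiplicativity of the four-arm event give
\begin{equation*}
\Pb{i,j\in\Piv}\le C\,\alpha_4(d)^2\,\alpha_4(d,R)\,\asymp\,C\,\alpha_4(d)\,\alpha_4(R)\,,
\end{equation*}
where the last step uses $\alpha_4(R)\asymp\alpha_4(d)\alpha_4(d,R)$. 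It remains to estimate $\sum_{i,j}\alpha_4(|i-j|)$. Fixing $i$ and decomposing in dyadic annuli centered at $i$, this is $\asymp\sum_{k=0}^{\lceil\log_2 R\rceil}2^{2k}\alpha_4(2^k)$. By quasi-multiplicativity each term equals $2^{2k}\alpha_4(R)/\alpha_4(2^k,R)$ up to constants, so the ratio to the target $R^2\alpha_4(R)$ is $\asymp(2^k/R)^2/\alpha_4(2^k,R)$. The key universal input is that the four-arm exponent is strictly less than $2$: combining $\alpha_5(r,R)\asymp(r/R)^2$ (\cite{\KestenSZh}) with $\alpha_1(r,R)\le C(r/R)^\eps$ (RSW) and $\alpha_1\alpha_4\ge\alpha_5$ (Reimer, or Proposition~\ref{p.strong}) yields $\alpha_4(2^k,R)\ge c(2^k/R)^{2-\eps}$, so the dyadic sum is dominated by the geometric series $\sum_k(2^k/R)^\eps\le C$, giving $\sum_j\alpha_4(|i-j|)\le C\,R^2\alpha_4(R)$. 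Summing over $i$ and combining with the first-moment bound finishes the proof.

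The one step that needs real care, rather than routine RSW manipulations, is the derivation of the sharp upper bound $\Pb{i,j\in\Piv}\le C\,\alpha_4(d)\,\alpha_4(R)$: this requires a version of quasi-multiplicativity that handles the ``two base points merging into one'' geometry, and a careful use of independence for the two disjoint inner four-arm events together with the outer joint four-arm event respecting the distinguished arcs. Everything else is bookkeeping with~\eqref{e.qm},~\eqref{e.qRSW} and the universal $5$-arm exponent.
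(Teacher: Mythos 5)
Your proposal is correct and follows essentially the same route as the paper: reduce to pivotal probabilities via~\eqref{e.SP} and~\eqref{e.2ndmom}, use the $4$-arm estimate~\eqref{e.pivo4} and quasi-multiplicativity to get $\Pb{i\in\Piv}\asymp\alpha_4(R)$, bound $\Pb{i,j\in\Piv}$ by independence of disjoint $4$-arm events at scales $|i-j|$ and $R$, then sum dyadically using that the $4$-arm exponent is strictly below $2$ (equation~\eqref{e.a4less2}, which you rederive from the $5$-arm exponent, RSW and Reimer). The only cosmetic difference is that you spell out the $\alpha_1\alpha_4\ge\alpha_5$ derivation where the paper simply cites~\eqref{e.a4less2}; also, use radius $d/3$ rather than $d/2$ so that the two inner balls are genuinely disjoint from each other and from the outer annulus, as the paper does.
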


The reason for the appearance of $\alpha_4$ in the
first moment is the following. We know from~\eqref{e.SP}
that $\Pb{x\in\Spec}=\Pb{x\in\Piv}$ for $\Piv:=\Piv_{f_{R\Quad}}$.
In order for $x$ to be pivotal for $f_{R\Quad}$ it is necessary and
sufficient that there are white paths
in $R\Quad$ from the tile of $x$ to the two distinguished arcs on
$R\,\p\Quad$ and two black paths in $R\Quad$ from the tile
of $x$ to the complementary arcs of $R\p\Quad$. 
These four paths form the $4$-arm event in the annulus between
the tile of $x$ and $R\p\Quad$. Moreover, it is well-known
(and follows from quasi-multiplicativity arguments; see \cite{\KestenScaling, \WWperc}) that 
\begin{equation}\label{e.pivo4}
\forall x\in \I\cap R\,U:\qquad \Pb{x\in\Piv_{f_{R\Quad}}}=\alpha_\square(x,R\Quad)\asymp \alpha_4(R)\,.
\end{equation}
Here and in the proof below, we use the notation $g\asymp g'$ to mean that there
is a constant $c>0$ (which may depend on $U$ and $\Quad$),
such that $g\le c\,g'$ and $g'\le c\,g$.
Likewise, the $O(\cdot)$ notation will involve constants that may depend
on $\Quad$ and $U$.

\proofof{Lemma \ref{l.moments}}
From~\eqref{e.SP} and~(\ref{e.pivo4}) we get that
\begin{equation}\label{e.percmom1}
\forall x\in \I\cap R\,U:\qquad
 \Pb{x\in\Spec} \asymp \alpha_4(R)\,.
\end{equation}
The first claim of the lemma is obtained by summing over $x\in\I\cap R\,U$.

Now consider $x,y\in\I\cap R\,U$.
Let $a$ be the distance from $\closure U$ to $\p \Quad$. Thus $a>0$.
Then by~\eqref{e.2ndmom}, we have $\Pb{x,y\in\Spec}=\Pb{x,y\in\Piv}$.
In order for $x,y\in\Piv$ it is necessary that the $4$ arm
event occurs from the tile of $x$ to distance $(|x-y|/3)\wedge (a\,R)$ away,
and from the tile of $y$ to distance $(|x-y|/3)\wedge (a\,R)$ away, and
from the circle of radius $2\,|x-y|$ around $(x+y)/2$ to distance
$a\,R$ away (if $2\,|x-y|<a\,R$). By independence on disjoint subsets of $\I$,
this (together with the regularity properties of
the $4$-arm probabilities~\eqref{e.qRSW}) gives for $R$ sufficiently large 
\begin{equation*}
 \Pb{x,y\in\Spec} \le
O(1)\,\alpha_4(|x-y|)^2\,\alpha_4(|x-y|,R) 
\,.
\end{equation*}
Using the quasi-multiplicativity property of $\alpha_4$, this gives
\begin{equation}
\label{e.percmom2}
\forall x,y\in \I\cap R\,U:\qquad
 \Pb{x,y\in\Spec}
\le O(1)\,\alpha_4(R)^2/\alpha_4(|x-y|,R)\,.
\end{equation}
The number of pairs $x,y\in\I\cap U$ such that $|x-y|\in [2^n,2^{n+1})$
is $O(R^2)\,2^{2n}$, and is zero if $|x-y|>R\,\diam(\Quad)$.
Therefore, we get from~\eqref{e.percmom2} and the regularity property~\eqref{e.qRSW}
that
$$
\Eb{|\Spec\cap RU|^2}
\le O(R^2)\,\alpha_4(R)^2 \,\sum_{n=0}^{\log_2(R)+O(1)}\frac{ 2^{2n}}{\alpha_4(2^n,R)}\,.
$$
{}From (\ref{e.a4less2}) we get $2^{2n}/\alpha_4(2^n,R) \leq O(1)\,R^{2-\eps}\,2^{\eps n}$. Hence the sum over $n$ is at most $O(R^2)$, and we obtain the desired bound on the second moment.
\QED

Note that $\Eb{|\Spec \cap R\,U|}\to \infty$ as $R\to\infty$, which follows from Lemma \ref{l.moments}
and  ~\eqref{e.a4less2}. Moreover, by the standard Cauchy-Schwarz second-moment argument (also called the Paley-Zygmund inequality), the above lemma implies that for some constant $c>0$ (which may depend on $\Quad$ and $U$), 
$$
\PB{|\Spec\cap RU| > c\,\E{|\Spec\cap RU|}}>c\, \text{ for all $R>0$}.
$$

We also note the following lemma.

\begin{lemma} \label{l.basic}
Let $\Quad\subset\R^2$ be a quad, and set $\Spec=\Spec_{f_\Quad}$.
Let $B$ be some union of tiles such that $B\cap\Quad$ is nonempty and connected.
Then
\begin{equation}\label{e.SintersectsB}
\Ps{\Spec\cap B \ne \emptyset} \leq 4\,\alpha_\square(B,\Quad)\,,
\end{equation}
and
\begin{equation}\label{e.ScontainedinB}
\Ps{\emptyset\ne\Spec\subseteq B} \leq 4\, \alpha_\square(B,\Quad)^2.
\end{equation}
When $B$ is a single tile, corresponding to $x\in\bits$, we have
\begin{equation}
\label{e.singleBit}
\Pb{x \in \Spec} = \alpha_\square(x,\Quad)\hbox{\quad\text{and}\quad} 
\Pb{\Spec = \{x\} } = \alpha_\square(x,\Quad)^2.
\end{equation}
\end{lemma}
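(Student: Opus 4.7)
The plan is to derive (\ref{e.SintersectsB}) and (\ref{e.ScontainedinB}) as the two extremal specializations of Lemma~\ref{l.lambda} (taking $W=\emptyset$ and $W=B^c$), combined with the geometric bound $\Pb{\Lambda_B}\le \alpha_\square(B,\Quad)$; the sharper single-tile equalities in (\ref{e.singleBit}) will then follow from a direct Fourier computation that exploits the monotonicity of $f_\Quad$.

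Since $f_\Quad$ is $\pm 1$-valued we have $\|f_\Quad\|_\infty=1$ and $\Q=\P$ for its spectral sample, so Lemma~\ref{l.lambda} applies with multiplicative constant $4$. With $W=\emptyset$ one has $\lambda_{B,\emptyset}=1_{\Lambda_B}$, so $\Eb{\lambda_{B,\emptyset}^2}=\Pb{\Lambda_B}$; with $W=B^c$ the pivotality event $\Lambda_B$ is measurable with respect to $\F_{B^c}$ and hence independent of $\F_B$, so $\lambda_{B,B^c}=\Pb{\Lambda_B}$ is deterministic and $\Eb{\lambda_{B,B^c}^2}=\Pb{\Lambda_B}^2$ (these are the two specializations already pointed out just after Lemma~\ref{l.lambda}). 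It then remains to prove $\Pb{\Lambda_B}\le\alpha_\square(B,\Quad)$: by monotonicity of $f_\Quad$, $\Lambda_B$ coincides with the event $\{f_\Quad(\omega_B^+)=+1,\ f_\Quad(\omega_B^-)=-1\}$. On this event the white crossing present when $B$ is forced all-white must use $B$ (otherwise it would survive when $B$ is forced all-black), and since $B\cap\Quad$ is nonempty and connected this produces two disjoint white arms in $\Quad\setminus B$ from $\p B$ to the two distinguished arcs of $\p\Quad$; symmetrically, the dual black separating path present when $B$ is forced all-black must use $B$, producing two disjoint black arms in $\Quad\setminus B$ from $\p B$ to the complementary arcs. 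These four arms are exactly those in the definition of $\alpha_\square(B,\Quad)$, with the stated boundary-incidence convention taking care of the case $B\cap\p\Quad\ne\emptyset$.

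For (\ref{e.singleBit}), the first equality is the specialization (\ref{e.SP}) combined with $\Pb{x\in\Piv}=\alpha_\square(x,\Quad)$. For the second, note that $\Pb{\Spec=\{x\}}=\widehat{f_\Quad}(\{x\})^2$, and by conditioning on $\omega_x$ and using that $f_\Quad\in\{-1,+1\}$ one obtains $\widehat{f_\Quad}(\{x\})=\Eb{f_\Quad\,\omega_x}=\Pb{f_\Quad=1\md\omega_x=+1}-\Pb{f_\Quad=1\md\omega_x=-1}$. Monotonicity of $f_\Quad$ identifies this difference with the probability that flipping $\omega_x$ from $-1$ to $+1$ flips the crossing, which is precisely $\Pb{\Lambda_{\{x\}}}=\alpha_\square(x,\Quad)$; squaring yields the claim. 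The step I expect to require the most care is the pivotal-to-four-arms geometric reduction, particularly when $B$ abuts $\p\Quad$: one has to be careful to invoke the convention that arms from $B$ to boundary arcs that $B$ already meets are counted as present, and to use the connectedness of $B\cap\Quad$ to produce arms of each color rather than a single fattened crossing.
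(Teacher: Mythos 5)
Your proof is correct and follows essentially the same route as the paper: both derive (\ref{e.SintersectsB}) and (\ref{e.ScontainedinB}) as the $W=\emptyset$ and $W=B^c$ specializations of Lemma~\ref{l.lambda} after identifying $\Pb{\Lambda_B}$ with $\alpha_\square(B,\Quad)$, and both obtain (\ref{e.singleBit}) from (\ref{e.SP}) together with a monotonicity-based computation (your direct Fourier expansion of $\widehat{f_\Quad}(\{x\})$ is one of the two equivalent derivations the paper mentions). The one cosmetic difference is that you prove only $\Pb{\Lambda_B}\le\alpha_\square(B,\Quad)$ whereas the paper asserts equality; the inequality is all that is used for the two upper bounds, and for (\ref{e.singleBit}) you correctly invoke the standard single-tile pivotal-equals-four-arms identity directly, so no gap results.
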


\proof
Note that $\Pb{\Lambda_B}=\alpha_\square(B,\Quad)$, since $\Lambda_B$ holds
if an only if the $4$-arm event from $B$ to the corresponding arcs
on $\p\Quad$ occurs.
Since
$\lala(B,\emptyset)=1_{\Lambda_B}$, the first claim follows from Lemma~\ref{l.lambda}
with $W=\emptyset$.
Similarly,~\eqref{e.ScontainedinB} follows by taking $W=B^c$.
The identity $\Pb{x\in\Spec}=\alpha_\square(x,\Quad)$ follows from~\eqref{e.SP}.
Finally, the right hand identity in~\eqref{e.singleBit} can be derived
from~\eqref{e.SBA} with $B=\{x\}$ and $W=\bits\setminus B$. Alternatively,
it also follows from
$\Pb{\{x\}=\Spec}=\Pb{x\in\Spec}^2$, which holds for arbitrary
monotone $f:\Omega\to\{-1,1\}$.
\QED

As we will see in Section \ref{s.mainlemma}, both inequalities in Lemma~\ref{l.basic} are actually approximate equalities when $B\subset\Quad$. The main reason for this is a classical arm separation phenomenon,
see e.g.~\cite[Appendix, Lemma A.2.]{\SchrammSteif}, which roughly says that conditioned on having four arms connecting $\p B$ to the appropriate boundary arcs on $\p\Quad$, with positive conditional probability, these arms are ``well-separated'' on $\p B$. On this event, a positive proportion of the $\omega_B$ configurations enable the crossing of $\Quad$, while a positive proportion disable all crossings. However, for radial crossings, the estimates from Lemma~\ref{l.lambda} are not sharp --- see Lemma~\ref{l.ComphRad} and the discussion afterwards.

Lemma \ref{l.basic} 
has the following immediate consequence. 
Let $\Quad$ be the $\nn\times \nn$ square with two opposite sides as distinguished boundary arcs. Then, for a box $B\subseteq \Quad$ of radius $r$ and a concentric sub-box $B'$ of radius $r/3$, if $B'\cap\bits\ne\emptyset$, then
\be
\EB{|\Spec \cap B'| \md \Spec \cap B \ne \emptyset} 
&=& \sum_{x\in B'}\frac{\Ps{x\in\Spec}}{\Ps{\Spec \cap B \ne \emptyset}} 
\geq \sum_{x\in B'} \frac{\alpha_\square(x,\Quad)}{4\alpha_\square(B,\Quad)}\nonumber\\
&\asymp& |B'|\alpha_4(r)  \asymp r^2 \alpha_4(r),\label{e.mean}
\ee
where we used the quasi-multiplicativity result~(\ref{e.qmsq}).
This result already suggests that $\Spec$ has self-similarity properties that a random fractal-like object should have, and it should be unlikely that it is very small. This idea will, in fact, be of key importance to us, and will be developed in Section~\ref{s.mainlemma}.

\section{The probability of a very small spectral sample} \label{s.verysmall}

\subsection{The statement}\label{ss.verysmallintro}

In this section, we study the Fourier spectrum of the $\pm 1$ indicator function $f$
of having a crossing of a square, or more generally, of a quad $\Quad$.

Divide the plane into a lattice of $r\times r$ subsquares, that is, $r\,\Z^2$,
and for any set of bits $S\subseteq\I$ define 
$$S_r:=\{\text{those $r\times r$ squares that intersect }S\}.$$ 
In particular, $\Spec_r$ is the set of $r$-squares whose intersection with the spectral sample $\Spec$ of $f$  is nonempty.
Following is an estimate for the probability that $\Spec$ is very small, or, more generally,
that $\Spec_r$ is very small.

\begin{proposition}\label{pr.verysmall} 
Let $\Spec$ be the spectral sample of $f=f_{[0,\nn]^2}$, the $\pm1$ indicator function of the left-right
crossing of the square $[0,\nn]^2$.
For $g(k):=2^{\coa\log_2^2 (k+2)}$, with $\coa>0$ large enough, and $\gamma_r(\nn):=(\nn/r)^2 \alpha_4(r,\nn)^2$,
$$
\forall k,\nn,r\in\N_+\qquad
\Pb{|\Spec_r|=k} \leq g(k)\; \gamma_r(\nn).
$$
\end{proposition}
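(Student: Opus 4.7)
My plan is to begin with the union bound
\[
\Pb{|\Spec_r|=k} \;=\; \sum_{\mathcal{C}} \Pb{\Spec_r = \mathcal{C}}\,,
\]
where $\mathcal{C}$ ranges over $k$-element sets of $r$-squares meeting $[0,R]^2$, and to bound each summand by $\Pb{\emptyset \ne \Spec \subseteq U(\mathcal{C})}$ where $U(\mathcal{C}) := \bigcup_{B \in \mathcal{C}} B$. Applying Lemma~\ref{l.lambda} with $W = U(\mathcal{C})^c$ yields $\Pb{\emptyset \ne \Spec \subseteq U(\mathcal{C})} \le 4\,\Pb{\Lambda_{U(\mathcal{C})}}^2$. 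For the crossing indicator $f_R$, $\Lambda_{U(\mathcal{C})}$ amounts to an alternating $4$-arm event from $U(\mathcal{C})$ to the opposite sides of $[0,R]^2$, so the task reduces to bounding $\sum_{|\mathcal{C}|=k} \alpha_\square(U(\mathcal{C}),[0,R]^2)^2$.

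The key step, following the outline in Subsection~\ref{ss.rough}, is a multiscale \emph{geometric classification} of these configurations. At each dyadic scale $2^j r$, $j=0,1,\ldots,J := \lceil \log_2(R/r) \rceil$, I partition $\mathcal{C}$ into maximal clusters whose $2^j r$-neighbourhoods are connected; this produces a rooted tree $T(\mathcal{C})$ with $k$ leaves and depth at most $J$. For a fixed tree shape $T$, I expect
\[
\sum_{\mathcal{C}\,:\,T(\mathcal{C})=T} \alpha_\square\bigl(U(\mathcal{C}),[0,R]^2\bigr)^2 \;\le\; C_T \, \gamma_r(R)\,,
\]
using independence of $4$-arm events on disjoint annuli together with quasi-multiplicativity~\eqref{e.qm}. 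The argument: non-branching scales telescope via quasi-multiplicativity, leaving only contributions from the root and from branching vertices. The root, together with the positional sum in $[0,R]^2$, recovers the single-cluster cost $\gamma_r(R)$; each branching vertex of arity $m$ contributes a local factor coming from the $4$-arm probability inside the parent annulus, together with an $O(1)^m$ combinatorial cost for the relative positions of the $m$ children inside that parent.

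The main obstacle is then the combinatorial summation $\sum_T C_T$ over tree shapes with $k$ leaves and depth $\le J$. A naive count gives exponentially many shapes, far more than can be absorbed by the constant-size per-branching savings. The resolution exploits two facts: a tree with $k$ leaves has at most $k-1$ branching nodes; and the RSW estimate~\eqref{e.RSW} gives $\alpha_4(s,2s) \le c < 1$ uniformly in $s$, so each branching costs geometrically in the scale gap to its children. Trees whose branchings are spread out across very different scales are penalized by the arm-probability factors, while trees with branchings concentrated in a short range of scales are combinatorially scarce (at most $\binom{J'}{k-1}$ for $J' \ll J$). Balancing these two effects should reduce the effective entropy of tree shapes from $\exp(O(k\log J))$ down to a quasi-polynomial $2^{O(\log^2(k+2))}$, independent of $R$. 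Carrying out this entropy-versus-geometry balance cleanly enough to yield precisely $g(k)=2^{\coa \log_2^2(k+2)}$, rather than an exponential-in-$k$ bound, is the principal technical difficulty I anticipate.
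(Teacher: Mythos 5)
The opening step already breaks the estimate: replacing $\Pb{\Spec_r=\mathcal{C}}$ by $\Pb{\emptyset\ne\Spec\subseteq U(\mathcal{C})}$ discards the requirement that $\Spec$ meet \emph{every} square in $\mathcal{C}$, and that lost information is exactly what produces the right order of magnitude. To see the damage concretely, take $k=2$ and let $B_1,B_2$ be two well-separated interior $r$-boxes with $U=B_1\cup B_2$. A witness for $\Lambda_{B_1}$ agrees with $\omega$ off $B_1$ and hence off $U$, so $\Lambda_{B_1}\subseteq\Lambda_U$ and therefore $\Pb{\Lambda_U}\ge\Pb{\Lambda_{B_1}}\asymp\alpha_4(r,R)$. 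Thus your bound $4\Pb{\Lambda_U}^2$ contributes at least $c\,\alpha_4(r,R)^2$ for each of the $\asymp(R/r)^4$ pairs $\mathcal{C}=\{B_1,B_2\}$, and the resulting sum is already of order $(R/r)^4\alpha_4(r,R)^2=(R/r)^2\,\gamma_r(R)$, overshooting the target by $(R/r)^2$; for general $k$ the overshoot is $(R/r)^{2(k-1)}$. Put differently, the events $\{\Spec_r=\mathcal{C}\}$ partition the sample space, but the enlarged events $\{\emptyset\ne\Spec\subseteq U(\mathcal{C})\}$ are nested (monotone in $\mathcal{C}$ under inclusion), so passing to them converts an exact decomposition into a massively overcounting union bound.

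What is needed instead is an estimate that simultaneously exploits the \emph{negative} information that $\Spec$ avoids some annuli and the \emph{positive} information that $\Spec$ intersects each cluster. Lemma~\ref{l.lambda} applied to a disconnected set $B$ cannot deliver this, because $\Lambda_B$ already holds as soon as any single component is pivotal, so $\Pb{\Lambda_B}$ does not factor across the components. The paper's Lemma~\ref{l.Comph} is the right replacement: for a collection $\Ann$ of disjoint annuli, $\Pb{\Spec\text{ compatible with }\Ann}\le h(\Ann)^2$ with $h(\Ann)=\prod_{A\in\Ann}h(A)$ a \emph{product} of $4$-arm probabilities, one factor per annulus; the constraint that $\Spec$ hit every inner disk is what forces the product structure, and its proof passes through the orthogonal projection $P_W$ onto the span of $\{\chi_S:S\text{ compatible with }\Ann\}$ rather than through the single pivotality event $\Lambda_B$. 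The paper then assigns to each $S$ a compatible annulus structure $\Ann(S)$ built from a multiscale cluster tree and sums $h(\Ann)^2$ over the much smaller collection of distinct annulus structures, not over all $\binom{(R/r)^2}{k}$ configurations $\mathcal{C}$. Your subsequent tree classification and the entropy-versus-geometry worry are genuinely parallel to the paper's cluster-tree induction with its pruning at overcrowded clusters, so that instinct is sound; it just has to be run on top of the product-form bound $h(\Ann)^2$, not on top of $\Pb{\Lambda_{U(\mathcal{C})}}^2$.
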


The square prefactor $(\nn/r)^2$ in the definition of $\gamma_r$ reflects the two dimensionality
of the ambient space --- it corresponds to the number of different ways to choose a square of size
$r$ inside $[0,\nn]^2$. The factor $\alpha_4(r,\nn)^2$ comes from the second inequality in Lemma~\ref{l.basic}. In fact, since that lemma will turn out to be sharp up to a constant factor (when the $r$-square is not close to the boundary of $[0,\nn]^2$), the $k=1$ case of Proposition~\ref{pr.verysmall} is also sharp. When we use this proposition, this will be important, as well as the fact that the dependence on $k$ is sub-exponential.

Recall from~\eqref{e.a4} that for critical site percolation on the triangular lattice
$\gamma_r(\nn)=(r/\nn)^{1/2+o(1)}$, as $\nn/r \to\infty$.
Also, note that for critical bond percolation on $\Z^2$
we have $\gamma_r(\nn)<O(1)\,(r/\nn)^{\epsilon}$ for some $\epsilon>0$
by~\eqref{e.a4less2}.

For either lattice, the $r=1$ case of the proposition implies that for arbitrary $C,a>0$, we have $\lim_{R\to \infty} \Pb{0<|\Spec|< C(\log R)^a}=0$.
This is already stronger than the $a=1$ and small $C$ result of \cite{\BKSnoise}, whose proof used more analysis but less combinatorics. 

In order to demonstrate the main ideas of the proof of the proposition in a slightly
simpler setting in which considerations having to do with 
the boundary of the square do not appear, we will first state and prove 
a \lq\lq local\rq\rq\ version of this proposition. Then we will see in Subsection~\ref{ss.boundary} that the boundary of the square indeed has no significant effect; the main reason for this is that the
spectral sample ``does not like'' to be close to the boundary. (Let us note here that this phenomenon holds in any quad,  
as we will see in Subsection~\ref{ss.quadconc}. However, the exact computations that are needed to get the bounds of Proposition~\ref{pr.verysmall} apply only to the case of the square, whose boundary is easy to handle.)

In Subsection~\ref{ss.smallradial}, we will prove a radial version of Proposition~\ref{pr.verysmall}, which will be similar to the square case, using one main additional trick.

\subsection{A local result}\label{ss.local}

\begin{proposition}\label{pr.local}
Consider some quad $\Quad$, and let $\Spec$ be the spectral sample of $f=f_\Quad$, the $\pm1$ indicator function for the crossing event in $\Quad$.
Let $U'\subset U\subset \Quad$, let $R$ denote the diameter of $U$, let $a\in (0,1)$,
and suppose that the  distance from $U'$ to the complement of $U$ is at least $a\,R$.
Let $\mathcal S(r,k)$ be the collection of all sets $S\subseteq\bits$ such that
$\bigl|(S\cap U)_r\bigr|=k$ and $S\cap (U\setminus U')=\emptyset$.
Then for $g$ and $\gamma_r$ as in Proposition~\ref{pr.verysmall}, we have
$$
\forall k,r\in\N_+\qquad
\Pb{\Spec\in\mathcal S(r,k)} \le c_a\,g(k)\,\gamma_r(R)\,,
$$
where $c_a$ is a constant that depends only on $a$.
\end{proposition}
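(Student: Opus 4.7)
I would prove the bound first for $k=1$ directly from Lemma~\ref{l.basic} and then bootstrap to general $k$ via a multiscale ``shape'' classification of configurations. In the base case, any $r$-box $B \subseteq U'$ lies at distance at least $aR$ from $\partial\Quad$, so the quasi-multiplicativity estimate~\eqref{e.qmsq} yields $\alpha_\square(B,\Quad) \le C_a\,\alpha_4(r,R)$, and~\eqref{e.ScontainedinB} gives $\Pb{\emptyset \ne \Spec \subseteq B} \le C_a'\,\alpha_4(r,R)^2$. Summing over the $O_a((R/r)^2)$ choices of $B \subseteq U'$ proves the proposition when $k = 1$, with $g(1)$ an absolute constant.

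For $k \ge 2$, the idea is to associate to each configuration $\{B_1,\dots,B_k\}$ of $r$-boxes in $U'$ an ``annulus structure'': a rooted tree of depth $O(\log(R/r))$ whose $k$ leaves are the $B_i$ and whose internal nodes carry pairwise disjoint dyadic annuli separating nested enclosures of clusters of $B_i$'s. The event that $\Spec \cap U \subseteq \bigcup_i B_i$ with $\Spec \cap B_i \ne \emptyset$ for every $i$ forces $\omega$ to realize a $4$-arm event across each annulus of the structure; since the annuli involve disjoint bits, iterating Lemma~\ref{l.lambda} with $B$ being the current cluster and $W$ its surrounding annulus bounds the probability of matching the structure by a product of squared $\alpha_4$-factors over the annuli. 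Summing over the positions of the leaves within a fixed tree shape, quasi-multiplicativity~\eqref{e.qm} combined with the RSW-type bounds for $\alpha_4$ should let this product telescope and collapse to a constant multiple of $\gamma_r(R) = (R/r)^2\,\alpha_4(r,R)^2$, where the $(R/r)^2$ factor comes from the location of the root.

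The main obstacle is the combinatorial enumeration: one must show that the number of distinct tree shapes with $k$ leaves (after factoring out the root's position) is at most $g(k) = 2^{\coa \log_2^2(k+2)}$. A naive enumeration of binary trees only gives the too-weak exponential bound $O(C^k)$. The sub-exponential improvement should come from stratifying trees by depth and by the number of children at each level, exploiting the fact that only $O(\log k)$ scales can carry nontrivial branching before the probabilistic cost per shape decays quickly enough to absorb any additional combinatorial complexity. Balancing this trade-off via a careful scale-by-scale induction is the delicate part of the proof. Combining the per-shape probability bound with the shape count then yields $\Pb{\Spec \in \mathcal S(r,k)} \le c_a\, g(k)\, \gamma_r(R)$, as desired.
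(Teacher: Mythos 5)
Your skeleton matches the paper's at a structural level: encode the location of $\Spec$ by a hierarchy of clusters/annuli, bound the probability of each annulus structure by a product of $\alpha_4$-factors, and show the combinatorial count of structures is sub-exponential in $k$. But there is a genuine gap at the single most important step: ``iterating Lemma~\ref{l.lambda}'' cannot produce the product bound you need. Lemma~\ref{l.lambda} only bounds $\Qb{\Spec\cap B\ne\emptyset=\Spec\cap W}$ for \emph{one} box $B$ and one blocking set $W$; applying it with $B=\bigcup_i D_i$ (all inner disks) and $W=\bigcup_i A_i$ (all annuli) bounds the probability that $\Spec$ hits \emph{at least one} $D_i$, not all of them. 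And applying it separately inside each cluster would require conditioning on $\Spec\cap D_j\ne\emptyset$ for the clusters already treated, which is exactly the ``positive conditioning'' the authors explicitly say they cannot handle (see the discussion in \S\ref{ss.rough}). The paper bypasses this with Lemma~\ref{l.Comph}, a projection argument: split the bits into those inside $\bigcup\Ann$ ($\window$) and outside ($\eta$), observe that $P_W F_\window\ne 0$ forces the $4$-arm event in \emph{every} annulus for the fixed $\window$, and then use $\|P_W f\|\le \E^\window\|P_W F_\window\|\le\P^\window[P_W F_\window\ne 0]$. This gives the product over all annuli simultaneously and is the genuinely new ingredient your plan is missing; it is not a corollary of Lemma~\ref{l.lambda}.

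The combinatorial side is also described too vaguely to constitute a proof. ``Stratifying by depth and by the number of children'' does not by itself beat the naive $C^k$ bound. The paper gets $g(k)=2^{\coa\log_2^2(k+2)}$ from three concrete devices that you would need to supply: (i) fixing the inner/outer bounding squares on dyadic sublattices so each structure is determined by finitely many integer choices and the position count at scale $j$ is $O((R/2^j r)^2)$, dominated by the factor $\alpha_4(k2^jr,aR)^2$ coming from the outermost annulus; (ii) truncating below ``overcrowded'' clusters (where $g(|C|)\bgamma_r(2^{j}r)>1$), which caps the depth to which the recursion can branch at $J(k)=O(\log_2^2(k+2))$; and (iii) exploiting $d\ge 2$ branching at every non-leaf node together with the concavity of $\log_2^2(x+2)$, so that $\prod_i g(k_i)\le g(k/d)^d$ and the induction with exponent $1.99$ closes. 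Without the overcrowded-cluster cutoff, the scale-by-scale sum does not converge to $g(k)\gamma_r(R)$, and without the concavity bound the per-level enumeration blows up.
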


We preface the proof of the proposition with a rough sketch of the main ideas.
When $S\in\mathcal S(r,k)$ and $k$ is small, the
set $(S\cap U)_r$ has to consist of one or very few \lq\lq clusters\rq\rq\ of
$r$-squares that are small (since their total cardinality is $k$, and the elements of each cluster are close to each other) and well separated from each other (otherwise they would not be distinct clusters).
The probability that $(\Spec\cap U)_r$ has just one cluster, contained
in a specific small box $B$ of Euclidean diameter anywhere between $r$ and $g(O(k))r$, can be estimated by~(\ref{e.ScontainedinB}) of Lemma~\ref{l.basic}, at least if we assumed $U=\Quad$. For general $U$, we will soon prove a generalization of (\ref{e.ScontainedinB}), Lemma~\ref{l.Comph}.
Then, one may sum over an appropriate collection of such small boxes $B$ to get a reasonable bound
for the probability that $\diam(\Spec\cap U)$ is small while $\Spec\cap U\ne\emptyset$.
To deal with the case where $(\Spec \cap U)_r$ has a few different well-separated clusters,
we will again use Lemma~\ref{l.Comph}.
The more involved part of the proof will be to classify the possible cluster
structures of $\Spec$ and sum up the bounds corresponding to each possibility.

\proofof{Proposition \ref{pr.local}} Let $\Ann$ be a finite collection of disjoint (topological) annuli in the plane;
we call this an {\bf annulus structure}. We say that a set $S \subset \R^2$ is {\bf compatible} with $\Ann$ (or vice versa) if it is contained in $\R^2\setminus\bigcup\Ann$ and intersects the inner disk of each annulus in $\Ann$.  Define $h(\Ann)$ as the probability that each annulus in $\Ann$ has the four-arm event.
By independence on disjoint sets, we have
\begin{equation}\label{e.hann}
h(\Ann)=\prod_{A\in\Ann}h(A)\,.
\end{equation}
 Suppose that $\ANN$ is a set of annulus structures $\Ann$ such that each
annulus $A\in\Ann$ is contained in $\Quad$, and $\ANN$ has the property that each $S\in\mathcal S(r,k)$
must be compatible with at least one $\Ann \in \ANN$. We claim that any such set $\ANN$ satisfies
\begin{equation}\label{e.ANN}
\Pb{\Spec\in \mathcal S(r,k)} \le \sum_{\Ann \in \ANN} h(\Ann)^2.
\end{equation}
For this, it is clearly sufficient to verify the following lemma, which is a generalization of~(\ref{e.ScontainedinB}) from Lemma~\ref{l.basic}.

\begin{lemma}\label{l.Comph}
For any annulus structure $\Ann$ with $\bigcup \Ann\subset \Quad$, 
$$
\Pb{\Spec \text{ is compatible with }\Ann} \le h(\Ann)^2.
$$
\end{lemma}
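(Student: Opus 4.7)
The plan is to express the compatibility probability as a squared $L^2$-norm via inclusion--exclusion, and then bound this norm by iterating the conditional-variance technique underlying Lemma~\ref{l.lambda}, using that pivotality of each inner disk $D_i$ forces the four-arm event in the surrounding annulus $A_i$.

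Enumerate $\Ann = \{A_1,\ldots,A_n\}$, write $W := \bigcup_i A_i$, let $D_i$ denote the inner disk of $A_i$, and for $J \subseteq \{1,\ldots,n\}$ set $D_J := \bigcup_{j \in J} D_j$. Let $E_S g := \Es{g \mid \F_{S^c}}$ denote the operator that averages out the bits in $S$. By inclusion--exclusion on the events $\{\Spec \cap D_i = \emptyset\}$, combined with the identity~\eqref{e.SinB}, I would first show that
\[
\Pb{\Spec \text{ compatible with }\Ann}
\;=\; \sum_{J \subseteq \{1,\ldots,n\}} (-1)^{|J|}\,\|E_{W \cup D_J} f\|^2
\;=\; \|F\|^2,
\]
where $f = f_\Quad$ and $F := \prod_{i=1}^{n}(\mathrm{Id} - E_{D_i})\,E_W f$. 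The final equality uses that the operators $\mathrm{Id} - E_{D_i}$ mutually commute and are orthogonal projections (because the $D_i$ are pairwise disjoint and the bits independent), so $F$ retains in Fourier coordinates exactly those $\widehat{f}(S)$ with $S \cap W = \emptyset$ and $S \cap D_i \ne \emptyset$ for every $i$.

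Next I would bound $\|F\|^2$ by peeling off one projection at a time. The identity $\|(\mathrm{Id} - E_{D_j})H\|^2 = \Eb{\Var(H \mid \F_{D_j^c})}$, combined with Popoviciu's inequality, bounds this by a quarter of the expected squared range of $H$ as $\omega_{D_j}$ varies while the remaining bits are fixed. Applied iteratively, the argument reduces to controlling the $n$-fold mixed finite-difference of the conditional crossing probability $p(\omega) := \Pb{\text{crossing of }\Quad \mid \omega_{W^c}}$ in the variables $\omega_{D_1},\ldots,\omega_{D_n}$. The essential percolation input, which in the case $n = 1$ is the estimate $\lala(D_1,A_1) \le h(A_1)$ used in the proof of Lemma~\ref{l.lambda}, is that a change in $\omega_{D_j}$ cannot affect $p$ unless the annulus $A_j$ exhibits the four-arm event, and this event depends only on $\omega_{A_j}$, which is independent of everything outside $A_j$.

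The main obstacle is carrying out the iteration so that each peeling step contributes precisely a factor $h(A_j)^2$, with no accumulated multiplicative constant. This amounts to showing that the $n$-fold mixed finite-difference of $p$ is pointwise dominated by the conditional probability, given $\omega_{W^c}$, that the disks $D_1,\ldots,D_n$ are simultaneously pivotal. Simultaneous pivotality forces the four-arm event in every $A_i$, and since the sets $\omega_{A_i}$ are independent across $i$, this joint event has probability exactly $\prod_i h(A_i) = h(\Ann)$; squaring and integrating then yields the required bound.
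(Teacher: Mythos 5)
Your opening step — expressing $\Pb{\Spec\text{ compatible with }\Ann}$ as $\|Pf\|^2$, where $P$ is the orthogonal projection onto $\mathrm{span}\{\chi_S : S\text{ compatible with }\Ann\}$ — is the same identity the paper uses. The second half of your plan, however, factors $P$ as $\prod_j(\mathrm{Id}-E_{D_j})\cdot E_{\bigcup\Ann}$ (with $E_A$ denoting averaging out the bits in $A$) and peels off one inner-disk projection at a time; this is a genuinely different route from the paper's, and the obstacle you flag is real and is not surmountable by tightening the same iteration. Iterating the identity $\|(\mathrm{Id}-E_{D_j})H\|^2=\tfrac12\Eb{(H'-H'')^2}$ (with $H'-H''$ a difference over an iid pair $\omega_{D_j}',\omega_{D_j}''$) gives $\|Pf\|^2 = 2^{-n}\Eb{(R_1\cdots R_n E_{\bigcup\Ann}f)^2}$, where $R_1\cdots R_n$ is the $n$-fold mixed difference. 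The percolation input you describe is correct — $R_1\cdots R_n f$ vanishes unless the four-arm event holds in every $A_i$ — but it is bounded only by $2^n$, not $1$, because $f$ takes values $\pm1$ and so has range $2$; averaging over $\omega_{\bigcup\Ann}$ by independence across the annuli gives $|R_1\cdots R_n E_{\bigcup\Ann}f|\le 2^n\prod_i h(A_i)$, and the final bound is $2^n\,h(\Ann)^2$, an exponentially growing overshoot. (Using Popoviciu only at the outermost variable gives $4^{n-1}h(\Ann)^2$ instead.) What would close the gap is a multi-variable Popoviciu of the form $\|\prod_j(\mathrm{Id}-E_{D_j})q\|^2\le 4^{-n}\bigl(\max|\Delta^n q|\bigr)^2$, which you neither state nor prove and which is not a routine iteration of the $n=1$ case; the "pointwise domination by the conditional probability of simultaneous pivotality" you invoke is off by this same factor $2^n$.

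The paper's proof sidesteps the issue with a different conditioning. Rather than factoring $P$ over the inner disks $D_i$, it conditions on the configuration $\window:=\omega_{\bigcup\Ann}$ in the \emph{annuli}, writes $Pf=\E^\window[PF_\window]$ with $F_\window(\eta):=f(\window,\eta)$, and pulls $\E^\window$ outside the $L^2$-norm by the triangle inequality. After that it uses only $\|PF_\window\|\le\|F_\window\|\le1$ (contractivity of $P$ and $|f|\le1$, so no ranges of finite differences enter) together with the observation that $PF_\window=0$ whenever some $A\in\Ann$ lacks the four-arm event in $\window$. This gives $\|Pf\|\le\E^\window\|PF_\window\|\le\P^\window\bigl[\text{4-arm in every }A\in\Ann\bigr]=h(\Ann)$ at once, with no accumulating constants. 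The resolution to your "main obstacle" is this change of conditioning — on the annuli rather than the disks — not a more careful version of the peeling iteration.
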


\proof 
We write $f(\window,\eta)$ for the value of $f$, where $\window$ is the configuration inside $\bigcup \Ann$ and $\eta$ is the configuration outside.
For each $\window$, let $F_\window$ be the function of $\eta$ defined by $F_\window(\eta):=f(\window,\eta)$. If $\window$ is such that there is an annulus $A\in\Ann$ without the 4-arm event, then the connectivity of points outside the outer boundary
of $A$ does not depend on the configuration inside the inner disk of $A$, and therefore $F_\window$ does
not depend on any of the variables in the inner disk of $A$. Thus, if a subset of variables $S$ is disjoint from $\bigcup \Ann$ (so that we can talk about $\widehat F_\window(S)$) and intersects this inner disk, then the corresponding Fourier coefficient vanishes: $\widehat F_\window(S)=\Es{F_\window\chi_S}=0$. Therefore, if we let $W$ be the linear space of functions spanned by $\{\chi_S : S$ compatible with $\Ann\}$, and $P_W$ denotes the orthogonal projection onto $W$, then $P_W F_\window\ne 0$ implies the $4$-arm event of $\window$ in every $A\in\Ann$.

Now, observe that $P_W f=P_W\Es{f\md \eta}$, because for all
$g\in W$ we have $\Eb{\Es{f\md\eta}\,g}=\Eb{\Es{f\,g\md\eta}}=\Es{f\,g}$.
Next, by the independence of $\eta$ and $\window$ from each other, we have 
$\Es{f\md \eta}=\E^\window[F_\window]$, where the right hand side
is an expectation w.r.t.~$\window$, hence is still a function of $\eta$.
Thus, $P_W f= P_W \E^\window[F_\window]=\E^\window[P_W F_\window]$.
Consequently,
$$ 
\Pb{\Spec\text{ is compatible with }\Ann} = \| P_W f \|^2 = \| \E^\window[P_W F_\window] \|^2 \leq  
\big(\E^\window\bigl[\| P_W F_\window \| \bigr]\big)^2,
$$
where the last step follows from the triangle inequality for $\|\cdot\|$.
For every $\window$, the function $F_\window$ is bounded from above by 1 in absolute value. Therefore,
$\|F_\window\| \leq 1$ for every $\window$.
This implies $\| P_W F_\window \| \leq 1$ for every $\window$ (the norm is the $L^2$ norm and
$P_W$ is an orthogonal projection). Hence, we get
$$
\Pb{\Spec\text{ is compatible with }\Ann} \leq \P^\window \bigl[ P_W F_\window \ne 0\bigr]^2 \leq h(\Ann)^2.
$$
This proves the lemma.\qed


A trivial but important instance of (\ref{e.ANN}) is when $\ANN=\{\emptyset\}$: the empty annulus structure $\emptyset$ is compatible with any $S$, while $h(\emptyset)^2 = 1$.

Now, for each set $S\in\mathcal S(r,k)$ 
 we construct a compatible annulus structure $\Ann(S)$, such that the set $\ANN=\ANN(r,k)=\{\Ann(S) : S\in\mathcal S(r,k)\}$ will be small and effective enough for (\ref{e.ANN}) to imply Proposition \ref{pr.local}. The main idea for this construction is that the spectral sample tends to be clustered together, which can already be foreseen in Lemma \ref{l.Comph}:
each additional thick annulus (corresponding to some part of the spectral sample far from all other parts) decreases the weight $h(\Ann)^2$ by quite a lot --- more than what can be balanced by the number of essentially different ways that this can happen. (We will make this vague description of clustering more precise after the end of the proof, in Remark~\ref{r.diam}.)
\bigskip

We now prepare to define
the annulus structure $\Ann(S)$ corresponding to an $S\in\mathcal S(r,k)$, based on the geometry of
$S$. For this definition, we need to first define what we call {\bf clusters} of $S$.
Set $V=V_S:= (S\cap U)_r$, the set of squares in the $r\,\Z^2$ lattice which meet $S\cap U$.
For $j\in\N_+$ let $G_j$ be the graph on $V$, where two squares are joined
if their Euclidean distance is at most $2^j\,r$, and let $G_0$ be the graph on $V$ with
no edges.
If $j\in\N_+$ and $C\subseteq V$ is a connected component
of $G_j$, but is not connected in $G_{j-1}$, then $C$ is called a level $j$ cluster of $S$.
The level $0$ clusters are the connected components of $G_0$, that is, the singletons
contained in $V$.
The level of a cluster $C$ is denoted by $j(C)=j_S(C)$. 
The Euclidean diameter of a cluster $C$ is clearly at most $2^{j(C)+2}\,r\,|C|$.

We will now associate to each cluster $C$ of $S$ an ``inner'' and an ``outer bounding square'', whose difference will be the annulus of $C$. We will have to make sure that the annuli we are constructing are all disjoint from the set $S$, hence the inner bounding square should be large enough to contain the points of $C$, while the outer square should be small enough not to intersect other clusters. However, we cannot simply take the smallest and largest such squares for each $C$, because we do not want to get too many different annulus structures, i.e., the bounding squares should not depend too sensitively on $C$. One consequence of this crudeness is that some of the clusters may have an empty annulus associated to them: this will happen when the other clusters are too close.

Let $C$ be a cluster in $S$ at some level $j=j(C)$. 
We choose a point of the plane, $z\in [C]$, in an arbitrary but fixed way (say the lowest among the leftmost points of $C$), where $[C]$ denotes the set of points of the plane covered by the $r$-squares in $C$.
Let $z'$ be a point with both coordinates divisible by $2^j\,r$, which is closest to $z$,
with ties broken in some arbitrary but fixed manner.
Define the {\bf inner bounding square} $B(C)$ as the square with edge-length
$|C|\,2^{j+4}\,r$
centered at $z'$ (with edges parallel to the coordinate axes).
Note that $[C]\subseteq B(C)$ and the distance from $[C]$ to $\partial B(C)$ is at least
$2^{j+3}\,|C|\,r-2^j\,r-2^{j+2}\,|C|\,r \geq 2^j\,r$. 

If $C\ne V$ is a cluster, then the smallest cluster in $V$ that properly contains $C$
will be called the {\bf parent} of $C$ and denoted by $C^p$.
In this case, let the {\bf outer bounding square} $\bar B(C)=\bar B_S(C)$
of $C$ be the square concentric with $B(C)$ having
sidelength $(2^{j(C^p)-4}\,r)\wedge (a\,R/4)$.
For the case $C=V$, we let $\bar B(V)$ be the square concentric with $B(V)$
having sidelength $a\,R/4$.

It is not necessarily the case that $\bar B(C)\supset B(C)$, nor even that $\bar B(C) \supset [C]$. 
Also, it may happen that for two disjoint clusters $C, C'$ we have $B(C)\cap B(C')\not=\emptyset$.
However, we do have the following important properties of these squares, which are easy to verify:
\begin{enumerate}
\hitem{i.a}{1} if $C'\subsetneqq C$ is a subcluster of $C$, then $\bar B(C')\subseteq B(C)$;
\hitem{i.b}{2} if $C$ and $C'$ are disjoint clusters, then $\bar B(C)\cap \bar B(C')=\emptyset$;
\hitem{i.c}{3} $B(C)$ depends only on $C$; and
\hitem{i.d}{4} $\bar B(C)$ depends only on $B(C)$ and $j(C^p)$.
\end{enumerate}


Define $A(C)=A_S(C):= \bar B(C)\setminus B(C)$. Note that if $A(C)\not=\emptyset$, then $|C|2^{j(C)+4}r < aR/4$;
therefore, using that $C\subseteq U'_r$, that the distance between $z$ and $z'$ is at most $2^j r$, that $\bar B(C)$ has sidelength
at most $aR/4$, and that the distance of $U'$ from $\partial U$ is at least $aR$, we get that $A(C)\subseteq U$.
This means that the annulus $A(C)=A_S(C)$ we have constructed is disjoint from $S$, both inside and outside $U$. 

Define an annulus structure $\Ann_1(S)$ associated with $S$ by
$$
\Ann_1(S):= \bigl\{ A_S(C) : C\text{ is a cluster in }S,\, A_S(C)\ne \emptyset \bigr\}.
$$
By properties \iref{i.a} and \iref{i.b} above, the different annuli in $\Ann_1(S)$ are disjoint. See Figure \ref{fig:annuli}. It is also clear that $\Ann_1(S)$
is compatible with $S$. However, we still need to modify $\Ann_1(S)$ for it to be useful.

\begin{figure}[htbp]
\centerline{\epsfysize=2 true in \epsffile{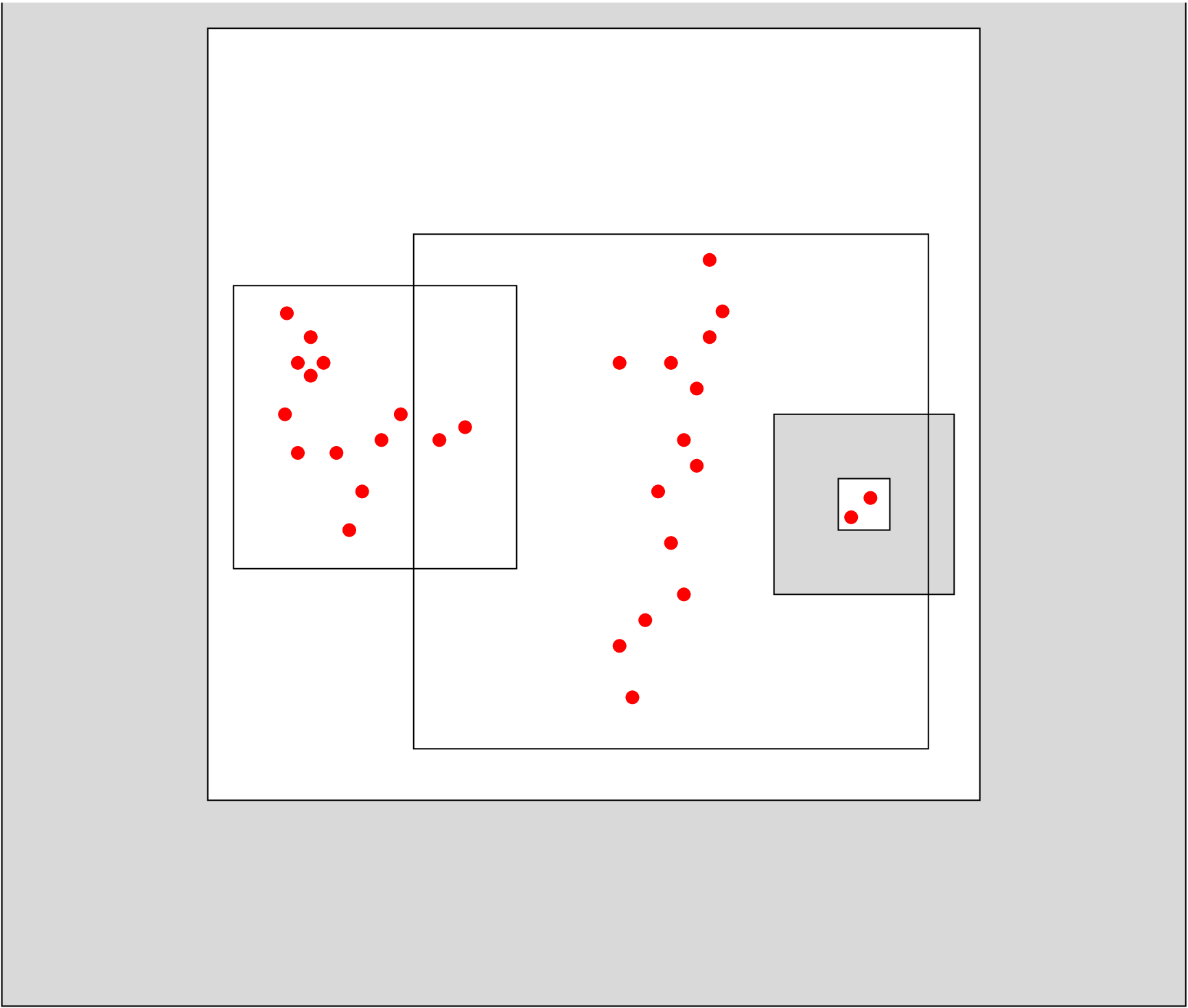}} 
\caption{ \label{fig:annuli}
A cluster with three child clusters, two of which have empty annuli; only the inner bounding squares are shown for those two.}
\end{figure}

It follows from~\eqref{e.a4less2} that
the function $\gamma_r(2^j\,r)$ is decaying exponentially, in the sense that there are
absolute constants $c_0,c_1>0$ such that 
\begin{equation}
\label{e.gamdecay}
\gamma_r(2^{j'}r)\le c_0\, 2^{-c_1 (j'-j)}\,\gamma_r(2^j\,r)\qquad\text{if }j'> j\ge 0\,.
\end{equation}
It would be rather convenient in the proof below to have $\gamma_r(2^{j+1}\,r)\le \gamma_r(2^j\,r)$.
However, we do not want to try to prove this. Instead, we use the function 
$\bgamma_r(\rho):=\inf_{\rho'\in[r,\rho]}\gamma_r(\rho')$ in place of $\gamma$.
Clearly, $\bgamma$ also satisfies~\eqref{e.gamdecay} and
$\bgamma_r(\rho)\le\gamma_r(\rho)\le O(1)\,\bgamma_r(\rho)$.

A cluster $C$ will be called {\bf overcrowded} if  
$$
g(|C|)\, \bgamma_r(2^{j(C)}\,r) > 1\,, 
$$
with the constant $\coa>0$ in the definition of $g(k)$ to be determined later.
In particular, clusters at level $0$ are overcrowded.
 For each overcrowded cluster, we remove from $\Ann_1(S)$ all the annuli corresponding to its proper subclusters. The resulting annulus structure will be denoted $\Ann(S)$, still compatible with $S$. Finally, we set
$\ANN=\ANN(r,k,U,U'):=\{\Ann(S) : S\in\mathcal S(r,k)\}$.
\bigskip

We will show that, for some constant $c_0>0$,
\begin{equation}\label{e.ANNnk}
\sum_{\Ann \in \ANN} h(\Ann)^2 \leq O(1)\,(k/a)^{c_0}\, g(k) \; \bgamma_r(R)\,.
\end{equation}
(Note that there are usually a lot of sets $S$ with the same $\Ann(S)$ in $\ANN$, or even with the same $V_S$. Indeed, a main point of our construction of $\ANN$ is to use as few annulus structures as possible. In particular, the above sum is really over different annulus structures, without multiplicities coming from the different sets $S$.)
This and~\eqref{e.ANN} together imply Proposition~\ref{pr.local}, with possibly
a different choice for the constant $\coa$. 

For each $S$, there is a natural tree structure on the clusters that correspond to the annuli of $\Ann(S)$.
 The root is $V$ itself, and
the parent $C^p$ of each cluster $C\ne V$ is also its parent in the tree. The leaves are the overcrowded clusters.
We will use this tree structure to build the bound (\ref{e.ANNnk}) inductively: we will write each term $h(\Ann)^2$ as a product of weights corresponding to smaller annulus structures, with the trivial annulus structures of overcrowded clusters as the base step.
(The effect of doing this induction on the notation is that the letter $k$ (or $k_i$) will be used not only for the size of $V_S$, but of subclusters, too.)
The reason for introducing the annulus structures $\Ann(S)$ instead of $\Ann_1(S)$ is that inside overcrowded clusters, the factors we would get from extra annuli would not balance the number of possible ways they can be added, so it  seems better to use no annuli for them at all.

\bigskip

For a cluster $C$ of $S$ let $\Ann'(C)$ denote the subset of $\Ann(S)$ corresponding to the proper subclusters of $C$.
Note that $\Ann'(C)$ depends only on $C$; that is, it is not affected by a modification of $S$,
as long as $C$ remains a cluster of $S$ and it does not acquire or lose an overcrowded ancestor.
Also note that $\Ann'(V)=\Ann(S)\setminus \{A_S(V)\}$, where $V=V_S$.

 Fix some $j,k\in\N_+$.  If $B=B(C)$, where $|C|=k$ and $j(C)=j$, then
  the coordinates of the four corners of $B$ are divisible by $2^j\,r$, and its side-length is $k\,2^{j+4}r$.
 For such $B$ (which might correspond to more that one pair of $k$ and $j$), define
$$
\ANNp(B,k,j):=\Bigl\{\Ann'(V_S) : S\in\mathcal S(r,k),\, B(V_S)=B,\,j(V_S)=j\Bigr\},
$$
and
$$
H(j,k):= \sup_B\,\sum_{\Ann\in \ANNp(B,k,j)} h(\Ann)^2.
$$
(Note that the sum on the right may depend on $B$, since it may happen,
for example, that $B\not\subseteq U'$.)

Define $J(k):=\max\{j\in\N: g(k)\,\bgamma_r(2^j\,r)>1\} = O(\coa)\, \log_2^2 (k+2)$.
If $j\leq J(k)$, then every $S\in\mathcal S(r,k)$ with $j(V_S)=j$ and $B(V_S)=B$ has $V_S$ overcrowded, hence $\Ann'(V_S)=\emptyset$. Therefore, if there exist such sets $S$, then we have $\ANNp(B,k,j)=\{\emptyset\}$ and thus $H(j,k)=1$; otherwise,
we have $\ANNp(B,k,j)=\emptyset$ and thus $H(j,k)=0$. That is,
\begin{equation}\label{e.smallj}
\forall j\le J(k)
\qquad
 H(j,k)\le 1
\,.
\end{equation}
This is the bound we will use for the overcrowded clusters at the base steps of the induction.

For general $j$ and $k$, we will show by induction on $j$ (for all $j\in\N$) that 
\begin{equation}\label{e.induct}
\forall k\geq 1
\qquad
H(j,k) \leq \bigl(g(k) \; \bgamma_r(2^j\,r)\bigr)^{1.99}\,.
\end{equation}

\bigskip

Before proving (\ref{e.induct}), let us demonstrate that it implies (\ref{e.ANNnk}).
If $j(V_S)=j$ (and $S\in\mathcal S(r,k)$), then the number of possible choices for $B(V_S)$ is at most
$\bigl(2R/(2^j\,r)\bigr)^2$. 
If $A(V_S)\ne\emptyset$, then the probability of the 4-arm event in $A(V_S)$ is at most
$O(1)\,\alpha_4\bigl(k\,2^{j}\,r,a\,R\bigr)$, by (\ref{e.qRSW}). If $A(V_S)=\emptyset$, then the probability of the 4-arm event is 1, 
while $k\,2^{j+4}\,r \geq aR/4$, hence the previous bound is $O(1)$, and thus remains true. Therefore,
$$
\sum_{\Ann \in \ANN} h(\Ann)^2 \leq \, O(1) \, \sum_{j=0}^{\lceil \log_2(2R/r)\rceil} 
\left(\frac{R}{2^jr}\right)^2\alpha_4\bigl(k\,2^{j}\,r,a\,R\bigr)^2\,H(j,k)\,.
$$
Now, we use the quasi-multiplicativity of the $4$-arm event, (\ref{e.qRSW}), \eqref{e.smallj} and \eqref{e.induct} 
 to rewrite this as
\be
\sum_{\Ann \in \ANN} h(\Ann)^2 
&\le&
 \sum_{j=0}^{\lceil\log_2 (2R/r)\rceil} O(1)\, (k/a)^{c_0}\frac{\bgamma_r(R)}{\bgamma_r(2^jr)}\,H(j,k)
\label{e.ANNjsum}\\
& &\hskip -1.5 cm \le \ 
O(1)\,(k/a)^{c_0}\,
 \bgamma_r(R) \Bigl(
\sum_{j\leq J(k)}\frac{1}{\bgamma_r(2^j r)}  + 
g(k)^{1.99} \, \sum_{j > J(k)} \bgamma_r(2^j\,r)^{0.99} \Bigr),\nonumber
\ee
for some finite constant $c_0$.
Because~\eqref{e.gamdecay} holds for $\bgamma$,
the first sum is dominated by a constant times the summand corresponding to $j=J(k)$ and
the second sum is dominated by a constant times the summand corresponding to $j=J(k)+1$.
Since $\bgamma_r(2^{J(k)}\,r) > 1/g(k)$ and $\bgamma_r(2^{J(k)+1}\,r) \le 1/g(k)$,
 we get the bound claimed in (\ref{e.ANNnk}).

\bigskip

In order to complete the proof of Proposition~\ref{pr.local}, all that remains is to establish~\eqref{e.induct}.
The proof of this inequality will be inductive and somewhat similar to the proof of~\eqref{e.ANNnk}
from~\eqref{e.induct}, but there are some important differences. In passing from~\eqref{e.induct}
to~\eqref{e.ANNnk}, we ``used up'' some of the exponent $1.99$ --- it has become $1$. Such a loss would not be sustainable if it had to be repeated in every inductive step. What saves us in the following proof of~\eqref{e.induct}
is that clusters that are not singletons have more than one child cluster. (In other words, any non-leaf vertex of the cluster tree has at least two children.) This results in almost squaring the estimate at each inductive step, which makes the proof work.

We now proceed to prove~\eqref{e.induct} by induction.
Since $\bgamma$ is non-increasing, the claim holds for all $j\le J(k)$, because of~\eqref{e.smallj}. (In particular, for $j=0$ and any $k\geq 1$.) 
We now fix some $j$ and $k$ with $j>J(k)$, and assume that~\eqref{e.induct} holds for all smaller values of $j$.
 Fix some square $B$ such that $\ANNp(B,k,j)\ne\emptyset$.
Observe that if $B(V_S)=B$, then $S$ has some $d=d(S)\ge 2$
children in its tree of clusters (we know $d\ne 0$ because $V_S$ is not overcrowded, since $j>J(k)$). Fix some $d\in\{2,3,\dots\}$, $k_1,\dots,k_d$, $j_1,\dots,j_d$ and sub-squares $B_1,\dots,B_d$ such that there is some $S\in\mathcal S(r,k)$
with $j(V_S)=j$, $B(V_S)=B$, and having cluster children $C^1,\dots,C^d$ with $|C^i|=k_i$, $j(C^i)=j_i$ and $B(C^i)=B_i$. 
Note that $A_i:=A_S(C^i)$ does not depend on the choice of $S$ satisfying the above
(by property~\iref{i.d} of squares noted previously and by having a fixed $j$).

Let $\ANNpp=\ANNpp(d,k_1,\dots,k_d,j_1,\dots,j_d,B_1,\dots,B_d)$ denote the set of all elements
of $\ANNp(B,k,j)$ that arise from such an $S$.
Note that every $\Ann\in\ANNpp$ is of the form
$\{A_1,\dots,A_d\}\cup\bigcup_{i=1}^d\Ann_i$,
where the $\Ann_i\in \ANNp(B_i,k_i,j_i)$ are $d$ disjoint annulus substructures, and the $A_i$'s are fixed by $\ANNpp$. (It is possible that some of the $A_i$ here are empty annuli, see, e.g., Figure~\ref{fig:annuli}.) For such an  $\Ann$, we have by~\eqref{e.hann}
$$
h(\Ann)= \prod_{i=1}^d \bigl(h(A_i)\,h(\Ann_i)\bigr)\,.
$$
Hence,
\begin{equation*}
\begin{aligned}
\sum_{\Ann\in\ANNpp} h(\Ann)^2
&
\le \sum_{\Ann_1\in\ANNp(B_1,k_1,j_1)}\;
\sum_{\Ann_2\in\ANNp(B_2,k_2,j_2)}
\cdots
\sum_{\Ann_d\in\ANNp(B_d,k_d,j_d)}\;
\prod_{i=1}^d \bigl(h(A_i)\,h(\Ann_i)\bigr)^2
\\&
=
\prod_{i=1}^d\Bigl(h(A_i)^2\,
\sum_{\Ann\in\ANNp(B_i,k_i,j_i)}
h(\Ann)^2
\Bigr)
\\&
=
\prod_{i=1}^d\bigl(h(A_i)^2\,H(j_i,k_i)\bigr) .
\end{aligned}
\end{equation*}
Now, $h(A_i)=O(1)\,\alpha_4(k_i\,2^{j_i}\,r,2^{j}\,r)$, where we use the convention
that $\alpha_4(\rho,\rho')=1$ if $\rho\ge \rho'$.
Furthermore, given $B$, $d$, $j_1,\dots,j_d$ and $k_1,\dots,k_d$, there are no more than $O(1)\,(k\,2^{j-j_i})^2$ possible choices for $B_i$.
Hence, summing the above over all such choices, we get
$$
H(j,k)
\le
\sum_{d=2}^k \sum_{(j_1,\dots,j_d) \atop (k_1,\dots,k_d)} 
\prod_{i=1}^d
\Bigl(
O(k\,2^{j-j_i})^2\,
\alpha_4(k_i\,2^{j_i}\,r,2^{j}\,r)^2\, H(j_i,k_i)\Bigr).
$$
The inductive hypothesis~\eqref{e.induct} for each of the pairs $(j_i,k_i)$ implies
$H(j_i,k_i)\le \bgamma_r(2^{j_i}\,r)\,g(k_i)$ when $j_i> J(k_i)$, since we decreased the exponent from 1.99 to 1, with a base less than 1. This upper bound also holds when $j_i\le J(k_i)$, because of~\eqref{e.smallj}. 
We now use this, together with the quasi-multiplicativity and the RSW estimate (\ref{e.qRSW}), as before,
to obtain 
\begin{equation}
\label{e.gammasum}
\begin{aligned}
H(j,k)
&
\le 
\sum_{d=2}^k \sum_{(j_1,\dots,j_d) \atop (k_1,\dots,k_d)} 
\prod_{i=1}^d
\Bigl(O(k)^2\, k_i^{O(1)}\,\frac{\bgamma_r(2^jr)}{\bgamma_r(2^{j_i}r)}\, \bgamma_r(2^{j_i}\,r)\,g(k_i)\Bigr)
\\&
\le
\sum_{d=2}^k \sum_{(j_1,\dots,j_d) \atop (k_1,\dots,k_d)} 
\prod_{i=1}^d
\Bigl(O(k)^{O(1)}\,\bgamma_r(2^j\,r)\,g(k_i)\Bigr)
\\&
\le
\sum_{d=2}^k \bigl(O(k)^{O(1)}\,j\,\bgamma_r(2^j\,r)\bigr)^d
 \sum_{(k_1,\dots,k_d)}\prod_{i=1}^d g(k_i)\,.
\end{aligned}
\end{equation}
Since $\log_2^2(x+2)$ is concave on $[0,\infty)$, it follows that when $k_1+\cdots+k_d=k$,
we have $\prod_{i=1}^d g(k_i)\le g(k/d)^d$. 
Since for a fixed $d$ the number of possible $d$-tuples $(k_1,\dots,k_d)$ is clearly bounded by $k^d$, the above gives
\begin{equation*}
H(j,k)
\le
\sum_{d=2}^k \bigl(c\,k^c\,j\,\bgamma_r(2^j\,r)\,g(k/d)\bigr)^d,
\end{equation*}
for some constant $c$.
Noting that $g(k/d)\le g(k/2)$, and setting 
$$
\Phi=\Phi(j,k):=c\,k^{c}\,j\,\bgamma_r(2^j\,r)\,g(k/2),
$$
we then get $ H(j,k) \le \Phi^2+\Phi^3+\Phi^4+\cdots= \Phi^2/(1-\Phi)$, provided that $\Phi<1$.
Consequently, the proof of~\eqref{e.induct} and of Proposition~\ref{pr.local} are completed by the following lemma.
\QED

\begin{lemma}
For all $\eps\in(0,1/2)$, if $\coa$ in the definition of $g$ is chosen sufficently large (depending
only on $\eps$),
then for all $k\in\N_+$ and all $j>J(k)$, 
\begin{equation}
\label{e.Aest}
\Phi(j,k) < \bigl(g(k)\,\bgamma_r(2^j\,r)\bigr)^{1-\eps}/2
\end{equation}
and $\Phi(j,k)<1/2$. 
\end{lemma}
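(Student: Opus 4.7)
The plan is to reduce~\eqref{e.Aest} to an inequality that pits three explicit factors against each other, and to show that a sufficiently large choice of $\coa$ makes them combine favorably.

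First, observe that the ``$\Phi(j,k)<1/2$'' part of the conclusion is automatic from~\eqref{e.Aest}: for $j>J(k)$ the definition of $J$ gives $Y:=g(k)\,\bgamma_r(2^j\,r)\le 1$, hence $Y^{1-\eps}/2\le 1/2$. So only~\eqref{e.Aest} needs proof. Dividing~\eqref{e.Aest} by $Y^{1-\eps}/2$ and using the identity $\bgamma_r(2^j\,r)^\eps\,g(k)^\eps = Y^\eps$, the claim becomes
\[
2c\,k^c\,j\,Y^\eps\,\frac{g(k/2)}{g(k)} < 1.
\]

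The three factors $g(k/2)/g(k)$, $Y^\eps$, and $j$ will each be estimated. For the first, writing $\log_2(k/2+2)=\log_2(k+4)-1$ and using $\log_2(k+4)\le\log_2(k+2)+1$, one checks that $\log_2(k+2)-\log_2(k/2+2)\ge\log_2(6/5)>0$ for all $k\ge 1$; multiplying by $\log_2(k+2)+\log_2(k/2+2)\ge\log_2(k+2)$ gives $\log_2^2(k+2)-\log_2^2(k/2+2)\ge \rho\log_2(k+2)$ for an absolute $\rho>0$, and hence $g(k/2)/g(k)\le (k+2)^{-\rho\coa}$. For $Y^\eps$, the definition of $J(k)$ yields $\bgamma_r(2^{J(k)+1}\,r)\le 1/g(k)$, and then~\eqref{e.gamdecay} upgrades this to $Y\le c_0\,2^{-c_1(j-J(k)-1)}$ for $j\ge J(k)+1$. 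Writing $j=J(k)+1+t$, we get
\[
j\,Y^\eps \;\le\; c_0^\eps\,(J(k)+1+t)\,2^{-c_1\eps t} \;\le\; c_0^\eps\bigl(J(k)+1+C_\eps\bigr),
\]
where $C_\eps:=\sup_{t\ge 0}t\,2^{-c_1\eps t}<\infty$. Finally, applying~\eqref{e.gamdecay} with $j'=0$ (together with $\bgamma_r(r)=1$) to the defining inequality $\bgamma_r(2^{J(k)}\,r)>1/g(k)$ gives $J(k)\le O\bigl(\coa\log_2^2(k+2)\bigr)$.

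Combining, the left side of the displayed inequality is at most a constant (depending on $\eps$ and on $c,c_0,c_1$ only) times
\[
\bigl(\coa\log_2^2(k+2)+1\bigr)\,k^c\,(k+2)^{-\rho\coa}.
\]
A routine logarithmic-derivative check shows that, once $\rho\coa-c$ exceeds a fixed threshold, this function is decreasing in $k$ for $k\ge 1$, so its supremum is attained at $k=1$ and equals $O_\eps(\coa)\cdot 3^{-\rho\coa}$, which tends to $0$ as $\coa\to\infty$. Choosing $\coa=\coa(\eps)$ sufficiently large makes this bound less than $1/(2c\,c_0^\eps)$ uniformly in $j>J(k)$ and $k\ge 1$, completing the proof. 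The only real obstacle is bookkeeping: the growth factor $j$ must be absorbed by the surplus decay in $\bgamma_r$ beyond what is needed to make $Y\le 1$, while the polynomial-in-$k$ prefactor and the $\coa\log_2^2(k+2)$ coming from $J(k)$ must both be absorbed by the $(k+2)^{-\rho\coa}$ gap produced by the difference $g(k/2)/g(k)$; the fact that $\coa$ is a free parameter appearing in the exponents (but not in the polynomial factors) is what closes the argument.
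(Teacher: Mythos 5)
Your proof is correct and follows essentially the same strategy as the paper's: rewrite $\Phi/\bigl(g(k)\,\bgamma_r(2^j r)\bigr)^{1-\eps}$, bound the factor involving $j$ and $\bgamma_r$ by $O_\eps(J(k)+1)$ using the exponential decay~\eqref{e.gamdecay}, use $J(k)=O(\coa)\log_2^2(k+2)$, and let the gap between $g(k/2)$ and $g(k)$ (which produces the $(k+2)^{-\rho\coa}$ factor, with $\coa$ appearing in the exponent but not in the polynomial prefactor) win as $\coa\to\infty$. Your version is slightly more explicit — you compute $\rho=\log_2(6/5)$ and work out the monotonicity-in-$k$ check that the paper compresses into "It is easy to verify" — but the idea and the bookkeeping are the same.
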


\begin{proof}
The estimate $\Phi<1/2$ follows from~\eqref{e.Aest}, since $g(k)\,\bgamma_r(2^j\,r)\le 1$.
Write
\begin{equation}
\label{e.Qrat}
\Phi/\bigl(g(k)\,\bgamma_r(2^j\,r)\bigr)^{1-\eps} =
c\,k^{c}\,\bgamma_r(2^j\,r)^\eps\,j\,g(k/2)^\eps\,\Bigl(g(k/2)/g(k)\Bigr)^{1-\eps}.
\end{equation}
Since~\eqref{e.gamdecay} holds for $\bgamma$ and
$\bgamma_r(2^{J(k)+1}\,r)\,g(k)\le 1$, we have
$\bgamma_r(2^j\,r)^\eps g(k/2)^\eps \le
\bgamma_r(2^j\,r)^\eps g(k)^\eps \le
O(1)\, 2^{-\eps\,c_1(j-J(k))}$.
Hence,
$j\,\bgamma_r(2^j\,r)^\eps g(k/2)^\eps \le O_\eps\bigl(J(k)+1\bigr)$.
As we noted before, $J(k)=O(\coa)\,\log_2^2(k+2)$.
Hence,~\eqref{e.Qrat} gives
$$
\Phi/\bigl(g(k)\,\bgamma_r(2^j\,r)\bigr)^{1-\eps} 
\le
O_\eps(\coa)\,\log_2^2(k+2)\,k^{c}\,\Bigl(g(k/2)/g(k)\Bigr)^{1/2}.
$$
It is easy to verify that the right hand side tends to $0$ as $\coa\to\infty$,
uniformly in $k\in\N_+$. This completes the proof. \qed
\end{proof}

For later use, let us point out that having an exponent larger than 1 in (\ref{e.induct}) was important 
when we derived the bound \eref{e.ANNnk}, but not for the induction.
In (\ref{e.gammasum}), we used only the bound (\ref{e.induct}) with the exponent 1. 
This was sufficient because of $d\ge 2$.

\begin{remark}\label{r.diam}
Our proof shows a {\bf clustering effect} for spectral samples of 
very small size. Firstly, a positive proportion of our main upper bound~(\ref{e.ANNnk}) comes from annulus structures with a single overcrowded cluster, as it is clear from~(\ref{e.ANNjsum}). Moreover, the contribution from sets with large diameter is small: for any $d\le \frac R r$, 
the calculation in (\ref{e.ANNjsum}) implies immediately that if $k\leq O(1)\log_2 d$, then 
$$
\PB{\Spec\in\mathcal S(r,k),\ \text{diam}(\Spec) > rd} \leq \gamma_r(R)\,\gamma_r(rd)^{1+o(1)}\,d^{\,o(1)}
$$
(of course, the exponent $0.99$ in~\eqref{e.ANNjsum} can be modified to $1+o(1)$).
Recall that (\ref{e.gamdecay}) says $\gamma_r(rd) < O(1)\,d^{-c_1}$ for some $c_1>0$.
Since we will see in Section~\ref{s.mainlemma} that Lemma~\ref{l.basic} is sharp, and will handle the boundary issues in Subsection \ref{ss.boundary}, we easily obtain that $\Pb{1\le |\Spec_r|\le k}\ge \Pb{ |\Spec_r| =1} \ge O(1) \gamma_r(R)$. Therefore the above bound gives for $k\leq O(1)\log_2 d$ that 
$$
\PB{\text{diam}(\Spec) > rd \, \Big| \, 1\leq |\Spec_r|\leq k} \leq \gamma_r(rd)^{\,1+o(1)}\,.
$$ 
To illustrate this formula (with $r=1$), if one is looking for spectral samples of size less than $\log R$,
then they have small diameter:
$$
\PB{\text{diam}(\Spec) > R^{\alpha} \, \Big| \, 1\leq |\Spec|\leq \log R} \le \gamma(R^{\alpha})^{1+o(1)},
$$
which for the triangular lattice gives $R^{-\alpha/2+o(1)}$.
\end{remark}

\begin{remark}\label{r.annpivo} 
Our strategy proving Proposition \ref{pr.verysmall} also works for pivotals, showing 
$$
\Pb{|\Piv_r|=k} \leq g(k)\,\alpha_6(r,\nn)\,(\nn/r)^2\,.
$$
The only difference is that we need to replace the factor $\alpha_4(r,\nn)^2$, coming from Lemma~\ref{l.basic} and its generalization Lemma~\ref{l.Comph}, with $\alpha_6(r,\nn)$. The reason for this factor is that having pivotals in the inner disk of an annulus but no pivotals in the annulus itself corresponds to the 6-arm event in the annulus. On $\Z^2$ it is known that $\alpha_6(r,\nn)\,(\nn/r)^2 \leq O(1)\,(r/\nn)^\eps$ for some $\eps>0$ \cite[Corollary A.8]{\SchrammSteif}; on the triangular lattice,
$\alpha_6(r,\nn)\,(\nn/r)^2=(r/\nn)^{11/12+o(1)}$ \cite{\SmirnovWerner}.
Thus the clustering effect for pivotals is expressed here in the following way:
$$
\PB{\text{diam}(\Piv) > R^{\alpha} \, \Big| \, 1\leq |\Piv|\leq \log R} \le R^{2\alpha} \alpha_6(R^{\alpha})^{1+o(1)},
$$
which for the triangular lattice gives $R^{-\frac {11}{12}\alpha +o(1)}$.
\end{remark}

\subsection{Handling boundary issues}\label{ss.boundary}

\proofof{Proposition~\ref{pr.verysmall}}
Since the proof is rather similar to that of Proposition~\ref{pr.local}, 
we will just indicate the necessary modifications.

First of all, we need the {\bf half-plane} and {\bf quarter-plane} {\bf $j$-arm events}. So let   $\alpha_j^+(r,R)$ be the probability of having $j$ disjoint arms of alternating colors connecting $\p B(0,r)$ to $\p B(0,R)$ inside the half-annulus $(B(0,R)\setminus B(0,r)) \cap (\R\times\R_+)$. Similarly, $\alpha_j^{++}(r,R)$ is the probability of having $j$ arms of alternating colors connecting $\p B(0,r)$ to $\p B(0,R)$ inside the quarter-annulus $(B(0,R)\setminus B(0,r)) \cap (\R_+\times\R_+)$. As before, we let $\alpha_j^+(R):=\alpha_j^+(2j,R)$ and similarly for $\alpha_j^{++}$. The RSW and quasi-multiplicativity bounds (\ref{e.RSW}, \ref{e.qm}) hold for these quantities, as well.

The reason for these definitions is that if $x$ is a point on one of the sides of $[0,\nn]^2$ such that its distance from the other sides is at least $r'$, then the percolation configuration inside $B(x,r)$ has an effect on the crossing event only if we have the 3-arm event in the half-annulus $(B(x,r')\setminus B(x,r)) \cap [0,\nn]^2$. Similarly, if $x$ is one of the corners of $[0,\nn]^2$, and $r' \leq \nn$, then we need the 2-arm event in the quarter-annulus $(B(x,r')\setminus B(x,r)) \cap [0,\nn]^2$ in order for the configuration inside $B(x,r)$ to have any effect.

In the annulus structures we are going to build, we will have to consider clusters that are close to a side or even to a corner of $[0,\nn]^2$. These will be called side and corner clusters (defined precisely below). To understand the contribution of such clusters, we define 
$$\gamma^+_r(\rho):= (\rho/r)\, \alpha_3^+(r,\rho)^2 \quad\text{and}\quad \gamma^{++}_r(\rho):=\alpha_2^{++}(r,\rho)^2.$$
The function $\gamma^+_r$ plays a role similar to $\gamma_r$, but in relation
to the side clusters. Similarly, $\gamma^{++}_r$ relates to the corner clusters.
The motivation for the linear prefactor of $\rho/r$ in the definition of
$\gamma^+_r$ is that (up to a constant factor) the number of different ways to
choose a square of some fixed size whose center is on a line segment of length $\rho$ and whose position on the line segment is divisible by $\rho'$ is $\rho/\rho'$, when $\rho>\rho'>0$.
Such a prefactor is not necessary in the case of $\gamma^{++}_r$, because
it corresponds to a corner, and there is no choice in placing a square of a fixed size into a fixed corner.

As we will see, the key reason for the boundary $\p[0,\nn]^2$ to play no significant role in the behavior of the spectral sample is that there is a $\delta>0$ and a constant $c$ such that when $r\le\rho'\le\rho$
\begin{equation}\label{e.gammadelta}
\frac{\gamma^{+}_r(\rho)}
{\gamma^{+}_r(\rho')}
\leq c\,
\Bigl(\frac{\gamma_r(\rho)}
{\gamma_r(\rho')}\Bigr)^{1+\delta} \quad\text{and}\quad 
\frac{\gamma^{++}_r(\rho)}
{\gamma^{++}_r(\rho')}
 \leq c\,
\Bigl(\frac
{\gamma_r(\rho)}
{\gamma_r(\rho')}\Bigr)
^{1+\delta}.
\end{equation}

We now prove these inequalities. Firstly, it is known
that
\begin{equation}
\label{e.a+3}
\alpha^+_3(\rho',\rho) \asymp (\rho'/\rho)^2,
\end{equation}
see \cite[First exercise sheet]{\WWperc}. Hence $\gamma^{+}_r(\rho)/\gamma^+_r(\rho') \asymp (\rho'/\rho)^3$. Secondly, observe that 
\begin{equation}
\label{e.a++2}
\alpha_2^{++}(\rho',\rho)^2 \leq \, \alpha_4^{+}(\rho',\rho) \leq \, \alpha_3^{+}(\rho',\rho)\,\alpha_1^{+}(\rho',\rho)\leq O(1)\,\alpha_3^{+}(\rho',\rho) \, (\rho'/\rho)^\eps,
\end{equation}
with some $\eps>0$, where the second step used Reimer's inequality \cite{\Reimer} (or color-switching and the BK inequality), and the third step used RSW for the 1-arm half-plane event. Therefore, $\gamma^{++}_r(\rho)/\gamma^{++}_r(\rho') \leq O(1) (\rho'/\rho)^{2+\eps}$. On the other hand,~(\ref{e.a4less2}) implies that $\gamma_r(\rho)/\gamma_r(\rho') \geq (\rho'/\rho)^{2-\eps'}$ for some $\eps'>0$. Combining these upper and lower bounds we get~(\ref{e.gammadelta}).

Let us note that for the triangular lattice we actually know that
\begin{equation}\label{e.a++2tri}
\alpha_2^{++}(\rho',\rho) =  (\rho'/\rho)^{2+o(1)},
\end{equation}
since $\alpha_2^{++}(\rho',\rho)=\alpha_2^{+}(\rho',\rho)^{2+o(1)}$ by the conformal invariance of the scaling limit, while $\alpha_2^{+}(\rho',\rho)\asymp \rho'/\rho$ is known on both lattices by RSW arguments, see again \cite[First exercise sheet]{\WWperc}.
\bigskip

After these preparations, we define $\mathcal S(r,k)$ as the set of all $S\subseteq\I$ such that
$|S_r|=k$. The set $V=V_S$ is defined as $S_r$.
As it turns out, we will need to limit the diameters of the clusters.
For that purpose, set
$\bar j=\bar j_k:=\lfloor \log_2(\nn/(k\,r))\rfloor -5$ and $J:=\{0,1,\dots,\bar j\}$.
Clearly, we may assume without loss of generality that $\bar j>0$.
The clusters at level $0$ are once again the sets of the type $C=\{x\}$, where $x\in V$.
If $j\in J$, then an {\bf interior cluster} at level $j$ is defined as
a connected component $C\subseteq V$ of $G_j$
such that the distance from $C$ to $\p ([0,\nn]^2)$ is larger than $2^j\,r$
and $C$ is not connected in $G_{j-1}$.
The interior clusters at level $0$ are just the connected components of
$G_0$; that is, the singletons in $V$. 
Inductively, we define the {\bf side clusters}:
a connected component $C\subseteq V$ of $G_j$ is
a side cluster at level $j\in J$ if it is within
distance $2^j\,r$ of precisely one of the four boundary edges
of $[0,\nn]^2$ and it is not a side cluster at any level
$j'\in\{1,2,\dots,j-1\}$.
Likewise, a {\bf corner cluster} at level $j\in J$ is a
connected component $C\subseteq V$ of $G_j$ that is within distance $2^j\,r$ of precisely two adjacent
boundary edges of $[0,\nn]^2$, but is not a corner cluster at any level $j'\in\{1,2,\dots,j-1\}$.
Finally, the unique {\bf top cluster} has level $\bar j+1$ (by definition) and consists of all of $V$.
With these choices, when $j\in J$ every connected component of $G_j$ is either
an interior, side, or corner cluster, and this is the reason for setting the upper bound $\bar j$.

Note that the top cluster contains at least one cluster (which could be a
side cluster or a corner cluster or an interior cluster),
a corner cluster contains at least one side or interior cluster, and possibly
also a corner cluster, a side cluster contains
an interior cluster or at least two side clusters (but no corner clusters),
and an interior cluster at level $j>0$
contains at least two interior clusters (and no side or corner clusters).

The inner and outer bounding squares associated with the clusters are defined 
as before, except that the squares associated with side clusters
are centered at points on the corresponding edge
and squares associated with corner clusters are centered at the corresponding corner,
which is the meeting point of the two sides of $[0,\nn]^2$ closest to the
cluster.  There are no squares associated with the top cluster.
The annulus associated with each cluster
(other than the top cluster, which does not have its own annulus)
is the annulus between its outer square and its inner square, provided that the outer square
strictly contains the inner square.


We define $\bgamma_r^+(\rho):=\inf_{\rho'\in[r,\rho]}\gamma^+_r(\rho)$ and similarly for $\bgamma_r^{++}$.
The exponential decay (\ref{e.gamdecay}) holds for these functions as well. As before, clusters (even side or corner clusters) are considered overcrowded if they satisfy $g(|C|)\,\gamma_r(2^j\,r)>1$, and the annulus structure $\Ann(S)$ is defined as above.

There are some modifications necessary in the definition of $h(\Ann)$ in order
for~\eqref{e.ANN} to still hold in the present setting.
For a side annulus $A$ define $h(A)$ as the probability
of the $3$-arm crossing event within $A\cap[0,\nn]^2$ between
the two boundary components of $A$, and for a corner annulus
define $h(A)$ as the probability of the $2$-arm crossing event
within $A\cap[0,\nn]^2$ between the two boundary components of $A$.
With these modifications,~\eqref{e.ANN} still holds, and the proof is
essentially the same. 

The definition of $H(j,k)$ is similar to the one in the proof of Proposition~\ref{pr.local},
but now the supremum only refers to interior squares.
(In the definition of $ \ANNp(B,k,j)$, we presently restrict
to such $S$ so that $V_S$ is an interior cluster at level $j$,
in addition to being the top cluster, and when we write $B(V_S)$, 
it is understood as the inner square defined for an interior cluster.)
We also define $H^+(j,k)$, $H^{++}(j,k)$, which refer to the supremum over
side and corner squares, respectively. We also set 
$$
H^\square(\nn,k):= \sum_{S\in\mathcal S(r,k)} h\bigl(\Ann(S)\bigr)^2\,,
$$
and our goal is to show that this quantity is at most $g(k)\,\gamma_r(\nn)$.

We first prove a similar bound on $H^+(j,k)$. It is convenient to
separate the annulus structures $\Ann'(S)$ where $B(V_S)=B$ and $B$ is
a side square into those where $V_S$ has a single child cluster and
into those where $V_S$ has at least two child clusters.
In the case of a single child cluster, that child has to be an interior
cluster, and using~\eqref{e.induct}, the argument giving~\eqref{e.ANNnk}
now gives the bound
$$
\sum_{\Ann} h(\Ann)^2\le
O(1)\,k^{O(1)} g(k) \; \bgamma_r(2^j\,r)\,,
$$
where the sum extends over such (single interior cluster child) annulus structures.
By increasing the constant $\coa$ in the definition of $g$, we may then incorporate
the factor $O(1)\,k^{O(1)}$ into $g$.
The bound on the sum over the annulus structures with at least two child clusters 
can now be established by induction on $j$, in almost the same way that~\eqref{e.induct}
was proved by induction. The main difference here is that the children can
fall into two types, which slightly complicates the calculations but adds no
significant difficulties.
(Indeed, since each square among $B_i$ at level $j_i$ ($i=1,2,\dots,d$)
can either correspond to a side cluster or an interior cluster, each factor
in the first row of~\eqref{e.gammasum} presently needs to be replaced by
$$
O(k)^2\, k_i^{O(1)}\Bigl(
\frac{\bgamma_r(2^jr)}{\bgamma_r(2^{j_i}r)}
+
\frac{\bgamma^+_r(2^jr)}{\bgamma^+_r(2^{j_i}r)}
\Bigr)\bgamma_r(2^{j_i}\,r)\,g(k_i)\,.
$$
Thanks to~\eqref{e.gammadelta}, the fraction featuring $\bgamma^+$ is dominated
by a constant times the other fraction and is therefore certainly
inconsequential.)
A point worth noting is that in the inductive prove of~\eqref{e.induct} we only used
the inductive hypothesis with exponent $1$ in place of $1.99$.
In our present situation, this is what we have at our disposal in the base step of the induction, since the induction now has to be started from overcrowded clusters or side clusters with a single child cluster. 

Summing up the two types, we conclude (by changing $\coa$ again, if necessary) that
$H^+(j,k)\le g(k)\,\bgamma_r(2^j\,r)$.
Of course, in this type of argument we should not change
$\coa$ at every induction step, for then it may end up depending on $\nn$.
But we have not committed any such offence.

We can show $H^{++}(j,k) \leq g(k)\,\bgamma_r(2^j\,r)$ similarly. We separate the annulus structures
into those with a single child at the top level that is an interior cluster,
those with a single child that is a side cluster, and those with several
children at the top level. The first type is handled as in the bound for $H^+(j,k)$.
The second type is handled similarly, but now we do not have the exponent $1.99$ as
in~\eqref{e.induct}, but only the exponent $1$ that we showed for $H^+(j,k)$. 
So, we use instead the fact that $\delta>0$ in~\eqref{e.gammadelta}.
The argument bounding the third type uses induction as in the
multi-child case of $H^+(j,k)$.

Finally, the bound for $H^\square(\nn,k)$ follows in the same way, using our previous bounds for $H^+(j,k)$ and $H^{++}(j,k)$ together with~\eqref{e.gammadelta}. The last small difference is that the child clusters of the top cluster (at some levels $j_i$) have outer bounding squares of size $\asymp r2^{\bar j}$, but the number of ways to place each of these clusters is $\asymp (\nn/(r2^{j_i}))^2$, instead of $\asymp (2^{\bar j}/2^{j_i})^2$. But $\nn/(r2^{\bar j})\asymp k$, so this discrepancy gives only an $O(k^2)$ factor for each child cluster, which can be absorbed into $g(k)$ in the usual way.
This completes the proof.
\QED

\subsection{The radial case}\label{ss.smallradial}

In this subsection, we will consider the spectral sample $\Spec=\Spec_f$, where $f$ is the indicator function
(not the $\pm1$-indicator function) of the $\ell$-arm event in the
annulus $[-R,R]^2\setminus[-\ell,\ell]^2$ and
$\ell=1$ or $\ell\in 2\,\N_+$. Thus, $\Eb{f}=\Eb{f^2}\asymp \alpha_\ell(R)$. Instead of the probability measure for $\Spec$ that we have worked with so far, it will be easier notationally to use the un-normalized measure $\Qb{\Spec=S}:=\widehat f(S)^2$.

For any $S\subset \R^2$, we let $S^*:=S \cup \{0\}$, 
  and define $S_r$ as before. In particular, $\Spec^*_r$ is the set of $r$-squares whose intersection with $\Spec^*$ is nonempty. We are going to prove the following analogue of Proposition \ref{pr.verysmall}:

\begin{proposition}\label{pr.verysmallRad} 
Let $\ell=1$ or $\ell\in2\,\N_+$,
and let $\Spec$ denote the spectral sample of the
indicator function of the $\ell$-arm event
in the annulus $[-R,R]^2\setminus [-\ell,\ell]^2$.
Then there is some $\cod=\cod_\ell>0$ such that
$$
\Qb{|\Spec_r|=k} \leq g^*(k)\; \alpha_\ell(r,R)^2\, \alpha_\ell(r).
$$
holds with $g^*(k):=2^{\cod \log_2^2(k+2)}$
for all $k\in\N_+$ and all $R\ge r\ge \ell$.
\end{proposition}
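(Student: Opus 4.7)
The plan is to follow the strategy of Proposition~\ref{pr.verysmall}, performing the cluster analysis on $\Spec^*_r$ so that the origin is always included as a phantom point. The construction of the annulus structure $\Ann(S)$ via interior, side, and corner clusters, inner and outer bounding squares, and the notion of overcrowded clusters proceeds as in Subsections~\ref{ss.local} and~\ref{ss.boundary}, with one structural feature: at each level there is a distinguished \emph{origin cluster} containing the origin's $r$-square, whose inner bounding square is centered at $0$. The associated annulus of each origin cluster encloses the origin in its inner disk, and the top-cluster annulus is taken to extend out to scale $\asymp R$, as in Proposition~\ref{pr.local}. Because the origin cluster's position is fixed at $0$, no $(R/r)^2$-style positional summation is needed for it, which is precisely what removes the corresponding factor that would otherwise appear.

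The one main additional trick is a radial refinement of Lemma~\ref{l.Comph}. For an annulus structure $\Ann$ contained in $[-R,R]^2\setminus[-\ell,\ell]^2$, let $A_0\in\Ann$ denote the innermost annulus enclosing the origin, with inner radius $r_0=r_0(\Ann)$, and define
\[
h^*(\Ann) := h_\ell(A_0)\,\prod_{A\in\Ann\setminus\{A_0\}} h_4(A),
\]
where $h_4(A)$ is the $4$-arm probability across $A$ and $h_\ell(A_0)$ the alternating $\ell$-arm probability across $A_0$. I claim
\[
\Qb{\Spec \text{ is compatible with } \Ann}\ \leq\ O(1)\,\alpha_\ell(r_0)\,h^*(\Ann)^2.
\]
The proof mimics Lemma~\ref{l.Comph}: with $\window$ the configuration on $\bigcup\Ann$ and $W$ the span of $\{\chi_S : S \text{ compatible with } \Ann\}$, $P_W F_\window$ vanishes unless each $A\in\Ann$ carries its appropriate arm event for $\window$ (the key point being that if $A_0$ lacks the $\ell$-arm event then $F_\window\equiv 0$, since no $\ell$-arm from $\ell$ to $R$ is then possible). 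The refinement is that, when these arm events do hold,
\[
\|P_W F_\window\|^2\,\leq\,\|F_\window\|^2\,=\,\P[\,f=1\mid\window\,]\,\leq\,O(1)\,\alpha_\ell(r_0),
\]
because for $f=1$ the random $\eta$-bits inside $B(0,r_0)$ must realize an $\ell$-arm from $\partial B(0,\ell)$ to $\partial B(0,r_0)$. Combining these via Cauchy--Schwarz yields
\[
\Qb{\Spec \text{ comp.\ with }\Ann}\,\leq\,\bigl(\E^\window[\|P_W F_\window\|]\bigr)^2\,\leq\,\E^\window[\|P_W F_\window\|^2]\cdot\P^\window[\text{arm events}]\,\leq\,O(1)\,\alpha_\ell(r_0)\,h^*(\Ann)^2.
\]

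The remainder is an adaptation of the inductive argument of Subsections~\ref{ss.local}--\ref{ss.boundary}. Since $\alpha_\ell(r_0)\leq O(1)\,\alpha_\ell(r)$ uniformly, the factor $\alpha_\ell(r)$ pulls out globally; the $\ell$-arm factors along the chain of origin-enclosing annuli telescope via quasi-multiplicativity to $\alpha_\ell(r,R)^2$, up to a $k^{O(1)}$-factor absorbed into $g^*(k)$; and the non-origin clusters contribute $4$-arm factors with positional $(R/r)^2$-style counts, bounded by essentially the same inductive estimate on $H(j,k)$ as in the square case. The main obstacle will be verifying the refined bound $\|F_\window\|^2\leq O(1)\,\alpha_\ell(r_0)$ carefully (using independence of the $\eta$-bits in $B(0,r_0)$ from $\window$ together with standard RSW and quasi-multiplicativity inputs); once that is in hand, the inductive bookkeeping becomes a routine extension of the square-case argument, with care taken to track the $\ell$-arm factors along the origin chain separately from the $4$-arm factors for the other clusters.
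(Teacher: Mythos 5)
Your overall strategy coincides with the paper's: both build \emph{centered annulus structures} (mixing $4$-arm annuli for non-origin clusters and $\ell$-arm annuli for origin clusters), both treat the origin as a phantom point of $\Spec$, and both identify the same ``main additional trick'', namely a radial refinement of Lemma~\ref{l.Comph} in which the compatibility probability picks up an extra factor $\alpha_\ell(r_0)$ from the configuration inside the inner disk of the innermost centered annulus. However, the key lemma as you state it has a genuine gap. You take $A_0$ to be ``the innermost annulus enclosing the origin'' and argue that $\|F_\omega\|^2 = \Pb{f=1\md\omega}\le O(1)\,\alpha_\ell(r_0)$ because ``the random $\eta$-bits inside $B(0,r_0)$ must realize an $\ell$-arm''. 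This is only valid if \emph{all} bits inside $B(0,r_0)$ are among the random bits $\eta$, i.e., if no other annuli of $\Ann$ sit inside the inner disk of $A_0$. In your construction this need not be so: the innermost non-removed origin cluster may have non-origin sub-clusters whose annuli lie inside $B(0,r_0)$. When $\omega$ includes those bits, the $\ell$-arm event from $\ell$ to $r_0$ is no longer a function of $\eta$ alone, and the per-$\omega$ bound can exceed $O(1)\,\alpha_\ell(r_0)$ (a favorable $\omega$-configuration inside $B(0,r_0)$ inflates the conditional probability); nor does the averaged form $\Eb{\|F_\omega\|^2\,\1_{\text{arm events}}}$ factor as you need, since the $\ell$-arm event from $\ell$ to $r_0$ and the arm events on $\bigcup\Ann$ are not supported on disjoint bit-sets. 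The paper handles this precisely: Lemma~\ref{l.ComphRad} is stated only for centered annulus structures whose innermost centered annulus has a clean inner disk, and when the cluster construction fails to produce such a nonempty annulus, a degenerate width-zero centered annulus is inserted. Without that fix, your lemma is false as stated. Two smaller points: the lemma should be stated and applied for $\Spec^*$ rather than $\Spec$ (the constructed $\Ann(S)$ is compatible with $S^*$, not necessarily $S$; you note this in the plan but then state the lemma for $\Spec$, which is a weaker statement that would be vacuous for spectral samples far from the origin); and the inductive bookkeeping you describe qualitatively (telescoping of $\ell$-arm factors along the origin chain, $H(j,k)$ for the rest) needs the separate quantity $H^*(j,k)$ with its own $1+\delta$-exponent induction, which is a non-trivial (if routine) adaptation that you do not carry out.
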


It might be surprising at first glance that no four-arm probabilities show up in this upper bound. See~(\ref{e.envelope}) below for a rough explanation.

\proof The main difference from the square crossing case is that we will use {\bf centered annulus structures}, which have two kinds of annuli: annuli centered at the origin ($0$), for which we are interested in the $\ell$-arm event, and annuli with outer square disjoint from 0, for which we are interested in the 4-arm event.
  Each centered annulus structure is required to have an annulus centered at $0$
 whose inner square does not contain any other annuli.
The {\bf inner radius} of the annulus structure is defined as the inner radius of this innermost centered annulus.
For a centered annulus structure $\Ann$, we define $h^*(\Ann)$ to be the probability of having the 4-arm event in the annuli with outer square disjoint from $0$ and the $\ell$-arm event in the annuli centered at $0$. Now, we have the following analogue of Lemma \ref{l.Comph}. 

\begin{lemma}\label{l.ComphRad}
For any centered annulus structure $\Ann$ with inner radius $r_\Ann$, 
$$
\Qb{\Spec^* \text{ is compatible with }\Ann} \le \alpha_\ell(r_\Ann) \, h^*(\Ann)^2.
$$
\end{lemma}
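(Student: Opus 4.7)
The strategy is to repeat the Fourier-projection argument of Lemma~\ref{l.Comph}, gaining the extra factor $\alpha_\ell(r_\Ann)$ from a sharper bound on $\|F_\window\|$. Write $\omega=(\window,\eta)$ with $\window$ the configuration on $\bigcup\Ann$ and $\eta$ on its complement, and set $F_\window(\eta):=f(\window,\eta)$. A subset $S\subseteq\I$ makes $\Spec^*=S\cup\{0\}$ compatible with $\Ann$ if and only if $S$ is disjoint from $\bigcup\Ann$ and meets the inner disk of every annulus in $\Ann$ whose outer square is disjoint from $0$; compatibility with the inner disks of the \emph{centered} annuli is automatic since $0$ lies in each. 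Letting $W$ be the linear span of the $\chi_S$ for such $S$, one has $\Qb{\Spec^*\text{ compatible with }\Ann}=\|P_W f\|^2$. Exactly as in Lemma~\ref{l.Comph}, $P_W f=\E^\window[P_W F_\window]$, and therefore $\|P_W f\|\le\E^\window\bigl[\|P_W F_\window\|\bigr]$ by the $L^2$ triangle inequality.

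Two ingredients will bound the right-hand side. First, the pivotality argument of Lemma~\ref{l.Comph} shows that if $P_W F_\window\ne 0$, then the $4$-arm event holds in every non-centered annulus of $\Ann$ under $\window$; in addition, $P_W F_\window\ne 0$ forces $F_\window\not\equiv 0$, and any realisation of the full $\ell$-arm event must contain $\ell$ alternating arms crossing each centered annulus (a portion determined by $\window$ alone), so the $\ell$-arm event holds in every centered annulus under $\window$. Let $E_\window$ denote the intersection of all these arm events; by independence over the disjoint annuli of $\Ann$, $\P^\window[E_\window]=h^*(\Ann)$.

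Second, let $D_0$ denote the inner disk of the innermost centered annulus, of radius $r_\Ann$. By hypothesis $D_0$ contains no other annulus of $\Ann$, so its configuration is part of $\eta$ and independent of $\window$. Any realisation of $F_\window=1$ forces $\ell$ alternating arms to cross $D_0\setminus[-\ell,\ell]^2$, giving the uniform bound
\[
\|F_\window\|^2=\P^\eta[F_\window=1]\le\P^\eta\bigl[\ell\text{-arm event in }D_0\setminus[-\ell,\ell]^2\bigr]\asymp\alpha_\ell(r_\Ann).
\]
Since $P_W$ is a contraction and $P_W F_\window$ vanishes off $E_\window$, we have the pointwise bound $\|P_W F_\window\|\le\|F_\window\|\cdot 1_{E_\window}$. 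Hence $\E^\window\bigl[\|P_W F_\window\|\bigr]\le\alpha_\ell(r_\Ann)^{1/2}\,\P^\window[E_\window]$, and squaring yields the stated inequality (up to constants, which may be absorbed by the conventions on $\alpha_\ell$). The one point that needs verification is that the $\ell$-arm factor for the innermost centered annulus appearing in $h^*(\Ann)$ (a condition on $\window|_{A_0}$) and the extra factor $\alpha_\ell(r_\Ann)$ (a condition on $\eta|_{D_0}$) depend on disjoint sets of coordinates, so neither estimate is redundant and they genuinely multiply.
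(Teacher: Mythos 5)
Your proof is correct and follows essentially the same route as the paper's: project onto $W$, write $P_Wf=\E^\window[P_W F_\window]$, observe that $P_WF_\window\ne0$ forces the $4$-arm and $\ell$-arm events in the respective annuli (a $\window$-measurable event of probability $h^*(\Ann)$), and use the uniform bound $\|F_\window\|^2\le\alpha_\ell(r_\Ann)$ coming from the $\ell$-arm event in the innermost disk $D_0$. One small tightening: the $\asymp$ at the end is not needed, since by the convention $\alpha_\ell(r_\Ann)=\alpha_\ell(2\ell,r_\Ann)$ the annulus $B(0,r_\Ann)\setminus B(0,2\ell)$ is a sub-annulus of $D_0\setminus[-\ell,\ell]^2$, so the $\ell$-arm event in the latter implies it in the former and $\P^\eta[\ell\text{-arm in }D_0\setminus[-\ell,\ell]^2]\le\alpha_\ell(r_\Ann)$ holds with no constant, recovering the lemma exactly as stated.
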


\proof Similarly to the proof of Lemma \ref{l.Comph}, we divide the set
of relevant bits into parts: $\window$ is the configuration inside $\bigcup \Ann$, while $\eta_0$ is the configuration inside the inner disk of the smallest centered annulus, and $\eta_1$ is the configuration neither in $\window$ nor in $\eta_0$. As before, $F_\window$ is the function defined by $F_\window(\eta_0,\eta_1):=f(\window,\eta_0,\eta_1)$; furthermore, $W$ is the linear space of functions spanned by $\{\chi_S : S^* \text{ compatible with }\Ann\}$, and $P_W$ denotes the orthogonal projection onto $W$. Now, $P_W F_\window\ne 0$ implies the $4$-arm event in every 
interior non-centered $A\in\Ann$, the $\ell$-arm event in every centered $A\in\Ann$,
and the $3$-arm event in every boundary (or corner) annulus.
(Note that this uses the fact that when $\ell\ne 1$ we are considering the alternating arms event.
In particular, we are restricted to $\ell\in \{1\}\cup 2\,\N_+$.)
 Moreover, for any $\window$, we have $F_\window(\eta_0,\eta_1)=0$ if $\eta_0$ does not have the $\ell$-arm event. 
Thus, $\|P_W F_\window\|^2 \leq \|F_\window\|^2 = \Es{F_\window^2} \leq \alpha_\ell(r_\Ann)$. Altogether, similarly to the proof of Lemma \ref{l.Comph},
\begin{align*} 
\Qb{\Spec^*\text{ is compatible with }\Ann} &= \| P_W f \|^2  \leq  \E^\window\bigl[\| P_W F_\window \| \bigr]^2 \\
&\leq  \P^\window\bigl[ P_W F_\window \not= 0\bigr]^2\,\alpha_\ell(r_\Ann)  \leq  h^*(\Ann)^2\,\alpha_\ell(r_\Ann), 
\end{align*}
which proves the lemma.\qed

Note that a centered annulus structure compatible with $\Spec$ is also compatible with $\Spec^*$, but not necessarily vice versa, hence the lemma is stronger with $\Spec^*$ than it would be with $\Spec$. This strengthening is crucial, as shown by the following example: for an $r$-square $B$ at distance $t\in (r,R)$ from $0$, by the remark after Lemma~\ref{l.lambda}, we have $\Qb{\emptyset\not=\Spec \subseteq B} \leq O(1)\,\Pb{\Lambda_B}^2 \asymp \big(\alpha_\ell(1,R)\alpha_4(r,t)\big)^2$, a bound that we would not be able to reproduce from the weaker (starless) version of the above lemma. Furthermore, when $B$ is centered at $0$, then the bound $O(1)\,\alpha_\ell(r)\alpha_\ell(r,R)^2$ given by Lemma~\ref{l.ComphRad} is stronger than the $O(1)\,\alpha_\ell(r,R)^2$ bound of Lemma~\ref{l.lambda}. (Unlike Lemma~\ref{l.Comph}, which was only a generalization of Lemmas~\ref{l.lambda} and~\ref{l.basic}.)

These bounds provide a back-of-the-envelope explanation how the result of Proposition~\ref{pr.verysmallRad} arises, at least for $k=1$:
\begin{align}
\Qb{|\Spec_r|=1} &\leq O(1)\, \alpha_\ell(r)\alpha_\ell(r,R)^2 + O(1)\sum_{s=1}^{O(R/r)} s \cdot \big(\alpha_\ell(1,R)\alpha_4(r,s r)\big)^2 \nonumber \\ 
&\leq O(1)\, \alpha_\ell(r)\alpha_\ell(r,R)^2 + O(1)\,\alpha_\ell(1,R)^2\,, \label{e.envelope}
\end{align}
where we used that $s\,\alpha_4(r,sr)^2 \leq O(1)\,s^{-1-\eps}$, by~(\ref{e.a4less2}). The first term being dominant also shows that a small $\Spec_r$ should typically be close to $0$. (Which is another manifestation of the four-arm events playing a small role here.)

Back to the actual proof, analogously to Subsection \ref{ss.local}, for each $S\subset [-R,R]^2$ with $|S_r|=k$ we will build a centered annulus structure $\Ann(S)$ that is compatible with $S^*$ (but not necessarily with $S$ itself!)\ and that has $r_{\Ann(S)} \geq r$. Furthermore, the collection $\ANN^*(r,k)$ of all these centered annulus structures will be small enough to have
\begin{equation}\label{e.ANNRk}
\sum_{\Ann \in \ANN^*(r,k)} h^*(\Ann)^2 \leq g^*(k) \; \alpha_\ell(r,R)^2.
\end{equation}
The combination of (\ref{e.ANNRk}) with Lemma \ref{l.ComphRad} proves Proposition \ref{pr.verysmallRad}.

To construct the annulus structure $\Ann(S)$, we take $V=V_S$ to be $S^*_r$, set $\bar j:=\lfloor\log_2(R/(kr))\rfloor-5$, and define the clusters exactly as in Subsection \ref{ss.boundary}. A cluster is called centered if it contains 0. In constructing the inner and outer bounding squares, we use the additional rule that for centered clusters $C$ the inner bounding square must be centered at 0. (Note that this is just a special case of the ``arbitrary but fixed way'' of choosing a vertex $z \in [C]$.)

The centered analogue of $\gamma_r(\rho)$ is now $\gammas_r(\rho):=\alpha_\ell(r,\rho)^2$. We again let $\bgammas_r(\rho):=\inf_{\rho'\in [r,\rho]}\gammas_r(\rho')$, and we note the exponential decay (\ref{e.gamdecay}) for $\bgammas_r(r2^j)$ in $j$. A non-centered cluster $C$ is called overcrowded if $g(|C|)\, \bgamma_r(r2^{j(C)}) > 1$, with the function $g(k)$ of Proposition~\ref{pr.verysmall}. A centered cluster is overcrowded if $g^*(|C|)\, \bgammas_r(r2^{j(C)}) > 1$. We define $J(k)$ as before, using $g(k)$, and similarly define $J^*(k)$, using $g^*(k)$.
In particular, $\bgammas_r(r2^{J^*(k)})\,g^*(k)\asymp 1$.

As before, we are removing all the annuli corresponding to the proper subclusters of overcrowded clusters. We have almost arrived at our centered annulus structure $\Ann(S)$, except for one technical issue: it might happen that the smallest {\it nonempty} centered annulus contains non-centered annuli in its inner square, which we did not allow. (This situation can be so bad that there are no nonempty centered annuli at all.) In order to be able to apply Lemma~\ref{l.ComphRad}, we need to fix this problem; therefore, whenever this happens, we add back the largest centered outer square that does not contain any other annulus, and consider it as an annulus of zero width. With this, we now have $\Ann(S)$. The collection of these for all $S\subseteq\bits$ with $|S_r| = k$ is $\ANN^*(r,k)$. 

We define $H(j,k)$ similarly as before, but now only for interior inner bounding squares that do not contain 0. (So, it is a sum of $h(\Ann)=h^*(\Ann)$ terms over a collection of annulus structures $\Ann=\Ann'(S)$.) We similarly define the quantities $H^+(j,k)$ and $H^{++}(j,k)$ for side and corner squares not containing 0. Finally, we let $H^*(j,k)$ be the analogous quantity (using $h^*$) where the inner bounding square is required to be centered. We will show that there is some constant $\delta>0$, depending only on $\ell$, such that, for $j\in \{J^*(k),\dots,\bar j\}$,
\begin{equation}\label{e.Hstarjk}
H^*(j,k) \leq \bigl(g^*(k) \; \bgammas_r(r2^j)\bigr)^{1+\delta}\,.
\end{equation}
This implies (\ref{e.ANNRk}) (with a possibly larger constant $\cod$) in exactly the same way as in Subsection~\ref{ss.local} the bound (\ref{e.induct}) implied (\ref{e.ANNnk}) (with the small additional care regarding the cutoff $\bar j$ that we have seen in Subsection~\ref{ss.boundary}).

As usual, we prove (\ref{e.Hstarjk}) by induction on $j$. We may assume that $j > J^*(k)$.
Suppose that $\Ann$ is a centered annulus structure contributing to the sum $H^*(j,k)$, where $\Ann=\Ann'(S)$ for 
some $S\subseteq\bits$ with $|S_r|=k$. Let $j_*$ be $j(C^*)$,
where $C^*$ is the largest proper centered subcluster of $S_r^*$, and let $k_*=|C^*\cap S_r|$.
Every such $\Ann$ can be formed as a union of a centered annulus structure $\Ann^*$ for $(j_*,k_*)$,
the annulus $A(C^*)$, which might actually be missing if the ``scales'' $j_*$ and $j$ are very close to each other, and the annulus structure $\Ann^\sharp$ formed by dropping from $\Ann$ all the annuli that are part of some centered cluster. Moreover, 
$$
h^*(\Ann)^2\asymp (k+2)^{O(1)}\,h^*(\Ann^{*})^2\,h(\Ann^\sharp)^2\,
\alpha_\ell(r\,2^{j_*},r\,2^j)^2,
$$
where the last factor should not be there if  $A(C^*)$ is empty, but it is of constant order in that case anyway, hence the approximate equality is still valid. Then, the sum over such $\Ann$ with $j_*$ and $k_*$ fixed is bounded by
$$
(k+2)^{O(1)}\,
\alpha_\ell(r\,2^{j_*},r\,2^j)^2
\Bigl(\sum_{\Ann^*} h^*(\Ann^*)^2\Bigr)\Bigl(\sum_{\Ann^\sharp} h(\Ann^\sharp)^2\Bigr),
$$
where the sums run over the appropriate collections of annulus structures.
The first sum is bounded by $H^*(j_*,k_*)$, and the proof of~\eqref{e.ANNnk},
possibly incorporating boundary clusters,
shows that the second factor is bounded by $ g(k-k_*) \, \bgamma_r(r\,2^j)$,
with possibly a different choice of the constant $\coa$ implicit in $g$.
(Note that the annulus structure $\Ann^\sharp$ may have just one annulus whose
outer square is roughly at the scale corresponding to $j$. This case is handled by the computation in~(\ref{e.envelope}), and this is the reason for the estimate being of the type~\eqref{e.ANNnk}, rather than
the type estimated in~\eqref{e.induct}.)
The induction hypothesis therefore gives
\begin{multline*}
H^*(j,k) \leq
  (k+2)^{O(1)} \sum_{k_*,j_*}  
 \alpha_\ell(r2^{j_*},r2^j)^2\,  g^*(k_*) \, \bgammas_r(r2^{j_*})\,  g(k-k_*)\, \bgamma_r(r2^j)\\
\leq  (k+2)^{O(1)} \, g(k)\, g^*(k) \; j\, \bgamma_r(r2^j) \; \bgammas_r(r2^j)\,.
\end{multline*}

If $\delta>0$ is small enough, then $j\,\bgamma_r(r2^j) \leq O(1)\, \bgammas_r(r2^j)^\delta$. Given this $\delta$, if $\cod$ is large enough, we also have $O(1)\,(k+2)^{O(1)} g(k) \leq g^*(k)^{\delta}$. Thus, our last upper bound is at most $\big( g^*(k)\, \bgammas_r(r2^j)\big)^{1+\delta}$, so (\ref{e.Hstarjk}) is proved, and our proof of Proposition \ref{pr.verysmallRad} is complete. \qed

\section{Partial independence in the spectral sample} \label{s.mainlemma}

\subsection{Setup and main statement}\label{ss.maintro}

Let $\Spec$ denote the spectral sample of the $\pm1$ indicator function of having a percolation left-right crossing in $[0,R]^2$
(in either of our two favorite lattices).
In order to prove that $|\Spec|$ is rarely much smaller than its mean 
it would be useful to have some independence of the following kind: if $B_1, B_2$ are two
distant squares, then we would expect that
\begin{multline*}
\Pb{\Spec \cap B_1=S_1 \md \Spec\cap B_1 \neq \emptyset ,\, \Spec\cap B_2=S_2} \asymp\qquad \\
\Pb{\Spec \cap B_1=S_1 \md \Spec\cap B_1 \neq \emptyset}.
\end{multline*}
It turns out that it is hard to control such correlations. Nevertheless, we will prove a weaker independence result that will be enough for our purposes.
\bigskip

Consider some box $B$ of radius $r$ inside $[0,R]^2$.
(Recall from Section~\ref{ss.gen} that a box $B(x,r)$ of radius $r$ is the union of tiles whose centers are in $x+[-r,r)^2$.)
We want to understand the behavior of $\Spec$ in $B$. Because of boundary issues, we will actually look at $\Spec$
in a smaller concentric box $B'$, of radius $r/3$.

We saw in (\ref{e.mean}) that $ O(1)\,\Eb{|\Spec \cap B'| \md \Spec \cap B \ne \emptyset} \geq r^2\, \alpha_4(r)$.
In this section, we will strengthen this by proving that $|\Spec\cap B'|$ is at least of this size
with a uniform positive probability, moreover, this remains true when we add $\Spec\cap W=\emptyset$ to the conditioning, where
$W$ is an arbitrary set in the complement of $B$:
\begin{equation}
\label{e.weakind}
\Pb{|\Spec \cap B'|\geq c\, r^2 \alpha_4(r) \md \Spec\cap B \neq \emptyset,\; \Spec\cap W =\emptyset } > a\,,
\end{equation}
with some fixed constants $c,a>0$.
However, the following stronger statement is closer to what we actually need.

\bpr\label{pr.SQBW}
Let $\Spec$ be the spectral sample of the $\pm1$-indicator function of the
left-right crossing event in $\Quad = [0,R]^2$.
Let $B$ be a box of some radius $r$. Let $B'$ be the concentric box with radius $r/3$,
and assume that $B'\subset\Quad$ (note that $B$ does not need to be in $\Quad$).
We also assume that $r\ge \bar r$, where $\bar r>0$ is some universal constant.
 Fix any set $W\subset\R^2\setminus B$, and let
$\Rs$ be a random subset of $\bits$ that is independent from $\Spec$, where each element of $\bits$ is
in $\Rs$ with probability $1/(\alpha_4(r)\,r^2)$ independently.
(By~\eqref{e.a4less2}, $\alpha_4(r)\,r^2\ge 1$ if $\bar r$ is sufficiently large.)
Then
$$
\Pb{\Spec\cap B'\cap \Rs \ne \emptyset \md \Spec\cap B \ne \emptyset,\ \Spec\cap W=\emptyset} > a\,,
$$
where $a>0$ is a universal constant.
\epr 

The estimate~\eqref{e.weakind} follows immediately from the proposition, since
\begin{multline*}
\Pb{\Spec\cap B'\cap \Rs \ne \emptyset \md \Spec\cap B',\ \Spec\cap B \ne \emptyset,\ \Spec\cap W=\emptyset} 
\\
=
1-\bigl(1-r^{-2}\,\alpha_4(r)^{-1}\bigr)^{|\Spec\cap B'|}.
\end{multline*}

\smallskip

It is important to note that in the proposition the constant $a>0$ is  independent of the position
of the box $B$ relative to the square $[0,R]^2$.
Such a uniform control over the domain would be harder to achieve in the case of general quads.
Instead, after proving this uniform result for the square, we will prove a local version (Proposition~\ref{pr.SQBWloc}) for general quads. We will also prove a radial version (Proposition~\ref{pr.SQBWrad}), which will be important for the application to exceptional times of dynamical percolation.

The proof of the proposition is straightforward once we have the following bounds on 
the first and second moments.
Recall the definition of $\llwb$ right before Lemma~\ref{l.lambda}.
\bpr[First moment]
\label{pr.first}
Assume the setup of Proposition~\ref{pr.SQBW}.
There is an absolute constant $c_1>0$ such that for any $x\in B'\cap\bits$,
\begin{equation}\label{e.1stm}
\Pb{x\in\Spec,\ \Spec \cap W=\emptyset} \geq c_1\, \Eb{\llwb^2}\, \alpha_4(r).
\end{equation}
\epr

\bpr[Second moment]
\label{pr.second}
Let $\Spec$ be the spectral sample of $f=f_\Quad$, where $\Quad\subset\R^2$ is some arbitrary quad.
Let $z\in\Quad$ and $r>0$. Set $B:= B(z,r)$ and $B':=B(z,r/3)$.
Suppose that $B(z,r/2)\subset\Quad$ and that $B$ and $W$ are disjoint.
Then for every $x,y\in B'\cap \bits$ we have
\be\label{e.2ndm}
\Pb{x,y\in\Spec,\ \Spec \cap W=\emptyset} \leq c_2\, \Eb{\llwb^2}\, \alpha_4(|x-y|)\,\alpha_4(r)\,,
\ee
where $c_2<\infty$ is an absolute constant.
\epr

\proofof{Proposition~\ref{pr.SQBW} {\rm (assuming the first and second moment estimates)}}
Consider the random variable 
$$
Y:=|\Spec \cap B' \cap \Rs|\,\1_{\{\Spec\cap W=\emptyset\}}.
$$ 
Since $\Rs$ is independent from $\Spec$
and $\Ps{x\in \Rs}=1/(\alpha_4(r)\,r^2)$, we obtain by summing~(\ref{e.1stm})
over all $x\in B'\cap \bits$ that 
$O(1)\,\Es{Y}\geq \Eb{\llwb^2}$. On the other hand, summing~(\ref{e.2ndm}) over all $x,y\in B'\cap\bits$, similarly to the second moment estimate in Lemma~\ref{l.moments}, gives 
\begin{align*}
\Es{Y^2} &\le
\overbrace{O(1)\,\Eb{\llwb^2}\,\alpha_4(r)\,r^2\,\Pb{x\in\Rs}}^{\text{diagonal term}}+
\overbrace{O(1)\,\Eb{\llwb^2}\,\alpha_4(r)^2\,r^4\,\Pb{x\in\Rs}^2}^{\text{off-diagonal term}}\\
& \leq O(1)\,\Eb{\llwb^2},
\end{align*}
by our choice of $\Pb{x\in\Rs}$. Note that this choice for $\Pb{x\in \Rs}$ is of the smallest
possible order that does not make the diagonal term the leading contribution.
Now, by Cauchy-Schwarz,
\begin{equation}\label{e.BWY}
\Pb{Y > 0}\geq \frac{\Es{Y}^2}{\Eb{Y^2}} \ge
\frac{\Es{\llwb^2}^2}{O(1)\,\Es{\llwb^2}} =
\frac{\Eb{\llwb^2}}{O(1)}.
\end{equation}
The proposition now follows from Lemma~\ref{l.lambda}.  \QED

\begin{remark}\label{r.asymp}
$\Pb{\Spec \cap B \ne \emptyset,\ \Spec\cap W=\emptyset}$ is obviously not smaller than the left hand side of (\ref{e.BWY}). Therefore, (\ref{e.BWY})
and Lemma~\ref{l.lambda} imply that in the present setting
\begin{equation}\label{e.SLasymp}
\Pb{\Spec \cap B \neq \emptyset,\ \Spec \cap W =\emptyset}\asymp \Eb{\llwb^2}.
\end{equation}
The definition of $\lala(B,W)$ easily gives
\begin{equation}\label{e.LA}
\Eb{\lala(B,\emptyset)^2} =  \alpha_\square(B,\Quad) \quad\text{and}\quad
\lala(B,B^c) = \alpha_\square(B,\Quad)\,.
\end{equation}
Combining these with (\ref{e.SLasymp}), we get that
for $B$ as above,
approximate equalities hold in Lemma~\ref{l.basic}, i.e., 
\begin{equation}\label{e.SBasymp}
\Pb{\Spec \cap B \neq \emptyset}\asymp \alpha_\square(B,\Quad) \quad\text{and}\quad
\Pb{\emptyset\ne \Spec \subseteq B}\asymp \alpha_\square(B,\Quad)^2\,.
\end{equation}
\end{remark}

\subsection{Bounding the second moment}\label{ss.main12}
Due to the way in which $\lala(B,W)$ was defined, it is generally easier to obtain
$\lala(B,W)$ as an upper bound up to constants, than as a lower bound up to constants.
Consequently, the second moment estimate is easier to prove, and for this reason we start 
with that.

\proofof{Proposition~\ref{pr.second}}
Let $\theta$ denote the restriction of $\omega$ to
the complement of $W\cup\{x,y\}$. Then Lemma~\ref{l.LMN} gives
\begin{equation}\label{e.Wxy}
\begin{aligned}
\Pb{x,y\in\Spec,\Spec\cap W=\emptyset}
&=
\Pb{\Spec\cap ( W\cup\{x,y\})=\{x,y\}} \\
&=
\EB{
\Eb{\chi_{\{x,y\}}(\omega)\,f(\omega) \md \theta}^2}\,.
\end{aligned}
\end{equation}
Set 
$$
g(\theta):= \Eb{\chi_{\{x,y\}}(\omega)\,f(\omega) \md \theta}.
$$
Then $\Eb{g^2}$ is the quantity that we need to estimate.
Since $B\cap W=\emptyset$, the information in $\theta$ includes the
configuration in $B\setminus\{x,y\}$.
If $\theta$ does not have the $4$ arm event from the tile of $x$
to distance $|x-y|/4$, then flipping $\omega_x$ does not effect
$f(\omega)$, and hence $g(\theta)=0$.  A similar statement holds for $y$.
Also, if the box $\tilde B$ of radius $2\,|x-y|$ centered at $(x+y)/2$
does not intersect $\partial B$, then $g(\theta)=0$ unless $\theta$ has the $4$ arm event
in the corresponding annulus $B\setminus \tilde B$.
Let $A_x,A_y$ and $A_{x,y}$ denote the indicator functions for the $4$-arm
event in the corresponding $3$ annuli, where we take $A_{x,y}=1$ if 
$\tilde B\cap\partial B\ne\emptyset$.
Then we have $g(\theta)=0$ if $A_x\,A_y\,A_{x,y}=0$.

We now argue that $|g(\theta)|\le \lala(B,W)$. For this purpose, write
$$
g= 
 \Eb{\chi_{\{x,y\}} f\md \ev F_{ (W\cup \{x,y\})^c}}
=
 \Eb{\Es{\chi_{\{x,y\}} f\md \ev F_{ \{x,y\}^c}}\md \ev F_{W^c}},
$$
where we used that our measure is i.i.d. Clearly, 
 $\bigl|\Es{\chi_{\{x,y\}} f\md \ev F_{ \{x,y\}^c}}\bigr|\le 
1_{\Lambda_{\{x,y\}}}\le 1_{\Lambda_B}$,
where $\Lambda_{\cdot}$ is as defined above Lemma~\ref{l.lambda}.
Taking conditional expectation given $\ev F_{W^c}$ then gives
$$
\bigg|
\EB{
 \Es{\chi_{\{x,y\}} f\md \ev F_{ \{x,y\}^c}}\md \ev F_{W^c}}
\bigg|
\le
\EB{
\bigl|
 \Es{\chi_{\{x,y\}} f\md \ev F_{ \{x,y\}^c}}\bigr|\md \ev F_{W^c}}
\le \lala(B,W)\,.
$$
Since the left hand side is $|g|$, we get $|g|\le \lala(B,W)$.

Putting together the above, we arrive at
$ |g(\theta)|\le A_x\,A_y\,A_{x,y}\,\lala(B,W)$.
Thus, $ g(\theta)^2\le A_x\,A_y\,A_{x,y}\,\lala(B,W)^2$.
Independence on disjoint sets then gives
$$
\Eb{g^2} \le \alpha_4(|x-y|/4)^2\,\alpha_4 (2\,|x-y|,r/3)\,\Eb{\lala(B,W)^2}\,.
$$
The proposition now follows from the familiar properties of $\alpha_4$.
\QED

\subsection{Reformulation of first moment estimate}\label{subs.int}

Before proving the first moment estimate (Proposition \ref{pr.first}), we explain how
it can be reformulated as a quasi-multiplicativity property analogous to the
quasi-multiplicativity property of the $j$-arm events~\eqref{e.qm}.
Recall that
$$
\Eb{\lala(B,W)^2}= \EB{ \Pb{\Lambda_B\md \mathcal{F}_{W^c}}^2}.
$$
It is not apriori clear how to work with $\Eb{\lala(B,W)^2}$, but here is a useful observation about this quantity.
Let $\omega'$ and $\omega''$ be two critical percolation configurations which coincide on $W^c$
but are independent on $W$.
Let $\AA_\square(B,\Quad)$ denote the set of percolation configurations $\omega$ for which the $4$-arm event
occurs in the annulus $\Quad\setminus B$ with the appropriately colored arms terminating on the correct
boundary arcs of $\Quad$; that is, the primal (white) arms terminating on the two distinguished arcs of
$\p\Quad$ and the dual (black) arms terminating on the two complementary arcs.
Then
$$
\Eb{\lala(B,W)^2}= \Pb{ \omega',\omega''\in \Lambda_B} = \Pb{\omega',\omega''\in \AA_\square(B,\Quad)};
$$
that is, $\Eb{\lala(B,W)^2}$ is just the probability that the corresponding $4$-arm event occurs
in both $\omega'$ and $\omega''$. 
Lemma~\ref{l.LMN} gives
$$
\Pb{x\in\Spec,\ \Spec \cap W=\emptyset} = \EB{\Es{f\,\chi_x \md \ev F_{(W\cup\{x\})^c}}^2}.
$$
Now, if $f$ is a monotone increasing function taking values in $\{-1,1\}$, then 
\begin{equation}
\label{e.increasing}
\Es{f\,\chi_x\md \ev F_{ \{x\}^c}} = 1_{\Lambda_{\{x\}}},
\end{equation}
and
$$ \Es{f\,\chi_x \md \ev F_{(W\cup\{x\})^c}} =
\EB{\Es{f\,\chi_x\md \ev F_{\{x\}^c}}\md \ev F_{W^c}} = \Eb{1_{\Lambda_{\{x\}}}\md \ev F_{W^c}} = \lala(x,W)\,.
$$
Hence,
$$
\Pb{x\in\Spec,\ \Spec \cap W=\emptyset} 
= \Eb{\lala(x,W)^2} = \Pb{ \omega',\omega''\in \AA_\square(x,\Quad)},
$$
where $\AA_\square(x,\Quad)$ has the obvious meaning. Likewise, since $W\cap B=\emptyset$,
we have $\omega'=\omega''$ in $B$, and so,
$$
\alpha_4(r) \asymp \Pb{\omega',\omega''\in \AA_4(x,B)},
$$
where $\AA_4(x,B)$ is the $4$-arm event (which does not pay attention to any distinguished
arcs on $\p B$).
Hence~\eqref{e.1stm} can be rewritten as
\begin{equation}
\label{e.1}
\Pb{ \omega',\omega''\in \AA_\square(x,\Quad)}
\ge c_1\,
\Pb{\omega',\omega''\in \AA_4(x,B)} \, \Pb{ \omega',\omega''\in \AA_\square(B,\Quad)}.
\end{equation}
To see that this is indeed a quasi-multiplicativity property,
observe that if we take $W=\emptyset$ and replace the
events with $\AA_\square$ by the corresponding events with $\AA_4$,
then this is essentially the same as the case $j=4$ in the
left inequality of~\eqref{e.qm}.
  
It turns out that with a few extra twists, a proof
which gives the quasi-multiplicativity estimates~\eqref{e.qm}
generalizes to give~\eqref{e.1}. This will be explained in the next
subsections.

\begin{remark}
Proposition~\ref{pr.SQBW} generalizes to the radial setting,
in which we consider the event of a crossing from the origin to a large distance away.
However, at present it does not generalize to the radial $2$-arm event where
a vacant crossing and an occupied crossing occur simultaneously.
The only argument in the proof that does not generalize to the $2$-arm event is~\eqref{e.increasing},
which is not true for non-monotone functions.
Instead, we have
\begin{equation}
\label{e.nonmonotone}
\Es{f\,\chi_x\md \ev F_{ \{x\}^c}} = 1_{M_x^+}-1_{M_x^-},
\end{equation}
where $M_x^+$ is the event that $x$ is monotonically pivotal (i.e., $f(\omega_{\{x\}}^+)=1=-f(\omega_{\{x\}}^-)$)
and $M_x^-$ is the event that $x$ is anti-monotonically pivotal. The problem with such functions is that for the 
first moment we would need to bound from below $\Eb{\Es{1_{M_x^+}-1_{M_x^-}\md \mathcal{F}_{W^c}}^2}$. This expression is
easily controlled from above by $\Eb{\lambda_{x,W}^2}$, but not from below due to cancellations between $M_x^+$ and
$M_x^-$. These cancellations are far from being negligible, thus there is no hope to get $O(1) \Eb{\Es{1_{M_x^+}-1_{M_x^-}\md \mathcal{F}_{W^c}}^2} \geq \Eb{\lambda_{x,W}^2}$ for general $W$. For instance, if $W=\{x\}^c$ and $f$ is an even function ($f(-\omega)=
f(\omega)$) like the $2$-arm indicator function for site percolation on the triangular grid,
then
$\Pb{\Spec=\{x\}}=\Eb{\Eb{1_{M_x^+}-1_{M_x^-}}^2}=0$.
This ``unfortunate'' cancellation between the events $M_x^+$ and $M_x^-$ is the reason of the breakdown of our methods for such events.
\end{remark}

\subsection{Quasi-multiplicativity for coupled configurations}\label{subs.qm}

Rather than proving specifically the inequality~\eqref{e.1}, we first
address a related statement which is somewhat cleaner.
In the following, $W$ is any fixed subset of $\bits$, and $\omega',\,\omega''$
are the above coupled configurations, which are independent in $W$ and agree on $\bits\setminus W$.
The annulus $B(0,R)\setminus B(0,r)$ will be denoted by $A(r,R)$.
Let $j\in\N_+$ be either $1$ or an even number and
let $\AA_j(r,R)$ denote the set of configurations $\omega$ that satisfy the alternating
$j$-arm event in the annulus $A(r,R)$.
Set
$$
\b_j^W(r,R):=\Pb{\omega',\omega''\in \AA_j(r,R)}.
$$
We will prove the following quasi-multiplicativity result:

\begin{proposition}[Quasi-multiplicativity]\label{pr.qm}
Let $j\in \N_+$ be either one or an even integer, and let $W\subseteq\bits$. Then
$$
 \b_j^W(r_1,r_2)\, \b_j^W(r_2,r_3) \leq C_j\,\b_j^W(r_1,r_3)
$$
holds for every $0<r_1<r_2<r_3$ satisfying $r_2\ge \bar r_j$,
where $C_j$ and $\bar r_j$ are finite constants depending only on $j$
(and in particular, not on $W$).
\end{proposition}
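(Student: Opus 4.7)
The plan is to adapt Kesten's classical proof of quasi-multiplicativity for single-configuration arm events to the coupled product measure on $(\omega',\omega'')$. The essential structural observation is that the joint law of $(\omega',\omega'')$ is itself a product measure over bits $i\in\bits$: on $\bits\setminus W$ the two coordinates are identified (taking a common uniform value), while on $W$ they are independent uniform bits. In particular, events depending on disjoint subsets of bits are independent under the joint measure, so the joint arm events in disjoint annuli $A(r_1,r_2)$ and $A(r_2,r_3)$ are independent, and the standard multi-scale decomposition used in the proof of~\eqref{e.qm} carries over.

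The first technical input I would establish is a \emph{coupled RSW}: for any rectangle $Q$ of bounded aspect ratio and any monotone crossing event $C$ of $Q$,
\[
\Pb{\omega',\omega''\in C}
\;=\;\EB{\Ps{\omega'\in C\md \omega'|_{\bits\setminus W}}^2}
\;\ge\;\Pb{\omega'\in C}^{\,2},
\]
by Jensen's inequality applied to the conditionally i.i.d.\ pair $(\omega',\omega'')$. The left-hand lower bound is uniform in $W$, so every RSW-type lower bound for a single configuration carries over to the coupling at the cost of squaring the constant. The same idea gives coupled analogues of all the usual consequences of RSW (primal and dual crossings of long rectangles and annuli, conditional on boundary data outside the region of interest), again with constants depending only on geometric parameters and $j$.

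With these coupled crossing estimates, I would prove a \emph{coupled arm-separation} statement: conditional on $\omega',\omega''\in \AA_j(r_1,r_2)$, with conditional probability bounded below uniformly in $r_1<r_2$ and in $W$, the landings of the $j$ arms of $\omega'$ and of $\omega''$ on $\partial B(0,r_2)$ are simultaneously well-separated in both configurations, and analogously on $\partial B(0,r_1)$. The single-configuration proof proceeds by iteratively exploring the outermost arms and, at each step, applying an RSW-based push to land the current arm far from the previously landed ones; each such step has a coupled analogue by the previous paragraph. Given such joint separation at $\partial B(0,r_2)$, a final gluing in a thin annulus $A(r_2, 2r_2)$ connects the inner coupled arms to an independently sampled joint arm event on $A(2r_2,r_3)$, using coupled RSW to match landings while preserving the alternating color pattern in both $\omega'$ and $\omega''$. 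Combining this with the independence of the annular events then yields the claimed quasi-multiplicativity bound.

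The main obstacle will be the coupled arm-separation lemma. In the single-configuration argument one exploits the fact that the arms being separated coincide with those being conditioned on; in the coupled setting the $j$ arms of $\omega'$ and of $\omega''$ may diverge completely inside $W$, so an RSW modification that helps separate the $\omega'$-arms could a priori destroy the separation of the $\omega''$-arms. The natural remedy is to separate configuration by configuration: first run the exploration of $\omega'$ and apply a coupled-RSW push whose successful outcome is a monotone crossing event in both configurations, then repeat for $\omega''$ using the fresh bits outside the regions already examined; the uniform coupled lower bounds and independence on disjoint bits ensure both separations succeed with a common positive probability. The case $j=1$ degenerates, reducing to FKG in the joint product measure together with the gluing step and requiring essentially none of this separation machinery.
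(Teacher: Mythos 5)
Your high-level framing is sound: the coupled measure on $(\omega',\omega'')$ is a product measure over the bits, so independence on disjoint sets holds, and the Jensen bound $\Pb{\omega',\omega''\in C}=\Eb{\Pb{\omega'\in C\md\ev F_{W^c}}^2}\ge\Pb{\omega'\in C}^2$ is a correct ``coupled RSW'' lower bound. You also correctly single out coupled arm-separation as the crux, and your diagnosis of the obstacle is accurate. The remedy you sketch, however---push the $\omega'$-arms apart, then the $\omega''$-arms ``using the fresh bits outside the regions already examined''---does not close the gap. Forcing a given crossing pattern for $\omega'$ in an annulus $A$ constrains $\omega''$ on $A\cap W^c$, where it equals $\omega'$; the $\omega''$-arms must still cross $A$ with a geometry that may be incompatible, and a subsequent push for $\omega''$ in a disjoint annulus cannot undo that. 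Moreover, your Jensen bound only controls the \emph{diagonal} event $\{\omega',\omega''\in C\}$; it gives no lower bound on $\Pb{\omega'\in C',\omega''\in C''}$ for two different separation patterns $C'\ne C''$, which is exactly what a sequential separation would require.

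The ingredient that resolves this, absent from your proposal, is the monotonicity $\b_j^{W_2}(r,R)\le\b_j^{W_1}(r,R)$ for $W_1\subseteq W_2$, which follows from $\b_j^W(r,R)=\Eb{\Pb{\omega\in\AA_j(r,R)\md\ev F_{W^c}}^2}$ and the orthogonality of martingale increments. Since one wants only a \emph{lower} bound on $\b_j^W(r_1,r_3)$, one may freely \emph{enlarge} $W$ by the entire gluing region, reducing to the case where $\omega'$ and $\omega''$ are fully independent there. The paper implements this by introducing an auxiliary pair $(\tilde\omega',\tilde\omega'')$ agreeing with $(\omega',\omega'')$ inside $B(0,r)$ but fully independent outside. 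The \emph{single-configuration} arm-separation estimates of \cite{\SchrammSteif} then apply to $\tilde\omega'$ and $\tilde\omega''$ \emph{conditionally independently} given $(\omega'_{B(0,r)},\omega''_{B(0,r)})$, yielding a factor $c_0^2\bigl((\tilde X-\delta/2)_+\bigr)^2$ with $\tilde X:=X'\wedge X''$; a Jensen inequality for the convex function $x\mapsto((x-\delta/2)_+)^2$ converts the bound $\Eb{\tilde X\md\omega',\omega''\in\AA_4(r_0,r)}\ge\delta$ into a uniform constant, and a ``$\delta$-good radius'' bootstrap replaces the naive multi-scale iteration. With the $W$-monotonicity in hand, your ``separate configuration by configuration'' idea becomes precisely the paper's strategy; without it, the separation step as you describe it has a genuine gap.
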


Note that the opposite inequality with $C_j=1$ holds by
independence on disjoint sets.

To prepare for the proof of the proposition, we first need to prove a few
lemmas. The first observation is the following monotonicity property:
\begin{equation}
\label{e.mono}
\b_j^{W_2}(r,R) \le \b_j^{W_1}(r,R)\qquad\text{if }W_1\subseteq W_2\,.
\end{equation}
Indeed, since
$$
\b_j^W(r,R) =
\EB{\Ps{\omega \in \AA_j(r,R)\md \ev F_{W^c}}^2},
$$
the claimed monotonicity follows by the orthogonality property
of martingale increments.

\smallskip

The case $j=1$ in Proposition~\ref{pr.qm} easily follows from
the Russo-Seymour-Welsh theorem and from the Harris-FKG inequality.
In the following, we will restrict to the case $j=4$, since the other
even values of $j$ are essentially the same.

\smallskip

Let $\delta$ be some small positive constant, and let $r_0>0$.
We say that $r\geq r_0$ is $\delta$-{\bf good} if $\b_4^W(r_0,2\,r)\ge \delta \b_4^W(r_0,r)$.
Of course, this notion of good depends on $W,\delta$ and $r_0$.

\begin{lemma}\label{l.largerthangood}
For any $\delta>0$, there exist $\bar r=\bar r(\delta)>0$ and $c=c(\delta)>0$ (both depending only on $\delta$),
such that for any $W\subseteq\bits$ and any $r_0>0$: if one assumes that $r\geq r_0\vee\bar r$ is $\delta$-good
then for every $r'>r$
$$
\b_4^{W\cup A(r,r')}(r_0,r')\ge c\, \b_4^W(r_0,r)\,(r/r')^d,
$$
where $d$ is a universal constant.
\end{lemma}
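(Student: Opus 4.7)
The plan is to combine the $\delta$-goodness of $r$ with a coupled arm-separation argument to get a ``well-separated'' 4-arm configuration at scale $r$ in both $\omega'$ and $\omega''$, and then to independently extend each configuration's arms from scale $r$ to scale $r'$, using that $\omega'$ and $\omega''$ are independent on $A(r,r')$ once we enlarge $W$ to $W\cup A(r,r')$. Throughout, the key geometric observation is that the four arms in each configuration at scale $r$ are ``locally glued'' through coincident bits on $A(r_0,r)\setminus W$, but become genuinely independent once we cross into $A(r,r')$.

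The first step is to produce a coupled well-separation event $\mathcal{G}_r$, depending only on the bits in $B(0,r)$, on which both $\omega'$ and $\omega''$ satisfy the alternating 4-arm event from $\partial B(0,r_0)$ to $\partial B(0,r)$ and, in addition, all eight arm-endpoints land in four fixed disjoint arcs of $\partial B(0,r)$ of fixed angular extent, with the correct cyclic color pattern and with easy ``landing regions'' of constant aspect ratio just outside. I would build $\mathcal{G}_r$ by the usual Kesten-type RSW/gluing moves, but applied simultaneously to the pair $(\omega',\omega'')$: moves performed on $W^c$ affect both configurations and are legal by FKG and RSW, while moves performed on $W$ can be done independently for $\omega'$ and $\omega''$ and the combined cost is still a constant. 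The $\delta$-goodness $\b_4^W(r_0,2r)\ge \delta\,\b_4^W(r_0,r)$ is exactly what supplies the room to carry out these moves inside the annulus $A(r,2r)$ without paying more than a $c(\delta)$ factor, yielding
$$
\Pb{\mathcal{G}_r}\ \ge\ c(\delta)\,\b_4^W(r_0,r).
$$

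For the second step, note that once $W$ has been replaced by $W\cup A(r,r')$, the configurations $\omega'$ and $\omega''$ are independent on $A(r,r')$; moreover, $\mathcal{G}_r$ depends only on bits in $B(0,r)$, so the extension from $\partial B(0,r)$ to $\partial B(0,r')$ factors into four independent events (two alternating 4-arm extensions, one for $\omega'$ and one for $\omega''$). Each extension, started from the four fixed landing arcs built into $\mathcal{G}_r$, succeeds with probability at least $c\,\alpha_4(r,r')$ by the standard ``extension from well-separated arms'' argument (cf.\ the proof of~\eqref{e.qm} in~\cite{\KestenScaling,\SchrammSteif,\NolinKesten}), and the RSW lower bound~\eqref{e.RSW} gives $\alpha_4(r,r')\ge c\,(r/r')^{a_4}$. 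Multiplying,
$$
\b_4^{W\cup A(r,r')}(r_0,r')\ \ge\ \Pb{\mathcal{G}_r}\cdot\bigl(c\,\alpha_4(r,r')\bigr)^2\ \ge\ c(\delta)\,\b_4^W(r_0,r)\,(r/r')^{2a_4},
$$
which is the claim after absorbing $2a_4$ into $d$. The main obstacle is the first step: the standard arm-separation lemma is written for a single configuration, whereas here one must propagate separation simultaneously for the pair $(\omega',\omega'')$, respecting the constraint that moves on the ``coupled'' region $W^c$ act on both configurations at once. The $\delta$-goodness is precisely the hypothesis that lets the usual gluing scheme absorb this restriction into a constant factor depending only on $\delta$, provided $r$ exceeds some universal lower bound $\bar r(\delta)$ needed to run RSW in $A(r,2r)$.
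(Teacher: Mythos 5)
You have the right high-level picture (separate the arms at scale $r$, then extend independently through $A(r,r')$ at a polynomial cost), and your step~3 -- once a good separation event is in hand, extending via RSW through the decoupled annulus and bounding $\alpha_4(r,r')$ below by $(r/r')^{a_4}$ -- matches the paper's endgame. The gap is entirely in step~1, and it is a real one: you assert that a coupled well-separation event $\mathcal{G}_r$, measurable w.r.t.\ $B(0,r)$ and forcing both $\omega'$ and $\omega''$ to have arms landing in fixed arcs of $\partial B(0,r)$, can be built with $\Pb{\mathcal{G}_r}\geq c(\delta)\,\b_4^W(r_0,r)$ by ``applying the usual Kesten/RSW gluing moves simultaneously to the pair.'' That claim is precisely the hard part of the lemma, and the standard single-configuration separation lemma does not hand it to you. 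The obstruction is that on $W\cap B(0,r)$ the two configurations are independent, so the interfaces of $\omega'$ and $\omega''$ crossing $A(r/2,r)$ can be entirely unrelated there; gluing moves on $W^c$ act on both at once, but gluing moves on $W$ applied to $\omega'$ can destroy what you just arranged for $\omega''$, and you have no mechanism to control this conflict. Moreover, $\delta$-goodness is a statement about a ratio of probabilities, not a license to perform coupled surgery; it does not, by itself, let the ``gluing scheme absorb the restriction into a constant.''

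The paper's actual resolution is a genuinely different device, which you would need to discover to close the gap. It introduces auxiliary configurations $\tilde\omega',\tilde\omega''$ agreeing with $\omega',\omega''$ inside $B(0,r)$ but \emph{fully independent of each other outside} $B(0,r)$ (monotonicity~\eqref{e.mono} makes this harmless). Because $\tilde\omega'\vert_{A(r,2r)}$ is independent of $\omega'\vert_{B(0,r)}$, the unconditional tightness of the interface-endpoint gap $s'$ on $\partial B(0,2r)$ (Lemma~A.2 of~\cite{\SchrammSteif}) applies to $\tilde\omega'$ directly -- no conditioning on the 4-arm event and no coupled surgery is needed. Then, writing $\tilde X$ for the smaller of the two one-sided continuation probabilities, $\delta$-goodness gives $\Eb{\tilde X\mid \omega',\omega''\in\AA_4(r_0,r)}\geq\delta$, the separation gives $\Pb{\ev Z'\mid\omega'_{B(0,r)}}\geq c_0(\tilde X-\delta/2)_+$, conditional independence of $\tilde\omega'$ and $\tilde\omega''$ given the inside squares this, and Jensen's inequality applied to the convex function $x\mapsto((x-\delta/2)_+)^2$ converts the first-moment bound into a lower bound on the second moment. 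That combination (decouple outside, use unconditional tightness of $s'$, square by conditional independence, then Jensen) is the heart of the proof, and it replaces rather than implements your proposed coupled gluing argument. Without something of this sort, step~1 of your proposal is an assertion, not a proof.
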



The proof of this lemma will rely on Lemmas~A.2 and~A.3 from~\cite{\SchrammSteif}.

\proof
Assume that $r$ is $\delta$-good. Then $\b_4^W(r_0,2\,r)\ge\delta\,\b_4^W(r_0,r)$.
Set
\begin{align*}
X' & :=\Pb{\omega'\in \AA_4(r_0,2\,r)\md \omega'_{B(0,r)}},
\\
X'' & :=\Pb{\omega''\in \AA_4(r_0,2\,r)\md \omega''_{B(0,r)}}.
\end{align*}
Then
\begin{equation}
\label{e.bxx}
\begin{aligned}
\b_4^W(r_0,2\,r)
&
= \Pb{\omega',\omega''\in\AA_4(r_0,2\,r)}
\\&
= \EB{\Ps{ \omega',\omega''\in\AA_4(r_0,2\,r)\md \omega'_{B(0,r)},\omega''_{B(0,r)}}}.
\end{aligned}
\end{equation}
Now, since 
\begin{align*}
&
\Ps{ \omega',\omega''\in\AA_4(r_0,2\,r)\md \omega'_{B(0,r)},\omega''_{B(0,r)}}
\\ &\qquad
\le \Ps{ \omega'\in\AA_4(r_0,2\,r)\md \omega'_{B(0,r)},\omega''_{B(0,r)}}
=X',
\end{align*}
and a similar relation holds with $X''$, we have
$$
\Ps{ \omega',\omega''\in\AA_4(r_0,2\,r)\md \omega'_{B(0,r)},\omega''_{B(0,r)}}\le X'\wedge X''=:\tilde X,
$$
where $\tilde X=X'\wedge X''$ denotes the minimum of $X'$ and $X''$.
Because $r$ is $\delta$-good,~\eqref{e.bxx} now gives $\Eb{\tilde X} \ge \delta\,\b_4^W(r_0,r)$.
Since $\{\tilde X>0\}\subseteq\{\omega',\omega''\in\AA_4(r_0,r)\}$, and the
latter event has probability $\b_4^W(r_0,r)$, this gives
\begin{equation}
\label{e.tX}
\Eb{\tilde X\md \omega',\omega''\in\AA_4(r_0,r)}\ge \delta\,.
\end{equation}

Now let $\tilde \omega'$ and $\tilde\omega''$ be two percolation configurations
that have the same law as $\omega$ that are independent of each other outside
of $B(0,r)$ and inside $B(0,r)$ they satisfy 
$\tilde\omega'=\omega'$ and $\tilde\omega''=\omega''$.
Let $s'$ be the least distance between the endpoints on
$\p B(0,2\,r)$ of any pair of disjoint interfaces of $\tilde\omega'$
that cross the annulus $A(r,2\,r)$.
(Take $s'=\infty$ if there is at most one such interface.)
We claim that $r/s'$ is tight, in the following sense:
for every $\eps>0$ there is a constant $M=M_\eps$,
depending only on $\eps$, such that
$\Pb{r/s'> M}<\eps$.
This is proved, for example, in~\cite[Lemma~A.2]{\SchrammSteif}.
We use this with $\eps=\delta/2$.
Thus, we have
\begin{equation}
\label{e.Md}
\Pb{s'< r/M}<\delta/2\,.
\end{equation}
This property will be referred to below as the \lq\lq separation of arms\rq\rq\ phenomenon.
  
Assume now that $r\ge 100\,M=:\bar r$. Then when $s'\ge r/M$, we know
that $s'$ is substantially larger than the lattice mesh.
Observe that the distance between the endpoints on
$\p B(0,2\,r)$ of any two disjoint interfaces
of $\tilde\omega'$ that cross $A(r_0,2\,r)$ is at least
$s'$ (since every such interface also crosses $A(r,2\,r)$), 
and if $\tilde\omega'\in \AA_4(r_0,2\,r)$ then there exist at least four such interfaces.
Let $L_k$ denote the sector $\{\rho\,e^{i\theta}:\rho>0, \theta\in[\pi\,k/4,\pi\,(k+1)/4]\}$.
Let $\ev Z'$ denote the event that in $\tilde\omega'$ for each $k\in\{0,2,4,6\}$ there is a 
crossing from $\p B(0,r_0)$ to $\p B(0,8\,r)$ in $L_k\cup A(r_0,4\,r)$, which
is white when $k\in\{0,4\}$ and black when $k\in\{2,6\}$.
By the proof of~\cite[Lemma~A.3]{\SchrammSteif},
we know (see Figure \ref{f.Zp}) that there is a constant $c_0=c_0(M)>0$ such that 
\begin{equation}
\label{e.Zp}
\Pb{\ev Z' \md \tilde\omega'_{B(0,r)},\, \tilde\omega'\in \AA_4(r_0,2\,r),\,s'\ge r/M} \ge c_0\,.
\end{equation}

\begin{figure}[htbp]
\centerline{
\AffixLabels{\epsfysize=3.2in \epsffile{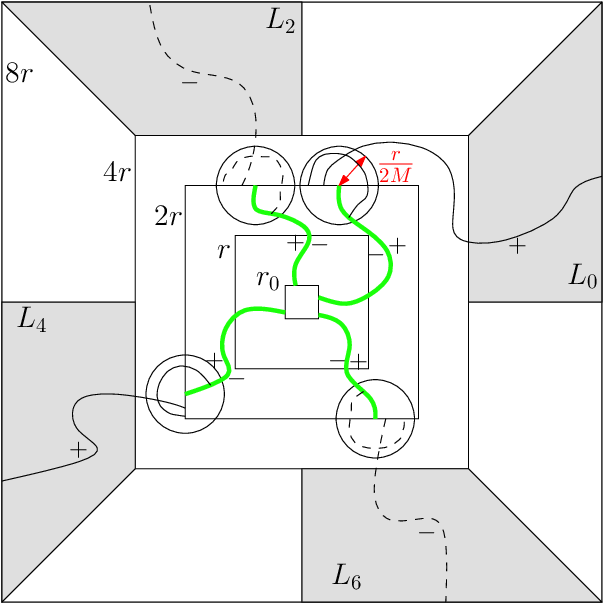}}
}
\begin{caption}{\label{f.Zp}
How to use ``separation of arms'' in order to get equation (\ref{e.Zp}).}
\end{caption}
\end{figure}

Note that $s'$ is independent from $\tilde\omega'_{B(0,r)}=\omega'_{B(0,r)}$.
Therefore,~\eqref{e.Md} gives
$$
\begin{aligned}
&\Pb{s'\ge r/M,\, \tilde\omega'\in\AA_4(r_0,2\,r) \md \omega'_{B(0,r)}}
\\&\qquad
\ge
\Pb{\tilde\omega'\in\AA_4(r_0,2\,r) \md \omega'_{B(0,r)}}
-\Pb{s'< r/M\md \omega'_{B(0,r)}}
\\&\qquad
\ge \tilde X-\delta/2\,.
\end{aligned}
$$
Together with~\eqref{e.Zp}, this shows that 
$$
\Pb{\ev Z'\md \omega'_{B(0,r)}} \ge
c_0\,(\tilde X-\delta/2)\,.
$$
Now let $\ev Z''$ be defined as $\ev Z'$, but with $\tilde\omega''$
replacing $\tilde\omega'$. Since $\tilde\omega'$ and $\tilde\omega''$
are conditionally independent given $(\omega'_{B(0,r)},\omega''_{B(0,r)})$,
we get
\begin{equation}
\label{e.ZZ}
\Pb{\ev Z',\,\ev Z''\md \omega'_{B(0,r)},\,\omega''_{B(0,r)}}
\ge c_0^2\, \bigl((\tilde X-\delta/2)_+\bigr)^2\,,
\end{equation}
where $(x)_+$ denotes $x\vee 0$.
Since $\bigl((\tilde X-\delta/2)_+\bigr)^2$
is a convex function of $\tilde X$, we get
from Jensen's inequality and~\eqref{e.tX}
$$
\Eb{\bigl((\tilde X-\delta/2)_+\bigr)^2\md \omega',\omega''\in\AA_4(r_0,r)}\ge \delta^2/4\,.
$$
Thus, taking the expectation of both sides of~\eqref{e.ZZ} gives
\begin{equation}
\label{e.ZpZpp}
\Pb{\ev Z',\,\ev Z''}
\ge c_0^2\,(\delta^2/4)\,\Pb{\omega',\omega''\in\AA_4(r_0,r)}
= c_0^2\,(\delta^2/4)\, \b_4^W(r_0,r)\,.
\end{equation}
This clearly implies the statement of the lemma in the case
$r'\le 8\,r$. Assume therefore that $r'>8\,r$.
Note that $\ev Z'\cap\ev Z''$ is monotone increasing inside
$(L_0\cup L_4)\setminus B(0,6\,r)$ and monotone decreasing in
$(L_2\cup L_6)\setminus B(0,6\,r)$.
Hence, it is positively correlated with the event
$\tilde{\ev Z}$ that for each
of $\tilde \omega'$ and $\tilde\omega''$ there are white paths separating
$\p B(0,6\,r)$ from $\p B(0,8\,r)$ in each of
$L_0$ and $L_4$, and similar black paths in $L_2$ and $L_6$, and
moreover, there are black paths in each of $L_2$ and $L_6$ joining
$\p B(0,6\,r)$ and $\p B(0,r')$
and white paths in each of $L_0$ and $L_4$ joining
$\p B(0,6\,r)$ and $\p B(0,r')$.
By the Russo-Seymour-Welsh theorem (see Figure~\ref{f.lemma5.7}), $\Pb{\tilde{\ev Z}}\ge c_1\,(r/r')^d$
for some absolute constants $c_1>0$ and $d<\infty$.


\begin{figure}[htbp]
\centerline{
\AffixLabels{\epsfysize=3.2in \epsffile{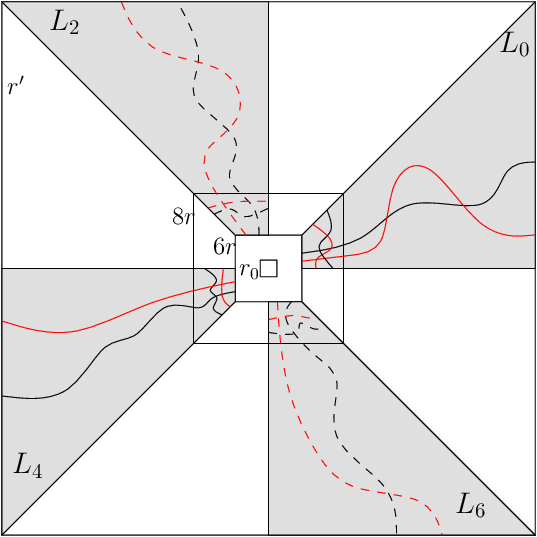}}
}
\begin{caption}{\label{f.lemma5.7}
A realization of the event $\tilde{\ev Z}$. The black color corresponds to arms in $\tilde \omega'$, while the red color corresponds to arms in $\tilde \omega''$.
}
\end{caption}
\end{figure}

Taking $c:= c_1\,c_0^2\,\delta^2/4$,
we obtain
$\Pb{\ev Z',\,\ev Z'',\,\tilde{\ev Z}}\ge c\,\b_4^W(r_0,r)\,(r/r')^d$.
The lemma follows, since 
$\ev Z'\cap\ev Z''\cap\tilde{\ev Z}\subseteq 
\{\tilde\omega',\tilde\omega''\in\AA_4(r_0,r')\}$.
\QED

\begin{lemma}\label{l.good}
There are absolute constants $\delta_0>0$ and $\bar R>0$ such that
\begin{equation}
\label{e.good1}
\b_4^W(r_0,2\,\rho)\ge \delta_0\,\b_4^W(r_0,\rho)
\end{equation}
holds for any $r_0>0$ and any $\rho\ge r_0\vee\bar R$. Furthermore,
\begin{equation}
\label{e.good2}
\b_4^W(\rho/2,r_0)\ge \delta_0\,\b_4^W(\rho,r_0)
\end{equation}
holds for any $\bar R\le \rho \le r_0$.
\end{lemma}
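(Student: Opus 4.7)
The plan is to use Lemma~\ref{l.largerthangood} as a black box together with a pigeonhole argument on dyadic scales. I describe the first inequality; the second follows by an entirely analogous argument applied to the inner radius, using the symmetric inward version of Lemma~\ref{l.largerthangood}, whose proof only requires arm separation near the inner boundary of an annulus (instead of the outer one) followed by an RSW extension toward the centre.

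Fix $\delta_* := 2^{-d-1}$ with $d$ the exponent from Lemma~\ref{l.largerthangood}, and set $\bar R := \bar r(\delta_*)$ and $c_* := c(\delta_*)$. Call a scale $r \ge r_0 \vee \bar R$ \emph{good} if $\b_4^W(r_0, 2r) \ge \delta_* \b_4^W(r_0, r)$, and \emph{bad} otherwise. If $\rho$ itself is good, \eqref{e.good1} holds with constant $\delta_*$; otherwise descend through $\rho, \rho/2, \rho/4, \dots$ and let $k \ge 1$ be the smallest index for which either $\rho/2^k \ge r_0 \vee \bar R$ is good, or $\rho/2^k < r_0 \vee \bar R$. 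In the first sub-case, telescoping the bad inequalities along $\rho, \rho/2, \dots, \rho/2^{k-1}$ gives $\b_4^W(r_0, 2\rho) < \delta_*^k\, \b_4^W(r_0, \rho/2^{k-1})$, while applying Lemma~\ref{l.largerthangood} at $r = \rho/2^k$ and $r' = 2\rho$, combined with the monotonicity~\eqref{e.mono}, yields
\[
\b_4^W(r_0, 2\rho) \,\ge\, c_* \cdot 2^{-(k+1)d}\, \b_4^W(r_0, \rho/2^k) \,\ge\, c_* \cdot 2^{-(k+1)d}\, \b_4^W(r_0, \rho/2^{k-1}),
\]
where the last step uses that shrinking the outer radius enlarges $\AA_4(r_0, \cdot)$. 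Comparing these forces $(2^d \delta_*)^k > c_* \cdot 2^{-d}$; since $2^d \delta_* = 1/2 < 1$, this bounds $k$ by the absolute constant $K := d - \log_2 c_*$, and hence $\b_4^W(r_0, 2\rho) \ge c_* \cdot 2^{-(K+1)d}\, \b_4^W(r_0, \rho)$ (using once more that $\b_4^W(r_0, \rho/2^{k-1}) \ge \b_4^W(r_0, \rho)$).

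The second sub-case arises when the descent reaches the floor without finding a good scale, forcing $\rho \le 2^K(r_0 \vee \bar R)$. Assuming $r_0$ is at least the lattice spacing, this puts $\rho/r_0$ in a universally bounded range, so by RSW~\eqref{e.RSW} both $\alpha_4(r_0, 2\rho)$ and $\alpha_4(r_0, \rho)$ are bounded below by positive universal constants. The direct bounds $\b_4^W(r_0, 2\rho) \ge \alpha_4(r_0, 2\rho)^2$ (an instance of Jensen applied to the $\omega|_{W^c}$-conditioning used in the proof of \eqref{e.mono}) and $\b_4^W(r_0, \rho) \le \alpha_4(r_0, \rho) \le 1$ then give a universal lower bound on the ratio $\b_4^W(r_0, 2\rho)/\b_4^W(r_0, \rho)$. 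Taking $\delta_0$ to be the minimum of the three universal constants so obtained completes the proof. The real work is packaged in Lemma~\ref{l.largerthangood}; the only delicate point in the present argument is the calibration $2^d\delta_* < 1$, which ensures that the geometric decay $\delta_*^k$ from a bad run cannot outpace the polynomial correction $2^{-(k+1)d}$ of Lemma~\ref{l.largerthangood}, and thereby bounds the bad-run length absolutely.
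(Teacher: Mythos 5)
Your first sub-case is sound, and it correctly isolates the key mechanism (the same one the paper uses): since $2^d\delta_*<1$, a maximal run of bad scales telescopes to a geometric bound $\delta_*^k$ that, when compared against the polynomial lower bound $c_*\,2^{-(k+1)d}$ from Lemma~\ref{l.largerthangood} applied at the first good scale encountered in the descent, forces $k\le K$ for an absolute $K$. The algebra there checks out.

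The gap is in the second sub-case. You assert that hitting the floor without finding a good scale ``forces $\rho\le 2^K(r_0\vee\bar R)$,'' but this is exactly the bound on $k$ that you cannot get: the first sub-case's argument deduces $k\le K$ by applying Lemma~\ref{l.largerthangood} at the good scale $\rho/2^k$, and in the second sub-case no such scale exists by hypothesis. Nothing prevents the bad run from being arbitrarily long a priori, so the conclusion that $\rho/r_0$ lies in a bounded range does not follow, and the RSW fallback built on it is vacuous. The root of the problem is your calibration $\delta_*:=2^{-d-1}$: this makes the bad-run comparison work, but it does not guarantee that the base scales near $r_0\vee\bar R$ are $\delta_*$-good, which is what you would need to rule the second sub-case out entirely. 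The paper fixes this by choosing $\delta$ to satisfy two constraints simultaneously --- $\delta<2^{-(d+1)}$ \emph{and} $\delta\le\inf_{r\ge R_0}\b_4^W(r,2r)$ (the latter coming from RSW and Jensen, as in your estimate $\b_4^W\ge\alpha_4^2$) --- and then sets $\bar R:=\bar r(\delta)\vee R_0$. With that calibration, when $r_0\ge\bar R$ the scale $r_0$ itself is automatically $\delta$-good (since $\b_4^W(r_0,r_0)=1$), so every descent must find a good scale before the floor and your second sub-case never arises. Finally, the lemma as stated covers all $r_0>0$, not just $r_0$ above the lattice spacing; for $0<r_0<\bar R$ the paper pays a fixed (but $\bar R$-dependent) price via a deterministic concatenation of configurations in $B(\bar R)$, which you would still need to supply.
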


\proof
We start by proving the first claim.
Let us consider some $\delta \in (0, 2^{-(d+1)})$ (where $d$ is the universal constant from Lemma~\ref{l.largerthangood}),
whose value will be determined later.
Let $\bar r=\bar r(\delta)$ and $c(\delta)$ be as in Lemma~\ref{l.largerthangood}.
Let $r_0>0$ some radius.
Let $r\ge \bar r\vee r_0$, and assume for now that
$r$ is $\delta$-good. Then, by Lemma~\ref{l.largerthangood} and
the monotonicity property~\eqref{e.mono}, we have
\begin{equation}
\label{e.forecast}
\b_4^{W}(r_0,r')\ge c(\delta)\, \b_4^W(r_0,r)\,(r/r')^d,
\end{equation}
for every $r' > r$.
Set $\rho_k:=2^k\,r$, and let $\bar k:=\inf\{k\in\N_+: \rho_k\text{ is $\delta$-good}\}$, with $\bar k:=\infty$ if this set is empty.
If $m\in\N_+$ and $m\le \bar k$, then
by the definition of $\bar k$ and
by~\eqref{e.forecast} with $r':=\rho_{m}$, we have
$$
\begin{aligned}
\b_4^W(r_0,\rho_1)\,\delta^{m-1}
&
\ge \b_4^W(r_0,\rho_m)
\\&
\ge c(\delta)\,\b_4^W(r_0,r)\,2^{-dm}
\\&
\ge c(\delta)\,\b_4^W(r_0,\rho_1)\,2^{-dm}.
\end{aligned}
$$
Now, since $\delta<2^{-d-1}$, the above gives
$2^{-(d+1)(m-1)}\ge c(\delta)\,2^{-dm}$, which implies
$2^{d+1}\ge c(\delta)\,2^m$.
Hence, $\bar k$ is bounded from above by some finite constant depending only on $\delta$ 
(recall that $d$ is a universal constant).
We may conclude that $\delta$-good radii appear in scales with bounded gaps,
since the same argument may be applied with $r$ replaced by $\rho_{\bar k}$.
If $\rho$ is in the range $(r,\rho_{\bar k})$, then we have the estimate
$$
\frac{\b_4^W(r_0,2\,\rho)}{\b_4^W(r_0,\rho)} \ge
\frac{\b_4^W(r_0,2\,\rho_{\bar k})}{\b_4^W(r_0,r)} 
\overset{\eqref{e.forecast}}\ge c(\delta)\,2^{-d (\bar k+1)},
$$
which means that $\rho$ is $\delta_1$-good with
$\delta_1:= \delta_1(\delta) = c(\delta)\,2^{-d(\bar k+1)}$.
The same statement applies to any $\rho\ge r$, since, above $r$, the
$\delta$-good radii appear in scales with bounded gaps.

The proof of~\eqref{e.good1} is nearly complete, since we only have to initialize the above recursion. 
Given $r_0>0$, we want to find $\delta>0$ small enough and some $\bar R \ge \bar r = \bar r(\delta)$ 
that does not depend on $r_0$,
such that $\bar R \vee r_0$ will be $\delta$-good (recall that the definition of $\delta$-good 
in Lemma~\ref{l.largerthangood} above does depend on $r_0$). 
Once this is established, we can start the induction 
with $r:= \bar R \vee r_0$, and~\eqref{e.good1} will be satisfied for all $\rho\ge r$, with $\delta_0:=\delta_1$.

Clearly, by RSW, $\b_4^W(r,2r)$ is bounded from below once $r\ge R_0$, where $R_0$ is some absolute constant.
We now fix $\delta:=\delta(R_0) \in (0,2^{-(d+1)})$ such that for any $r\ge R_0$, $\b_4^W(r,2r) \ge \delta$.
Now that $\delta$ is fixed, define $\bar R:= \bar r(\delta)\vee R_0$. 
This certainly ensures that the requirement $\bar R \geq \bar r$ is fulfilled.

We now distinguish between two cases. If $r_0\ge \bar R$, then we want to start the induction with $r:=r_0$, 
which is fine, because $r_0\geq \bar R \geq R_0$, hence $r_0$ is $\delta$-good (for $r_0,W$) by the choice
of $\delta$. We thus obtain the desired ~\eqref{e.good1} for any $\rho \ge r=r_0$,
with $\delta_0:= \delta_1(\delta)$. 

Let us now deal with the case $0<r_0< \bar R$. Let $\rho \geq \bar R$. Note that for any coupled configurations 
$\omega', \omega''$ satisfying $\{ \omega', \omega'' \in \AA_4(\bar R, 2\rho) \}$, there always exists 
a pair of configurations $\tilde \omega', \tilde \omega''$ in the ball $B(\bar R)$ such that the concatenations
($\tilde \omega', \omega'$) and ($\tilde \omega'', \omega''$) both satisfy $\AA_4(r_0, 2\rho)$. 
Note that, for this to be entirely rigorous, one would need to choose the boxes $B(r)$ in a proper way to exclude 
discrete effects, see Figure~\ref{f.latticevisible}. On the triangular 
lattice, choosing the balls according to the triangular graph distance does it, for instance. 

\begin{figure}[htbp]
\begin{center}
\includegraphics[width=.25\textwidth]{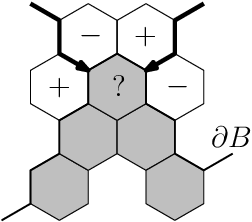}
\end{center}
\caption{If we had chosen boxes with such boundaries, then in this case whatever the color of the ?-hexagon is,
we would not be able to continue both interfaces.}\label{f.latticevisible}
\end{figure}

This implies that 
for any $1<r_0<\bar R\le \rho$, 
$$
\begin{aligned}
\b_4^W(r_0,2\rho) &\ge \left(\frac 1 2\right)^{2 | B(\bar R)|} \,\b_4^W(\bar R, 2\rho) \\
& \ge  \left(\frac 1 2\right)^{2 | B(\bar R)|}\,  \delta_1(\delta)\, \b_4^W(\bar R,\rho) \\
& \ge \left(\frac 1 2\right)^{2 | B(\bar R)|} \, \delta_1(\delta)\, \b_4^W(r_0,\rho)\,,
\end{aligned}
$$
where the second inequality follows from $r_0=\bar R$ in the above analysis.
Hence the first claim of the Lemma is now proved with $\delta_0:= 2^{-2 |B(\bar R)|} \delta_1(\delta)$.

To prove~\eqref{e.good2}, we follow a similar argument but with annuli
growing towards $0$ rather than towards infinity. For this,
a corresponding analogue of Lemma~\ref{l.largerthangood} is needed.
Since the proofs in this case are essentially the same, they are omitted.
\QED

\proofof{Proposition~\ref{pr.qm}}
As remarked above, we only prove the case $j=4$, since $j=1$ is
very easy and the proof for the case $j=4$ applies to all even $j$, 
with no essential changes.

If $r_2\le 4\, r_1$ or $r_3\le 4\, r_2$, then the claim follows from
Lemma~\ref{l.good}. Hence, assume that $r_1<r_2/4$ and $4\,r_2<r_3$.
By the monotonicity property~\eqref{e.mono}, it suffices to prove
$$
O(1)\,\b_4^{W\cup A(r_2/4,4\,r_2)}(r_1,r_3)\ge \b_4^W(r_1,r_2/4)\,\b_4^W(4\,r_2,r_3)\,.
$$
This follows from the proof of Lemma~\ref{l.largerthangood}:
we just need to apply the same argument twice, once going outwards
from $0$ and using~\eqref{e.good1} with $r_0:=r_1$ and $\rho:=r_2/4$
to verify that $r_2/4$ is $\delta_0$-good,
and once going inwards towards $0$ and using~\eqref{e.good2}
with $r_0:=r_3$ and $\rho:= 4\,r_2$.
The easy details are left to the reader.
\QED

Although this will not be needed in the following, we note that
the following generalization of Proposition~\ref{pr.qm} to arbitrary sequences of crossings holds.
(This can be proved by combining the above arguments with the proof of \cite[Proposition A.5]{\SchrammSteif}.)

\bpr\label{pr.qmgen}
Let $j\in \N_+$ and fix a color sequence $X\in \{\mathrm{black, white}\}^j$. For any set $W\subseteq\bits$,
the probabilities for 
the existence in both coupled configurations $\omega'$ and $\omega''$,
of $j$ crossings whose colors match this sequence in counterclockwise
order satisfy the inequalities in Proposition~\ref{pr.qm}. 
\epr

\subsection{Proof of first moment estimate}\label{ss.first}

\proofof{Proposition~\ref{pr.first}}
As remarked in Subsection~\ref{subs.int}, the proof of the first moment
estimate reduces to proving~\eqref{e.1}.
We will now explain how the proof of Proposition~\ref{pr.qm} needs to be adapted to
give~\eqref{e.1}. As in the case of Proposition~\ref{pr.qm}, one needs to show that arms 
``tend to separate'' when one conditions on the event we are interested in (i.e.~$\AA_\square(x,\Quad)$ here).
In Proposition~\ref{pr.qm}, the conditioning was very symmetric around $x$, while for Proposition~\ref{pr.first}, if the point $x$ happens to be close to the boundary 
$\p \Quad$, then, under the conditioning, boundary effects will have a strong influence. In order to deal with this influence of the boundary, 
it is convenient to locate the point $x$ first with respect to its closest edge (basically inducing a three-arm event) and then with respect to its closest corner (inducing a two-arm event).

Fix some $x\in\bits$ that is relevant for the left-right crossing in $\Quad=[0,R]^2$.
Let $x^+$ be the closest point to $x$ on $\p\Quad$ and let $x^{++}$ be the closest
point to $x$ among the four corners of $\Quad$. Set $R^+:= \|x-x^+\|_\infty$ and
$R^{++}:=\|x-x^{++}\|$.
We now define
$$
B_r:=\begin{cases}
B(x,r), & \bar r\le r\le R^+/8, \\
B(x^+,r), & 8\,R^+ \le r\le R^{++}/8,\\
B(x^{++},r), & 8\,R^{++} \le r\le R/8,
\end{cases}
$$
where $\bar r>1$ is some fixed constant, and let $\mathcal R$ denote the set of
$r$ for which $B_r$ is defined; that is,
$\mathcal R:= [\bar r,R^+/8]\cup[8\,R^+,R^{++}/8]\cup[8\,R^{++},R/8]$.
Given any $r\in\mathcal R$, define $\tilde r:=\inf (\mathcal R\cap [2\,r,R])$.
Then we say that $r$ is $\delta$-good if
$\Pb{\omega',\omega''\in \AA_4(x,B_{\tilde r})} \ge
\delta\,
\Pb{\omega',\omega''\in \AA_4(x,B_r)}$.
The proof that there is a universal constant $\delta$ such that
every $r\in\mathcal R$ satisfying $2\,r\le\sup\mathcal R$ is
$\delta$-good proceeds like the proof of~\eqref{e.good1} with a few minor changes.
The fact that some of the boxes considered are not concentric with each other
is of no consequence.  The only significant modification needed is that in the
argument corresponding to Lemma~\ref{l.largerthangood}, if $B_r\cap\p\Quad\ne\emptyset$,
then the interfaces considered are in the intersection of the corresponding
annulus and $\Quad$ and the definition of $s'$ needs to be modified.
In the adapted proof, $s'$ is defined as the least distance between any two distinct
points that are either endpoints on
$\p B_{\tilde r}$ of the interfaces or points in the intersection $\p B_{\tilde r}\cap \p\Quad$.
The remaining details are left to the reader, as is the similar proof
that 
$\Pb{\omega',\omega''\in \AA_\square(B_{\breve r},\Quad)} \ge
\delta\,
\Pb{\omega',\omega''\in \AA_\square(B_r,\Quad)}$
when $\breve r =\sup(\mathcal R\cap [\bar r,r/2])\ge\bar r$.
The proof of~\eqref{e.1} then follows as in the proof of Proposition~\ref{pr.qm}.
\QED

\begin{remark}\label{r.agglomerates}
In Sections~\ref{s.concent} and~\ref{s.noise}, we will need to apply Proposition~\ref{pr.SQBW} to a set of boxes
$B$ that form a grid covering $\Quad$.  Since we need each $B'$ to be contained in $\Quad$,
there is some care needed in placing the grid of boxes. In fact, for some radii $r$,
this is actually impossible. There are several alternative solutions to this problem.
The easiest solution is to restrict $r$ to the set of radii that admit grids of boxes that
cover $\Quad$ well. This happens, for example, when $r$ divides $R$.
 However, this solution has the drawback of not being easily adaptable
to other settings, for example, to the setting in which $\Quad$ is a rectangle
or some smooth perturbation of a rectangle. For this reason, we now describe a somewhat different
solution. Let $V$ be a maximal set of points in $\Quad$ such that the distance between
any pair of distinct points in $V$ is at least $r$ and the distance between any $v\in V$
to the closest point on $\p \Quad$ is at least $r$.
Consider the intrinsic metric $d_\Quad$ on $\Quad$, where $d_\Quad(x,x')$ is the
infimum length of any curve in $\Quad$ connecting $x$ and $x'$.
Let $(T_v:v\in V)$ denote the Voronoi tiling associated with $V$ and with this
metric, and let $B_v$ denote the union of the lattice tiles meeting $T_v$.
If we assume that $\Quad$ is \lq\lq reasonably nice\rq\rq,
then the maximal $d_\Quad$-diameter of any $B_v$ is $O(r)$. (Of course, we assume $r>1$.)
This will be the case, for example, when $\Quad$ is a rectangle whose smaller sidelength
is larger than $2\,r$, or more generally, if $\Quad=R\Quad_0$ for a piecewise smooth quad $\Quad_0$ and $R>c(\Quad_0) r$.
Now observe that the disk of radius $r/4$ around each $v\in V$ is contained
in the interior of $T_v$ and
is bounded away from $\p\Quad$. We may define $B'_v$ as the union of 
the lattice tiles that meet this disk.
The statement and proof of Proposition~\ref{pr.SQBW} hold
with $B_v$ and $B_v'$ replacing $B$ and $B'$, though the constants will depend
on the upper bound we have for $\diam(B_v)/r$.
\end{remark}

\subsection{A local result for general quads}\label{ss.piquad}

In this subsection, we prove the following local result, which is a key step in estimates for noise sensitivity in the case of general quads.

\bpr\label{pr.SQBWloc}
Let $\Quad\subset \R^2$ be some quad, and let $U$ be an open set whose closure is contained in the interior of $\Quad$.
For $R>0$, let
 $\Spec:= \Spec_{f_{R\Quad}}$ be the spectral sample of $f_{R\Quad}$, the $\pm 1$ indicator function
for the crossing event in $R\Quad$. Then, there is a constant $\bar r=\bar r(U,\Quad)$
such that
for any box $B\subset R\,U$ of radius $r \in [\bar r,R\diam(U)]$ and any set $W$ with $W\cap B=\emptyset$, we have
\begin{align*}
\Pb{\Spec_{f_{R\Quad}} \cap B' \cap \Rs \neq \emptyset \md
\Spec_{f_{R\Quad}}\cap B\neq \emptyset,\, \Spec_{f_{R\Quad}}\cap W =\emptyset}\geq
 a(U,\Quad)\,,
\end{align*}
where $B'$ is concentric with $B$ and has radius $r/3$,
the random set $\Rs$ is defined as in Proposition~\ref{pr.SQBW}, and $a(U,\Quad)>0$ is a constant that depends only on $U$ and $\Quad$.
\epr

\proof
Here, the main new issue to deal with is that the quad $\Quad$ is general;
but, in contrast to the situation in Proposition~\ref{pr.first},
the box $B$ is bounded away from $\p\Quad$, which simplifies parts
of the proof. 

The second moment estimate (Proposition~\ref{pr.second})
applies in the present setup. We now prove the corresponding first moment
estimate.

In the following discussion, the constants are allowed to depend on $U$ and
$\Quad$.
We start by proving the analogue of~\eqref{e.good2}.
Let $K$ be a compact set contained in the interior of $\Quad$ and containing
the closure of $U$ in its interior.
Let $M$ denote the set of all squares $S\subseteq K$ that intersect $\closure{U}$
while the concentric square of twice the radius is not contained
in the interior of $K$. Then $M$ is a compact set of squares in the natural topology, 
and the radius of the squares
in $M$ is bounded away from zero.
Fix some $S\in M$.
Let $B(R\,S)$ denote the union of the lattice tiles that meet $R\,S$.
A simple application of the Russo-Seymour-Welsh theorem
shows that
$\liminf_{R\to\infty} \Pb{\omega',\omega''\in\AA_\square\bigl(B(R\,S),R\,\Quad\bigr)}>0$.
Moreover, the same estimate holds in a neighborhood of $S$; that is, there is a set
$V\subset M$ that contains $S$ and is open in the topology of $M$, and there is a
constant $R_0=R_0(V,U,\Quad)>0$ such that
$$
\inf_{R\ge R_0}\,\inf_{S'\in V}\,
\Pb{\omega',\omega''\in\AA_\square\bigl(B(R\,S'),R\,\Quad\bigr)}> 0\,.
$$
Since $M$ is compact, this cover of $M$ by open subsets $V$ has a finite subcover,
and therefore there is some constant $R_1=R_1(U,\Quad)$ such that
$$
\inf_{R\ge R_1}\,\inf_{S\in M}\,
\Pb{\omega',\omega''\in\AA_\square\bigl(B(R\,S),R\,\Quad\bigr)}> 0\,.
$$
It is clear that there is some constant $b>1$ such that for every
$S\in M$ the concentric square whose radius is $b$ times the radius
of $S$ is still contained in $\Quad$. The above then shows that there
is a constant $\delta>0$ such 
that for all $R\ge R_1$ and all $S\in M$,
\begin{equation}
\label{e.bgood}
\Pb{\omega',\omega''\in\AA_\square\bigl(B(R\,S),R\,\Quad\bigr)}\ge \delta\,
\Pb{\omega',\omega''\in\AA_\square\bigl(B(R\,S_b),R\,\Quad\bigr)}\,,
\end{equation}
where $S_b$ denotes the square concentric with $S$ whose radius
is $b$ times the radius of $S$.
Let $\hat M$ denote the set of squares that are contained in and
concentric with some square in $M$.
Once we have~\eqref{e.bgood} for all $S\in M$, we can conclude
as in the proof of~\eqref{e.good2} in Lemma~\ref{l.good} that the same holds
with possibly a different constant $\delta$
for every $S\in \hat M$ such that $\diam(R\,S)\ge \bar r$,
for some constant $\bar r>0$. For this, the powers of $2$ that
were used in the proof of~\eqref{e.good1} and~\eqref{e.good2}
(for example, for the definition
of the notion of \lq\lq good\rq\rq) need to be replaced by powers of $b$,
but this is of little consequence. We also need here a version of Lemma~\ref{l.largerthangood} for the events $\AA_\square(B(RS_b),R\Quad)$, but that can be proved the same way as the original version, using \cite[Lemmas A.2, A.3]{\SchrammSteif} and (\ref{e.bgood}). Finally, the restriction that $R\ge R_1$ may be avoided by taking $\bar r$ sufficiently large. Thus, the analogue of~\eqref{e.good2} is established.

Based on~\eqref{e.good1} and the above analogue of~\eqref{e.good2}, we
obtain the analogue of~\eqref{e.1} for the current setup,
yielding the first moment estimate. (Note that we do not need to 
adapt the outward bound \eqref{e.good1} to this local result.)
The proof of the current proposition from the first and second
moment estimates follows as in the proof of Proposition~\ref{pr.SQBW}.
\QED

\subsection{The radial case}

For the study of the set of exceptional times for dynamical percolation, we will need some \concentration for the spectral samples of the ``radial'' indicator function.
For this purpose, the following analog of Proposition~\ref{pr.SQBW} for the radial setting
will be useful.

\bpr\label{pr.SQBWrad}
Let $f=f_R$ be the $0$-$1$ indicator function of the existence of a white crossing
between the two boundary components of the annulus
$[-R,R]^2\setminus [-1,1]^2$, and let $\Spec=\Spec_f$ be its spectral sample with 
law $\Pb{\Spec= S}=\widehat f(S)^2/\|f\|^2$.
Also let $W\subseteq\bits$.
Let $B$ be a box of some radius $r$ that does
not intersect $W$ and let $B'$ be the concentric box with radius $r/3$.
Suppose that $B'\subset [-R,R]^2$ and $B\cap [-4\,r,4\,r]^2=\emptyset$.
We also assume that $r\ge \bar r$, where $\bar r>0$ is some universal constant.
Then
\be
\Pb{\Spec\cap B'\cap \Rs\neq \emptyset \md \Spec\cap B \neq \emptyset,\,\Spec\cap W=\emptyset}>a,\nonumber
\ee
where $\Rs$ is as in Proposition~\ref{pr.SQBW} and $a>0$ is a universal constant.
\epr

\proof
The proof will be similar to the above proofs.
For this reason, we will be brief and leave many details to the reader.
Let $z$ be the center of the box $B$, and set $r_1:=|z| > 4r$.
Assume first that $r_1<R/3$.
We then consider the three annuli
$B(0,r_1/3)\setminus B(0,1)$, $B(0,R)\setminus B(0,3\,r_1)$
and $B(z,r_1/3)\setminus B$.
In order for $\Lambda_B$ to hold, it is necessary that
the $1$-arm event occurs in the first two annuli and
that the $4$-arm event occurs in the third annulus. 
Thus, by Lemma~\eqref{l.lambda},
\begin{align}
\Qb{B\cap\Spec\ne\emptyset = \Spec\cap W} 
&
 \le 4\, \Eb{\lala(B,W)^2} 
\nonumber\\
&
\le
4\, \Pb{\omega',\omega''\in \AA_1\bigl(B(0,1),B(0,r_1/3)\bigr)}
\times 
\nonumber\\&\qquad
 \Pb{\omega',\omega''\in \AA_1\bigl(B(0,3\,r_1),B(0,R)\bigr)}
\times 
\nonumber\\&\qquad
\Pb{\omega',\omega''\in \AA_4\bigl(B,B(z,r_1/3)\bigr)}. 
\label{e.lalaell}
\end{align}


Now, using the same technology of quasi-multiplicativity for coupled configurations and the separation of arms as we did before (starting with the argument of Subsection~\ref{subs.int}), one can prove the following first moment estimate for any $x\in B'$,
\begin{align}
\Qb{x\in \Spec, \Spec\cap W=\emptyset} &\geq c_1\, \b_1^W(1,r_1/3)\, \b_4^W(x,r_1/3) \, \b_1^W(3r_1, R)\nonumber\\
&\geq c_2\, \b_1^W(1,r_1/3)\, \alpha_4(r) \, \b_4^W(r,r_1/3) \b_1^W(3r_1,R)\nonumber\\
&\geq c_3 \, \alpha_4(r)\, \Eb{\lala(B,W)^2},\label{e.ellfirst}
\end{align}
where (\ref{e.lalaell}) is used for the last step. 
One can easily prove the analogous second moment estimate, and the claim now follows as in the proof of Proposition~\ref{pr.SQBW}
(it is the same second moment type of argument, except one has to renormalize the measure $\Q$ to get the probability measure $\P=\P_{f_R}$).

Suppose now that $r_1>2\,R/3$. In this case, we need to consider a different system of annuli.
Let $d$ denote the distance from $B$ to $\p B(0,R)$ and let $z'$ denote a closest point to
$B$ on $\p B(0,R)$.
In the annulus $B(0,R/3)\setminus B(0,1)$ we consider the $1$-arm event,
in the annulus $B(z,r+d/2)\setminus B$ we consider the $4$-arm event,
and in the intersection of $B(0,R)$ with $B(z',R/3)\setminus B(z',5\,r+d)$
(assuming that this is nonempty),
we consider the $3$-arm event between $\p B(z',R/3)$ and $\p B(z',5\,r+d)$.
Again, the claim follows.

In the intermediate case $R/3\le r_1\le 2\,R/3$,
we need to consider the $1$-arm event
in $B(0,R/6)\setminus B(0,1)$ and the $4$-arm event
in $B(z,R/6)\setminus B$, and the claim likewise follows. \QED

\section{A large deviation result}\label{s.YX}

In order to deduce Theorem~\ref{t.1} from the results of
Sections~\ref{s.mainlemma}
and~\ref{s.verysmall}, we will need the following general result. 
Let us note that not only the statement bears a vague resemblance to \cite{\LSSdomination}, as explained in Section~\ref{ss.rough}, but also the proof method of averaging using an independent random sample $I\subseteq [n]$.

\begin{proposition}\label{pr.YX}
Let $n\in\N_+$, let $x$ and $y$ be random variables in $\{0,1\}^n$,
and set $X:=\sum_{j=1}^n x_j$ and $Y:=\sum_{j=1}^ny_j$.
Suppose that a.s.\ $y_i\le x_i$ for each $i\in[n]$
 and that there is a constant $a\in(0,1]$ such that for each $j\in[n]$ and every
$I\subset[n]\setminus\{j\}$ we have
\begin{equation}\label{e.yx}
\Pb{y_j=1\md y_i=0\, \forall i\in I}
\ge
a\,
\Pb{x_j=1\md y_i=0\,\forall i\in I}\,.
\end{equation}
Then
\begin{equation}\label{e.y0}
\Pb{Y=0 \md X>0}\le a^{-1}\,\Eb{e^{-aX/e} \md X>0}.
\end{equation}
\end{proposition}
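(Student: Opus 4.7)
The plan is to combine a chain-rule estimate, obtained by revealing the coordinates of $y$ one at a time in a random order, with an averaging step that brings the random variable $X$ into the bound.

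First, I would let $\pi$ be a uniformly random permutation of $[n]$, independent of $(x,y)$, and set $q_i(\pi):=\Pb{x_{\pi(i)}=1 \md y_{\pi(j)}=0,\,j<i,\,\pi}$. By the chain rule,
\begin{equation*}
\Pb{Y=0\md\pi} = \prod_{i=1}^n \Pb{y_{\pi(i)}=0 \md y_{\pi(j)}=0,\,j<i,\,\pi},
\end{equation*}
and applying the hypothesis~\eqref{e.yx} with $I=\{\pi(j):j<i\}$ to each factor, together with $1-t\le e^{-t}$, yields
\begin{equation*}
\Pb{Y=0\md\pi} \le \prod_{i=1}^n \bigl(1-a\,q_i(\pi)\bigr) \le \exp\Bigl(-a\sum_{i=1}^n q_i(\pi)\Bigr).
\end{equation*}

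The second, and more delicate, step is to replace the deterministic quantity $\sum_i q_i(\pi)$ by something of order $X$ at the price of a probability factor. The auxiliary random variable
\begin{equation*}
\widetilde S(\pi):=\sum_{i=1}^n x_{\pi(i)}\,\mathbf{1}_{\{y_{\pi(j)}=0,\,j<i\}}
\end{equation*}
counts the indices $j$ with $x_j=1$ that precede every $y$-one in the ordering $\pi$, and equals $X$ identically on $\{Y=0\}$; in general, a size-biasing computation over random $\pi$ with $(x,y)$ fixed gives $\Eb{\widetilde S(\pi)\md x,y}$ in closed form, of order $X$ on $\{X>0\}$. Since $\sum_i q_i(\pi)$ is (after integrating out $y$) the natural weighted version of $\widetilde S(\pi)$, a Markov-type argument applied to $\widetilde S(\pi)$ would show that the event $\bigl\{\sum_i q_i(\pi)\ge X/e\bigr\}$ occurs with probability at least $1/e$ on $\{X>0\}$, with the threshold $X/e$ and probability $1/e$ jointly optimized. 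Splitting $\Pb{Y=0,X>0}$ according to whether this event holds then gives the main contribution $\Eb{e^{-aX/e}\mathbf{1}_{X>0}}$ on the favorable event, and the factor $a^{-1}$ in~\eqref{e.y0} is precisely what absorbs the probability cost $1/e$ after rearrangement.

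The main obstacle is the second step. The $q_i(\pi)$ are conditional probabilities rather than $0$-$1$ indicators, so $\sum_i q_i(\pi)$ need not equal $\widetilde S(\pi)$ or $X$ on any particular realization; at the same time, the event $\{Y=0\}$ we are trying to control is what forces $\widetilde S(\pi)=X$ in the first place, so the averaging has to be arranged to avoid circularity. Making the Markov-type inequality produce both the correct threshold $X/e$ and the correct probability $1/e$ — and thereby the precise constant $a^{-1}$ in the conclusion — is the delicate probabilistic step; everything else is a routine consequence of the chain-rule bound displayed above.
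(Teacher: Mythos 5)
Your first step is sound: for any fixed permutation $\sigma$ the chain rule gives
$\Pb{Y=0}=\prod_{i}\Pb{y_{\sigma(i)}=0\md y_{\sigma(j)}=0,\,j<i}$,
and the hypothesis plus $1-t\le e^{-t}$ yields
$\Pb{Y=0}\le\exp\bigl(-a\sum_i q_i(\sigma)\bigr)$,
where $q_i(\sigma):=\Pb{x_{\sigma(i)}=1\md y_{\sigma(j)}=0,\,j<i}$.
The problem is in the second step, and I do not think it is merely ``delicate''; I think it fails. The quantity $\sum_i q_i(\sigma)$ is a deterministic number for each fixed $\sigma$ --- it has already been averaged over $(x,y)$ --- while $X$ is a random variable depending on the realization of $x$. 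So the event $\bigl\{\sum_i q_i(\pi)\ge X/e\bigr\}$ mixes a scalar with a random variable in a way that cannot carry the information you need: the left side of your chain-rule bound, $\Pb{Y=0}$, is likewise a scalar, and there is simply nowhere in that inequality for the distribution of $X$ to enter. In the paper's proof the distribution of $X$ enters precisely because the averaging (over Bernoulli random subsets $I$) is done \emph{inside} the hypothesis \eqref{e.ma}, producing $\Eb{Y\lambda^Y}\ge a\,\Eb{X\lambda^{Y+1}}$, i.e.\ an inequality between expectations of random variables in which $X$ and $Y$ appear together. Once you have applied the chain rule and collapsed everything to $\Pb{Y=0}$, that coupling is lost and cannot be recovered by any ``Markov-type'' afterthought.

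A secondary but concrete issue: the claim that $\Eb{\widetilde S(\pi)\md x,y}$ is ``of order $X$ on $\{X>0\}$'' is false. The size-biasing computation you allude to gives $\Eb{\widetilde S(\pi)\md x,y}=(X+1)/(Y+1)$ when $Y>0$, which is $O(1)$ whenever $Y\asymp X$; and $\sum_i q_i(\pi)$ is not the expectation of $\widetilde S(\pi)$ either --- you have $\Eb{\widetilde S(\pi)}=\sum_i q_i(\pi)\,p_i(\pi)$ with $p_i(\pi)=\Pb{y_{\pi(j)}=0,\,j<i}$ a decreasing weight, so the two sums are not comparable. If you want to pursue a permutation-based route you would need a fundamentally different mechanism for making $X$ appear; the route the paper takes (average \eqref{e.ma} over random $I$, then pick $\lambda=e^{-1}$ and maximize $Z=(Y-a\lambda X)\lambda^Y$ in $Y$) is, in effect, the correct replacement for this step, and the constants $e^{-1}$ and $a^{-1}$ drop out of that optimization rather than from a Markov inequality.
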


For completeness, we will also show below that
\begin{equation}\label{e.st}
\Pb{Y\le t} \le
\Pb{X<(e/a)\,(t+s)}+
\bigl(e^{t-1}/s\bigr)\,\Eb{e^{-a\,X/e}}
\end{equation}
holds for every $t\ge 0$ and $s>0$.
However, we do not have an application for this inequality.

\proof
We may write our assumption~\eqref{e.yx} as follows:
\begin{equation}\label{e.ma}
\Pb{ y_j=1,\,y_i=0\,\forall i\in I}
\ge a\,
\Pb{ x_j=1,\,y_i=0\,\forall i\in I}\,\1_{\{j\notin I\}},
\end{equation}
which is now true even when $j\in I$: it simply says $0\geq a\, 0$. 
This gives us many inequalities, which we will average out in a useful manner.
Fix $\lambda\in(0,1)$. Now multiply~\eqref{e.ma} by
$\lambda^{n-|I|}\,(1-\lambda)^{|I|}$; we now think of 
$I$ as a random subset of $[n]$, where each $i\in[n]$ is in $I$ independently with probability
$1-\lambda$. We will use $\P^I$ for this independent extra randomness. Summing~\eqref{e.ma}
over all choices of $j\in[n]$ and $I\subset[n]$, one gets for the left-hand side
\begin{eqnarray}
\sum_{j,I} \lambda^{n-|I|} (1-\lambda)^{|I|} \Pb{y_j=1,\, y_i =0 \, \forall i\in I} 
&=& \Eb{Y\,\P^I \bigl[ y_i = 0\, \forall i\in I \bigr]} \nonumber\\
&=& \Eb{Y \lambda^Y}\,.\nonumber
\end{eqnarray}
The right-hand side of~\eqref{e.ma} after summing over all choices of $j\in[n]$ and $I\subset[n]$ gives
\begin{multline*}
\sum_{j,I} \lambda^{n-|I|} (1-\lambda)^{|I|} \1_{\{j\notin I\}}\Pb{x_j=1,\, y_i =0 \, \forall i\in I} \\
= \Eb{ \sum_{j : x_j=1} \P^I \bigl[j\notin  I,\, y_i = 0\, \forall i\in I\bigr]}.\nonumber
\end{multline*}
Here, the random set $I$ not only has to avoid the points $i$ with $y_i=1$ but also the point $j$; hence, depending on whether 
$y_j=1$ or not, 
$I$ has to avoid $Y$ or $Y+1$ points. Therefore, $\Es{X\,\lambda^{Y+1}}$ is a lower bound on the last displayed quantity. Summarizing these computations, one ends up with
$$
\Eb{Y\,\lambda^Y}\ge a\,
\Eb{X\,\lambda^{Y+1}}.
$$
This may be rewritten as $\Es{Z}\ge 0$, where
$ Z:= (Y-a\,\lambda\,X)\,\lambda^Y$.
At this point, we choose $\lambda:=e^{-1}$.
In order to bound $Z$ from above by a function of $X$ only, we
maximize $Z$ over $Y$, and get the bound
$Z\le \exp(-1-a\,X/e)$.
On $X=0$, we also have $Y=0$ and $Z=0$,
while on $Y=0<X$, we have $Z\le -a\,e^{-1}$.
Therefore, $\Es{Z}\ge 0$ gives
$$
a\,e^{-1}\,\Pb{Y=0<X}\le \Eb{1_{X>0}\,\exp(-1-a\,X/e)}.
$$
Dividing by $a\,e^{-1}\,\Pb{X>0}$, we obtain~\eqref{e.y0}.
\QED

We now prove~\eqref{e.st}.
Set $r:= (e/a)\,(t+s)$, $Z_+:=\max(Z,0)$ and $Z_-:=Z-Z_+$.
Note that on the event $\{X\ge r, Y\le t\}$ we have $Z \le -s\,e^{-t}$.
Hence,
$$
\Eb{Z_-}\le -s\,e^{-t}\, \Pb{X\ge r,\,Y\le t} \le -s\,e^{-t}\,\bigl(\Ps{Y\le t}-\Ps{X<r}\bigr).
$$
On the other hand, $\Eb{Z_+}\le \Eb{\exp(-1-a\,X/e)}$.
Since $0\le\Es{Z}=\Eb{Z_+}+\Eb{Z_-}$,~\eqref{e.st} follows.

\section{The lower tail of the spectrum} \label{s.concent}

In this section, we prove Theorem~\ref{t.1} and a few related
results. 

\subsection{Local version}\label{ss.lcon}

We start with a version which avoids the issues involving the boundary, as we did in Subsections~\ref{ss.local} and~\ref{ss.piquad}. The bound we give here is sharp up to a constant factor, but we will not need the lower bound, so will prove sharpness only in the square case (with boundary), in Section~\ref{ss.lsq}.

\begin{theorem}\label{t.loc}
Consider some quad $\Quad$, and let $\Spec=\Spec_{f_{R\Quad}}$ be the spectral sample of $f_{R\Quad}$,
the $\pm1$ indicator function for the crossing event in $R\Quad$.
Let $U\subset\Quad$ be open, and let
$U'\subset \closure{U'}\subset U$.
Then, for some constants $\bar r=\bar r(U',U,\Quad)>0$ and $q(U',U,\Quad)>0$, 
for any $r\in [\bar r , R\diam(U)]$,
\begin{align}
\label{e.loc}
\Pb{0<|\Spec_{f_{R\Quad}}\cap RU|\le r^2\,\alpha_4(r),\,\Spec_{f_{R\Quad}}\cap RU\subset RU'}\qquad\nonumber\\
\le q(U',U,\Quad)\, 
\frac{R^2\,\alpha_4(R)^2}{r^2\,\alpha_4(r)^2}\,.
\end{align}
\end{theorem}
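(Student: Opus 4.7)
The plan is to deduce the theorem by combining the three main tools built in the preceding sections: the small-spectrum bound of Proposition~\ref{pr.local}, the partial-independence result of Proposition~\ref{pr.SQBWloc}, and the large-deviation estimate of Proposition~\ref{pr.YX}, along the lines sketched in Subsection~\ref{ss.rough}. First I would tile $R\Quad$ by $r$-boxes $\{T_i\}$, introduce an independent sparse sample $\Rs$ with $\Pb{i\in\Rs}=p:=1/(r^2\alpha_4(r))$, and set $x_i:=\mathbf{1}_{\Spec\cap T_i\ne\emptyset}$, $y_i:=\mathbf{1}_{\Spec\cap T_i\cap\Rs\ne\emptyset}$, $X:=\sum_i x_i$, $Y:=\sum_i y_i$, where the sum ranges over tiles meeting $RU'$. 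Writing $\mathcal E$ for the event on the left of~\eqref{e.loc}, on $\mathcal E$ we have $|\Spec\cap RU|\le r^2\alpha_4(r)$, hence $\Pb{\Rs\cap\Spec\cap RU=\emptyset\md\Spec}\ge(1-p)^{1/p}\ge e^{-2}$, and combining with $\Spec\cap RU\subseteq RU'$ yields
$$\Pb{\mathcal E}\le e^2\,\Pb{X>0,\,Y=0,\,\Spec\cap(RU\setminus RU')=\emptyset}.$$

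The second step applies Proposition~\ref{pr.YX} under the extra event $\mathcal F:=\{\Spec\cap(RU\setminus RU')=\emptyset\}$, to get $\Pb{Y=0\md X>0,\mathcal F}\le a^{-1}\Eb{e^{-aX/e}\md X>0,\mathcal F}$. The required hypothesis $\Pb{y_j=1\md y_i=0\,\forall i\in I',\,\mathcal F}\ge a\,\Pb{x_j=1\md y_i=0\,\forall i\in I',\,\mathcal F}$ I would verify by first conditioning on $\Rs$, so that $\{y_i=0\,\forall i\in I'\}\cap\mathcal F=\{\Spec\cap W=\emptyset\}$ with $W:=\bigl(\bigcup_{i\in I'}(T_i\cap\Rs)\bigr)\cup\bigl(\bits\cap(RU\setminus RU')\bigr)$, and then invoking Proposition~\ref{pr.SQBWloc} with $T_j$ playing the role of $B'$ and a concentric $3r$-box $B_j$ playing the role of $B$; Proposition~\ref{pr.SQBWloc} gives $\Pb{\Spec\cap T_j\cap\Rs\ne\emptyset\md\Spec\cap B_j\ne\emptyset,\,\Spec\cap W=\emptyset}\ge a$, and the desired inequality follows since $T_j\subseteq B_j$.

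The main obstacle is the disjointness requirement $W\cap B_j=\emptyset$ in Proposition~\ref{pr.SQBWloc}. Its outer part (from $\mathcal F$) is arranged by restricting attention to tiles $T_j$ at distance at least $3r$ from $\partial(RU')$, which costs introducing an auxiliary intermediate set and, by a separate application of Proposition~\ref{pr.local}, a small estimate on the contribution of the thin strip near $\partial(RU')$. The inner part of $W$ (from neighboring $T_i$ with $i\in I'$) can overlap $B_j$, and I would defuse this by partitioning the tile indices into $\kappa=O(1)$ subgrids, on each of which the enlarged boxes $\{B_i\}$ are mutually disjoint, then running the YX argument separately on each subgrid and concluding by a union bound, which loses only a harmless multiplicative constant.

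The final step combines everything. Proposition~\ref{pr.local}, applied to the pair $(U',U)$ and rescaled by $R$, gives $\Pb{X=k,\,\mathcal F}\le c\,g(k)\,\gamma_r(R\diam(U))$, so
$$\Eb{e^{-aX/e}\mathbf{1}_{X>0,\,\mathcal F}}=\sum_{k\ge 1}e^{-ak/e}\Pb{X=k,\,\mathcal F}\le c\,\gamma_r(R\diam(U))\,,$$
since the sub-exponential factor $g(k)=\exp(O(\log^2(k+2)))$ is beaten by the exponential $e^{-ak/e}$. Quasi-multiplicativity of $\alpha_4$ then identifies $\gamma_r(R\diam(U))=(R\diam(U)/r)^2\alpha_4(r,R\diam(U))^2\asymp R^2\alpha_4(R)^2/(r^2\alpha_4(r)^2)$, matching the claimed bound. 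For $r$ near the upper endpoint $R\diam(U)$ the asserted right-hand side is bounded below by a positive constant depending only on $U',U,\Quad$, so the claim is trivial; this lets me safely assume throughout that $r$ is small enough for the enlarged boxes to fit in $RU$.
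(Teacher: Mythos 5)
Your overall framework — the sparse test set $\Rs$, the indicators $x_j,y_j$, the large-deviation Proposition~\ref{pr.YX}, and the small-spectrum Proposition~\ref{pr.local} — is the one the paper uses, and the first step ($\Pb{\mathcal E}\le e^2\Pb{X>0,\,Y=0,\,\mathcal F}$) and the final summation are correct. The gap is in the verification of the hypothesis of Proposition~\ref{pr.YX}. Feeding the tile $T_j$ into Proposition~\ref{pr.SQBWloc} as the \emph{inner} box $B'$ and an enlarged $3r$-box as $B$ creates exactly the overlap problem you notice, and the subgrid repair does not actually close it: applying Proposition~\ref{pr.YX} separately to each subgrid $G_m$ with $X_m:=\sum_{j\in G_m}x_j$, $Y_m:=\sum_{j\in G_m}y_j$ and union bounding only yields
$$\Pb{Y=0,\,X>0,\,\mathcal F}\ \le\ a^{-1}\,\EB{\Bigl(\sum_m e^{-aX_m/e}\,1_{X_m>0}\Bigr)1_{\mathcal F}}\,,$$
and the random variable $\sum_m e^{-aX_m/e}\,1_{X_m>0}$ is not $O(e^{-cX})$ for any $c>0$: if $\kappa-1$ of the $X_m$ equal $1$ and the last one carries the remaining mass, it equals $(\kappa-1)e^{-a/e}+e^{-a(X-\kappa+1)/e}$, which stays bounded away from zero however large $X$ is. So the right side is only $O(\Pb{X>0,\,\mathcal F})$, not $O(1)\gamma_r(R)$. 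Put differently, you would need the per-subgrid estimate $\Pb{X_m=\ell,\,\mathcal F}\le O(g(\ell))\,\gamma_r(R)$, whereas Proposition~\ref{pr.local} only controls the \emph{total} tile count $X$.

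The correct application of Proposition~\ref{pr.SQBWloc} — which is what the paper does, though rather tersely — reverses the roles of the two boxes. Take the tile $T_j$ itself (radius $\asymp r$) as the \emph{outer} box $B$, with $B'$ the concentric sub-box of one third the radius and $\Rs$-density $\asymp 1/(r^2\alpha_4(r))$; the constant-factor mismatch in scale is harmless. Keep $x_j$ and $y_j$ exactly as you defined them. Since the tiles $T_i$ are pairwise disjoint, $W=\bigcup_{i\in I'}(\Rs\cap T_i)\cup(RU\setminus RU'')$ is automatically disjoint from $T_j$, so no enlargement and no subgrid restriction is needed: Proposition~\ref{pr.SQBWloc} gives $\Pb{\Spec\cap B'\cap\Rs\ne\emptyset\md\Spec\cap T_j\ne\emptyset,\,\Spec\cap W=\emptyset}\ge a$, and since $B'\subset T_j$ this event is contained in $\{y_j=1\}$, yielding $\Pb{y_j=1\md x_j=1,\,y_i=0\,\forall i\in I'}\ge a$, which combined with $y_j\le x_j$ is precisely the Proposition~\ref{pr.YX} hypothesis. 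You may then apply Proposition~\ref{pr.YX} once, to the whole tiling, and get $\Pb{Y=0,\,X>0,\,\mathcal F}\le a^{-1}\Eb{e^{-aX/e}1_{X>0}1_{\mathcal F}}$ with the genuine total $X$, exactly as your final step requires. The boundary difficulty with the enlarged boxes also disappears: the paper's intermediate $U''$ (chosen so that $RU''$ contains all tiles meeting $RU'$) together with the tilted measure $\tilde\P$ conditioned on $\Spec\cap(RU\setminus RU'')=\emptyset$ accomplish exactly what your thin-strip estimate was aiming for.
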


\proof
Let the distance between $U'$ and the complement of $U$ be $\delta>0$. Then, with no loss of generality, we may assume that $r\leq \delta R/10$. (Otherwise, $r/R$ remains bounded away from $0$, so, by choosing $q(U',U,\Quad)$ large enough compared to $\delta$, the upper bound in~(\ref{e.loc}) becomes larger than 1, and we are done.) Consider the tiling of the plane by $r\times r$ squares given by the grid $r\,\Z^2$, recall from the end of Section~\ref{ss.gen} that each square gives a box that together form a tiling of the plane, and let $\{B_1,B_2,\dots,B_n\}$ be the set of those boxes that intersect $RU'$. Let $U''\subset U$ such that $RU''\supset \bigcup_{j=1}^n B_j$ but the distance of $U''$ to the complement of $U$ is at least $\delta/2$;  one can choose a $U''$ that works for all $r \leq \delta R/10$ at the same time. 
Let $\Rs$ be a subset of $\I\cap RU''$, where each bit $i\in RU''\cap \I$ is in $\Rs$
with probability $1/(r^2\alpha_4(r))$, independently from each other and from $\Spec$.
Let $y_j$ be the indicator function of the event 
$$
\Spec\cap B_j\cap \Rs\ne\emptyset\,,
$$
and let $x_j$ be the indicator function of the event $\Spec\cap B_j\ne\emptyset$.
As in Propositions~\ref{pr.SQBW} and~\ref{pr.SQBWloc}, let $\P$ denote the law of $\Spec$ coupled with the independent 
point process $\Rs$. Let $\tilde\P$ denote the law $\P$ (on $(\Spec,\Rs)$) conditioned on
the event $\Spec\cap (RU\setminus RU'')=\emptyset$,
and let $\tilde\E$ denote the corresponding expectation operator.
For every $I\subseteq\{1,\dots,n\}$ and every $j\in\{1,\dots,n\}\setminus I$, 
by applying Proposition~\ref{pr.SQBWloc} to $U''$ (in place of $U$ there) and $\Quad$, we get that
$$
\Bb{\tilde\P}{y_j=1\md y_i=0\, \forall i\in I}
\ge
a\,
\Bb{\tilde\P}{x_j=1\md y_i=0\,\forall i\in I}\,,
$$
for some constant $a=a(U'',\Quad)>0$. (Technically, in order to apply Proposition~\ref{pr.SQBWloc} here, one has to first condition on the values 
of $\Rs$ on the boxes in $I$. Furthermore, in order to obtain $\tilde\P$ instead of $\P$, before applying Proposition~\ref{pr.SQBWloc}, one needs to add the set
$RU \setminus RU''$ to the set $W$.) Therefore, the large deviation result
Proposition~\ref{pr.YX} gives (using $RU' \subseteq \bigcup_i B_i$) that
$$
\Bb{\tilde\P}{\Spec \cap \Rs =\emptyset\ne \Spec\cap RU'}
\le a^{-1}\, \Bb{\tilde\E}{e^{-aX/e}\, 1_{X>0}},
$$
where $X:=\bigl|\{j:\Spec\cap B_j\ne \emptyset\}\bigr|$.  This yields
\begin{multline*}
\Pb{\Spec\cap\Rs=\emptyset\ne\Spec\cap RU\subset RU'}
\\
\le a^{-1}\sum_{k=1}^\infty e^{-ak/e}\,
\Pb{X=k,\, \Spec\cap RU\subset RU''}.
\end{multline*}
We estimate the terms $ \Pb{X=k,\, \Spec\cap RU\subset RU''}$ using Proposition~\ref{pr.local}, and get the bound
\begin{equation}
\label{e.spsp}
\begin{aligned}
\Pb{\Spec\cap\Rs=\emptyset\ne\Spec\cap RU\subset RU'}
&
\le O(1)\, \sum_{k=1}^\infty e^{-ak/e}\,g(k)\,\gamma_r(R)
\\&
= O(1)\,\gamma_r(R)\,,
\end{aligned}
\end{equation}
where $g$ is as defined in the proposition and the constants implied by the $O(1)$ terms may
depend on $(U',U,\Quad)$. 

Now, by the choice of $\Rs$, for $|\Spec\cap RU'| \leq r^2\alpha_4(r)$ we have
$$
\Pb{\Spec\cap\Rs\cap RU'=\emptyset\md \Spec} = \left(1-\frac{1}{r^2\alpha_4(r)}\right)^{|\Spec\cap RU'|} \geq \, c
$$
for some absolute constant $c>0$, and hence 
\begin{align*}
c \,\Pb{|\Spec\cap RU'| \leq r^2\alpha_4(r),\,\emptyset\ne\Spec\cap RU\subset RU'}\hskip 1in\\
\leq \Pb{\Spec\cap \Rs =\emptyset\not=\Spec\cap RU\subset RU'}\\
\leq O(1)\,\gamma_r(R)\qquad\qquad\text{by \eqref{e.spsp}.}
\end{align*}
This proves~\eqref{e.loc}.
\QED

\subsection{Square version}\label{ss.lsq}

We prepare for the proof
of Theorem~\ref{t.1} by first showing that
\begin{equation}
\label{e.Sr}
\Ess_R \asymp R^2\,\alpha_4(R)\, ,
\end{equation}
which will be needed for the (easier) lower bound.
Note that this also implies~\eqref{e.3d4} for the triangular lattice, by quasi-multiplicativity
and~\cite{\SmirnovWerner}.

First, the lower bound on $\Ess_R$ follows immediately from
Lemma~\ref{l.moments}.
For the upper bound, we will need to consider the half-plane $3$-arm events
and the quarter-plane $2$-arm events that were discussed
in Section~\ref{ss.boundary}. Let $\Quad=[0,R]^2$, $f=f_\Quad$ and $\Spec=\Spec_f$.
Let $x\in\I$ be an input bit of $f$.
If $x$ is at distance $r_0$ from the
closest edge of $[0,R]^2$, and at distance $r_1$ from
the closest corner, then by~\eqref{e.singleBit} and quasi-multiplicativity,
we have
\begin{equation*}
\Pb{x\in\Spec}=\alpha_\square(x,\Quad)\asymp \alpha_4(r_0)\,\alpha_3^+(r_0,r_1)\,
\alpha_2^{++}(r_1,R)\,.
\end{equation*}
Now observe that
$\alpha_3^+(r_0,r_1)\le O(1)\,\alpha_4(r_0,r_1)$ follows from~\eqref{e.a+3},
 and~\eqref{e.a4less2}.
Thus, $\alpha_4(r_0)\,\alpha_3^+(r_0,r_1)\le O(1)\,\alpha_4(r_1)$.
Moreover, $\alpha_2^{++}(r_1,R)\le O(r_1/R)$, by~\eqref{e.a++2}
and~\eqref{e.a+3}. 
(For the triangular lattice, we have $\alpha_2^{++}(r_1,R)\leq \alpha_4(r_1,R)$ from~(\ref{e.a++2tri}) and~(\ref{e.a4}), 
hence we get $\Pb{x\in\Spec}\leq O(1)\alpha_4(R)$, which proves~(\ref{e.Sr}) immediately.)

Since the number of $x\in\I$ with $r_1\in[2^j,2^{j+1})$ is $O(2^{2j})$,
we get
\begin{align*}
\E|\Spec| 
&=
\sum_{x\in\I} \Pb{x\in\Spec}
\le
\sum_{j=0}^{\lceil\log_2 R\rceil} O(2^{2j})\,
\alpha_4(2^j)\,\frac{2^j}R
\\&
=
\frac{\alpha_4(R)}{R}
\sum_{j=0}^{\lceil\log_2 R\rceil} \frac{O(2^{3j})}{\alpha_4(2^j,R)}
\overset{\eqref{e.a4less2}}
\le
\frac{\alpha_4(R)}{R}
\sum_{j=0}^{\lceil\log_2 R\rceil} \frac{O(2^{3j})}{(2^j/R)^2}
=O(R^2)\,\alpha_4(R)
\,.
\end{align*}
Thus, we get~\eqref{e.Sr}.

\proofof{Theorem~\ref{t.1}}
The proof of
\begin{equation}
\label{e.up}
\Pb{0<|\Spec|<r^2\,\alpha_4(r)} \le O(1)\,\gamma_r(R)\,,
\end{equation}
for $1\le r\le R$, is very similar to the proof of Theorem~\ref{t.loc},
with some small modifications, which we now discuss.
Note that the set of $x\in\I$ that are relevant for $f$ are all within distance at most
$2$ from $[0,R]^2$. Note that we may assume, without loss of generality, that $r\ge \bar r$ for some absolute constant $\bar r$, and that $(R+4)/r\in\N_+$. Let $\{B_1,B_2,\dots,B_n\}$ be the set of boxes corresponding to the tiling of $[-2,R+2]^2$ by $r\times r$ squares. In the proof of Theorem~\ref{t.loc} we are now allowed 
to take $RU=RU'=RU''=[-2,R+2]^2$, by replacing the appeal to Propositions~\ref{pr.SQBWloc} and~\ref{pr.local} with an appeal to Propositions~\ref{pr.SQBW} and~\ref{pr.verysmall}, respectively. (In particular, we do not need to introduce the measure $\tilde\P$ now.) This gives~\eqref{e.up}.

We now show that the inequality in~\eqref{e.up} is actually an equality up to
constants.
Let $\NN$ be the set of indices $i$ such that the $r$-box $B_i$ is at least at distance $\nn/10$ from the boundary $\p [0,\nn]^2$. Consider the events
\begin{eqnarray*}
V_i &:=& \big\{|\Spec \cap B_i| \geq C\,r^2\alpha_4(r)\big\}\,,\\
W_i &:=& \big\{\Spec\cap B_i\not=\emptyset,\ \Spec \subseteq B_i\big\}\,,
\end{eqnarray*}
for $i\in\NN$. We claim that we may take the constant $C$ large enough so that $\Pb{V_i | W_i} \leq 1/2$. This will follow from Markov's inequality, once we know that
\begin{equation}\label{e.plus}
\EB{|\Spec \cap B_i| \md W_i} \leq O(1)\,r^2\alpha_4(r).
\end{equation}
To prove (\ref{e.plus}), first observe that for each $i\in\NN$,
$$
\Pb{W_i} \overset{(\ref{e.SBasymp})}\asymp \alpha_\square(B_i,[0,R]^2)^2  
\overset{(\ref{e.qmsq})}\asymp \alpha_4(r,R)^2\,.
$$
Then, we need a good upper bound on $\Pb{x\in\Spec,\ \Spec\subseteq B_i}$. We know this equals $\Eb{\lala(x,B_i^c)^2}$, see, e.g., Subsection~\ref{subs.int}. Similarly to the proof of Lemma~\ref{l.basic}, or following Subsection~\ref{subs.int}, one can easily show that this is at most $\alpha_4(x,B_i)\,\alpha_\square(B_i,[0,\nn]^2)^2$.
Summing up for all $x\in B_i$, and using the above estimate on $\Ps{W_i}$ and a computation similar to~(\ref{e.Sr}), we get (\ref{e.plus}).

So, we have $\Pb{V_i^c \md W_i} \geq 1/2$. Note that the events $V_i^c \cap W_i$ for different $i$'s are disjoint, hence 
\begin{eqnarray*}
\PB{0< |\Spec| \leq C\,r^2\alpha_4(r)} \geq \sum_{i\in\NN} \Pb{V_i^c \cap W_i}
\geq c\, (\nn/r)^2 \, \alpha_4(r,\nn)^2\,,
\end{eqnarray*}
for some $c>0$, and the lower bound is proved.
\qed

\begin{remark}\label{r.variantofelam}
For the triangular lattice, the following variant of~\eqref{e.lam} may also be established:
\begin{equation}
\label{e.lam2} 
\begin{aligned}
\limsup_{R\to\infty}\PB{0<|\Spec_{f_R}|\le\lambda\,\Ess_R} & \asymp
 \lambda^{2/3},\qquad\text{and}
\\
\liminf_{R\to\infty}\PB{0<|\Spec_{f_R}|\le\lambda\,\Ess_R} & \asymp
 \lambda^{2/3} ,
\end{aligned}
\end{equation}
holds for every $\lambda\in(0,1]$, where the implied constants do not depend on $\lambda$.
In view of Theorem~\ref{t.1}, this follows from the fact that
$$
\lim_{R\to\infty}\alpha_4(t\,R,R)\asymp t^{5/4},\qquad t\in(0,1]\,,
$$
which holds since
the limit of critical percolation is described by SLE$_6$
(this is explained in~\cite{\SmirnovWerner}),
and the probabilities for the corresponding events for SLE are determined up to constant
factors~\cite{\LSWii} and have no lower order corrections to the power law.
\end{remark}

\subsection{Radial version}\label{ss.lrad}

We also have the following radial version, where $\Spec$ is the spectral sample of the 0-1 indicator function $f$ of the crossing event from $\p[-1,1]^2$ to $\p[-R,R]^2$, so that $\Eb{f^2} \asymp \alpha_1(R)$. Recall that we have the measures $\Pb{\Spec=S}=\Qb{\Spec=S}/\Eb{f^2}=\widehat f(S)^2/\Eb{f^2}$.

\begin{theorem}\label{t.radial}
Let $\Spec$ be as above, and let $r\in [1,R]$. Then
$$
\Qb{|\Spec|<\alpha_4(r)\,r^2} \le O(1)\,\frac{\alpha_1(R)^2}{\alpha_1(r)}\,, \qquad
\Pb{|\Spec|<\alpha_4(r)\,r^2} \le O(1)\,\frac{\alpha_1(R)}{\alpha_1(r)}\,.
$$
\end{theorem}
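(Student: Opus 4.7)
The second inequality follows from the first by dividing by $\Eb{f^2}\asymp\alpha_1(R)$ and observing that $\Pb{\Spec=\emptyset}=\Eb{f}^2/\Eb{f^2}\asymp\alpha_1(R)\le\alpha_1(R)/\alpha_1(r)$ is absorbed in the bound. So the plan is to prove the $\Q$-bound, assuming $r^2\alpha_4(r)\ge 1$ (otherwise the event is empty). The overall strategy will parallel the proof of Theorem~\ref{t.1}: use a sparse random set $\Rs$ to convert the small-spectrum event into one of the form $\{\Spec\cap\Rs=\emptyset\}$, apply the large deviation Proposition~\ref{pr.YX}, and bound the resulting generating function by the lower-tail estimate from Proposition~\ref{pr.verysmallRad}. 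The new geometric wrinkle is that Proposition~\ref{pr.SQBWrad} only applies to boxes at distance $\ge 4r$ from the origin, so I will treat the part of $\Spec$ near the origin by a different argument.

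For the near-origin piece, set $B_0:=[-4r,4r]^2$. On $\{\emptyset\ne\Spec\subseteq B_0\}$, the set $\Spec^*=\Spec\cup\{0\}$ is compatible with the centered annulus structure consisting of the single centered annulus $A(8r,R)$: its inner disk contains $0\in\Spec^*$, while $\Spec\subseteq B_0\subset B(0,8r)$ avoids the annulus. Lemma~\ref{l.ComphRad} then gives
$$
\Qb{\emptyset\ne\Spec\subseteq B_0}\le \alpha_1(8r)\,\alpha_1(8r,R)^2\asymp \alpha_1(R)^2/\alpha_1(r)
$$
by quasi-multiplicativity of $\alpha_1$, which is exactly the desired bound for this piece.

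For the far-from-origin piece, let $\{B_j\}_{j=1}^n$ be the $r$-boxes in $r\Z^2$ whose centers lie at distance $\ge 4r$ from $0$ and whose concentric $(r/3)$-box $B_j'$ lies inside $[-R,R]^2$ (handling the $[-R,R]^2$-boundary as in Remark~\ref{r.agglomerates} if necessary). Let $\Rs\subseteq\bigcup_j B_j'$ be a sparse random set, independent of $\Spec$, with each bit included independently with probability $1/(r^2\alpha_4(r))$. Set $x_j:=1_{\Spec\cap B_j\ne\emptyset}$, $y_j:=1_{\Spec\cap B_j'\cap\Rs\ne\emptyset}$, $X:=\sum_j x_j$, $Y:=\sum_j y_j$. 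On the event $\{0<|\Spec|\le r^2\alpha_4(r),\ \Spec\not\subseteq B_0\}$ I have $\Pb{Y=0\md\Spec}\ge(1-1/(r^2\alpha_4(r)))^{|\Spec|}\ge c>0$ by the sparse sampling and $X\ge 1$ because the far boxes cover $(B_0)^c\cap[-R,R]^2$. Hence this event has $\Q$-measure at most $O(1)\,\Qb{Y=0,\,X\ge 1}$. Proposition~\ref{pr.SQBWrad}, applied after first conditioning on $\Rs\cap\bigcup_{i\in I}B_i'$ (which turns $\{y_i=0\,\forall i\in I\}$ into $\{\Spec\cap W=\emptyset\}$ for a set $W\subset B_j^c$, exactly as in the proof of Theorem~\ref{t.loc}), supplies the domination hypothesis~\eqref{e.yx}. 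Proposition~\ref{pr.YX} then yields $\Qb{Y=0,\,X\ge 1}\le a^{-1}\sum_{k\ge 1}e^{-ak/e}\,\Qb{X=k}$ after multiplying the $\P$-statement by $\|f\|^2$.

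To finish, observe that $B_0$ is covered by only a bounded number $K_0=O(1)$ of $r$-boxes, so $\{X=k\}\subseteq\bigcup_{j=0}^{K_0}\{|\Spec_r|=k+j\}$. Proposition~\ref{pr.verysmallRad} applied with $\ell=1$ then gives $\Qb{X=k}\le O(g^*(k))\,\alpha_1(r,R)^2\alpha_1(r)\asymp O(g^*(k))\,\alpha_1(R)^2/\alpha_1(r)$, and since $g^*(k)=2^{\cod\log_2^2(k+2)}$ grows subexponentially the sum $\sum_{k\ge 1}e^{-ak/e}g^*(k)$ converges, yielding $\Qb{Y=0,\,X\ge 1}\le O(1)\,\alpha_1(R)^2/\alpha_1(r)$. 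Combining the near- and far-origin pieces gives the first inequality. The step requiring the most care will be the verification of~\eqref{e.yx} via Proposition~\ref{pr.SQBWrad}: one must first condition on the portion of $\Rs$ on the $B_i'$ with $i\in I$, note that the residual randomness of $\Rs$ on $B_j'$ retains its independent-bits structure, and use that $B_j\cap[-4r,4r]^2=\emptyset$ so the radial quasi-multiplicativity estimates behind Proposition~\ref{pr.SQBWrad} apply uniformly in the position of $B_j$.
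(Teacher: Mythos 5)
Your proposal follows essentially the same route as the paper's proof: split according to whether $\Spec$ stays in a bounded neighborhood of the origin, bound that piece directly via Lemma~\ref{l.ComphRad} (a single centered annulus of inner radius $\asymp r$ gives the factor $\alpha_1(r)\,\alpha_1(r,R)^2\asymp\alpha_1(R)^2/\alpha_1(r)$), and bound the remaining piece by the sparse-sampling argument combining Propositions~\ref{pr.SQBWrad},~\ref{pr.YX} and~\ref{pr.verysmallRad}, converting $\Qb{X=k}$ to $\Qb{|\Spec_r|=k+O(1)}$. The one geometric detail that needs adjusting is the choice of ``far'' boxes: with your condition that their centers lie at distance $\ge 4r$ from $0$, they neither satisfy the hypothesis $B_j\cap[-4r,4r]^2=\emptyset$ of Proposition~\ref{pr.SQBWrad} (a box may still overlap $[-4r,4r]^2$) nor cover $B_0^c$, so the split is not tight; the paper fixes this by letting $K$ be the union of all $r$-boxes meeting $[-4r,4r]^2$, taking the far boxes to be exactly those outside $K$, and then splitting on $\Spec\subset K$ versus $\Spec\setminus K\ne\emptyset$.
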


\proof Again, the bits relevant for $f$ are contained in $[-R',R']^2$, where $R'=R+2$. We may assume that $r$ is such that $R'/r\in\N_+$ and $r\in [ \bar r,R/8]$
for some fixed constant $\bar r>0$, which guarantees that
$k:=\alpha_4(r)\,r^2>1$.
Take a subdivision of $[-R',R']^2$ into boxes $\{B_j\}$ of
side-length $r$, and let $K$ denote the union of the boxes that intersect $[-4r,4r]^2$.
We now let $\Rs$ be the random set in $\bits\cap [-R',R']^2\setminus K$ where each bit is in $\Rs$
with probability $1/k$, and $\Rs$ is independent from $\Spec$. 
We also let $X:=|\{j: \Spec\cap B_j\ne \emptyset,\,B_j\not\subset K\}|=|(\Spec\setminus K)_r|$. 
Note that $|\Spec_r|-X$ is bounded from above by the number of boxes in $K$, which is bounded by a constant.

Exactly as before, Propositions \ref{pr.SQBWrad} and \ref{pr.YX} give that 
$$
\Pb{\Spec \cap \Rs =\emptyset \not=\Spec\setminus K}
\le a^{-1} \Eb{e^{-aX/e}\, 1_{X>0}},
$$
with some absolute constant $a>0$. We can use Proposition \ref{pr.verysmallRad} to bound each $\Pb{X=n}$, $n\in\N_+$, and the argument that finished the proof of Theorem~\ref{t.loc} above now gives
$$
\Pb{0<|\Spec \setminus K|<k} \leq O(1)\,\Pb{\Spec\cap \Rs=\emptyset \not=\Spec\setminus K} \le O(1)\,\alpha_1(r,R)\,.
$$
(The situation is more similar to Subsection~\ref{ss.lsq} than to Subsection~\ref{ss.lcon} in that the boundary issues are already dealt with in Propositions~ \ref{pr.SQBWrad}  and~\ref{pr.verysmallRad}, hence we do not need $U''$ and the measure $\tilde \P$). Finally, observe that 
\begin{align*}
\Pb{|\Spec|<k} &\leq \Pb{\Spec\subset K}+\Pb{0<|\Spec \setminus K|<k}\\ 
&\leq \PB{\Spec^*\text{ is compatible with } [-R',R']^2 \setminus K} + O(1)\,\alpha_1(r,R)\\
&\leq O(1)\,\alpha_1(r,R) + O(1)\,\alpha_1(r,R) \qquad \text{by Lemma \ref{l.ComphRad}}\,,
\end{align*}
and the theorem is proved.
\QED

\subsection{Tightness of the quad spectral sample}\label{ss.quadconc}

This section will be devoted to the following analog of~\eqref{e.tight} in the setting of
more general quads, showing that the appropriately normalized spectral sample is \concentrated.

\begin{theorem}\label{t.quadtight}
Let $\Quad\subset\R^2$ be a quad and for $R>0$ let $\Spec_{f_{R\Quad}}$
denote the spectral sample of $f_{R\Quad}$, the $\pm1$-indicator function
of the crossing event of $R\Quad$. Then
$$
\lim_{t\to\infty}\,
\inf_{R>1}
\PB{|\Spec_{f_{R\Quad}}|\in \bigl[t^{-1}\,R^2\,\alpha_4(R),t\,R^2\,\alpha_4(R)\bigr]\cup\{0\}}
= 1\,.
$$
\end{theorem}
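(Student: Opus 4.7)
The proof of Theorem~\ref{t.quadtight} amounts to controlling, uniformly in $R$ as $t\to\infty$, the upper tail $\Pb{|\Spec_{f_{R\Quad}}| > tR^2\alpha_4(R)}$ and the lower tail $\Pb{0 < |\Spec_{f_{R\Quad}}| < t^{-1}R^2\alpha_4(R)}$. The general strategy is to reduce to the interior of $\Quad$, where Theorem~\ref{t.loc} and Lemma~\ref{l.moments} give strong control, and then to show that the boundary contribution is negligible.

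For the upper tail, I would fix an open $U$ with $\closure U\subset\mathrm{int}(\Quad)$ and use Lemma~\ref{l.moments} plus Markov's inequality to control $|\Spec\cap RU|$. For the remaining $|\Spec\setminus RU|$, Lemma~\ref{l.basic} gives $\Eb{|\Spec\setminus RU|}=\sum_x\alpha_\square(x,R\Quad)$. For $x$ at distance $d$ from $\p R\Quad$, combining quasi-multiplicativity~\eqref{e.qm} with the half-plane three-arm estimate~\eqref{e.a+3} and the quarter-plane two-arm estimate~\eqref{e.a++2} yields $\alpha_\square(x,R\Quad)\lesssim\alpha_4(R)(d/R)^\eta$ for some $\eta>0$. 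Integrating over the boundary strip of width $\eps R$ gives $\Eb{|\Spec\setminus RU|}\le c(\eps,\Quad)\,R^2\alpha_4(R)$ with $c(\eps,\Quad)\to 0$ as $\eps\to 0$ for reasonable $\Quad$, and Markov again controls the boundary tail.

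For the lower tail, set $s=t^{-1}R^2\alpha_4(R)$ and choose $U_1\subset\closure{U_1}\subset U_2\subset\closure{U_2}\subset\Quad$. The key decomposition is
\begin{align*}
\Pb{0<|\Spec|<s} &\le \Pb{0<|\Spec\cap RU_2|<s,\,\Spec\cap RU_2\subset RU_1}\\
&\quad + \Pb{\Spec\cap R(U_2\setminus U_1)\ne\emptyset,\,|\Spec|<s}\\
&\quad + \Pb{\emptyset\ne\Spec,\,\Spec\cap RU_2=\emptyset}.
\end{align*}
The first summand is exactly the quantity bounded by Theorem~\ref{t.loc} with $(U',U)=(U_1,U_2)$; choosing $r$ with $r^2\alpha_4(r)\asymp s$ yields $\lesssim(r/R)^\delta\to 0$ as $t\to\infty$. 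The second summand is controlled by $\Pb{0<|\Spec\cap R(U_2\setminus U_1)|<s}$; since $U_2\setminus U_1$ is compactly contained in $\Quad$, the same machinery — Proposition~\ref{pr.SQBWloc} applied to boxes in $R(U_2\setminus U_1)$, combined with Proposition~\ref{pr.YX} and an interior annulus-structure bound — gives a bound that vanishes as $t\to\infty$. The third summand (the ``boundary concentration'' event) is bounded by an adaptation of Proposition~\ref{pr.verysmall} to the boundary strip: decomposing $\Spec$ into clusters, clusters touching $\p\Quad$ contribute half-plane and quarter-plane arm factors $\gamma_r^+$ and $\gamma_r^{++}$, which by~\eqref{e.gammadelta} decay strictly faster than $\gamma_r$, so the sum over configurations yields a constant $\eta(U_2,\Quad)\to 0$ as $U_2$ fills out $\Quad$.

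The main obstacle will be controlling the third summand — showing that $\Spec$ is unlikely to lie entirely in the boundary strip of $R\Quad$. For a general quad, the boundary may be irregular, so the sharp analysis of Proposition~\ref{pr.verysmall} for the square cannot be used verbatim; one must adapt the boundary-cluster analysis of Section~\ref{ss.boundary} qualitatively, which is simplified by the fact that we need only smallness rather than precise rates. Putting the pieces together: given $\eps_0>0$, first pick $U_2$ close enough to $\Quad$ that the third summand and the upper-tail boundary expectation are below $\eps_0/3$; then pick $U_1\subset\closure{U_1}\subset U_2$; and finally take $t$ large enough that the first two summands are each below $\eps_0/3$. Combined with the upper tail, this yields Theorem~\ref{t.quadtight}.
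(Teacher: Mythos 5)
Your proposal has a genuine gap in the treatment of the boundary, which is in fact the only hard part of this theorem. Both your upper-tail argument (bounding $\Eb{|\Spec\setminus RU|}=\sum_x\alpha_\square(x,R\Quad)$ via half-plane and quarter-plane arm exponents) and your third lower-tail summand (adapting the side- and corner-cluster analysis of Section~\ref{ss.boundary}) rely on the estimates \eqref{e.a+3}, \eqref{e.a++2}, $\gamma_r^+$, $\gamma_r^{++}$. Those are derived for a straight boundary edge and a right-angled corner, i.e.\ for the square; they do not apply to a general quad, and Theorem~\ref{t.quadtight} makes no smoothness assumption on $\Quad$ whatsoever. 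You acknowledge this with the hedge ``for reasonable $\Quad$'' and by saying one ``must adapt the boundary-cluster analysis of Section~\ref{ss.boundary} qualitatively,'' but the adaptation is precisely what cannot be done by these means: as remarked in Section~\ref{ss.main}, for a fractal-like $\p\Quad$ the boundary contribution might even dominate $\E|\Spec|$, and indeed the paper explicitly declines to claim $\E|\Spec_{f_{R\Quad}}|\asymp R^2\alpha_4(R)$ for general quads. So ``smallness rather than precise rates'' is not actually an easier target — the smallness is exactly what is in doubt.

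The paper's proof sidesteps all boundary arm computations by a different mechanism: take a quad $\Quad'$ compactly contained in the interior of $\Quad$ such that $\limsup_{R\to\infty}\Pb{f_{R\Quad}\ne f_{R\Quad'}}<\delta$ (this is a fact about percolation crossing events, from \cite{\SchrammSmirnovNoise}, not about arm exponents near $\p\Quad$), and then feed this through the elementary $L^2$-continuity estimate \eqref{e.specapprox} to conclude that the spectral law of $f_{R\Quad}$ is within total-variation distance $4\sqrt\delta$ of that of $f_{R\Quad'}$, hence $\Pb{\Spec_{f_{R\Quad}}\subseteq RU'}\ge 1-4\sqrt\delta$ (this is \eqref{e.quad'spec}). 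That single inequality replaces both your upper-tail boundary expectation bound and your third lower-tail summand. After that, Lemma~\ref{l.moments} plus Markov handles the upper tail, and Theorem~\ref{t.loc} handles the lower tail, exactly as in your first summand. So your first summand and the local interior machinery are on target, but the route you propose for escorting $\Spec$ away from the boundary is not the one that works in the required generality; the missing idea is to compare spectral measures of $L^2$-close functions rather than to estimate the spectral weight near $\p\Quad$ directly.
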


We do not presently prove that $\E|\Spec_{f_{R\Quad}}|\asymp R^2\,\alpha_4(R)$ as $R\to\infty$,
though we tend to believe that this holds.

The main technical difficulty in the case of a general quad $\Quad$ compared to a square is the boundary: our explicit computations in Subsection~\ref{ss.boundary} do not apply to a general quad (even if it has piecewise smooth boundary).
Thus, the proof will begin by showing that even in a general quad the spectral sample is unlikely to be very close to $\p\Quad$. 

\proof
For every fixed $\delta>0$ we can find a quad $\Quad'$ that is contained in the interior of $\Quad$ and such that 
\begin{equation}\label{e.quad'perc}
\limsup_{R\to\infty} \Pb{f_{R\Quad}\ne f_{R\Quad'}}<\delta\,.
\end{equation}
This is easy to see, and also worked out in detail in~\cite{\SchrammSmirnovNoise}.
Let $U',U\subset\Quad$ be open sets satisfying $\Quad'\subset U'\subset\closure{U'}\subset U$.

Now, (\ref{e.quad'perc}) and~\eqref{e.specapprox} imply that for all large enough $R$, the laws of the spectral samples $\Spec_{f_{R\Quad}}$ and $\Spec_{f_{R\Quad'}}$ have a total variation distance at most $4\,\sqrt\delta$, and 
\begin{equation}\label{e.quad'spec}
\Pb{\Spec_{f_{R\Quad}} \subseteq RU'}\ge 1- 4\,\sqrt\delta\,.
\end{equation}
Theorem~\ref{t.loc} can now be invoked to get
$$
\lim_{t\to\infty}\,\limsup_{R\to\infty} \Pb{\Spec_{f_{R\Quad}}\subseteq RU',\,0<|\Spec_{f_{R\Quad}}|<t^{-1}\,R^2\,\alpha_4(R)}=0\,.
$$
In conjunction with~\eqref{e.quad'spec}, this gives
\begin{equation*}
\limsup_{t\to\infty}\,\limsup_{R\to\infty} \Pb{0<|\Spec_{f_{R\Quad}}|<t^{-1}\,R^2\,\alpha_4(R)}\le 4\,\sqrt{\delta}\,,
\end{equation*}
and since $\delta$ was an arbitrary positive number, 
\begin{equation}
\label{e.nosmall}
\lim_{t\to\infty}\,\limsup_{R\to\infty} \Pb{0<|\Spec_{f_{R\Quad}}|<t^{-1}\,R^2\,\alpha_4(R)}=0\,.
\end{equation}
In the other direction, it is easy to see that $\E|\Spec_{f_{R\Quad}}\cap RU'|= O(R^2)\,\alpha_4(R)$, as $R\to\infty$.
Therefore, Markov's inequality and~\eqref{e.quad'spec} imply that for all sufficiently large $R$,
$$
\Pb{|\Spec_{f_{R\Quad}}|>t\,R^2\,\alpha_4(R)} \le 4\,\sqrt\delta\,+O(1/t)\,,
$$
where the implied constant may depend on $\delta$ but not on $R$. Thus,
\begin{equation}
\label{e.nolarge}
\lim_{t\to\infty}\,\limsup_{R\to\infty}
\Pb{|\Spec_{f_{R\Quad}}|>t\,R^2\,\alpha_4(R)} =0\,.
\end{equation}
Since for every $R_0\in(1,\infty)$ we obviously have
$$
\lim_{t\to\infty}\,\sup_{R\in[1,R_0]}
\PB{|\Spec_{f_{R\Quad}}|\notin \bigl[t^{-1}\,R^2\,\alpha_4(R),t\,R^2\,\alpha_4(R)\bigr]\cup\{0\}}=0\,,
$$
the theorem follows immediately from~\eqref{e.nosmall} and~\eqref{e.nolarge}.
\QED

\section{Applications to noise sensitivity}\label{s.noise}

We are ready to prove Corollary~\ref{c.noise} and Theorem~\ref{t.horizontal}, together with some generalizations.

\subsection{Noise sensitivity in a square and a quad}\label{ss.noisesquare}

We will now prove Corollary~\ref{c.noise} and its generalization Corollary~\ref{c.noisequad} to quad crossings. The proof of the latter will be quite simple using Theorem~\ref{t.quadtight}, and, actually, the same argument could be used to prove 
Corollary~\ref{c.noise}  from~(\ref{e.tight}). Nevertheless, we are giving a longer proof of Corollary~\ref{c.noise} that has the advantage of being more quantitative. In particular, it implies~(\ref{e.triangnoise}) below, and a variation on it will also be used in Section~\ref{s.dyns} to prove our quantitative dynamical sensitivity results.

\proofof{Corollary \ref{c.noise}}
Let $\I_R$ denote the set of bits on which $f_R$ depends,
and write $\eps=\eps_R$.
Recall from~(\ref{e.fcor}) that if $y$ is an $\eps$-noisy version of $x\in \{-1,+1\}^{\I_R}$, then 
\begin{equation} \label{e.corr}
\Psi_R:=
\Eb{f_R(y)f_R(x)}-\Es{f_R(x)}^2 =
\sum_{k=1}^{|\I_\nn|} (1-\eps)^{k}\,\Pb{|\Spec_{f_R}| = k}\,.
\end{equation}

Breaking the sum over $k$ in (\ref{e.corr}) into parts $(j-1)/\eps < k \le j/\eps $, with $j=1,2,\dots$, we get
\begin{equation}\label{e.PsiR}
\begin{aligned}
\Psi_R &\leq \sum_{j=1}^\infty (1-\eps)^{(j-1)/\eps} \, \Pb{(j-1)/\eps < |\Spec_{f_R}| \leq j/\eps}\\
&\leq  \sum_{j=1}^\infty e^{1-j} \, \Pb{ 0< |\Spec_{f_R}| \le j/\eps}\,. 
\end{aligned}
\end{equation}
Recall that $r^2\,\alpha_4(r)\to\infty$ as $r\to\infty$, by~\eqref{e.a4less2}.
For $s\ge 1$, let $\rho(s)$ be the least $r\in\N_+$ such that
$r^2\,\alpha_4(r)\ge s$, and let $\gamma(r):=r^2\,\alpha_4(r)^2$.
Since $\alpha_4(r+1)\leq \alpha_4(r)$, we have $(r+1)^2\alpha_4(r+1)  \le O(1) r^2\alpha_4(r)$, and thus
\begin{equation}
\label{e.rhobd}
 s\leq \rho(s)^2\,\alpha_4\bigl(\rho(s)\bigr)\le O(s)\qquad\text{for } s\ge 1.
\end{equation}
By~\eqref{e.a4less2}, there exist $c_1,c_2>0$ such that $s^{c_1}/O(1) \leq \rho(s) \leq O(1)s^{c_2}$. 
Substituting this and~\eqref{e.qRSW} into~(\ref{e.rhobd}) gives that 
\begin{equation}
\label{e.rhobdrat}
(s'/s)^{c_3}/O(1) \leq \rho(s')/\rho(s) \leq O(1) (s'/s)^{c_4} \qquad \text{for } s'\ge s\ge 1,
\end{equation}
with some $c_3,c_4>0$. This and~\eqref{e.qRSW} imply 
\begin{equation}
\label{e.gamrat}
  O(1)\,\gamma\bigl(\rho(s')\bigr) / \gamma\bigl(\rho(s)\bigr) \ge (s/s')^{O(1)} \qquad \text{for } s'\ge s\ge 1.
\end{equation}
Now set $\rho_j:=\rho(j/\eps)$. Then, for $j\in\N_+$,
$$
\begin{aligned}
\Pb{ 0< |\Spec_{f_R}| \le j/\eps}
&
\le \Pb{0<|\Spec_{f_R}| \le \rho_j^2\,\alpha_4(\rho_j)}
\\ &
\le O(1)\,\gamma(R)/\gamma(\rho_j)\qquad\text{by~\eqref{e.up}}
\\ &
\le O(1)\,\gamma(R)\,j^{O(1)}/\gamma(\rho_1)\qquad\text{by~\eqref{e.gamrat}}.
\end{aligned}
$$
Therefore~\eqref{e.PsiR} gives
\begin{equation}
\label{e.Psilast}
\Psi_R\le O(1)\,\gamma(R)/\gamma(\rho_1)\,.
\end{equation}
If $\lim_{R\to\infty} \Ess_R\,\eps_R=\infty$, then by~\eqref{e.Sr}
and the usual properties of $\alpha_4$ we have $R/\rho(1/\eps)\to\infty$
as well as
$\gamma(R)/\gamma(\rho_1)=\gamma(R)/\gamma\bigl(\rho(1/\eps)\bigr)\to 0$,
which together with~\eqref{e.Psilast} proves~\eqref{e.uncor}.

Now assume that $\eps_R\,\Ess_R\to0$. Applying~\eqref{e.fcor} 
and Jensen's inequality, we get
$$
\Eb{f_R(x)\,f_R(y)}= \EB {(1-\eps)^{|\Spec_{f_R}|}}
\ge (1-\eps)^{\Ess_R}\to 1\,,
$$
as $R\to\infty$.
Since $f_R(x)\,f_R(y)\le 1=f_R(x)^2$,~\eqref{e.corel}
follows.
\QED

Suppose that we are in the setting of the triangular grid,
and $\eps=t/\Ess_R$, where $t>1$.
Then with the above notations, we have by~\eqref{e.lam2}
and~\eqref{e.PsiR} that $ \limsup_{R\to\infty}\Psi_R \leq O(1)\, t^{-2/3}$.
Using the fact that we also have lower bounds in Theorem~\ref{t.1}, it is easy to see that
\begin{equation}
\label{e.triangnoise}
\limsup_{R\to\infty}
\Psi_R 
\asymp t^{-2/3}
\asymp
\liminf_{R\to\infty}
\Psi_R 
\,.
\end{equation}
In a forthcoming paper we plan to use this to show that in the appropriate
scaling limit of critical dynamical percolation (where both space and time are rescaled), the crossing events in the unit square at time $0$
and at time $t$ have correlations that decay like $t^{-2/3}$ as $t\to\infty$.

\bigskip
We also have the following generalization for the $\pm 1$-indicator function
of the left-right crossing in scaled versions of an
arbitrary fixed quad $\Quad$.

\begin{corollary}\label{c.noisequad}
Assume that $\eps_R\in(0,1)$ is such that $\eps_R\, R^2\alpha_4(R)\to\infty$ as $R\to\infty$, and $y$ is an $\eps_R$-noisy version of $x$. Then 
$$
\Eb{f_{R\Quad}(y)\,f_{R\Quad}(x)}-\Eb{f_{R\Quad}(x)}\,\Eb{f_{R\Quad}(y)} \to 0\,.
$$
On the other hand, if $\eps_R\, R^2\alpha_4(R)\to 0$, then
$$
\Eb{f_{R\Quad}(y)\,f_{R\Quad}(x)}-\Eb{f_{R\Quad}(x)^2} \to 0\,.
$$
\end{corollary}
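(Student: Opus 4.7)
The plan is to reduce both claims to tail properties of $|\Spec_R|$, where I set $\Spec_R:=\Spec_{f_{R\Quad}}$. Since $f_{R\Quad}$ is $\pm1$-valued so $\|f_{R\Quad}\|^2=1$,~\eqref{e.fcor} rewrites $\Eb{f_{R\Quad}(x)\,f_{R\Quad}(y)}=\Eb{(1-\eps_R)^{|\Spec_R|}}$ under the normalized spectral law. Because $x$ and $y$ both have the uniform marginal, $\Eb{f_{R\Quad}(x)}\,\Eb{f_{R\Quad}(y)} = \widehat{f_{R\Quad}}(\emptyset)^2 = \Pb{\Spec_R=\emptyset}$, and $\Eb{f_{R\Quad}(x)^2}=1$. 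So the first goal is to show $D_1(R):=\Eb{(1-\eps_R)^{|\Spec_R|}\,\1_{|\Spec_R|>0}}\to 0$, and the second is to show $D_2(R):=\EB{1-(1-\eps_R)^{|\Spec_R|}}\to 0$.

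To control both, I would feed in Theorem~\ref{t.quadtight}: for each $\delta>0$ it provides a threshold $t=t(\delta,\Quad)$ so that $|\Spec_R|$ lies in $[t^{-1}R^2\alpha_4(R),\,tR^2\alpha_4(R)]\cup\{0\}$ with probability at least $1-\delta$, uniformly in $R>1$. For the sensitivity regime, I would split $D_1(R)$ along this typical event: on it, the factor $(1-\eps_R)^{|\Spec_R|}$ restricted to $|\Spec_R|>0$ is bounded by $\exp\!\bigl(-\eps_R\,t^{-1}R^2\alpha_4(R)\bigr)$, which tends to $0$ since $\eps_R R^2\alpha_4(R)\to\infty$, while the exceptional event contributes at most $\delta$; sending $R\to\infty$ and then $\delta\to 0$ yields $D_1(R)\to 0$. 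For the stability regime, the symmetric argument on $D_2(R)$ uses monotonicity of $s\mapsto 1-(1-\eps_R)^s$: on the typical event it is at most $1-(1-\eps_R)^{tR^2\alpha_4(R)}$, which vanishes because $\eps_R R^2\alpha_4(R)\to 0$, and again the exceptional contribution is at most $\delta$.

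I do not anticipate any substantial obstacle beyond Theorem~\ref{t.quadtight} itself, which is already established. In particular, the absence of a proven sharp upper bound on $\Eb{|\Spec_R|}$, flagged in the paper immediately after that theorem, is harmlessly bypassed, since only the upper tail of $|\Spec_R|/(R^2\alpha_4(R))$ is used. The argument is the qualitative analogue of the sensitivity portion of Corollary~\ref{c.noise} as carried out through~\eqref{e.PsiR}--\eqref{e.Psilast}, with the sharp quantitative input of Theorem~\ref{t.1} replaced by the weaker tightness of Theorem~\ref{t.quadtight} --- which is exactly enough to separate the two regimes across the single critical scale $R^2\alpha_4(R)$.
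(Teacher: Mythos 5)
Your proposal is correct and takes essentially the same route as the paper: both reduce the statement via~\eqref{e.fcor} to tail control of $|\Spec_{f_{R\Quad}}|$, and both invoke Theorem~\ref{t.quadtight} to localize $|\Spec_{f_{R\Quad}}|$ to the scale $R^2\alpha_4(R)$, splitting each expectation along the typical event and bounding the exceptional contribution by $\delta$. The only cosmetic difference is the order in which the two small parameters are dispatched (you send $R\to\infty$ at fixed $t$ and then $\delta\to 0$, whereas the paper picks $t$ large to get a $2\delta$ bound uniform in large $R$ before sending $\delta\to 0$), which changes nothing of substance.
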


\proof
First assume $\eps_R\, R^2\alpha_4(R)\to\infty$. Given $\delta>0$, by Theorem~\ref{t.quadtight} we have
$$\sup_{R>1}
\Pb{0< |\Spec_{f_{R\Quad}}| < t^{-1}\, R^2\,\alpha_4(R)} < \delta
$$
if $t \geq t_1(\delta)$ is large enough. Now let $R$ be large enough so that $\eps_R\, R^2\alpha_4(R) > t^2$.
Then, by~(\ref{e.corr}),
\begin{multline*}
\Psi_R
\leq \delta + \sum_{|S| \geq R^2\alpha_4(R)/t} \widehat f(S)^2 \,(1-\eps_R)^{|S|}
 \\
\leq \delta + \left(1-\frac{t^2}{R^2\,\alpha_4(R)}\right)^{R^2\alpha_4(R)/t}
\leq \delta + O(1)\,\exp(-t)\,, 
\end{multline*}
which is at most $2\delta$ if $t \geq t_2(\delta)$. We can choose $R$ large enough with respect to this new $t$, and hence $\Psi_R\to 0$ is proved.

Now assume $\eps_R\, R^2\alpha_4(R)\to 0$. Given $\delta>0$, by Theorem~\ref{t.quadtight} we have
$$\sup_{R>1}
\Pb{|\Spec_{f_{R\Quad}}| > t\, R^2\,\alpha_4(R)} < \delta
$$
if $t \geq t_1(\delta)$ is large enough. Now let $R$ be so large that 
$\eps_R\,R^2\alpha_4(R) < t^{-2}$. Then, by~(\ref{e.fcor}), for $\Spec=\Spec_{f_{R\Quad}}$,
\begin{align*}
\Eb{f_{R\Quad}(y)\,f_{R\Quad}(x)} 
&\geq \EB{(1-\eps_R)^{|\Spec|} \md
 |\Spec| < t R^2 \alpha_4(R)} \,\Pb{|\Spec| < t R^2 \alpha_4(R)}\\
&\geq \left(1-\frac{t^{-2}}{R^2\,\alpha_4(R)}\right)^{t R^2 \alpha_4(R)}(1-\delta)\\
&\geq \exp(-O(1)/t)\,(1-\delta)\,. 
\end{align*}
This is arbitrarily close to $\Eb{f_{R\Quad}(x)^2}=1$ for $t$ large; hence we are done.
\qed

\subsection{Resampling a fixed set of bits}\label{ss.deterministic}

We now prove a general version of Theorem~\ref{t.horizontal}.
If $y\in\Omega=\{-1,1\}^\bits$ is a noisy version of $x$,
then $y_j=x_j$, except on a small random set of $j\in\bits$.
We may consider a variation of this situation, where
we have some fixed deterministic set $\determ\subseteq\bits$,
and we take $y_j=x_j$ for $j\in\determ$ and take
the restriction of $y$ to $\determc:=\bits\setminus\determ$
 be independent from $x$
(and $y$ is uniform in $\Omega$). Although this setup
was mentioned in~\cite{\BKSnoise}, the techniques developed there
and in~\cite{\SchrammSteif} fell short of being able to handle
this variation.
Now, we can analyse this variation without difficulty, and prove the following proposition:

\begin{proposition}
\label{p.determ}
Let $\Quad\subset \R^2$ be some quad and for $R>1$ let $f_R=f_{R\Quad}$ be the $\pm 1$-indicator function of the crossing event in $R\Quad$ (either in $\Z^2$ or in the triangular lattice).
For every $R>1$, let $\determ_R\subseteq\bits$ be some  set of bits, and let 
$r_R$ be the maximal radius of any disk contained in $R\,\Quad$ that is disjoint from 
$\determc_R:=\bits\setminus \determ_R$. 
If 
\begin{equation}
\label{e.rR}
\lim_{R\to\infty} \frac{r_R}{\sqrt{R^2\,\alpha_4(R)}}=0\,,
\end{equation}
then the family $(\determ_R)_{R>1}$ is {\bf asymptotically clueless} in the sense that
$$
\lim_{R\rightarrow \infty} \bigl\| \Es{f_R\md \mathcal{F}_{\determ_R}} - \Es{f_R} \bigr\| = 0\,.
$$
On the other hand, if 
\begin{equation}
\label{e.Uc}
\lim_{R\to\infty}\bigl|\determc_R\bigr|\,\alpha_4(R)=0\,,
\end{equation}
then $(\determ_R)_{R>1}$ is {\bf asymptotically decisive} in the sense that
$$
\lim_{R\rightarrow \infty} \bigl\| \Es{f_R\md \mathcal{F}_{\determ_R}} - f_R \bigr\| = 0\,,
$$
which means that there is asymptotically no loss of information about the crossing $f_R$.
\end{proposition}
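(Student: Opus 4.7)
The plan is as follows. Both parts rest on the Fourier-side identities
$$\|\Es{f_R\md\F_{\determ_R}}-f_R\|^2=\Qb{\Spec_{f_R}\not\subseteq\determ_R},\qquad\|\Es{f_R\md\F_{\determ_R}}-\Es{f_R}\|^2=\Qb{\emptyset\ne\Spec_{f_R}\subseteq\determ_R}.$$
The decisive case is immediate: a union bound together with the standard arm-exponent estimate $\Qb{x\in\Spec_{f_R}}\le O(\alpha_4(R))$ for every bit $x$ (a consequence of~\eqref{e.percmom1} and quasi-multiplicativity, as used in the derivation of~\eqref{e.Sr}) yields $\Qb{\Spec_{f_R}\cap\determc_R\ne\emptyset}\le O(|\determc_R|\,\alpha_4(R))$, which tends to $0$ by hypothesis~\eqref{e.Uc}.

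For the clueless case my strategy will be to imitate the proof of Theorem~\ref{t.loc}, replacing its random sparse set $\Rs$ by a deterministic sub-net drawn from $\determc_R$. First I will approximate $\Quad$ by slightly smaller $U'\subset\closure{U'}\subset U\subset\Quad$ so that $\Pb{\Spec_{f_R}\subseteq RU'}$ is close to $1$ (as in the proof of Theorem~\ref{t.quadtight}), reducing the task to bounding $\Pb{\emptyset\ne\Spec\subseteq\determ_R,\,\Spec\subseteq RU'}$. Next I will fix the scale $r^*$ via $(r^*)^2\alpha_4(r^*)\asymp r_R^2$; since $r\mapsto r^2\alpha_4(r)$ is asymptotically increasing by~\eqref{e.a4less2}, the hypothesis $r_R^2\ll R^2\alpha_4(R)$ ensures $r^*\ll R$. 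Finally I will tile $R\Quad$ by boxes of radius a suitable constant multiple of $r_R$ and select from each box a single point of $\determc_R$ (such a point exists by the defining property of $r_R$); the resulting set $\Xi\subseteq\determc_R$ is a well-separated net with pairwise distances $\gtrsim r_R$ and with $|B\cap\Xi|\asymp(r^*/r_R)^2\asymp 1/\alpha_4(r^*)$ for every $r^*$-box $B$ lying well inside $R\Quad$.

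The central step---and the one I expect to be the main obstacle---is to establish a deterministic analog of Proposition~\ref{pr.SQBWloc}: for each $r^*$-box $B$ with $B'\subset RU$ and every $W\subseteq\bits\setminus B$,
$$\Pb{\Spec\cap B'\cap\Xi\ne\emptyset\md\Spec\cap B\ne\emptyset,\,\Spec\cap W=\emptyset}\ge a>0.$$
I will prove this by applying the second moment method to $Y:=|\Spec\cap B'\cap\Xi|\cdot 1_{\Spec\cap W=\emptyset}$. By Proposition~\ref{pr.first}, the first moment satisfies $\E Y\gtrsim\Eb{\lambda_{B,W}^2}\,\alpha_4(r^*)\,|B'\cap\Xi|\asymp\Eb{\lambda_{B,W}^2}$, thanks to the choice of $r^*$ and the density of $\Xi$. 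The crux is the second moment: Proposition~\ref{pr.second} reduces matters to bounding $\sum_{x\ne y\in B'\cap\Xi}\alpha_4(|x-y|)$, and the key observation will be that the $\gtrsim r_R$ mutual separation between points of $\Xi$ forces this sum to be of order $(r^*/r_R)^2\asymp 1/\alpha_4(r^*)$ (via a dyadic decomposition and the power-law decay of $\alpha_4$), rather than the much worse bound one gets for an arbitrary subset of $B'$ of the same cardinality. This yields $\E Y^2=O(\Eb{\lambda_{B,W}^2})$, and Paley--Zygmund combined with Lemma~\ref{l.lambda} will deliver the stated conditional lower bound.

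With the local lemma in hand, I will conclude exactly as in the proof of Theorem~\ref{t.loc}. Setting $x_j:=1_{\Spec\cap B_j\ne\emptyset}$ and $y_j:=1_{\Spec\cap B_j'\cap\Xi\ne\emptyset}$ for an $r^*$-tiling $\{B_j\}$ of $RU'$, the local inequality supplies the hypothesis of the large-deviation Proposition~\ref{pr.YX} (under the conditional measure enforcing $\Spec\cap(RU\setminus RU'')=\emptyset$ for an intermediate $U''$), and combining with Proposition~\ref{pr.verysmall} to control the distribution of $|\Spec_{r^*}|$ produces
$$\Pb{\emptyset\ne\Spec\cap\Xi=\emptyset,\,\Spec\subseteq RU''}\le O(\gamma_{r^*}(R)).$$
Since $\Xi\subseteq\determc_R$, the left side dominates $\Pb{\emptyset\ne\Spec\subseteq\determ_R,\,\Spec\subseteq RU''}$. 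Finally, $r^*/R\to 0$ forces $\gamma_{r^*}(R)\to 0$, and combining with the initial approximation step completes the proof.
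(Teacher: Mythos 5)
Your proposal tracks the paper's proof closely: both parts rest on the Parseval identities you state, the clueless direction proceeds via a deterministic net inside $\determc_R$ and the second-moment machinery of Propositions~\ref{pr.first}--\ref{pr.second}, \ref{pr.verysmall}, and~\ref{pr.YX}, and both treat a general quad by first restricting the spectrum to a slightly shrunken $\Quad'$. The paper uses a maximal $r_R$-separated subset $H_R\subseteq\determc_R$ and boxes of radius $R/m$ with $m\to\infty$ ``slowly''; you instead build $\Xi$ by picking one point of $\determc_R$ from each $O(r_R)$-box and you fix the box scale explicitly by $(r^*)^2\alpha_4(r^*)\asymp r_R^2$. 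These are cosmetically different but produce the same key inequality $\alpha_4(\text{box})\cdot|\text{net}\cap B'|\gtrsim1$. One small slip: $\Xi$ constructed by one-point-per-box need not have pairwise distances $\gtrsim r_R$ (neighbouring boxes can contribute nearby points). What your second-moment computation actually needs is not separation but the bounded local density (\emph{at most $O(1)$ net points per $r_R$-box}), which your construction does give, so the claim $\sum_{x\ne y\in B'\cap\Xi}\alpha_4(|x-y|)=O\bigl((r^*/r_R)^2\bigr)$ survives; you should phrase it in terms of density rather than separation.

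There is a genuine (though easily repairable) gap in your decisive case: you invoke ``$\Qb{x\in\Spec_{f_R}}\le O(\alpha_4(R))$ for every bit $x$'' as a consequence of~\eqref{e.percmom1} and the derivation of~\eqref{e.Sr}, but~\eqref{e.percmom1} only applies to $x\in R\,U$ with $\closure U$ in the interior of $\Quad$, and the derivation of~\eqref{e.Sr} shows that the uniform bound $\Pb{x\in\Spec}\le O(\alpha_4(R))$ over \emph{all} bits is known on the triangular lattice (where $\alpha_2^{++}(r_1,R)\le\alpha_4(r_1,R)$ via~\eqref{e.a++2tri} and~\eqref{e.a4}) but is \emph{not} established on $\Z^2$: near a corner the paper only has $\Pb{x\in\Spec}\lesssim\alpha_4(r_1)\cdot(r_1/R)$, which need not be $O(\alpha_4(R))$, and only the sum over all $x$ is $O(R^2\alpha_4(R))$. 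The paper avoids this by first reducing to bits in $R\Quad'$ via~\eqref{e.Qp}, then using $\Pb{x\in\Spec}\le O(\alpha_4(\rho_0 R))$ there. You already use exactly this reduction in the clueless case; you should apply it in the decisive case too, after which your union bound is fine.
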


Notice that even though the convergence of $\Eb{f_{R\Quad}}$ is not known in $\Z^2$ for general quads or even rectangles other than squares, our definitions of being asymptotically clueless or decisive still make perfect sense.

Note that $O(1)\,|\determc_R|\ge (R/r_R)^2$ and there are examples where $|\determc_R|\asymp (R/r_R)^2$.
Thus, in some sense the conditions~\eqref{e.rR} and~\eqref{e.Uc} are nearly complementary.
However, the following two examples
are not covered.  Suppose that $\Quad$ is the unit square, and for each $R$ we take
$\determ_R$ to be the set of bits contained in the left half of the square $R\,\Quad$.
It is left to the reader to verify that in this case $\determ_R$ is neither asymptotically
decisive, nor asymptotically clueless. 

In the second example, we take $\Quad$ to be the unit square again, and let
$\determ_R$ be the set of bits outside of the disk of radius $\rho_R$ centered
at the center of the square $R\,\Quad$. Then $\determ_R$ is asymptotically decisive
as long as $\rho_R/R\to 0$, but this does not follow from the proposition (unless $\rho_R$ is small enough).
However, Remark~\ref{r.hausdeter} below does give a general statement which
covers this example.

\begin{remark}\label{r.deter}
When $(\determ_R)_{R>1}$ is asymptotically clueless, it is immediate to see that if $x_R$ and $y_R$ are two coupled percolation configurations which coincide on $\determ_R$, but are independent elsewhere, then
$$
\lim_{R \rightarrow \infty} \Eb{f_{R} (x_R)f_{R}(y_R)} - \Eb{f_{R}}^2 = 0.
$$
On the other hand, if $(\determ_R)_{R>1}$ is asymptotically decisive, then 
$$
\lim_{R \rightarrow \infty} \Eb{f_{R} (x_R)f_{R}(y_R)} - \Eb{f_{R}^2} = 0.
$$
\end{remark}

\proofof{Proposition \ref{p.determ}}
By~\eqref{e.SinB} and orthogonality of martingale differences, we have
\begin{multline*}
\Pb{\emptyset\ne \Spec_{f_R}\subseteq\determ_R}
=
\EB{\Es{f_R\md \mathcal{F}_{\determ_R}}^2-\Es{f_R}^2}
\\
=
\EB{\bigl(\Es{f_R\md \mathcal{F}_{\determ_R}}-\Es{f_R}\bigr)^2}.
\end{multline*}
Thus, $(\determ_R)_{R>1}$ is asymptotically clueless if and only if the spectral sample of $f_R$
satisfies
$\Pb{\emptyset \neq \Spec_{f_R}\subseteq \determ_R}\to 0$.
Similarly, 
$$
\Pb{\Spec_{f_R}\not\subseteq\determ_R}
=
\EB{f_R^2-\Es{f_R\md \mathcal{F}_{\determ_R}}^2}
=
\EB{\bigl(\Es{f_R\md \mathcal{F}_{\determ_R}}-f_R\bigr)^2}.
$$
Hence, a necessary and sufficient condition for $(\determ_R)_{R>1}$ to be asymptotically decisive is that
$\Pb{\Spec_{f_R}\subseteq \determ_R}\to 1$.

We now consider the simpler case in which $\Quad=[0,1]^2$,
and assume~\eqref{e.rR}.
Since the proof is rather similar to the proof of Theorem~\ref{t.1}, we will be brief here,
and only indicate some of the essential points and the arguments where a 
more substantial modification is necessary.
As in Section \ref{ss.lsq}, subdivide $[-2,R+2]^2$ into boxes $B_1,B_2,\ldots,B_{m^2}$ of radius $(R+4)/(2\,m)$,
where $m=m_R\in\N$ tends to infinity as $R\to\infty$, but very slowly.
As above let $B_j'$ denote the box concentric with $B_j$ whose radius is a third of the radius of $B_j$.
Let $H_R\subseteq \determc_R$ be a maximal subset of $\determc_R$ with the property that the distance between
any two distinct elements in $H_R$ is at least $r_R$.
Then for some constant $C$ every disk of radius $C\,r_R$ in $R\,\Quad$ contains a point of $H_R$, but
a disk of radius smaller than $r_R/2$ contains at most one point of $H_R$.
   Let $x_j$ be the indicator function
of the event $\Spec_{f_R}\cap B_j\neq\emptyset$ and let $y_j$ be the indicator function 
of the event $$\Spec_{f_R}\cap B'_j\cap H_R\neq \emptyset\,.$$

Our goal is to prove that for each
$I\subseteq\{1,\dots,m^2\}$ and every $j\in \{1,\ldots,m^2\}\setminus I$,
\begin{equation}
\label{e.xyI}
    \Pb{y_j=1 \md y_i=0\,\forall i\in I} \geq a\,\Pb{x_j=1\md y_i=0\, \forall i\in I},
\end{equation}
holds with some absolute constant $a>0$.
We mimic the proof of Proposition~\ref{pr.SQBW}.
Fix such $j$ and $I$, and set
$n:=|B_j'\cap H_R|$, $W:= H_R\cap\bigcup_{i\in I} B_i'$
and $Y:=\bigl| \Spec_{f_R}\cap B'_j\cap H_R\bigr|$.
Using Proposition~\ref{pr.first}, we get
$$
O(1)\,\Eb{Y\,1_{\Spec_{f_R}\cap W=\emptyset}} \ge
\Eb{\llwb^2}\,\alpha_4(R/m)\,n\,,
$$
and Proposition~\ref{pr.second} can be used to obtain
$$
\Eb{Y^2\,1_{\Spec_{f_R}\cap W=\emptyset}} \le
O(1)\,\Eb{\llwb^2}\,\alpha_4(R/m)^2\,n^2\,,
$$
provided that $\liminf_{R\to\infty} \alpha_4(R/m)\,n > 0$ and hence the diagonal term is dominated by a constant times the off-diagonal term.  
(Intuitively, we need that the ``density'' of the set $H_R$ inside the box $B_j$ of radius $R/m$ is good enough to see pivotals once the box has any of them.)
Since $n\asymp R^2\,(m\,r_R)^{-2}$, this follows from~\eqref{e.rR},
provided that $m_R$ tends to $\infty$ sufficiently slowly.
Then,~\eqref{e.xyI} follows from the Cauchy-Schwarz second moment bound.

Using Propositions~\ref{pr.verysmall} and~\ref{pr.YX}, and following the proof of the \concentration Theorem~\ref{t.1}, we have for $R$ large enough that
$$\Pb{\emptyset \neq \Spec_{f_{R}} \subseteq \determ_R} \leq O(1) \,\gamma_{R/m}(R)
\overset{\eqref{e.a4less2}}{\underset{R\to\infty}\longrightarrow} 0
\,.
$$
By the above, it follows that
$(\determ_R)_{R>1}$ is asymptotically clueless.
\bigskip

For the case of a general quad, we can do the same trick as in Section~\ref{ss.quadconc} and in 
the proof of Corollary~\ref{c.noisequad}.
For any $\delta>0$, there is a quad $\Quad'$ contained in the interior of $\Quad$ such that for $R$ large enough
\begin{equation}
\label{e.Qp}
  \Pb{\Spec_{f_{R}} \subseteq R\Quad'}>1-\delta\,.
\end{equation}
We may assume that $\Quad'$ is smooth, though this is not really needed, since we are going to use Proposition~\ref{pr.SQBWloc}, with $\Quad'$ in place of $\Quad$.
So, the above arguments (for the square) can easily be adapted to show that
$$
\limsup_{R\to\infty} \Pb{\emptyset\ne \Spec_{f_R}\subseteq \determ_R\cap R\,\Quad'}=0\,,
$$
because the distance from $R\,\Quad'$ to the complement of $R\,\Quad$ is bounded from
below by a positive constant times $R$.
In combination with~\eqref{e.Qp}, this gives
$$\limsup_{R\rightarrow \infty} \Pb{\emptyset \neq \Spec_{f_{R}}\subseteq \determ_R} \leq \delta\,.$$ 
Since the left hand side does not depend on $\delta$, we may let $\delta$
tend to $0$ and deduce the first claim of the proposition.

\bigskip

For the other direction, we do a first moment argument. Let $\Quad'$ and $\delta$ satisfy~\eqref{e.Qp}, as before.
Let $\rho_0>0$ denote the distance from $\partial \Quad'$ to $\partial \Quad$.
Then for $x\in\bits\cap (R\,\Quad')$, we have $\Pb{x\in\Spec_{f_R}}\le O(1)\,\alpha_4(\rho_0\,R)$.
Thus,
$$
\Eb{|\Spec_{f_R}\cap \determc\cap(R\,\Quad')|} \le |\determc|\,\alpha_4(\rho_0\,R)\,.
$$
If we assume~\eqref{e.Uc}, then this tends to zero as $R\to\infty$.
Thus, 
$$
\lim_{R\to\infty}\Pb{\Spec_{f_R}\cap\determc\cap (R\,\Quad')\ne\emptyset}=0\,,
$$
and~\eqref{e.Qp} gives
$$
\limsup_{R\to\infty} \Pb{\Spec_{f_R}\cap\determc\ne\emptyset}\le\delta\,.
$$
Once again, since $\delta>0$ is arbitrary, this completes the proof.
\QED

\proofof{Theorem~\ref{t.horizontal}} The Theorem follows immediately from
Proposition~\ref{p.determ} and Remark~\ref{r.deter}.
\qed

\begin{remark}\label{r.ZU}
Recall that we used the random set $\Rs$ in Sections~\ref{s.mainlemma} and~\ref{s.concent}, with $\Ps{x\in\Rs}=1/(\alpha_4(r)\,r^2)$, just as a tool to measure the size of $\Spec$. However, in the spirit of our above proof, in place of $\determc_r$, we can think of $\Rs$ as the actual set of bits being resampled.  
\end{remark}

\begin{remark}\label{r.hausdeter}
It may be concluded from a slight variation on the proof of Proposition~\ref{p.determ} that
in the setting of the triangular grid if the Hausdorff
limit $F:=\lim_{R\to\infty} \determc_R/R$ exists and has Hausdorff dimension strictly less
than $5/4$, then $(\determ_R)_{R>1}$ is asymptotically decisive.
Indeed, assuming that $s:=\dim(F)<5/4$, for every $\eps>0$ one may find a countable collection of points
$z_j$ and radii $\rho_j$, such that $\sum_j \rho_j^{s+\eps}<\eps$ and the union of the
disks with these centers and radii contains a neighborhood of $F$. The probability
that $\Spec_{f_R}$ comes within distance $O(1)\,\rho_j\,R$ of $R\,z_j$ is bounded from above by
$O(1)\,\rho_j^{5/4-\eps}$, if $z_j$ is not too close to the boundary of
$\Quad$ and $R$ is sufficiently large. A sum bound and the above argument for dealing
with a neighborhood of the boundary of $\Quad$ complete the proof.
\end{remark}

\begin{remark}\label{r.polarset}
We obtain here a somewhat sharp result for ``sensitivity to selective noise'', though it would be even more satisfying
to have a necessary and sufficient condition for a family $(\determ_R)_{R>1}$ to be asymptotically clueless.
We believe that $(\determ_R)_{R>1}$ is asymptotically clueless if and only if
$\Pb{\emptyset\ne\Piv_R \subseteq \determ_R }\rightarrow 0$, where $\Piv_R$ is the set of pivotals.
Similarly, $(\determ_R)_{R>1}$ should be asymptotically decisive if and only if $\Pb{\Piv_R \subseteq \determ_R }\rightarrow 1$.
In other words, even though $\Piv_R$ and $\Spec_R$ are asymptotically quite different (compare, e.g., Remark~\ref{r.annpivo}
with Proposition~\ref{pr.verysmall}), they should have the same polar sets.
\end{remark}

\begin{remark}\label{r.macro}
Tsirelson~\cite{\TsirelsonStFlour} distinguishes two types of noise sensitivities: micro and block 
sensitivities, where the latter is stronger than the micro sensitivity we have been considering so far. He gives the following illustrative examples. 
Consider the two functions on $\{-1,1\}^n$: $f_1 = \frac 1 {\sqrt{n}} \sum_{i=1}^n x_i$
and $f_2=\frac 1 {\sqrt{n}} \sum_{i=1}^{n-n^{1/2}} \prod_{k=i}^{i+n^{1/2}}  x_k$.
Both correspond to renormalized random walks which converge to Brownian motion, but the first is stable
while the second is noise-sensitive. Block sensitivity is defined as follows: instead of resampling 
the bits one by one, each
with probability $\eps$, we resample simultaneously blocks of bits. For $\delta>0$, divide the $n$ bits
into about $\delta^{-1}$ blocks of about $\delta n$ bits:
$B_i:=\N\cap\bigl[i\,\delta\,n,(i+1)\,\delta\,n\bigr)$. Each block is now resampled (i.e., all bits within the block)
with probability $\eps$. A sequence of functions is block sensitive if for any fixed $\eps>0$,
the limsup as $n$ goes to infinity
of the correlation in this block procedure is bounded by a function of $\delta$ which goes to 0 when $\delta$ goes to 0.
It is easy to see that the sequence of functions $f_2$ ($n=1,2,\dots$) is not block-sensitive.
This is related to the fact that the sensitivity of $f_2$ is ``localized'',
in the sense that its spectral sample $\Spec_{f_2}$, when rescaled by $1/n$, converges in law to a finite
(random) set of points.
As we will see in Section~\ref{s.spectralscaling}, this is not at all the case with the spectral sample of percolation.
It is easy to check that percolation crossing events 
are indeed block sensitive.
\end{remark}

\section{Applications to dynamical percolation} \label{s.dyns}

In this section, we prove Theorems~\ref{t.dyn} and \ref{t.Z2dyn}.

 As in Subsection~\ref{ss.lrad}, we consider the 0-1 indicator function $f=f_R$
of the percolation crossing
event from $\p([-1,1]^2)$ to $\p([-R,R]^2)$. Then 
$\Eb{f}=\Eb{f^2} \asymp \alpha_1(R)$. We let $\omega_t$ be the dynamical percolation configuration at time $t$, started at the stationary distribution at $t=0$. Recall that we have
\begin{equation}\label{e.corrad}
\Eb{f(\omega_0)\,f(\omega_t)}
= \sum_{k=0}^\infty e^{-kt}\sum_{|S|=k}\widehat f(S)^2,\qquad t>0\,.
\end{equation}
As in Subsection \ref{ss.noisesquare}, for $s\ge 1$
define $\rho(s)$ as the least $r\in\N_+$ such
that $r^2\,\alpha_4(r)\ge s$, and break the sum over $k$
in (\ref{e.corrad}) into parts according to the
$j\in\N$ satisfying $j/t \leq k < (j+1)/t $. 
Then, the same way we got~(\ref{e.Psilast}), 
just now using Theorem~\ref{t.radial} and the estimate
$$\alpha_1(\rho(1/t)) \leq  j^{O(1)} \alpha_1(\rho(j/t))\,,$$
which follows from~\eqref{e.rhobdrat} and~\eqref{e.qRSW},
we get
\begin{equation}\label{e.correll}
\Eb{f(\omega_0)\,f(\omega_t)}
\le O(1)\,\frac{\alpha_1(R)^2}{\alpha_1(\rho(1/t))}
= O(1)\,\frac{\Es{f(\omega_0)}^2}{\alpha_1(\rho(1/t))}\,.
\end{equation}

Let
$\EX$ denote the
set of exceptional times $t\in[0,\infty)$ for the event that
the origin is in an infinite open cluster.
To give a lower bound on the Hausdorff dimension of $\EX$, a well-known technique is Frostman's criterion, see e.g.~\cite[Theorem 4.27]{\BMbook} or \cite[Theorem 8.9]{\Mattila}.
Combined with a compactness argument, it gives the following; see \cite[Theorem 6.1]{\SchrammSteif}. For any $\gamma>0$, let
\begin{equation}\label{e.energy}
M_\gamma(R):=\int_0^1\int_0^1 \frac{\Eb{f_R(\omega_s)\,f_R(\omega_t)}}{\Eb{f_R(\omega_0)}^2\, |t-s|^\gamma}\,dt\,ds\,.
\end{equation}
If $\sup_{R} M_\gamma(R) < \infty$, then $\EX\cap[0,1]$ is nonempty with positive probability, and on this event a.s.\ its dimension is at least $\gamma$.
It is easy to see that there is a constant $d$ such that $\dim_H(\EX)=d$ 
a.s. Therefore, $\sup_R M_\gamma(R)<\infty$ also implies
$\dim_H(\EX) \geq \gamma$ almost surely.

\proofof{Theorem \ref{t.dyn}} 
To start with, we have $\rho(s)=s^{4/3+o(1)}$, by~(\ref{e.a4}). Secondly, 
$$\alpha_1(r)= r^{-5/48+o(1)}$$
by~\cite{\LSWoneArm}.
Thus, translation invariance and~(\ref{e.correll}) give 
$$
\Eb{f(\omega_s)\,f(\omega_t)} / \Eb{f(\omega)}^2 \leq 
O(1)\, |t-s|^{-(4/3)(5/48)+o(1)},
$$
as $|t-s|\to 0$.
Therefore, as long as
$\gamma < 1-(4/3)\,(5/48)$, we have $\sup_R M_\gamma(R)<\infty$.
The above discussion therefore gives $\dim_H(\EX)\ge 31/36$ a.s.
The matching upper bound is given by Theorem~1.9 of~\cite{\SchrammSteif}. 
This implies statement 1 of the Theorem.

The proof of Statement 2 is similar.
For the $0$-$1$ indicator function $f^+$ of the crossing event between
radius $1$ and $R$ in a half plane one gets the following analog of
Theorem~\ref{t.radial}: if $k\in\N_+$ satisfies $k\le \alpha_4(r)\,r^2$,
then
$$
\Qb{|\Spec_{f^+}|<k} \le O(1)\,\frac{\alpha^+_1(R)^2}{\alpha^+_1(r)}\,.
$$
The proof is similar to the proof of Theorem~\ref{t.radial}, and is left to the reader.
The bound corresponding to~\eqref{e.correll} is then
\begin{equation}
\label{e.fp}
\frac{\Eb{f^+(\omega_0)f^+(\omega_t)}}{\Eb{f^+(\omega_0)^2}}
= O(1)\,\alpha_1^+\bigl(\rho(1/t)\bigr)^{-1}.
\end{equation}
By~\cite[Theorem 3]{\SmirnovWerner}, we have $\alpha_1^+(r)=r^{-\xi_1^++o(1)}$, with
$\xi_1^+=1/3$.
The proof of the lower bound of $1-(4/3)\,\xi_1^+$ on the Hausdorff dimension then proceeds as above.
For the upper bound, we refer to~\cite[Theorem 1.13]{\SchrammSteif}. This proves
part 2.

For the proof of the third part, we now let $f$ be the indicator function of the
event that there is a white crossing in the upper half plane (here, the set of hexagons whose center has non-negative imaginary part) and a black crossing in the lower half plane (the set of hexagons whose center has negative imaginary part), both crossings
from some fixed radius $r_0=2$ to radius $R$. The two half planes are chosen here in such a 
way that that the percolation configurations in the two halves are independent, and
$r_0=2$ is chosen so that the event has positive probability for all $R>r_0$.
Then, by independence, we get
from~\eqref{e.fp} that
$$
\frac{\Eb{f(\omega_0)f(\omega_t)}}{\Eb{f(\omega_0)^2}}
= O(1)\,\alpha_1^+\bigl(\rho(1/t)\bigr)^{-2}.
$$
Thus, in this case we get the lower bound of $1/9$ for the
corresponding Hausdorff dimension, which completes the proof.
\qed

\proofof{Theorem \ref{t.Z2dyn}}
Let $f=f_R$ be the indicator function for the existence of an open crossing from 
$0$ to $\p([-R,R]^2)$.
We will apply the relation between $\alpha_1$, $\alpha_4$ and $\alpha_5$ that comes from the $k=2$ case of Proposition~\ref{p.strong} in our Appendix. Since $\alpha_5(r)\asymp r^{-2}$ (see \cite[Lemma 5]{\KestenSZh} or \cite[Corollary A.8]{\SchrammSteif}), the estimate~(\ref{e.strong}) says that there are some constants $c_1,\eps>0$ such that
\begin{equation}
\label{e.14}
\alpha_1(r)\, \alpha_4(r) > c_1 r^{\,\eps-2},
\end{equation}
holds for all $r>1$.
Thus,
$$
\begin{aligned}
\frac{\Eb{f(\omega_0)\,f(\omega_t)}}{\Eb{f(\omega_0)}^2}
&
\leq \frac{ O(1)}{\alpha_1\bigl(\rho(1/t)\bigr)}
\qquad&\text{by~\eqref{e.correll}} \\
&
\le O(1)\,\rho(1/t)^{2-\eps}\,\alpha_4\bigl(\rho(1/t)\bigr)
\qquad&\text{by~\eqref{e.14}} \\
&
\le \frac{O(1)}{t}\,\rho(1/t)^{-\eps}
\qquad&\text{by~\eqref{e.rhobd}} \\
&
\le O(1)\,t^{\eps/2-1}
,
\end{aligned}
$$
where the last inequality follows from the definition of $\rho$.
Therefore, if we take $\gamma \in(0, \eps/2)$, then $\sup_R M_\gamma(R) < \infty$, and the set of exceptional times for having an infinite cluster almost surely has a positive Hausdorff dimension.
\qed

Finally, note that if there were exceptional times with two distinct infinite white clusters with positive probability, then there would also be times with the 4-arm event from the origin to infinity. It was shown in \cite{\SchrammSteif} that this does not happen on the triangular lattice, and that there are no exceptional times on $\Z^2$ with three infinite white clusters. However, one can also easily prove the stronger result for $\Z^2$. Recall that~(\ref{e.a4less2}) implies that $\alpha_4(r)^2\, r^2 < O(1)\, r^{-\eps}$ for some $\eps>0$.
 This implies that the expected number of pivotals for the 4-arm event between radius $4$ and $R$ tends to
zero as $R\to\infty$.  (One should also take into account the sites near the outer boundary and
near the inner boundary. Indeed, the total expected number of pivotals is $O(1)\,R^2\,\alpha_4(R)^2$.)
Hence \cite[Theorem 8.1]{\SchrammSteif} says that there are a.s.\ no exceptional times for the 4-arm event even on $\Z^2$.

\section{Scaling limit of the spectral sample}\label{s.spectralscaling}

Given $\eta>0$ let $\mu_\eta$ denote the law of Bernoulli$(1/2)$ site percolation on the
triangular grid $T_\eta$ of mesh $\eta$.
Let $\omega$ denote a sample from $\mu_\eta$.
Given a quad $\Quad\subset\C$, we can consider the event that $\Quad$ is crossed by $\omega$.
To make this precise in the case where $\Quad$ is not adapted to the grid, we
may consider the white and black coloring of the hexagonal grid dual to $T_\eta$, as in Subsection~\ref{ss.gen}. We let $f_\Quad$ denote the $\pm1$ indicator function of the crossing event.
Let $\widehat\mu_\eta^\Quad$ denote the law of the spectral sample of $f_\Quad$, that is,
if $\ev X$ is a collection of subsets of the vertices of $T_\eta$, then
$$
\widehat\mu_\eta^\Quad(\ev X)=\sum_{S\in \ev X} \widehat f_\Quad(S)^2.
$$

Let $d_0$ denote the spherical metric on $\hat\C=\C\cup\{\infty\}$
(with diameter $\pi$).
If $S_1,S_2\subseteq \hat \C$ are closed and nonempty, let
$d_H(S_1,S_2)$ be the Hausdorff distance between $S_1$ and $S_2$
with respect to the underlying metric $d_0$.
If $S\ne\emptyset$, define $d_H(\emptyset,S)=d_H(S,\emptyset):=\pi$,
and set $d_H(\emptyset,\emptyset)=0$.
Then $d_H$ is a metric on the set $\mathfrak S$ of closed subsets of $\hat\C$.
Since $(\mathfrak S\setminus\{\emptyset\},d_H)$ is compact,
the same holds for $(\mathfrak S,d_H)$.
We may consider the probability measure
$\widehat\mu_\eta^\Quad$ as a Borel measure on $(\mathfrak S,d_H)$.

\begin{theorem}\label{t.scalinglimit}
Let $\Quad$ be a piecewise smooth quad in $\C$.
Then the weak limit $\widehat\mu^\Quad:=\lim_{\eta\searrow0}\widehat \mu_\eta^\Quad$ (with respect
to the metric $d_H$) exists.
Moreover, it is conformally invariant, in the sense that
if $\phi$ is conformal in a neighborhood of $\Quad$ and $\Quad':=\phi(\Quad)$,
then $\widehat\mu^{\Quad}=\widehat\mu^{\Quad'}\circ \phi$.
\end{theorem}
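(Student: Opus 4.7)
The plan is to combine two ingredients: Tsirelson's abstract theory of spectral measures on noises \cite{\TsirelsonStFlour}, and the Schramm--Smirnov result \cite{\SchrammSmirnovNoise} that critical site percolation on the triangular grid has a scaling limit that is a conformally invariant ``black noise'' on $\C$. For any noise $\mathcal N$ (discrete or continuous), Tsirelson associates to each $f\in L^2(\mathcal N)$ a ``spectral measure'' $\mu_f$ supported on closed subsets of the parameter space of $\mathcal N$; in the discrete setting (a product of finitely many bits) this reduces exactly to the sample $\Spec_f$ studied in this paper, so $\widehat\mu_\eta^\Quad$ is nothing but the Tsirelson spectral measure of the discrete crossing function $f_\Quad^\eta$ in the noise $\mathcal N_\eta$ associated to $\mu_\eta$.

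First, using the Schramm--Smirnov scaling limit, realize all the $\mathcal N_\eta$ as sub-$\sigma$-fields of a common ambient noise, with the continuous noise $\mathcal N$ appearing in the limit. In this common framework, the discrete crossing functions $f_\Quad^\eta$ converge in $L^2$ to a well-defined $\pm 1$-valued functional $f_\Quad$ of the continuous noise, since the probability of a discrepancy between the discrete crossing event and the continuous one tends to $0$ for a piecewise smooth quad. Second, invoke the general continuity of spectral measures under $L^2$-convergence: the continuous analogue of~\eqref{e.specapprox} says that if $f_n\to f$ in $L^2(\mathcal N)$ then $\mu_{f_n}\to \mu_f$ in total variation on the space of closed subsets. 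Applying this to the embedded $f_\Quad^\eta\to f_\Quad$ yields a candidate limit $\widehat\mu^\Quad:=\mu_{f_\Quad}$.

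Next, upgrade this abstract convergence to the weak convergence of Borel probability measures on $(\mathfrak S,d_H)$ that the theorem demands. Since $(\mathfrak S,d_H)$ is compact, it suffices to check finite-dimensional marginals: for each finite collection of open sets $U_1,\dots,U_k\subset\hat\C$ and closed sets $F_1,\dots,F_\ell\subset\hat\C$, the events $\{\Spec\cap U_i\ne\emptyset\}$ and $\{\Spec\cap F_j=\emptyset\}$ are determined (via \eqref{e.SinB} and Lemma~\ref{l.LMN}) by quadratic functionals of conditional expectations of $f_\Quad^\eta$, which are continuous under the above $L^2$-convergence of $f_\Quad^\eta$ to $f_\Quad$. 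Combining this with tightness (automatic by compactness of $\mathfrak S$) yields weak convergence of $\widehat\mu_\eta^\Quad$ to $\widehat\mu^\Quad$.

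Conformal invariance then follows immediately from that of the continuous noise $\mathcal N$: if $\phi$ is conformal on a neighborhood of $\Quad$, then under the scaling limit $\phi$ induces an isomorphism of the noise restricted to a neighborhood of $\Quad$ onto the noise restricted to $\phi(\Quad)$, mapping $f_\Quad$ to $f_{\phi(\Quad)}$; since spectral measures transform covariantly under such noise isomorphisms, $\widehat\mu^{\phi(\Quad)}=\widehat\mu^\Quad\circ\phi^{-1}$. The main technical obstacle is the first step, namely producing a satisfactory common framework in which $f_\Quad^\eta\to f_\Quad$ in $L^2$ and in which Tsirelson's spectral-measure formalism simultaneously applies to all $\mathcal N_\eta$ and to $\mathcal N$; this relies essentially on the construction of percolation as a noise in \cite{\SchrammSmirnovNoise}, and on the fact, provable by a direct RSW/Russo argument, that the discrete and continuous crossing events of a piecewise smooth quad agree with probability tending to $1$.
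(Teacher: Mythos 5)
Your proposal follows the Tsirelson--Schramm--Smirnov route, which the paper explicitly acknowledges (``the existence of the limit follows from Tsirelson's theory [Theorem 3c5] and \cite{\SchrammSmirnovNoise}'') but deliberately does not pursue, instead giving a more elementary, self-contained argument. The paper's proof avoids the abstract noise machinery entirely: it approximates $f_\Quad$ by a function $g$ of finitely many quad-crossing indicators $f_{\Quad_1},\dots,f_{\Quad_n}$ with all $\Quad_j$ disjoint from $\p U$ (Proposition~\ref{p.ss}), splits the indices according to whether $\Quad_j\subset U$ or $\Quad_j\cap U=\emptyset$, and then computes $W(\Quad,U)=\lim_\eta \widehat\mu_\eta^\Quad(\Spec\subseteq U)$ explicitly using the convergence of the finite-dimensional joint law $\nu_\eta\to\nu$ together with the $L^2$-continuity estimate~\eqref{e.specapprox}. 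What each buys: the paper's route is concrete and checkable with the tools already in the paper; your route, once made rigorous, packages the argument compactly as an instance of a general stability theorem for spectral measures of noises, which could be reused for other functionals.

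However, your opening step contains a real imprecision that matters. You write that one should ``realize all the $\mathcal N_\eta$ as sub-$\sigma$-fields of a common ambient noise'' and that ``$f_\Quad^\eta$ converge in $L^2$ to $f_\Quad$, since the probability of a discrepancy between the discrete crossing event and the continuous one tends to $0$.'' The Schramm--Smirnov scaling limit is formulated as \emph{weak} convergence of the joint law of quad-crossing indicators, not as a.s.\ or $L^2$ convergence on a shared probability space; there is no canonical realization of the $\mathcal N_\eta$ and $\mathcal N$ together in which ``probability of discrepancy'' is even defined, and the discrete noises are not sub-noises of the continuum one. To run your argument one must either (a) construct a bespoke coupling of all the discrete configurations with the continuum limit in which crossing events converge a.s.\ --- a nontrivial extra construction that Schramm--Smirnov do not supply --- or (b) replace the $L^2$-convergence hypothesis by what the scaling limit actually gives, namely convergence of finite-dimensional crossing distributions plus the factorization (black noise) property; the latter is precisely the input to Tsirelson's Theorem 3c5 and is also exactly what the paper's Proposition~\ref{p.ss} and the joint-law convergence deliver. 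Once your first step is corrected to that form, the rest of your sketch (continuity of spectral measures, finite-dimensional checks for weak convergence on $(\mathfrak S,d_H)$, conformal invariance via covariance of spectral measures under noise isomorphisms) is sound, though it still relies on the nontrivial fact that the discrete Fourier spectral sample is the correct discrete avatar of Tsirelson's abstract spectral measure --- a fact you invoke but do not verify.
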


As mentioned in the introduction, the existence of the limit follows
from Tsirelson's theory (see \cite[Theorem 3c5]{\TsirelsonStFlour}) and~\cite{\SchrammSmirnovNoise}. Nevertheless,
we believe that our exposition below might be helpful.

\begin{remark}\label{r.Z2scaling}
The proof of
the existence of the limit also works for subsequential
scaling limits of critical bond percolation on $\Z^2$:
if $\eta_j\searrow0$ is a sequence along which bond percolation
on $\eta_j\,\Z^2$ has a limit (in the sense of~\cite{\SchrammSmirnovNoise}, say),
then the corresponding spectral sample measures also have a limit. 
(The existence of such sequences $\{\eta_j\}$ follows from compactness.)
\end{remark}

In the proof of Theorem~\ref{t.scalinglimit}, we will use the
following result.

\begin{proposition}[\cite{\SchrammSmirnovNoise}]\label{p.ss}
Let $\Quad$ be a piecewise smooth quad in $\C$.
Suppose that $\alpha\subset\C$ is a finite union of finite length paths,
and that $\alpha\cap\p \Quad $ is finite.
Then for every $\eps>0$ there is a finite collection of piecewise smooth quads
$\Quad_1,\Quad_2,\dots,\Quad_n\subset\C\setminus\alpha$
and a function $g:\{-1,1\}^n\to\{-1,1\}$ such that 
\begin{equation*}
\label{e.finapprox}
\lim_{\eta\searrow 0}\,
\mu_\eta\Bigl[ f_{\Quad}(\omega)\ne g\bigl( f_{\Quad_1}(\omega),f_{\Quad_2}(\omega),\dots,
f_{\Quad_n}(\omega)\bigr)\Bigr]<\eps\,.
\QED
\end{equation*}
\end{proposition}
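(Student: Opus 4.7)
The plan is to express $f_\Quad$ as a Boolean function of crossings of a fine tiling of $\Quad$ by small quads that have been locally perturbed so as to avoid $\alpha$. Fix a small scale $\delta>0$ (to be sent to $0$ at the end) and a much smaller scale $\delta'\ll\delta$. Start from a standard hexagonal or square tiling of the plane at scale $\delta$ and keep only the cells that meet $\closure\Quad$. Since $\alpha$ is a finite union of finite length paths, only $O(\mathrm{length}(\alpha)/\delta)+O(1)$ of these cells meet $\alpha$; on each such cell, deform its boundary inside the $\delta'$-neighborhood $N_{\delta'}(\alpha)$ so that the resulting tile lies in $\C\setminus\alpha$ (this is possible because $\alpha$ is rectifiable). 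Call the resulting piecewise smooth quads $T_1,\dots,T_N\subset\C\setminus\alpha$. Enlarge this collection into the family $\Quad_1,\dots,\Quad_n$ by including, for every pair (and triple) of topologically adjacent tiles $T_i,T_j$, the quad $T_i\cup T_j$ (with distinguished boundary arcs chosen so that $f_{T_i\cup T_j}=1$ iff there is a white crossing linking $T_i$ to $T_j$); everything stays in $\C\setminus\alpha$.

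Next, define $g:\{-1,1\}^n\to\{-1,1\}$ as follows. From the values of $f_{T_i\cup T_j}$ build a graph on $\{T_1,\dots,T_N\}$ with an edge $ij$ whenever a white crossing of $T_i\cup T_j$ is present; set $g=+1$ iff there is a path in this graph from a tile meeting one distinguished arc of $\p\Quad$ to a tile meeting the other. This is a Boolean function of $f_{\Quad_1},\dots,f_{\Quad_n}$, and by the standard gluing arguments (FKG plus the fact that a primal crossing of $\Quad$ must pass through a connected chain of tiles) the event $\{g=+1\}$ contains $\{f_\Quad=+1\}$ up to the event that a crossing ``sneaks around'' within one tile without linking it to its neighbors.

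The heart of the proof is to show, uniformly in the mesh $\eta$,
\begin{equation*}
\limsup_{\eta\searrow 0}\mu_\eta\bigl[f_\Quad\ne g(f_{\Quad_1},\dots,f_{\Quad_n})\bigr]\le \eps,
\end{equation*}
by choosing $\delta$ and $\delta'/\delta$ small enough. Any disagreement forces one of two ``bad'' local pictures: either an interface of $\omega$ passes through a deformed tile within $N_{\delta'}(\alpha)$, or a similar boundary defect occurs in an $O(\delta)$-neighborhood of $\p\Quad$. Standard three-arm (in the half-plane) and four-arm estimates, together with quasi-multiplicativity~\eqref{e.qm} and the RSW bound~\eqref{e.RSW}, control the probability of such defects: the total contribution along $\alpha$ is bounded by
\begin{equation*}
O\!\left(\tfrac{\mathrm{length}(\alpha)}{\delta}\right)\cdot\alpha_3^+(\delta',\delta)
\;\le\; O\!\left(\tfrac{\mathrm{length}(\alpha)}{\delta}\right)(\delta'/\delta)^{c},
\end{equation*}
for a positive constant $c$ (the half-plane three-arm exponent exceeds $1$ by~\eqref{e.a+3}), with a similar bound for the boundary $\p\Quad$. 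We first choose $\delta$ small enough to absorb the boundary contribution of $\Quad$, then $\delta'/\delta$ small enough to absorb the contribution along $\alpha$.

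The main obstacle will be the last step, precisely because all estimates must be uniform in the lattice mesh $\eta$ (and in fact valid along any subsequential scaling limit, which is why the proof also applies to critical bond percolation on $\Z^2$; cf.~Remark~\ref{r.Z2scaling}). This uniformity is exactly what the RSW theorem and the quasi-multiplicativity of arm events deliver, so no new percolation input is required beyond what is already cited; the argument is really a careful bookkeeping of how deformations of the tiling near $\alpha$ interact with the crossing event, combined with summability of the half-plane three-arm estimate over the $O(1/\delta)$ deformed tiles.
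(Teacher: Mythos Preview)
The paper does not give a proof of this proposition: it is quoted from \cite{\SchrammSmirnovNoise} and the statement is closed with a \qed\ symbol.  So there is nothing in the paper to compare your argument against; the question is whether your sketch stands on its own.

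It does not, and the gap is in the construction of $g$, not in the arm-event bookkeeping.  Your function $g$ declares a crossing of $\Quad$ whenever there is a chain $T_{i_1},\dots,T_{i_m}$ of adjacent tiles such that each union $T_{i_\ell}\cup T_{i_{\ell+1}}$ carries \emph{some} white crossing.  But two such crossings need not concatenate: the white crossing of $T_{i_1}\cup T_{i_2}$ may land on $\p T_{i_2}$ in a cluster that is disjoint, inside $T_{i_2}$, from the white crossing of $T_{i_2}\cup T_{i_3}$.  Hence $\{g=+1\}$ can be much larger than $\{f_\Quad=+1\}$, and the discrepancy $\{g=+1,\,f_\Quad=-1\}$ is \emph{not} a local event near $\alpha$ or $\p\Quad$; your error analysis (three- and four-arm events along $\alpha$) is bounding the wrong thing.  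One bit per adjacent pair is simply not enough information to glue.

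What is actually needed, and what the Schramm--Smirnov argument supplies, is to record for each tile the full connectivity pattern among a finite collection of small arcs on its boundary: for every pair of such arcs on $\p T_i$ one asks whether they are joined by a white path inside $T_i$, and these are the quad crossings $\Quad_1,\dots,\Quad_n$.  From this data the crossing of $\Quad$ is genuinely determined, up to the event that an interface hits two of the chosen arc-endpoints within distance $\delta'$ of each other (a multi-arm event near a point of $\p T_i$), and \emph{that} is what the arm estimates control.  The deformation near $\alpha$ is then handled by splitting tiles that $\alpha$ traverses into pieces on either side of $\alpha$ --- one cannot ``deform the boundary'' of a tile to avoid a curve through its interior, as your second paragraph proposes --- and recording the boundary connectivity of those pieces in the same way.
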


Another result from~\cite{\SchrammSmirnovNoise} that we will need is
that for any finite sequence $\Quad_1,\Quad_2,\dots,\Quad_n$ of piecewise smooth quads
in $\C$, the law of the vector $\bigl(f_{\Quad_1}(\omega),f_{\Quad_2}(\omega),\dots,f_{\Quad_n}(\omega)\bigr)$
under $\mu_\eta$ has a limit as $\eta\searrow 0$, and that this limiting
joint law is conformally invariant.

\proofof{Theorem~\ref{t.scalinglimit}}
Let $U\subset\C$ be an open set such that $\p U$ is a disjoint finite union
of smooth simple closed paths and $\p U\cap \p \Quad$ is finite.
In order to establish the existence of the limit $\widehat \mu^\Quad$,
it is clearly enough to show that for every such $U$ the limit
$$
W(\Quad,U):=\lim_{\eta\searrow 0}\widehat\mu^{\Quad}_\eta\bigl(\Spec\subseteq U\bigr)
$$
exists, and for the conformal invariance statement,
it suffices to show that $W\bigl(\Quad',\phi(U)\bigr)=W(\Quad,U)$.

Fix some $\eps>0$ arbitrarily small.
In Proposition~\ref{p.ss}, take $\alpha:=\p U$
and let $\Quad_1,\dots,\Quad_n$ and $g$ be as guaranteed there.
Let $J:=\bigl\{j\in\{1,2,\dots,n\}:\Quad_j\subset U\bigr\}$
and $J':=\{1,\dots,n\}\setminus J$.
Then $\Quad_j\cap U=\emptyset$ when $j\in J'$.
Set $x:=\bigl(f_{\Quad_1}(\omega),\dots,f_{\Quad_n}(\omega)\bigr)\in\{-1,1\}^n$,
and let $x_J$ and $x_{J'}$ denote the restrictions of
$x$ to $J$ and $J'$, respectively.
Then for all $\eta$ sufficiently small $x_{J'}$ is independent
from $\omega_U$ and $x_J$ is determined by $\omega_U$.

Let $ G=G(\omega):=g(x)$,
let $\nu_\eta$ be the law of $x$ under $\mu_\eta$,
and let $\nu:=\lim_{\eta\searrow 0}\nu_\eta$.
By~\eqref{e.SinB}, we have for all $\eta$ sufficiently small
\begin{equation*}
\label{e.Gexp}
W_\eta(G,U):= \sum_{S\subset U} \widehat G(S)^2=\EB{\Eb{G(\omega)\md \omega_U}^2}.
\end{equation*}
We may write $g(x)$ as a sum
$$
g(x)=\sum_{y\in\{-1,1\}^{J}} 1_{x_J=y}\,g_y(x_{J'})
$$
with some functions $g_y:\{-1,1\}^{J'}\to\{-1,1\}$.
Then
$$
W_\eta(G,U) =
\sum_y \nu_\eta(x_J=y)\,\nu_\eta[g_y]^2
\;\;
\underset{\eta\searrow0}{\longrightarrow}
\;\;
\sum_y \nu(x_J=y)\,\nu[g_y]^2
.
$$
(Here, $\nu[g_y]$ denotes the expectation of $g_y$ with respect to
$\nu$, and similarly for $\nu_\eta$.)
Hence, $W(G,U):=\lim_{\eta\searrow0}W_\eta(G,U)$ exists.

By~\eqref{e.specapprox}, we have
$$
\bigl| \widehat\mu_\eta^\Quad(\Spec\subseteq U) - W_\eta(G,U) \bigr|
\le 4\,\mu_\eta(G\ne f_\Quad)^{1/2}\,.
$$
For $\eta$ sufficiently small, the right hand side is smaller than
$4\sqrt\eps$, by our choice of $g$.
Since $W(G,U)=\lim_{\eta\searrow0} W_\eta(G,U)$, we conclude that 
$$
\bigl|\widehat\mu_\eta^\Quad(\Spec\subseteq U)-W(G,U)\bigr|<5\,\sqrt\eps
$$
for all $\eta$ sufficiently small. 
(But we cannot say that
$\lim_{\eta\searrow0}\widehat\mu_\eta^\Quad(\Spec\subseteq U)= W(G,U)$, since
$G$ depends on $\eps$.) This implies
$$
\limsup_{\eta\searrow0}\widehat\mu_\eta^\Quad(\Spec\subseteq U)-
\liminf_{\eta\searrow0}\widehat\mu_\eta^\Quad(\Spec\subseteq U) \le 10\,\sqrt\eps\,.
$$
Since $\eps$ is an arbitrary positive number, this establishes
the existence of the limit $W(\Quad,U)$.
The proof of conformal invariance is similar, and left to the reader.
\QED

We now describe some a.s.\ properties of the limiting law.

\begin{theorem}\label{t.limprop}
If $\Spec$ is a sample from
$\widehat\mu^\Quad$, then a.s.\ $\Spec$ 
is contained in the interior of $\Quad$ and for every
open $U\subset\C$, if $U\cap\Spec\ne\emptyset$,
then $U\cap \Spec$ has Hausdorff dimension $3/4$.
In particular, $\Spec$ is a.s.\ homeomorphic to a Cantor set,
unless it is empty.
\end{theorem}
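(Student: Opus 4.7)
The statement has three parts: $\Spec\subset\mathrm{int}(\Quad)$ almost surely; $\dim_H(\Spec\cap U)=3/4$ whenever $U\subset\C$ is open and meets $\Spec$; and the Cantor-type topology. Granting the first two, the third is formal: $\Spec$ is closed by definition, hence compact in $(\hat\C,d_0)$; any connected subset of $\R^2$ with more than one point has Hausdorff dimension at least one, so $\Spec$ must be totally disconnected; and an isolated point $x\in\Spec$ would give $\Spec\cap B(x,\eps)=\{x\}$ of dimension $0$ for small $\eps$, contradicting the dimension-$3/4$ conclusion applied to $U=B(x,\eps)$.

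To prove $\Spec\subset\mathrm{int}(\Quad)$, I would reuse the boundary-avoidance estimate that powered the proof of Theorem~\ref{t.quadtight}: for every $\delta>0$ one can pick a piecewise smooth $\Quad'\Subset\mathrm{int}(\Quad)$ with $\limsup_{\eta\searrow 0}\mu_\eta(f_\Quad\ne f_{\Quad'})<\delta$, which through~\eqref{e.specapprox} yields $\widehat\mu_\eta^\Quad(\Spec\subseteq\Quad')\ge 1-4\sqrt\delta$ for all sufficiently small $\eta$. Since $\{S\subseteq\Quad'\}$ is a closed condition in $(\mathfrak S,d_H)$, this bound passes to the weak limit $\widehat\mu^\Quad$; exhausting $\mathrm{int}(\Quad)$ by a countable sequence of such $\Quad_k$ with $\delta_k\to 0$ then gives the claim.

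For the upper bound $\dim_H(\Spec\cap U)\le 3/4$, a first-moment argument suffices. By the scaling-limit form of~\eqref{e.SBasymp} (see Remark~\ref{r.asymp}) together with the triangular-lattice exponent $\alpha_4(\rho,1)=\rho^{5/4+o(1)}$, every dyadic $\rho$-box $B$ bounded away from $\p\Quad$ satisfies $\widehat\mu^\Quad(\Spec\cap B\ne\emptyset)\asymp\rho^{5/4}$, and summing over the $O(\rho^{-2})$ such boxes makes the expected number of hit boxes $O(\rho^{-3/4})$. Markov combined with Borel--Cantelli along $\rho_n=2^{-n}$ then produces $\overline{\dim}_M(\Spec)\le 3/4$ almost surely, forcing $\dim_H(\Spec\cap U)\le 3/4$.

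The main obstacle is the matching lower bound on every $U$, which I would attack via Frostman energy. At mesh $\eta$, set $\nu_\eta:=(\eta^{-2}\alpha_4(\eta,1))^{-1}\sum_{x\in\Spec}\delta_x$; by Lemma~\ref{l.moments}, $\E[\nu_\eta(V)]\asymp 1$ and $\E[\nu_\eta(V)^2]=O(1)$ for any $V\Subset\mathrm{int}(\Quad)$, while the two-point estimate~\eqref{e.percmom2} together with dyadic binning on $|x-y|$ gives $\sup_\eta\E[I_s(\nu_\eta|_V)]<\infty$ for every $s<3/4$ (the powers of $\eta$ cancel exactly thanks to the arm exponent being $5/4$). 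By tightness and lower semicontinuity of $I_s$, a subsequential weak limit $\nu$ is supported on $\Spec\cap\bar V$, satisfies $\nu(V)>0$ with positive probability by Paley-Zygmund, and has $I_s(\nu)<\infty$ a.s., so Frostman yields $\dim_H(\Spec\cap V)\ge s$ with positive probability. The genuinely delicate step is promoting positive to full probability on $\{\Spec\cap V\ne\emptyset\}$: for this I would exploit the localized partial-independence estimate Proposition~\ref{pr.SQBWloc} (and its scaling-limit analogue) inside successive sub-boxes, showing that conditioned on the first $\rho$-sub-box of $V$ that meets $\Spec$, the conditional law of $\Spec$ inside that sub-box again supports the positive-probability Frostman construction. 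Iterating this conditioning down a dyadic tree of sub-boxes that hit $\Spec$ drives the conditional probability of having dimension smaller than $3/4-\eps$ to zero, and letting $\eps\to 0$ over a countable family of rational balls finishes the argument. A cleaner alternative is to invoke the conformal invariance from Theorem~\ref{t.scalinglimit} to identify the conditional law of $\Spec$ restricted to a ball, given it is nonempty there, with a rescaled copy of another spectral sample, reducing the almost-sure statement to the positive-probability one already established.
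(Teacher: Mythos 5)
Your overall architecture -- establish containment in $\mathrm{int}(\Quad)$ via \eqref{e.quad'spec}, run a discrete Frostman energy argument with the normalized counting measure $\nu_\eta\asymp\lambda_\eta$, get the upper bound from a first-moment covering estimate, and deduce the Cantor topology from the dimension statement -- is the right skeleton, and your moment computations (using Lemma~\ref{l.moments} and \eqref{e.percmom2}, with the $5/4$ exponent making the $s$-energy finite precisely for $s<3/4$) are correct. The Cantor-set bookkeeping (total disconnectedness from $\dim<1$, no isolated points from the dimension statement applied to small balls) is also fine as a supplement; the paper states this part without detail.

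The real issue is the step you yourself flag as ``genuinely delicate'': upgrading $\dim_H(\Spec\cap V)\ge s$ from a positive-probability statement to an almost-sure statement on $\{\Spec\cap V\ne\emptyset\}$. Neither of the two routes you propose matches what the paper does, and neither is clearly viable. Route (b) -- conformal invariance identifying the conditional law of $\Spec$ in a sub-ball, given it hits the sub-ball, with a rescaled spectral sample -- assumes a conditional self-similarity that Theorem~\ref{t.scalinglimit} does not provide and that is almost certainly false in that exact form: the conditional law carries information from outside the ball, and there is no reason it should coincide with the spectrum of some $f_{\Quad'}$. Route (a) -- iterating Proposition~\ref{pr.SQBWloc} down a dyadic tree under the conditioning that the ambient box has been hit -- is exactly the kind of ``positive conditioning'' that the paper repeatedly warns is out of reach without the large-deviations machinery of Proposition~\ref{pr.YX}; it is not a plain induction.

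What the paper actually uses here is the sharp local lower-tail bound, Theorem~\ref{t.loc}, which you do not invoke. The point is that the event $\{\emptyset\ne\Spec\cap B\subset B'\}$ (for concentric disks $B'\subset B$) is an \emph{open} condition in $(\mathfrak S,d_H)$, so limiting probabilities can be controlled, and on this event Theorem~\ref{t.loc} gives a quantitative bound
$$
\limsup_{\eta\searrow 0}\;\widehat\mu_\eta^\Quad\bigl(\emptyset\ne\Spec\cap B\subset B',\;\lambda_\eta(B)\le t\bigr)\;\le\;O(1)\,t^{c}\qquad(c>0),
$$
because $\lambda_\eta(B)\le t$ translates to a smallness constraint on $|\Spec_\eta\cap B|$ at a scale $r$ with $r^2\alpha_4(r)\asymp t\,\eta^{-2}\alpha_4(\eta,1)$, where \eqref{e.loc} applies. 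Letting $t\searrow 0$ gives $\lambda(B)>0$ a.s.\ on this event. The restriction $\Spec\cap B\subset B'$ is then removed by covering the annular shell $B\setminus B'$ by $O(\delta^{-1})$ disks of radius $\delta$ and applying \eqref{e.SintersectsB} together with \eqref{e.a4}, so that $\Pb{\Spec\cap B\not\subset B'}=O(\delta^{1/4+o(1)})$. Taking a countable family of balls then yields $\lambda(U)>0$ a.s.\ on $\{\Spec\cap U\ne\emptyset\}$, at which point your Frostman energy bound applied to $\lambda$ finishes the lower bound outright -- no iteration or positive conditioning needed. In short: the tool you need has already been forged in Section~\ref{s.concent}; reach for Theorem~\ref{t.loc} rather than trying to reconstruct a self-similarity argument.
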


\proof
It follows from~\eqref{e.quad'spec} from Subsection~\ref{ss.quadconc}
that $\Spec$ is $\widehat\mu^\Quad$-a.s.\ contained in the interior of $\Quad$.
Now fix some open $U$ whose closure is contained in the interior
of $\Quad$.
Fix $\eta>0$ and let $\Spec_\eta$ denote a sample from
$\widehat\mu^\Quad_\eta$.
Let $\lambda_\eta$ denote the counting measure on $\Spec_\eta\cap U$
divided by $\eta^{-2}\,\alpha_4(1,1/\eta)$.
We may consider $\lambda_\eta$ as a random point in the metric space
of Borel measures on $\Quad$ with the Prokhorov metric.
By the estimate~\eqref{e.singleBit}, 
we have $\limsup_{\eta\searrow0}\Eb{\lambda_\eta(U)}<\infty$.
Therefore, the law of $\lambda_\eta$ is tight as $\eta\searrow 0$.
Likewise, the law of the pair $(\Spec_\eta,\lambda_\eta)$ is tight.
Hence, there is a sequence $\eta_j\to 0$ such that the
law of the pair $(\Spec_{\eta_j},\lambda_{\eta_j})$ converges weakly
as $j\to\infty$. Let $(\Spec,\lambda)$ denote a sample from
the weak limit.
Then $\lambda$ is a.s.\ a measure whose support is contained in $\Spec$.

Now let $B\subset U$ be a closed disk.
Let $B'\subset B$ be a concentric open disk with smaller
radius, and let $\delta$
denote the distance from $\partial B'$ to $\partial B$.
Theorem~\ref{t.loc} with
$B'$ and $B$ in place of $U'$ and $U$ implies that $\lambda(B)>0$
a.s.\ on the event $\emptyset\ne \Spec\cap B\subset B'$.
(Note that $\emptyset \ne \Spec\cap B\subset B'$ is
an open condition on $\Spec$, since
it is equivalent to having $\emptyset\ne\Spec\cap B'$
and $\Spec\cap (B\setminus B')=\emptyset$.)
But $B\setminus B'$ can be covered by $O(\delta^{-1})$ disks
of radius $\delta$. By~\eqref{e.SintersectsB} and~\eqref{e.a4},
for each of these radius $\delta$ disks, the probability
that $\Spec$ intersects it is $O(\delta^{5/4+o(1)})$.
Therefore, $\Pb{\Spec\cap B\not\subset B'} = O(\delta^{1/4+o(1)})$.
In particular, we have
$\Pb{\Spec\cap B\ne\emptyset,\,\lambda(B)=0}=o(1)$ as
$\delta\searrow0$; that is,
$\Pb{\Spec\cap B\ne\emptyset,\,\lambda(B)=0}=0$.
By considering a countable collection of disks covering $U$
 it follows that on $U\cap \Spec\ne\emptyset$ we have
$\lambda(U)>0$ a.s.
The correlation estimate~\eqref{e.percmom2} 
and the asymptotics $\alpha_4(r)=r^{-5/4+o(1)}$ from~\eqref{e.a4}
imply that for $\eta>0$ and every $s>-3/4$ we have
$$
\EB{\int_U\int_U \bigl(|x-y|\vee\eta\bigr)^s\,d\lambda_\eta(x)\,d\lambda_\eta(y)}
= O(1)\,.
$$
This implies that a.s.\ 
$$
\int_{U}\int_{U} |x-y|^s\,d\lambda(x)\,d\lambda(y)<\infty\,.
$$
Therefore, Frostman's criterion implies that the Hausdorff dimension
of $\Spec$ is a.s.\ at least $3/4$ on the event $\Spec\cap U\ne\emptyset$.
By Lemma~\ref{l.basic}, the expected number of disks of radius $r$ needed
to cover $\Spec\cap U$ is bounded from above by $r^{3/4+o(1)}$.
Hence, the Hausdorff dimension of $U\cap\Spec$ is a.s.\ at most $3/4$
on the event $U\cap\Spec\ne\emptyset$.
This proves the claim for any fixed $U$. The assertion for every $U$ then
follows by considering a countable basis for the topology (i.e.,
disks having rational radius and centers with rational coordinates).
\QED

\begin{remark}\label{r.weakconv}
It would be interesting to prove the weak convergence of the law
of $(\Spec_\eta,\lambda_\eta)$ as $\eta\searrow 0$.

Note that the proof above shows that for any subsequential scaling limit $(\Spec,\lambda)$ of $(\Spec_{\eta},\lambda_{\eta})$,
the support of the measure $\lambda$ a.s is the whole $\Spec$.
\end{remark}

\proofof{Theorem~\ref{t.speclim}}
Since $[0,R]^2$ is a square, we have $\Eb{f_R}\to 0$ a.s.
Therefore $\Pb{\Spec_{f_R}=\emptyset}\to 0$.
Consequently, the claims follow from Theorems~\ref{t.scalinglimit} and~\ref{t.limprop}.
\QED

\section{Some open problems}\label{s.open}

Here is a list of some questions and open problems:
\begin{enumerate}
\item For any Boolean function $f:\{-1,1\}^n \rightarrow \{-1,1\}$, define its {\bf spectral entropy} $\Ent(f)$ to be \begin{equation*} \Ent(f)= \sum_{S\subseteq \{1,\ldots,n\}} \widehat f(S)^2 \log \frac 1 {\widehat f(S)^2}\,.
\end{equation*}
Friedgut and Kalai conjectured in \cite{\FriedgutKalai} that there is some absolute constant $C>0$ such that for any Boolean function $f$, \begin{equation*} \Ent(f)\leq C \sum_{S\subseteq \{1,\ldots,n\}} \widehat f(S)^2 |S|=C\,\Eb{|\Spec_f|}; \end{equation*} in other words, that the spectral entropy is controlled by the total influence.
 As was pointed out to us by Gil Kalai, it is natural to test the conjecture in the setting of percolation: if $f_R$ is the $\pm1$-indicator function of the left-right crossing in the square $[0,R]^2$, is it true that
$\Ent(f_R) = O( R^2 \alpha_4(R))$?

\item Our paper deals with noise sensitivity of percolation and its applications to dynamical percolation.
One could ask similar questions about the Ising model, for which a natural dynamics is the Glauber dynamics.
For instance, Broman and Steif ask in \cite[Question 1.8]{\BromanSteif} if there exist exceptional times for the Ising model on $\Z^2$ at $\beta=\beta_c$ for which there is an infinite
up-spin cluster. Since  $SLE_3$ (which is supposedly the scaling limit of critical Ising interfaces, see Smirnov's recent breakthrough \cite{\SmirnovICM}) 
does not have double points, there should be very few pivotals, and thus such exceptional times should not exist, but the missing argument is a quasi-multiplicativity property for the probabilities of the alternating 4-arm events in the Ising model. 
Similar questions can be asked for the FK model, Potts models, etc.

\item Prove the weak convergence of the law of $(\Spec_{\eta}, \lambda_{\eta})$; see Remark~\ref{r.weakconv}.
In the paper \cite{\PivotalMeasure}, we prove the weak convergence of the law of 
$(\Piv_{\eta}, \tilde\lambda_{\eta})$, where $\tilde\lambda_{\eta}$
 is the counting measure on the set of pivotals renormalized by $\eta^{-2}\alpha_4(1,1/\eta)$.

\item Prove that $\Piv_R$ and $\Spec_R$ asymptotically have the same ``polar sets''. See Remark~\ref{r.polarset} for a more precise description.

\item Prove that the laws of $\Piv_R$ and $\Spec_R$ are asymptotically mutually singular, or that
their scaling limits are singular. Remark~\ref{r.annpivo} suggests that this should be the case, since
both these sets should be statistically self similar, in some sense. 

\item Do we have $\Eb{|\Spec_{R\Quad}|}=\Eb{|\Piv_{R\Quad}|}\asymp R^2 \alpha_4(R)$ for any quad $\Quad \subset \C$, as $R$ goes to infinity? (See Theorem~\ref{t.quadtight}).

\item In the same fashion, prove that the sharp \concentration as in Theorem~\ref{t.1} still holds for general quads. With our techniques, this would require a uniform control over the domain on the constants involved in Proposition~\ref{pr.SQBWloc}, as well as a statement analogous to Proposition~\ref{pr.verysmall} for the case of general quads.

\item Prove that the main statement in Section~\ref{s.mainlemma} (Proposition~\ref{pr.SQBW}) still holds for non-monotone functions
such as the $\ell$-arm annulus crossing events.
(See Subsection~\ref{subs.int} for an explanation why we needed the monotonicity assumption for the first moment.) If such a generalization was proved, then it would imply in particular that for the triangular lattice
 the set of exceptional times with both infinite black and white clusters has dimension $2/3$ a.s.,
strengthening the last statement in Theorem~\ref{t.dyn}.

For $\ell >1$, there is a further small complication when $\ell$ is odd: a bit can be pivotal for the $\ell$-arm event even without having the exact 4-arm event around its tile.
(That is why we restricted Proposition~\ref{pr.verysmallRad} to the $\ell\in \{1\}\cup 2\N_+$ case.) Resolving this technicality and the
non-monotonicity problem would imply the existence of exceptional times where there are polychromatic three arms from 0 to infinity
(the dimension of this set of exceptional times would then be 1/9).
We cannot prove the existence of such times with the results of the present paper.

\item 
Let us conclude with a computational problem: find any ``efficient'' algorithmic way to sample $\Spec$
in the case of percolation, say (in order, for instance, to make pictures of it), or prove that such
an algorithm does not exist. 

As we have recently learnt from Gil Kalai, the fact that the crossing function itself is computable in polynomial time (in the number of input bits) implies that there is a polynomial time {\it quantum algorithm} to sample $\Spec$ \cite[Theorem 8.4.2]{\BernsteinVazirani}. In fact, the Fourier sampling problem provided the first formal evidence that quantum Turing machines are more powerful than bounded-error probabilistic Turing machines.
\end{enumerate}


\appendix

\section{Appendix: an inequality for multi-arm probabilities}\label{s.appendix}

We prove here an estimate regarding the multi-arm crossing probabilities for annuli in critical bond percolation on $\Z^2$, which is due to Vincent Beffara (private communication) and included here with his permission.

\begin{proposition}\label{p.strong}
Fix $k\in\N_+$
and consider bond percolation on $\Z^2$ with parameter $p=1/2$.
There are constants, $C,\eps>0$,
which may depend on $k$, such that for all $1<r<R$, \begin{equation} \label{e.strong} \alpha_{2k+1}(r,R)\le C\, \alpha_1(r,R)\,\alpha_{2k}(r,R)\,(r/R)^\eps.
\end{equation}
\end{proposition}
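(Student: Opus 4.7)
The plan is to combine Reimer's inequality with a multi-scale argument. Reimer's inequality \cite{\Reimer} applied to the alternating $(2k+1)$-arm event (viewed as the disjoint occurrence of one ``extra'' arm of the majority color and the $(2k)$-arm alternating event) yields the trivial bound
\[
\alpha_{2k+1}(\rho, \rho') \le C_k\, \alpha_1(\rho, \rho')\,\alpha_{2k}(\rho, \rho'),\qquad 1 < \rho < \rho',
\]
where $C_k$ absorbs the choice of color and ordering. To upgrade this to a polynomial gain, I plan to establish a uniform strict improvement at each fixed-aspect-ratio scale and then iterate across the $\log_2(R/r)$ dyadic scales.

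The main step is to show that there exist $\delta = \delta(k) > 0$ and $\rho_0 = \rho_0(k)$ such that
\[
\alpha_{2k+1}(\rho, 2\rho) \le (1-\delta)\,\alpha_1(\rho, 2\rho)\,\alpha_{2k}(\rho, 2\rho)\qquad\text{for every }\rho \ge \rho_0.
\]
The intuition is that Reimer's bound is \emph{not tight}: the Reimer decomposition only witnesses that the bits used by the two sub-events are disjoint, but does not enforce the specific circular alternating color pattern required by the $(2k+1)$-arm event. To quantify this slackness, I would first condition on a well-separated arm-endpoints configuration (in the spirit of Lemma~A.2 of~\cite{\SchrammSteif}) and then perform a finite-energy local modification in a thin sub-annulus near the inner boundary that either merges the ``extra'' arm with an adjacent same-color arm or reassigns its color, thereby breaking the alternation pattern while preserving the disjoint occurrence decomposition. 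By RSW, the cost and gain of this modification are bounded uniformly in $\rho$, producing the constant factor $1-\delta$.

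The third step is iteration. Choosing $\rho_i := 2^i\rho_0 r$ and $n \asymp \log_2(R/r)$, the $(2k+1)$-arm event in $A(r,R)$ implies the $(2k+1)$-arm event in each of the disjoint dyadic sub-annuli $A(\rho_i, \rho_{i+1})$, and by independence on disjoint annuli, $\alpha_{2k+1}(r,R) \le \prod_i \alpha_{2k+1}(\rho_i, \rho_{i+1})$. Substituting the scale-by-scale bound and then applying quasi-multiplicativity~\eqref{e.qm} to $\alpha_1$ and $\alpha_{2k}$ separately yields
\[
\alpha_{2k+1}(r,R) \le C\,(1-\delta)^n\, \alpha_1(r,R)\,\alpha_{2k}(r,R),
\]
and $(1-\delta)^n \asymp (r/R)^\epsilon$ with $\epsilon := \log_2\bigl(1/(1-\delta)\bigr) > 0$ gives the desired estimate; the finitely many scales near $r$ (below $\rho_0 r$) and near $R$ are absorbed into the prefactor.

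The hard part will be the scale-by-scale gain. One must construct a local modification that simultaneously (i) lies within the bits used by the Reimer-witnessing decomposition so that it is legal, (ii) actually breaks the alternation pattern so that it removes a contribution to $\alpha_{2k+1}$, and (iii) has positive probability uniformly in $\rho$. Arranging these three requirements to be compatible is the delicate point: the separation-of-arms conditioning must be set up with enough room near the inner boundary to accommodate the modification, and the color assignment of the landing points needs to be read off before the modification is performed.
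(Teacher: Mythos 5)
The Reimer starting point is legitimate, and the intuition that the Reimer bound has slack at each scale is reasonable. However, the iteration step has a genuine gap: after breaking into $n\asymp\log_2(R/r)$ dyadic sub-annuli and inserting the per-scale gain $(1-\delta)$, you reassemble $\alpha_1$ and $\alpha_{2k}$ using quasi-multiplicativity \eqref{e.qm}. But each application of quasi-multiplicativity costs a constant $C_j>1$, so across $n$ scales the chain reads
$$\alpha_{2k+1}(r,R)\;\le\;(1-\delta)^n\prod_i\alpha_1(\rho_i,\rho_{i+1})\,\alpha_{2k}(\rho_i,\rho_{i+1})\;\le\;(1-\delta)^n\,C_1^{\,n-1}C_{2k}^{\,n-1}\,\alpha_1(r,R)\,\alpha_{2k}(r,R)\,,$$
not $(1-\delta)^n\cdot C\cdot\alpha_1\alpha_{2k}$ with a fixed $C$. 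The combined factor $\bigl((1-\delta)C_1C_{2k}\bigr)^n$ only decays if $\delta$ specifically exceeds $1-1/(C_1C_{2k})$, and there is no reason for a finite-energy modification to produce a $\delta$ that large --- the two constants are unrelated, and the accumulated quasi-multiplicativity constants will generally swamp the gain. In addition, the per-scale strict inequality itself (the step you flag as hard) is unproved, and the sketched local modification is not pinned down: it is unclear on which probability space it acts (a single configuration, or the product space in the Reimer coupling), and how ``breaking the alternation'' translates into a multiplicative comparison between $\alpha_{2k+1}$ and $\alpha_1\alpha_{2k}$ rather than just a qualitative non-tightness.

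The paper's proof takes a different route that sidesteps both issues. There is no Reimer step and no product-over-dyadic-annuli decomposition. Instead, one sums over triples of consecutive interfaces $\gamma=(\gamma_1,\gamma_2,\gamma_3)$, so the $\alpha_{2k}$ factor arises from bounding $\E\bigl[\sum_\gamma 1_{\ev A_\gamma}\bigr]\le O(1)\,\alpha_{2k}(r,R)$ via RSW and BK. The $(r/R)^\eps$ gain is produced by a \emph{sequential conditional} estimate: given the configuration inside radius $\rho$ and given $r\leftrightarrow R$, the probability of an extra primal crossing of $S'$ (the region away from $\gamma_1$) out to radius $\rho'$ is at most a constant $a<1$. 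Iterating this conditioning from the inner to the outer boundary yields $a^{m}$ with $m\asymp\log(R/r)$ directly, with no quasi-multiplicativity cost, because it is a single nested chain of conditional probabilities rather than a product of probabilities in disjoint annuli. The per-scale contraction is established not by local modification but by a winding-number argument: the events $\ev D_{\pm10}$, each with probability bounded below and each positively correlated with $r\leftrightarrow R$, pin the winding number of every primal crossing of the middle annulus; any crossing living in $S'$ must have winding close to that of $\gamma_1$, so at least one of $\ev D_{\pm10}$ excludes it.
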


The method of proof can be generalized to give a few similar results.
However, new ideas seem to be necessary for the corresponding statement with $k=1/2$. The case $k=1/2$ is of particular significance:
it was proved in~\cite{\SchrammSteif} that it implies the existence of exceptional times for Bernoulli$(1/2)$ bond percolation on $\Z^2$. In the present paper we prove their existence using~\eqref{e.strong} instead.

\proof
For simplicity of notation, we will restrict the proof to the case $k=2$, which is the case we need, but the proof very easily carries over to the general case.
Let $A(r,R)$ denote the annulus which is the closure of $B(0,R)\setminus B(0,r)$.
If we have the 4-arm event in $A(r,R)$, i.e., four crossings of alternating colors between the two boundary components of $A(r,R)$, with white (primal) and black (dual) colors on the tiles given in Subsection~\ref{ss.gen}, then there are at least $4$ interfaces $\gamma_1,\gamma_2,\gamma_3,\gamma_4$ separating these clusters.
These interfaces are simple paths on the grid $\Z^2+(1/4,1/4)$ in $A(r,R)$, and each of them has one point on each boundary component $A(r,R)$.

Let $\gamma=(\gamma_1,\gamma_2,\gamma_3)$ be a triple of $3$ simple paths that can arise as $3$ consecutive interfaces in cyclic order.
Let $\ev A_\gamma$ denote the event that these are actual interfaces between crossing clusters.
Let $S:=S_\gamma$ denote the connected component of $A(r,R)\setminus(\gamma_1\cup\gamma_3)$
that does not contain $\gamma_2$,
and let $\ev B_\gamma$ denote the event that $\ev A_\gamma$ occurs and there are at least two disjoint primal crossings in $S$.
Our first goal is to prove that
\begin{equation}
\label{e.condA}
\Pb{\ev B_\gamma\md \ev A_\gamma}\le O(1)\,\alpha_1(r,R)\,(r/R)^\eps, \end{equation} with some constant $\eps>0$.

Note that on $\ev A_\gamma$, we have in $S$ at least one primal crossing and at least one dual crossing, which are adjacent to $\gamma_1$ and $\gamma_3$. For the sake of definiteness, we will assume that the primal crossing is adjacent to $\gamma_1$ and the dual crossing is adjacent to $\gamma_3$.
(This can be determined from $\gamma$.)
Let $S'=S_\gamma'$ denote the set of edges in $S$ that are not adjacent to $\gamma_1$. Then given $\ev A_\gamma$, we have $\ev B_\gamma$ if and only if there is a primal crossing also in $S'$.
Therefore,~\eqref{e.condA} follows once we show that for every such $\gamma$ the probability that there is a crossing in $S'$ is bounded from above by the right hand side of~\eqref{e.condA}.

We may consider a percolation configuration $\omega$ in the whole plane and also restrict it to $S'$.
Let $\omega_\rho$ denote the restriction of $\omega$ to $B(0,\rho)$, for any $\rho\in [r,R]$.
Write $\rho\leftrightarrow\rho'$
for the event that there is a crossing of $\omega$ between the two boundary component of the annulus $A(\rho,\rho')$, and write $\rho\overset D\leftrightarrow \rho'$ for the existence of such a crossing within some specified set $D$.
We will prove that for some constant $a\in(0,1)$ and every $\rho$ and $\rho'$ satisfying $r\vee 100\le\rho\le \rho'/8\le R/8$ we have \begin{equation} \label{e.step} \Pb{r\overset{S'}{\leftrightarrow}\rho'
\md \omega_\rho,\,r \leftrightarrow R }\le a\,.
\end{equation}
Using induction, this implies~\eqref{e.condA} with $\eps=\log_8 (1/a)$.

The interface $\gamma_1$ crosses the annulus $A(3\,\rho,4\,\rho)$ one or more times.
Let $w$ denote the winding number around $0$ of one of these crossings, that is, the signed change of the argument along the crossing divided by $2\,\pi$.
Suppose that $\beta$ is a simple path in $A(3\,\rho,4\,\rho)$ with one endpoint on each boundary component of the annulus and let $w_\beta$ denote the winding number of $\beta$. If $\beta\cap\gamma_1=\emptyset$, then we may adjoin to $\beta\cup\gamma_1$ two arcs on the boundary components of the annulus to form a simple closed curve which has winding number in $\{0,\pm1\}$. Therefore, we see that $|w-w_\beta|>3$ implies $\beta\cap\gamma_1\ne\emptyset$.

\begin{figure}[htbp]
\SetLabels
(.65*.33)$2\rho$\\
(.75*.23)$3\rho$\\
(.85*.13)$4\rho$\\
(.95*.03)$5\rho$\\
\endSetLabels
\centerline{
\AffixLabels{\epsfysize=2.5in \epsffile{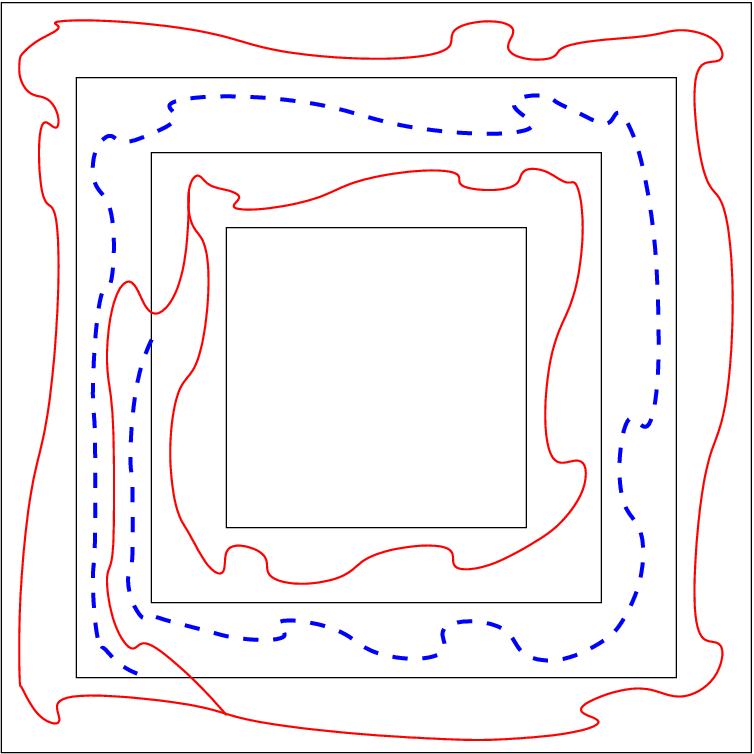}}
}
\begin{caption}{The event $\ev D_j$ with $j=1$.
\label{f.winding}
}
\end{caption}
\end{figure}

Let $\ev D_j$ denote the event that there is a dual crossing in $\omega$ of $A(3\,\rho,4\,\rho)$ with winding number in the range $[j-1/2,j+1/2]$, and there are primal circuits in $A(2\,\rho,3\,\rho)$ and in $A(4\,\rho,5\,\rho)$, each of them separating the two boundary component of its annulus, and these primal circuits are connected to each other in $\omega$; see Figure~\ref{f.winding}. By the RSW theorem, there is constant $\delta>0$ such that $\Pb{\ev D_{\pm 10}}\ge\delta$.
We claim that
\begin{equation}
\label{e.condD}
\Pb{\ev D_{\pm 10}\md r\leftrightarrow R,\,\omega_\rho}\ge\delta\,.
\end{equation}
If we condition on $\ev D_{10}$,
and we further condition on the outermost primal circuit $\alpha_0$ in $A(2\,\rho,3\,\rho)$ and on the innermost primal circuit $\alpha_1$ in $A(4\,\rho,5\,\rho)$, then the configuration inside $\alpha_0$ and the configuration outside $\alpha_1$ remains unbiased, and if additionally $\alpha_0$ is connected to the inner boundary component of $A(r,R)$ and $\alpha_1$ is connected to the outer boundary component of $A(r,R)$, then we also have $r\leftrightarrow R$. This implies $\Pb{r\leftrightarrow R\md \omega_\rho,\,\ev D_{10}}\ge \Pb{r\leftrightarrow R \md \omega_\rho}$.
The same holds for $\ev D_{-10}$, and since $\ev D_{\pm 10}$ is independent of $\omega_\rho$, the inequality~\eqref{e.condD} easily follows.

Now note that if $\ev D_{j}$ holds, then every primal crossing of $A(3\,\rho,4\,\rho)$ in $\omega$ is with winding number in the range $[j-4,j+4]$.
Hence, if $|j-w|>7$, then $\ev D_j\cap\{3\,\rho\overset {S'}\leftrightarrow 4\,\rho\}=\emptyset$.
Consequently, at least one of the two events $\ev D_{\pm10}$ is disjoint from $\{\rho\overset {S'}\leftrightarrow 4\,\rho\}$. This gives~\eqref{e.step} with $a:=1-\delta$.
As we have argued before,~\eqref{e.condA} follows.

The $5$-arm crossing event is certainly contained in $\bigcup_\gamma \ev B_\gamma$, where the union ranges over all $\gamma$ as above.
Hence,~\eqref{e.condA} gives
\begin{equation}
\label{e.ad}
\alpha_5(r,R)\le O(1)\,(r/R)^\eps\,\alpha_1(r,R)\,\EB{ \sum_\gamma 1_{\ev A_\gamma}}.
\end{equation}
If $X$ is the number of interfaces crossing the annulus $A(r,R)$ (which is necessarily even), then $\sum_\gamma 1_{\ev A_\gamma}$ is not more than $X^3\,1_{X\ge 4}$.
Since for all $j\in\N$ we have
$\Pb{X\ge j}\le O(1)\,(r/R)^{j\,\eps_0}$ with some constant $\eps_0>0$ (by RSW and BK) and $\Pb{X\ge 4}\ge (r/R)^{\eps_1}/O(1)$ for some $\eps_1\in(0,\infty)$, we have $\Eb{X^3\,1_{X\ge 4}}\le O(1)\,\Pb{X\ge 4}$ when $R>2\,r$, say.
Therefore,
$$
\EB{ \sum_\gamma 1_{\ev A_\gamma}} \le
\Eb{X^3\,1_{X\ge 4}}\le O(1)\,\Pb{X\ge 4}=O(1)\,\alpha_4(r,R)\,.
$$
When combined with~\eqref{e.ad}, this proves the proposition in the case $k=2$.
The general case is similarly obtained.
\QED

\bibliographystyle{halpha}
\addcontentsline{toc}{section}{Bibliography}
\bibliography{../mr,../prep,../notmr}

\end{document}